%% file: settings.tex
\documentclass[11pt, a4paper, twoside, reqno]{amsart}
\usepackage[utf8]{inputenc}
\usepackage[T1]{fontenc}
\usepackage{lmodern}
\usepackage{microtype}
\usepackage[frak=boondox]{mathalpha}
\usepackage{dsfont}
\usepackage{bbold}
\usepackage{upgreek}
\usepackage{mathrsfs}
\usepackage{euscript}
\usepackage[top=1in, bottom=1in, left=1in, right=1in, headheight=15pt, footskip=40pt]{geometry}
\usepackage{enumitem}
\usepackage{parskip}
\usepackage{float}
\usepackage{caption}
\usepackage{tabto}
\usepackage{color}
\usepackage{mdframed}
\usepackage{hyphenat}
\usepackage{comment}

\usepackage{amsmath, amssymb, amsthm, mathtools}
\usepackage{extarrows}
\usepackage{tikz-cd}
\usepackage[all,cmtip]{xy} 
\usepackage{graphicx}
\usepackage{mathabx}
\usepackage[numbers]{natbib}
\setcitestyle{open={},close={}}
\makeatletter
\renewcommand{\@biblabel}[1]{[#1]\hfill}
\makeatother

\usepackage[hidelinks]{hyperref}
\usepackage[nameinlink]{cleveref} 
\newtheoremstyle{mystyle}
  {\topsep}     
  {\topsep}     
  {\itshape}    
  {}            
  {\bfseries}   
  {.}           
  {.5em}        
  {}            

\newtheoremstyle{spacedremark} 
  {\topsep}     
  {\topsep}     
  {\normalfont} 
  {}            
  {\bfseries}   
  {.}           
  {.5em}        
  {}          
\theoremstyle{mystyle}
\newtheorem{thm}{Theorem}[subsection]
\newtheorem{lem}[thm]{Lemma}

\newtheorem{prop}[thm]{Proposition}
\newtheorem{cor}[thm]{Corollary}

\newtheorem{conj}[thm]{Conjecture}
\newtheorem{defn}[thm]{Definition}
\newtheorem{inner-recall-star}{Theorem}
\newenvironment{recall*}[1]
  {\begin{inner-recall-star}[see \Cref{#1}]}
  {\end{inner-recall-star}}
  
\theoremstyle{spacedremark}
\newtheorem{rem}[thm]{Remark}
\newtheorem{ex}[thm]{Example}

\crefname{thm}{Theorem}{Theorems}
\crefname{prop}{Proposition}{Propositions}

\DeclareMathOperator*{\colim}{colim}

\newcommand{\dcolim}{\varinjlim}
\newcommand{\plim}{\varprojlim}

\DeclareMathAlphabet{\duc}{U}{dutchcal}{m}{n}
\SetMathAlphabet{\duc}{bold}{U}{dutchcal}{b}{n}
\DeclareFontFamily{U}{BOONDOX-calo}{\skewchar\font=45 }
\DeclareFontShape{U}{BOONDOX-calo}{m}{n}{<-> s*[1.0] BOONDOX-r-calo}{}
\DeclareFontShape{U}{BOONDOX-calo}{b}{n}{<-> s*[1.0] BOONDOX-b-calo}{}
\DeclareMathAlphabet{\mal}{U}{BOONDOX-calo}{m}{n}
\SetMathAlphabet{\mal}{bold}{U}{BOONDOX-calo}{b}{n}

\newcommand{\shrp}{{\mathord{\text{\tiny $\sharp$}}}}
\newcommand{\esc}{\EuScript}

\setlist[enumerate]{leftmargin=*, nosep}
\makeatletter

\renewcommand{\section}{\@startsection{section}{1}
  {\z@}
  {.7\linespacing\@plus\linespacing}
  {.5\linespacing}
  {\normalfont\large\bfseries}}
\renewcommand{\subsection}{\@startsection{subsection}{2}
  {\z@}
  {.5\linespacing\@plus.7\linespacing}
  {.5\linespacing}
  {\normalfont\bfseries}}
\makeatother
\newcommand{\paperdate}{\today}
\newcommand{\paperauthor}{Dipankar Maity} 

\makeatletter
\renewcommand{\maketitle}{
  \newpage
  \null
  \noindent{\LARGE \bfseries \@title \\[0.5em]}
  \noindent{\large \paperauthor \quad \raisebox{-0.15em}{\scalebox{1}{$\bullet$}} \quad \paperdate \par}
  \vskip 1.5em
  \thispagestyle{plain}
}
\makeatother

\renewenvironment{abstract}{
  \small
  \noindent\textbf{\large\abstractname:\ }
  \ignorespaces
}{
  \par\vspace{1.5em}
}

\title[Schematic Functorialities of \\Birational Motivic Homotopy Categories]{Schematic Functorialities of \vspace{.3cm}\\Birational Motivic Homotopy Categories}
\author{Dipankar Maity}
\makeatletter
\let\runtitle\shorttitle 
\makeatother

\usepackage{fancyhdr}

\begin{document}

\maketitle

\input{body}

\phantomsection
\bibliographystyle{amsalpha}	
\renewcommand\refname{Bibliography}
\bibliography{references}
	
\end{document}

%% file: body.tex
\begin{abstract}
We promote the $n$-birational motivic homotopy category assignment $S\mapsto \mathcal{H}^n(S)$ to a $Pr^L$-valued presheaf on $Corr(\mathrm{Sch})_{uglt,sm}$. As a consequence, the birational motivic homotopy category $\mathcal{H}^{b\mathbb{A}^1}(X)$ of a scheme $X$ with finitely many generic points decomposes as the cartesian product of the birational motivic homotopy categories of those generic points; in particular, for a variety $V$, $\mathcal{H}^{b\mathbb{A}^1}(V) \simeq \mathcal{H}^{b\mathbb{A}^1}(k(V))$. This implies that birational equivalences of schemes in $Sm_X$ can be detected via the birational contractibility of their generic fibers. Finally, we show that stably birational morphisms and purely transcendental field extensions induce fully faithful embeddings of birational motivic homotopy categories.
\end{abstract}
\tableofcontents

\section{Introduction}
\subsection{History of the subject and the aim of the paper}

There are several reasons to consider the birational localization of the motivic homotopy category. The initial and probably the most significant motivation for its development was the slice filtration (see [\cite{pelaez2014unstable}]). Another reason, evident from its identification with the birational homotopy category [\cite{0bat}, Corollary 2.2.12.(2)], is that it serves as the natural homotopical extension of classical birational geometry (see \S1.1. [\cite{0bat}]). (There is also a distinct reason: to incorporate topos-theoretic properties into the motivic homotopy category (see [\cite{inhot}]).)

Although most literature has studied the construction of the birational (motivic) homotopy category only over fields (see [\cite{choudhury2022characterisation}, [\cite{cisinski2023homotopy}]), one can easily extend its definition to general base schemes ([\cite{0bat}]). Namely, for a Qcqs scheme $S$, one defines the birational motivic homotopy category $\mathcal{H}^{b\mathbb{A}^1}(S)$ as the localization of the motivic homotopy category $\mathcal{H}^{\mathbb{A}^1}(S)$ at the set of all dense open immersions in $Sm_S$. 

Unfortunately, this unstable construction does not behave well under base change. The main issue is that the denseness of open immersions is not preserved when pulling back along arbitrary maps, particularly closed immersions (see the first paragraph of \S\ref{3.2} for an example). Additionally, this new localization is not locally cartesian, as shown in [\cite{0bat}, Counterexample 3.2.13]. As a result, the existence of an exceptional pullback along closed immersions is unclear.

Despite this drawback, we will prove in this paper that, for schemes having finitely many irreducible schemes, the associated birational motivic homotopy category can be assembled from its values at the generic points of the schemes (see \Cref{Hb of a var is Hb of its generic point}). 

To handle this elegantly, we introduce a more refined notion of $ n$-birational motives. Rather than localizing at all dense open immersions, we localize only at those whose closed complement has codimension greater than $n$. We call this the $n$-birational motivic category, denoted $\mathcal{H}^n(S)$. (Clearly, $\mathcal{H}^{b\mathbb{A}^1}(S)\equiv\mathcal{H}^0(S)$.)

This generalized notion first appeared in [\cite{pelaez2014unstable}] as the unstable counterpart to the stable slice category. The modern $\infty$-categorical avatars of these categories first appeared in the joint work of Bachmann and Elmanto [\cite{bachmann2019voevodsky}], where the authors used them to prove Voevodsky's slice conjectures [\cite{voevodsky2002open}]. Let us take a moment to explain why these are the correct unstable analogs of the stable notion of slices of motivic spectra. To simplify notation, we will fix a base Qcqs scheme $S$ for the subsequent discussion and functionally omit the symbol $S$ from the diagrams.

To set the stage, let us first recall the definition of motivic slices from [\cite{voevodsky2002possible}]. The full subcategory $\mathcal{SH}^{eff}(n)$ of $\mathcal{SH}_{\mathbb{P}^1}$, monoidally generated by $T^{\wedge n}$ and $\mathcal{SH}^{eff}$, is closed under arbitrary coproducts by construction. It is therefore, colocalizing due to compact generation and hence has exact colocalization functors, say given by $f_n:\mathcal{SH}_{\mathbb{P}^1}\to \mathcal{SH}^{eff}(n)$. The chain of inclusions $\mathcal{SH}^{eff}(n+1)\subset \mathcal{SH}^{eff}(n)$, upon colocalization, then induces a tower:
\begin{flalign}\label{slice tower}
    \cdots f_{n+1}\to f_n\to f_{n-1}\to \cdots 
\end{flalign}
called the slice tower [\S2, \cite{voevodsky2002open}]. The slices of this tower are called the motivic slices $s_n:=f_n/f_{n+1}$. 

One can perform a similar construction for the $S^1$-stable motivic homotopy category $\mathcal{SH}_{S^1}$. Namely, for every $n\geq 0$ consider the monoidal subcategory $\mathcal{SH}_{S^1}(n)$ of $\mathcal{SH}_{S^1}$ generated by $T^{\wedge n}$. In this case, one can similarly define a towoer of colization functors as in \Cref{slice tower} (still denoted by the same notation):
$$\cdots\to f_n\to f_{n-1}\to \cdots \to f_1\to f_0=1,$$ with slices denoted by $s_n=f_n/f_{n+1}$.

Due to the slice conjectures, the computation of positive slices in $\mathcal{SH}_{\mathbb{P}^1}$ can be computed under $\Omega^\infty_{\mathbb{G}}$ inside $\mathcal{SH}_{S^1}$ itself. It can be hoped that, in turn, the computation of slices in $\mathcal{SH}_{S^1}$ can be deduced from a similar construction in the unstable category. To do so effortlessly, one must, however, choose an appropriately designed unstable category. That is, for every $-_n=f_n, f_{0/n}, s_n$ one has to choose $\mathcal{H}^{-_n}$ such that $Stab(\mathcal{H}^{-_n})\simeq \mathcal{SH}^{-_n}_{S^1}$, where  $$\mathcal{SH}_{S^1}^{f_n}:=\mathcal{SH}_{S^1}(n),\text{ }\mathcal{SH}_{S^1}^{f_{0/n}}:=\mathcal{SH}_{S^1}/f_n\text{ or } \mathcal{SH}_{S^1}^{s_n}:=s_n\mathcal{SH}_{S^1}.$$ 

Let us now discuss the situation of a similar construction within the unstable setup. One quick observation is that, if one defines the subcategory $i_n:\mathcal{H}^{\mathbb{A}^1}_\bullet\wedge T^n\subset \mathcal{H}^{\mathbb{A}^1}_\bullet$ as the one monoidally generated by $T^{\wedge n}$, then it is closed under filtered colimits in $\mathcal{H}_\bullet^{\mathbb{A}^1}$. Indeed, if $$D:I\to \mathcal{H}^{\mathbb{A}^1}_\bullet \wedge T^n\xhookrightarrow{i_n}\mathcal{H}^{\mathbb{A}^1}_\bullet$$ is a filtered diagram, then, on the one hand, $\Sigma_T^n\Omega^n_Ti_nD\simeq D$ (by the triangle identity of the adjunction $\Sigma_T^n\dashv \Omega^n_T$), while, on the other hand, the endo-functor $\Sigma_T^n\Omega_T^n:\mathcal{H}^{\mathbb{A}^1}\to \mathcal{H}^{\mathbb{A}^1}$ preserves filtered colimits (indeed, $\Sigma_T^n$ is a left adjoint and thus preserves colimits, while the endo-functor $\Omega_T^n:\mathcal{H}^{\mathbb{A}^1}_\bullet\to \mathcal{H}^{\mathbb{A}^1}_\bullet$ preserves filtered colimits because $T$ ($\simeq \mathbb{P}^1$) is a compact object). Thus, $$\colim{i_nD}\simeq \colim\Sigma_T^n\Omega_T^n{i_nD}\simeq \Sigma_T^n\Omega_T^n\colim{i_nD}\in \mathcal{H}^{\mathbb{A}^1}_\bullet \wedge T^n.$$ That is, $i_n$ preserves filtered colimits. However, $i_n$ fails to preserve cofibers in general\footnote{For example, over a field $k$, consider the Hopf map $\eta : \mathbb{A}^2\setminus 0\to \mathbb{P}^1$. This has motivic cofiber identical to $\mathbb{P}^2$ [\cite{dugger2005motivic}, Proposition 2.13]. Even though $\mathbb{A}^2\setminus 0\simeq S^{3,2}\simeq T\wedge\mathbb{G}_m\in \mathcal{H}^{\mathbb{A}^1}_\bullet\wedge T\ni T\simeq \mathbb{P}^1$, we claim that $\mathbb{P}^2\notin \mathcal{H}^{\mathbb{A}^1}_\bullet\wedge T$. Suppose $\mathbb{P}^2\simeq^{mot} T\wedge E\simeq^{mot} \Sigma F$ for some spaces $E,F$. We find that $\mathcal{P}_{nis}(\Sigma F, \mathbb{G}_m)=*$, while $\mathcal{P}_{nis}(\mathbb{P}^2, \mathbb{G}_m)=k^\times$, and $\mathbb{G}_m$ is $\mathbb{A}^1$-local.}. This makes it impossible to ensure that it is colocalizing, unlike its stable counterpart. Hence, producing the correct unstable slice tower via right adjoints is not possible this way. What one usually does in such scenarios is to consider the colocalizing subcategory generated by the projection maps $T^n\to *$, rather than requiring $\mathcal{H}^{\mathbb{A}^1}_\bullet\wedge T^n$ to be colocalizing. The problem with this approach is that, in general, it is extremely hard to characterize colocalizing classes of morphisms. 

The problem is that, in the stable case, the aforementioned tower is the analog of the `Whitehead tower' of Tate connective covers (not the `Postnikov tower'). The Postnikov tower is actually the dual of this and can be identified (say in the $S^1$-case) with the layers of the above tower along $f_0$. In the unstable world, this is equivalent to studying the localizing subcategory induced by the projection $T^n\to *$, where the localizing class of morphisms is better handled. This approach has been used by many, most notably in [\cite{asok2023p}, Section 3], in greater generality of $S^{p,q}$ localization (though with a different goal of obtaining a Frudenthal $\mathbb{P}^1$-suspension theorem). It is evident that for any $p\geq q$ there is a canonical equivalence $Stab(L^{p,q}\mathcal{H}^{\mathbb{A}^1})\simeq \mathcal{SH}_{S^1}/f_q$. In fact, because of the equivalence $T^n\simeq \mathbb{A}^{n+1}\setminus 0$ [\cite{morel19991}, Example 3.2.20], one has $$Stab(L_{\mathbb{A}^{n+1}\setminus 0}\mathcal{H}^{\mathbb{A}^1})\simeq \mathcal{SH}_{S^1}/f_n.$$

No doubt, this model is the most canonical analog of the $S^1$ stable slice construction. However, deciding whether a morphism of schemes is an $S^{p,q}$ equivalence is a classic and quite hard problem. This is exactly where the birational construction comes into play: one makes the key observation that, over perfect fields, an $S^1$-spectrum is a $0$-slice (i.e., has trivial $1$-connective cover) if and only if it is local with respect to all dense open immersions [\cite{levine2008homotopy}, Corollary 4.1.5]. In fact, it follows that an $S^1$-spectrum has a trivial $(n+1)$-connective slice cover if and only if it is local with respect to all dense open immersions whose complements have codimension strictly greater than $n$ (call such morphisms $n$-dense, and $B_n$ their collection) [\cite{pelaez2013birational}]. That is, $\mathcal{SH}_{S^1}[B_n^{-1}]\simeq \mathcal{SH}_{S^1}/f_{n+1}$. 

One is then tempted to define the required unstable analogue of the stable slice categories $\mathcal{SH}_{S^1}/f_{n+1}$ as the localization of $\mathcal{H}^{\mathbb{A}^1}$ at the set of $n$-dense open immersions, denoted by $\mathcal{H}^{n}$, so as to obtain an equivalence $$Stab(\mathcal{H}^{n})\simeq \mathcal{SH}_{S^1}/f_{n+1},$$ but this time over perfect fields. These $\mathcal{H}^{n}$'s are exactly the unstable $n$-birational motivic homotopy categories we mentioned in the third paragraph. Note that, because of classical birational geometry, it is easier to detect morphisms in the corresponding birational saturation classes than in the $(\mathbb{A}^{n+1}\setminus0)$-localized saturation class.

While the slice categories appear to have good functoriality properties under base change, it is understandable that the birational construction will not behave as well as the original motivic homotopy construction. To elaborate, recall that the six-functor formalism for the stable motivic homotopy category is first developed in pieces at the unstable level and then glued at the stable level using the localization sequence. Let us now compare the unstable birational construction to the original motivic homotopy construction from this functorial viewpoint.

As a first example, recall that the motivic homotopy construction $\mathcal{H}^{\mathbb{A}^1}$ is readily a presheaf on $\mathrm{Sch}$ taking values in $Pr^L$. However, as we have mentioned earlier, because $n$-dense maps are not stable under arbitrary pullbacks, the schematic functoriality of $\mathcal{H}^{n}$ (when taking values in $Pr^L$) on all morphisms of $\mathrm{Sch}$ is not immediate. Morphisms in $\mathrm{Sch}$ along which $\mathcal{H}^n$ has this functorial behavior will be those that can lift dense open immersions. In this paper, we will identify one such class, denoted $UGLT$ (see \Cref{uglt defn}).

Another key functorial property of the unstable motivic construction is that the presheaf $\mathcal{H}^{\mathbb{A}^1}: \mathrm{Sch}\to Cat_\infty$ satisfies Nisnevich descent [\cite{hoyois2017six}, Proposition 4.8]. In this paper, we will prove that the same holds for the unstable $n$-birational motivic homotopy construction when restricted to an appropriate domain that ensures functoriality.

It is likewise predictable that some of our results will be weaker than their $\mathbb{A}^1$ motivic counterparts. A notable example is the idea of essentially smooth base change, which, in the ordinary motivic case, says that $\mathcal{H}^{\mathbb{A}^1}$ is continuous with respect to pro-systems with affine transition maps. This turns out to be a very powerful tool in motivic homotopy for reasons worth mentioning. First, the Noetherian decomposition of qcqs schemes suggests that it suffices to study $\mathcal{H}^{\mathbb{A}^1}$ over Noetherian schemes alone, as was primarily done by the pioneers themselves [\cite{morel19991}]. Another important consequence is that, when working over fields, it (mostly) suffices to assume that the field is perfect (since every field is a filtered colimit of affine schemes over a perfect field). Again, for such a continuity result to work birationally, the pro-system must preserve dense immersions under base change. Unfortunately, neither of the examples of pro-systems mentioned earlier is of this type. Nonetheless, we will prove an appropriate statement for $\mathcal{H}^n$ and see that, even with this limited continuity, it yields meaningful results for the $0$-th category $\mathcal{H}^0\equiv\mathcal{H}^{b\mathbb{A}^1}= \mathcal{H}^b$ [\cite{0bat}].

The intent of this paper can thus be summarized as follows: 
\begin{enumerate}
    \item to determine the correct functoriality of $\mathcal{H}^n$;
    \item to prove the appropriate relations among those functors;
\end{enumerate}

Let us outline some implications of these for the category of our primary interest, the birational motivic homotopy category $\mathcal{H}^0\equiv\mathcal{H}^{b\mathbb{A}^1}$ [\cite{0bat}]. We will show that $\mathcal{H}^{b\mathbb{A}^1}$ is a birational invariant (induces an equivalence along birational maps), $\mathbb{A}^1$-invariant (induces a fully faithful embedding along $\mathbb{A}^1$-projections), and deformation invariant (induces an equivalence along nil immersions) Nisnevich sheaf of $\infty$-categories satisfying flat continuity. In particular, we demonstrate that the birational homotopy category of a scheme (having finitely many generic points) is a direct product of the birational homotopy categories of its generic points. Using these insights, we characterize birational motivic equivalences of such well-behaved smooth schemes over a base.

\subsection{Notations and terminologies}
Throughout this paper, all schemes are Qcqs. By a morphism of schemes, we shall always mean a separated morphism. Smooth morphisms will be assumed to be quasicompact and hence of finite presentation. By a birational morphism, we mean a span $X\hookleftarrow
 U \hookrightarrow Y$ of dense open immersions. $X/S$ is said to be rational if it is birational to $\mathbb{A}^n_S$ (over $S$). $\mathrm{Sch}$ denotes the category of all qcqs schemes. $\mathrm{Noet}$ and $\mathrm{Nuc}$ denote the full subcategories of $\mathrm{Sch}$ consisting of Noetherian, respectively Noetherian and universally catenary schemes. 

We shall freely use the language of $\infty$-categories without referring to any particular model for the theory. The reader may freely use their preferred model. A standard reference is [\cite{lurie2009higher}], where the author models these on Kan complexes as models for $\infty$-groupoids. With a choice of Grothendieck universes, the terms small, large, and very large will be treated as usual. $Pr^L$ shall denote the (very large) $\infty$-category of large presentable $\infty$-categories, while $Cat_{\infty}$ will denote the (very large) $\infty$-category of all large $\infty$-categories. $Pr^{L,\otimes}$ will denote the standard monoidal structure on $Pr^{L,\otimes}$ [\cite{lurie2017higher}, \S6.3]. We will use the notation $\mathrm{CAlg}(Pr^L)$ for the full subcategory of commutative algebra objects of $Pr^L$.

For an $\infty$-category $\mathcal{C}$, we denote by $\mathcal{P}(\mathcal{C})$ the category of presheaves of spaces on $\mathcal{C}$ and by $h$ the Yoneda embedding $\mathcal{C}\to \mathcal{P}(\mathcal{C})$. If $\mathcal{C}$ has coproducts, we denote by $\mathcal{P}_\Sigma(\mathcal{C})$ the full subcategory of $\mathcal{P}(\mathcal{C})$ consisting of presheaves that turn finite coproducts into products. For a composition-closed class $M\subset Mor(\mathcal{C})$, we denote by $\mathcal{C}^M$ the wide subcategory spanned by morphisms in $M$. For an object $S\in \mathcal{C}$, we denote by $\mathrm{M}_S$ the full subcategory of $\mathcal{C}_{/S}$ spanned by $M$-morphisms over $S$. Finally, we denote by $\mathrm{M}^M_S$ the category $\mathcal{C}^M_{/S}$.

Given a Qcqs scheme $S$, we denote by $Sm_S$ the full subcategory of $\mathrm{Sch}_{/S}$ consisting of schemes over $S$ that are relatively smooth and finitely presented. While one might consider the larger category $SM_S$ of (not necessarily finitely presented) smooth schemes over $S$, it turns out that Nisnevich locally this is irrelevant, in that one has an equivalence $\mathcal{P}_{nis}(Sm_S)\simeq \mathcal{P}_{nis}(Sm_S^{fp})$ [\cite{hoyois2015quadratic}, Appendix C, Proposition C.5 (3)]. Because we always work Nisnevich locally, we shall work with the (essentially) small category $Sm_S$. By $\mathcal{P}(S)$, we shall mean the $\infty$-category of spaces on $Sm_S$, i.e., $\mathcal{P}(S):=Fun(Sm_S^{op},Spc)$. For a smooth scheme $X/S$, we denote by $h_SX$ the corresponding representable presheaf.

We will use the notation $\mathcal{H}^{\mathbb{A}^1}(S)$ for the full subcategory of $\mathcal{P}(S)$ consisting of $\mathbb{A}^1$-local (i.e., local with respect to the set of projection maps $\{X\times \mathbb{A}^1\to X\}_{X\in Sm_S}$) Nisnevich sheaves of spaces. Equivalently, $\mathcal{H}^{\mathbb{A}^1}(S)=L_{\mathbb{A}^1}L_{nis}\mathcal{P}(S)$. The associated localization functor is denoted by $L_{mot}$. Morphisms in the associated saturated class will be called Motivic equivalences. 

\subsection{Key results and summary}
Given a (Qcqs) scheme $S$, we define the $n$-Birational Motivic homotopy category $\mathcal{H}^{n}(S)$ as the localization of the motivic homotopy category $\mathcal{H}^{\mathbb{A}^1}(S)$ at the set of dense open subschemes whose complements have codimension strictly greater than $n$ (we call such morphisms $n$-dense open immersions). Since $Sm_S$ is essentially small, this localization is accessible [\cite{lurie2009higher}] and is given by an accessible localization functor, which we denote by $L^n$. 

Let us recall that, although the $\mathbb{A}^1$ motivic localization functor is not entirely left exact, it has several good properties that make ordinary motivic homotopy theory at least manageable. For example, $L_{mot}$ is both cartesian and locally cartesian. These two properties hold because the $\mathbb{A}^1$ projection maps are stable under arbitrary pullbacks (and hence under products with smooth schemes). The situation with the $n$-Birational localization is a little disappointing, in that, since dense open immersions are not stable under arbitrary pullbacks, it is not clear whether the $n$-Birational localization functor is locally cartesian. However, it is cartesian for smooth morphisms. In fact, these problems regarding the failure of pullback stability of dense open immersions under arbitrary pullbacks cause another serious drawback: the pullback-pushforward adjunction does not descend to the level of the $n$-birational homotopy category. In fact, these new localizations are quite strange from a functorial perspective.

To elaborate on this, let us try to extend the definition of the $n$-Birational motivic homotopy category construction to a `presheaf' $\mathcal{H}^{n}(-)^*$ of $\infty$-categories on $\mathrm{Sch}$, by assigning $f\mapsto L_{n}f^*b^n$, where $b^n$ denotes the inclusion $\mathcal{H}^n\subset \mathcal{P}_{nis}$. The problem with this construction is that, for it to preserve compositions, $f^*$ must preserve $n$-Birational equivalences. Unfortunately, this does not hold for all morphisms. A good class of morphisms that can do this job is the collection of those that universally lift generalizations [\Cref{ugltpull}]. This yields the presheaf $\mathcal{H}^{n}(-)^*:Sch^{uglt,op}\to Pr^L$. Obviously, the smooth extension functor $p_\shrp$, induced by a smooth map $p$, trivially preserves $n$-Birational equivalences, giving rise to the co-presheaf $\mathcal{H}^n(-)_\shrp: Sch^{sm}\to Pr^L$ mapping $p\mapsto L^np_\shrp$. This is the broad content of \S3.1 and can be summarized as follows:
\begin{recall*}{full functoriality}
   There is a presheaf $$\mathcal{H}^n(-): Corr(\mathrm{Sch})_{sm,uglt}^{op}\to Pr^L$$ whose restriction to the vertical maps recovers $\mathcal{H}^n(-)_\shrp: \mathrm{Sch}^{sm}\to Pr^L$, while the restriction to the horizontal maps recovers $\mathcal{H}^{n}(-)^*: \mathrm{Sch}^{uglt,op}_{}\to Pr^L$.
\end{recall*}

The basic problem with full functoriality of $\mathcal{H}^n$ is largely tied to non-flat closed immersions. Specifically, when pulling back a dense open to a closed subset, the intersection may become empty, say by missing some generic points of the closed subscheme, if not all of them. This matter is delicate and requires careful treatment. Instead of asking for stability of arbitrary dense open immersions under closed pullback, we ask for a dimension-dependent version of the same, and show in \Cref{closed rel d push} that for Noetherian Universally Catenary schemes, the graded assignment $\bigsqcup \mathcal{H}^n(-): Ob(\mathrm{Nuc})\to Gr_\mathbb{N}Cat_\infty$ extends to a \textit{codimension}-graded presheaf $$\mathcal{H}^\mathbb{N}(-)^*:\mathrm{Nuc}^{equi-closed,op}\to Pr^L\text{ given by }i\mapsto L^{n}i^*: \mathcal{H}^{{n+mcod(i)}}(S) \rightarrow \mathcal{H}^{n}(Z)$$

The first surprising yet naively expected result in this section is the following:

    \begin{recall*}{dense equiv}
       For a dense open immersion $j:V\xhookrightarrow{}S$, the smooth extension map $$j_{0\shrp}:=L_{0}j_\shrp :\mathcal{H}^0(V)\to \mathcal{H}^0(S)$$ is an equivalence, and hence so are $j_*\simeq L_{0}j_*$ and $j^*\simeq L_{0}j^*$.
    \end{recall*}
We use the term `surprise' to emphasize that this is not a general phenomenon. To elaborate, suppose we study the $P$-invariant theory inside a suitable topos $\mathcal{P}_\tau(Sm_{\mbox{-}})$ (conceptually, this is obtained by localizing at a class of morphisms in $Sm_{\mbox{-}}$ with property $P$). In general, there is no reason to believe that the presheaf $\mathcal{H}_{\shrp}^{\{P\}}: \mathrm{Sch}^{sm}\to Pr^L$ is itself $P$-invariant. The $\mathbb{A}^1$ homotopy category is an immediate example (see the paragraph above \Cref{motivic category is a1 faithful}). 

On this line, one might expect that the philosophy of the main observation in [\cite{0bat}], that "`whatever' is birational local is $\mathbb{A}^1$ local", should apply immediately to this scenario and yield that the birational homotopy presheaf is itself $\mathbb{A}^1$-invariant, in the strongest sense of inducing a categorical equivalence against $\mathbb{A}^1$ projections. That is, the dense local invariant presheaf $\mathcal{H}^{0}(-)^*\equiv \mathcal{H}^{b\mathbb{A}^1}(-)^*$, with values in $Pr^L$ as above, is also $\mathbb{A}^1$-invariant (unlike the $\mathbb{A}^1$ homotopy category, which induces only a fully faithful embedding under the $\mathbb{A}^1$ projections). The first problem is that if we work with the presheaf $$\mathcal{H}^0(-)^*\equiv \mathcal{H}^{b\mathbb{A}^1}(-)^*: \mathrm{Sch}^{uglt,op}\to Pr^L,$$ we miss an important ingredient of the original argument in [\cite{cisinski2023homotopy}]. Namely, their proof of the $\mathbb{A}^1$-invariance of a birational presheaf involves certain blow-ups that are not necessarily glt (or smooth). At the end of \S4.3, we discuss in detail why the arguments in [\cite{cisinski2023homotopy}, Theorem 1.7] are not applicable to the birational presheaf $\mathcal{H}^0(-)^*: \mathrm{Sch}^{uglt,op}\to Pr^L$. However, we do have:

\begin{inner-recall-star}[Homotopy invariance, see \Cref{homotopy}]
    For a scheme $S$, the pullback maps $p^{n*}: \mathcal{H}^{n}(S)\to \mathcal{H}^{n}(\mathbb{A}^m_S)$ along the projection $p:\mathbb{A}^m_S\to S$ are fully faithful.
\end{inner-recall-star}

Let us also recall another surprising result about birational presheaves. It states that additive presheaves that are dense local are automatically Nisnevich local [\cite{0bat}, Theorem 2.1.3]. It is therefore natural to expect that our $\mathcal{H}^{0}(-)^*$ will satisfy Nisnevich descent. For this, however, it must belong to $\mathcal{P}_\Sigma (-,Pr^L)$ (see [\cite{0bat}, Theorem 2.1.3]; or \Cref{nis is bir} in this paper). That is, it must transform disjoint unions of schemes into products of $\infty$-categories. While it is easy to prove the last claim and thus settle the case of Nisnevich descent for $\mathcal{H}^0$, it turns out that the descent property can be proved by hand simultaneously for all $n$ and all schemes. The point is that the Nisnevich topos is itself Nisnevich local [\cite{hoyois2017six}, Proposition 4.8], so it remains to observe that a collection of compatible dense open immersions given over a Nisnevich cover descends `uniquely' to the total space of the cover:
 \begin{inner-recall-star}[Nisnevich descent, see \Cref{bir detect}]
  The presheaf $$
\mathcal{H}^{n}(-)^*: \mathrm{Sch}^{uglt,op}\to Pr^L$$ is a Nisnevich sheaf.       
    \end{inner-recall-star}
There are a few more continuity properties enjoyed by our $n$-Birational motivic homotopy construction. For example, we show that,

\begin{inner-recall-star}[{Deformation-invariant}, see \Cref{deformation invariant}]
The presheaf $$\mathcal{H}^{n}(-)^*: Sch^{uglt,op}\to Pr^L$$ sends thickenings of schemes to equivalences of $\infty$-categories. Specifically, if $Z\xhookrightarrow{i} S$ is a nil immersion, then $i_*:  \mathcal{H}^{n}(Z) \to \mathcal{H}^{n}(S)$ is an equivalence of $\infty$-categories. In particular, the canonical map $$\mathcal{H}^{n}(X^{red})\to \mathcal{H}^{n}(X)$$ given by pullback along the reduction $X^{red}\subset X$ is an equivalence.
    \end{inner-recall-star}
As expected, we obtain a flat affine continuity result for $\mathcal{H}^0$ (see \Cref{continuity of Hb}). Combined with the above results, we then conclude that,
  \begin{inner-recall-star}[Generic decomposition, see \Cref{Hb of a var is Hb of its generic point}]
         Let $X$ be a Qcqs scheme with finitely many generic points. Then the canonical map $$\mathcal{H}^0(X)\to\underset{\eta\in X^{(0)}}{\prod}\mathcal{H}^0(k(\eta))$$ is an equivalence. In particular, for a variety $V$, $\mathcal{H}^0(V)\simeq\mathcal{H}^0(K(V))$.
     \end{inner-recall-star}
As an immediate corollary, we obtain a fiberwise criterion for smooth $0$-birational motivic equivalences of schemes:
\begin{inner-recall-star}[Fibersise criteria for birational equivalences, see \Cref{fibersise criteria}]
    Suppose $S$ is a Qcqs scheme having finitely many irreducible components and $f:X\to Y$ is a morphism in $Sm_S$ such that the generic fibers of $f$ are birationally contractible, i.e., for every generic point $\eta\in Y^{(0)}$ the scheme $f_\eta:X_{k(\eta)}\to k(\eta)$ is contractible in $\mathcal{H}^0(k(\eta))$. Then $f$ is a birational equivalence in $\mathcal{P}(S)$.
\end{inner-recall-star} 
Additionally, the generic decomposition (\Cref{Hb of a var is Hb of its generic point}) yields:
\begin{inner-recall-star}[Rational invariance, see \Cref{rational invariance}]
Let $S$ be a scheme and assume that $X$ is a rational scheme over $S$ (in other words, $X\to S$ is a stably birational isomorphism). Then there is a canonical embedding $\mathcal{H}^0(S)\hookrightarrow \mathcal{H}^0(X)$.
    \end{inner-recall-star}
    
\begin{inner-recall-star}[Pure transcendence invariance, see \Cref{trans}]
   Let $k$ be a field. Then the embedding $k\subset k(t_1,\cdots,t_n) $ induces an embedding $\mathcal{H}^0(k)\subset \mathcal{H}^0(k(t_1,\cdots,t_n))$.
\end{inner-recall-star}
 
We end this paper in \S\ref{4.5} with a proof of a partial version of the unstable slice conjecture:

\begin{recall*}{coneavue adjunction}
     For every $n,d\geq 0$
      $$\Omega^n _{\mathbb{p}^1}\mathcal{H}^{{n+d}}_\bullet(S) \subset\mathcal{H}^{{d}}_\bullet(S).$$
  \end{recall*}
This reduces the unstable version of the slice conjecture (i.e., $\Omega_{\mathbb{P}^1}L^{n+1}\simeq L^n\Omega_{\mathbb{P}^1}$) to showing that:
\begin{conj}[see \Cref{conj}]
For a Qcqs scheme $S$, the canonical map $L^n\Omega_{\mathbb{P}^1}\to \Omega_{\mathbb{P}^1}L^{n+1}$ induced by \Cref{coneavue adjunction}, is an $n$-birational equivalence (on simply connected motivic spaces; see \Cref{Spq slice conj} for the justification of the condition).
\end{conj}

\textit{\textbf{What we do not do in this paper: }}
\begin{enumerate}
    \item {We will only set up the unstable theory here; the stable theory, as one always does in a paper on the six-functor formalism, could follow with a little amount of functorial work. In our case, however, this might require more correction to achieve a full six-functor formalism, and I postpone it to my thesis.} 
    \item The other issue is that the $n$-birational motivic homotopy construction is not locally cartesian (see [\cite{0bat}, Counterexample 3.2.13] for $n=0$). Consequently, many motivic functoriality results that use this property fail to descend to the $n$-birational level. For example, we do not have an exceptional pushforward functor for closed immersions.
\end{enumerate}
\section{Recollections on \texorpdfstring{$n$}{}-Birational Motivic homotopy theory}
\subsection{\texorpdfstring{$n$}{}-Birational geometry}
Let $X$ be a scheme. If $x,y\in X$ are points, then $y$ is a generalization of $x$, written $x\leftsquigarrow y$, if $x\neq y$ and $x\in \bar{y}$. A chain of generalizations starting at $x$ (or, equivalently, a generic chain of $x$) is a sequence $$x_0\leftsquigarrow x_1\leftsquigarrow x_2\leftsquigarrow \cdots \leftsquigarrow x_n$$ of consecutive generalizations with $x_0=x$. For such a chain, $n$ is called its length.
 
When $x\in X$, we denote by $cod_Xx$ the supremum of the lengths of generalizing chains starting at $x$. In other words,  $cod_Xx:=codim(\bar{x},X)$ (see [\cite[\href{https://stacks.math.columbia.edu/tag/02I3}{Definition 02I3}]{stacks-project}] for notation). From [\cite[\href{https://stacks.math.columbia.edu/tag/02IZ}{Lemma 02IZ}]{stacks-project}] it follows that $cod_Xx=dim(\mathcal{O}_{X,x})$, where the right-hand side denotes the Krull dimension of the local ring $\mathcal{O}_{X,x}$. Given a natural number $d$, $X^{(d)}$ will stand for the set of all codimension $d$ points of $X$.

More generally, for a closed subscheme $i:Z\subset X$, we denote $$cod{(i)}: =\underset{\eta_Z\in Z^{(0)}}{min} \{cod_X(\eta_Z)\}.$$ We will also denote it by $cod_X(Z)$ when the immersion $i$ is clear from the context. We shall also use the notation $$mcod{(i)}: =\underset{\eta_Z\in Z^{(0)}}{sup} \{cod_X(\eta_Z)\}$$ and $\delta (i)=mcod(i)-cod(i)$. We say that $i$ is equidimensional if $\delta(i)=0$.

Now we come to the most important geometric notion of this paper:

\begin{defn}
An open immersion $j: U\hookrightarrow X$ of schemes is said to be $n$-dense if $cod_X(X\setminus j U) >n$. 
\end{defn}

In other words, an open immersion $U\hookrightarrow X$ is $n$-dense if every generic point (and hence every other point) of $X\setminus jU$ has codimension at least $n+1$ in $X$, or, equivalently, if every codimension $\leq n$ point of $X$ belongs to $U$. In [\cite{bachmann2019voevodsky}, Definition 2.2], this notion is called an `$n$-birational morphism'. We call these $n$-dense to emphasize that we are not yet dealing with a span.

Since a generalization need not lift along all morphisms, $n$-dense open immersions are not stable under arbitrary pullbacks. In fact, lifting generalizations seems to be sufficient for this purpose:
\begin{lem}
    Suppose $f: S\to T$ lifts generalizations [\cite[\href{https://stacks.math.columbia.edu/tag/0063}{Definition 0063}]{stacks-project}]. Let $U\subset T$ be an $n$-dense open immersion. Then $f^{-1}U\subset S$ is also $n$-dense.
\end{lem}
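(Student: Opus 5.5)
The plan is to prove the equivalent pointwise statement: every point of $S\setminus f^{-1}U$ has codimension at least $n+1$ in $S$. By the definition of $cod_S$ of a closed subset as a minimum over its generic points, this is exactly $n$-density of $f^{-1}U\subset S$. Note that $S\setminus f^{-1}U=f^{-1}(T\setminus U)$ is closed, and that if it is empty there is nothing to prove.

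\emph{Step 1: bounding codimension on $T$.} Put $Z=T\setminus U$. I first check that every point $t\in Z$ satisfies $cod_T t\geq n+1$, not merely the generic points of $Z$. By Zorn's lemma, the irreducible closed set $\bar t\subset Z$ lies in some irreducible component of $Z$, with generic point $\eta_Z$. Then either $t=\eta_Z$, or $\eta_Z$ is a generalization of $t$. In the second case, prepending $t$ to any generalizing chain starting at $\eta_Z$ gives a longer chain starting at $t$. Hence $cod_T t\geq cod_T\eta_Z\geq cod_T(Z)>n$.

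\emph{Step 2: lifting a chain.} Let $s\in S\setminus f^{-1}U$ and $t=f(s)\in Z$. By Step 1 and the definition of $cod_T t$ as a supremum, there is a chain of generalizations $t=t_0\leftsquigarrow t_1\leftsquigarrow\cdots\leftsquigarrow t_{n+1}$ in $T$. I lift it inductively. Given $s_i$ with $f(s_i)=t_i$, the relation $t_i\in\overline{t_{i+1}}$ and the fact that $f$ lifts generalizations [\cite[\href{https://stacks.math.columbia.edu/tag/0063}{Definition 0063}]{stacks-project}] give a point $s_{i+1}$ with $s_i\in\overline{s_{i+1}}$ and $f(s_{i+1})=t_{i+1}$. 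Moreover $s_{i+1}\neq s_i$, since their images $t_{i+1}\neq t_i$ are distinct. So $s=s_0\leftsquigarrow s_1\leftsquigarrow\cdots\leftsquigarrow s_{n+1}$ is a generalizing chain of length $n+1$ starting at $s$.

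\emph{Step 3: conclusion.} Step 2 gives $cod_S s\geq n+1$ for every $s\in S\setminus f^{-1}U$, and in particular for its generic points. Therefore $cod_S(S\setminus f^{-1}U)>n$, which is the claim.

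There is no serious obstacle. The only points needing care are the following.
\begin{enumerate}
\item The passage from the codimension of the generic points of $Z$ to that of arbitrary points of $Z$, handled in Step 1.
\item The distinctness of consecutive lifted points, which is needed for the lifted sequence to count as a chain of generalizations.
\item The case $cod_T t=\infty$. This is harmless, because we only ever need a chain of length exactly $n+1$.
\end{enumerate}
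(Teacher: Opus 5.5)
Your proof is correct and is essentially the paper's argument: for $s\notin f^{-1}U$, the point $f(s)$ lies in $T\setminus U$ and so has codimension $\geq n+1$, and a generalizing chain of that length starting at $f(s)$ lifts to one starting at $s$. Your Steps 1 and 2 make explicit two points the paper passes over quickly: non-generic points of $T\setminus U$ also have codimension $>n$, and one should lift a chain of length exactly $n+1$ rather than an arbitrary ``maximal chain'', which at a non-equidimensional point can be shorter than $cod_T f(s)$.
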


\begin{proof}
To see this, let $s\in S-f^{-1}(U)$. This implies that $f(s)\in T-U$, and thus has codimension at least $n+1$. Any maximal chain of generalizations starting at $f(s)$ must therefore have length at least $n+1$. Since $f$ lifts generalizations, we may lift this chain to obtain a chain of generalizations starting at $s$ and thus of length at least $n+1$. This implies that $cod_S(S-f^{-1}(U))\geq n+1$. 
\end{proof}

\subsection{\texorpdfstring{$n$}{}-Birational Motivic spaces}
In this section, we will recall the construction of $n$-Birational motivic homotopy theory from [\cite{bachmann2019voevodsky}]. To set the stage, we will first recall the construction of the ordinary motivic homotopy category $\mathcal{H}^{\mathbb{A}^1}(S)$ for an arbitrary (Qcqs) scheme $S$. 

We denote by $Sm_S$ the full subcategory of $\mathrm{Sch}_S$ consisting of schemes over $S$ with quasi-compact, separated, and smooth structure maps. Since $S$ is Qcqs and the objects of $Sm_S$ are qcqs morphisms, $Sm_S$ is an (essentially) small (1-)category. We denote by $\mathcal{P}(S)$ the $\infty$-category of presheaves of spaces on $Sm_S$, i.e., $\mathcal{P}(S):=Fun(Sm_S^{op}, Spc)$. We call these $S$-spaces.

A morphism of schemes $X\to Y$ in $Sm_S$ is a Nisnevich cover if it is an \'etale cover (i.e., a surjective \'etale map) and is completely decomposed (i.e., induces at least one isomorphism of residue fields) at each point of $Y$. A space $\esc{X}\in \mathcal{P}(S)$ is a Nisnevich sheaf (or Nisnevich local) if it satisfies descent with respect to every Nisnevich cover. We denote the full subcategory of $\mathcal{P}(S)$ consisting of Nisnevich sheaves by $\mathcal{P}_{nis}(S)$. Since $Sm_S$ is (essentially) small, it follows that $\mathcal{P}_{nis}(S)\subset \mathcal{P}(S)$ is a left-exact, accessible localization, which we denote by $L_{nis}$, called the Nisnevich sheafification functor. A morphism of $\mathcal{P}(S)$ inverted by $L_{nis}$ is called a Nisnevich local equivalence, and we denote the collection of such morphisms by $nis(S)$. We denote this new $\infty$-category by $\mathcal{H}^{\mathbb{A}^1}(S)$.

Next, we call an object $\esc{X}\in \mathcal{P}(S)$ to be $\mathbb{A}^1$-local if it is local with respect to the class $\mathbb{A}(S):=\{\mathbb{A}^1_S\times X\to X\}$ of $\mathbb{A}^1$-projection maps. The motivic homotopy category is the full subcategory of the $\infty$-topos of Nisnevich sheaves consisting of $\mathbb{A}^1$-local objects. Its objects, i.e., those presheaves of spaces on $Sm_S$ that are both Nisnevich-local and $\mathbb{A}^1$-local, are called motivic spaces. 

The definition of the $n$-Birational Motivic homotopy categories is similar. For each $n\in \mathbb{N}$, we let $$B_n(S):=\{U\xhookrightarrow{i} X \in Mor(Sm_S)\mid cod_X(X-iU)\geq n+1\}$$ (that is, $B_n(S)$ is the set of $n$-dense open immersions contained in $Sm_S$). We say that a Motivic space $\esc{X}\in\mathcal{H}^{\mathbb{A}^1}(S)$ is an $n$-birational motivic space if it is, moreover, local with respect to the set $B_n(S)$. In other words:
\begin{defn}
    An $S$-space $\esc{X}$ is called an $n$-birational motivic space if it is a Nisnevich sheaf and is local with respect to the class $\mathbb{A}(S)\cup B_n(S)$ of $\mathbb{A}^1$-projections and $n$-dense open immersions in $Sm_S$. We denote by $\mathcal{H} ^{n}(S)$ the full subcategory of $\mathcal{P}(S)$ consisting of $n$-birational motivic spaces.
\end{defn}

By definition, we have a natural chain of full subcategories $$\mathcal{H}^n(S)\subset \mathcal{H}^{\mathbb{A}^1}(S)\subset \mathcal{P}_{nis}(S)\subset \mathcal{P}(S).$$ Since $\mathbb{A}(S)\cup B_n(S)\subset Mor(Sm_S)$ and $Sm_S$ is (essentially) small, it follows that both $\mathbb{A}(S)$ and $B_n(S)$ are (essentially) small. Thus, by [\cite{lurie2009higher}, Proposition 5.5.4.15], each possible composition in the chain of inclusions $$\mathcal{H}^n(S)\subset \mathcal{H}^{\mathbb{A}^1}(S)\subset \mathcal{P}_{nis}(S)\subset \mathcal{P}(S)$$ is an accessible localization. We denote the localization functor for $\mathcal{H}^{\mathbb{A}^1}(S)\subset \mathcal{P}(S)$ by $L_{mot}$ and call a morphism in $\mathcal{P}(S)$ inverted by $L_{mot}$ a motivic equivalence, and denote the collection of such morphisms by $mot(S)$. Similarly, we denote the localization functors for both the inclusions $\mathcal{H}^n(S)\subset \mathcal{H}^{\mathbb{A}^1}(S)$ and $\mathcal{H}^n(S)\subset \mathcal{P}(S)$ by $L^n_S$ and call a morphism in $\mathcal{P}(S)$ inverted by $L^n_S$ an $n$-Birational motivic equivalence, and denote the collection of such morphisms by $bir_n(S)$. 
\begin{rem}
    Since an open immersion is isomorphic to the open subscheme of its image, the \textit{set} of open subschemes forms a skeleton of $B_0(S)$ and generates the same localizing subcategory. We therefore treat "dense open immersion" and "dense open subscheme" interchangeably in the subsequent results.
\end{rem}

It follows from [\cite{lurie2009higher}, Proposition 5.5.4.15] that, in each case, the classes of morphisms inverted by the localization functors are precisely the saturation classes generated by (i.e., the smallest class closed under pushouts, two-out-of-three, and small colimits [\cite{lurie2009higher}, Definition 5.5.4.5] and containing) the class of morphisms being localized. For example, $mot(S)$ is the saturation class generated by Nisnevich equivalences and the set $\mathbb{A}(S)$, while $bir_n(S)$ is generated by motivic equivalences and $B_n(S)$, or equivalently by Nisnevich equivalences and $\mathbb{A}(S)\cup B_n(S)$.

In fact, both subcategories, $L_{\mathbb{A}^1}\mathcal{P}(S)$ and $L_{dense}^n\mathcal{P}(S)$, of the $\infty$-topos $\mathcal{P}(S)$ (consisting of $\mathbb{A}^1$-local and $B_n(S)$-local objects, respectively) are accessible localizations. We denote the corresponding localization functors by $L_{\mathbb{A}^1}$ and $L_{dense}^n$. Elements of the naive saturation classes generated by $\mathbb{A}(S)$ and $B_n(S)$ will be called homotopy equivalences and $n$-dense equivalences, respectively. It follows that $mot(S)$ is the saturation class generated by $nis(S)$ and homotopy equivalences; similarly, $bir_n(S)$ is the saturation class generated by $mot(S)$ and $n$-dense equivalences. We will drop $S$ from all notations when either the base scheme is clear from the context or we are working functionally over all schemes.

By the same proposition [\cite{lurie2009higher}, Proposition 5.5.4.15], both $\mathcal{H}^{n}(S)$ and $ \mathcal{H}^{\mathbb{A^1}}(S)$ are presentable $\infty$-categories. The candidate $\mathcal{H}^{\mathbb{A^1}}(S)$ first appeared in [\cite{morel19991}] in model-categorical terms and has since been studied extensively by various authors. The other candidate $\mathcal{H}^n(S)$ first appeared in modern $\infty$-categorical language in [\cite{bachmann2019voevodsky}], where it was defined for locally noetherian schemes and denoted $L^n_{bir}\mathcal{H}^{\mathbb{A}^1}(S)$, with the localization functors denoted $L^n_{bir}$. Since, in this paper we will always be working with birational localizations, we have removed $bir$ from the subscript to simplify notation. When $n=0$, the candidate $\mathcal{H}^0(S)$ is identical to $\mathcal{H}^{b\mathbb{A}^1}(S)$ of [\cite{0bat}], where it is shown to coincide with the birational homotopy category $\mathcal{H}^b(S):=L_{dense}\mathcal{P}_\Sigma(S)$ (see Corollary 2.2.10 (2) of \textit{loc.cit.}) (we will reprove this claim here using somewhat different arguments, see \Cref{Motivic equivalences are Birational equivalences}). Finally (to complete the analogy), we recognize $\mathcal{H}^{{-1}}(S)\simeq \{\underline{\emptyset}\to*\}=:\mathcal{I}$, the interval category, and $L^{-1}\simeq  \tau_{\leq -1}^{nis}$.

Because the contractible $S$-space is trivially local, it is also a terminal object of $\mathcal{H}^n(S)$. The pointed category $\mathcal{H}_\bullet ^n(S)$ is the slice category $\mathcal{H}^n(S)_{*/}$. By [\cite{lurie2009higher}, 5.5.2.10], this is also a presentable $\infty$-category. More specifically, the forgetful functor $\mathcal{H}_\bullet^n(S)\to \mathcal{H}^n(S)$ has a left adjoint, characterized by preservation of colimits and by $h_S(X)\mapsto h_S(X)_+$, where this last term is the space $h_S(X\sqcup S)$ given by the obvious inclusion $S\to X\sqcup S$. We denote this left adjoint, which adds base points, by $(-)_\bullet$. By the general machinery of $Pr^{L,\otimes }$, we have $\mathcal{H}^n_\bullet(S)\simeq \mathcal{H}^n(S)\otimes Spc_*$, where $Spc_*$ is the category of pointed spaces.

\textbf{The birational slice tower}\label{bst}

Clearly, $B_{n+1}(S)\subset B_n(S)$, so $\mathcal{H}^{n}(S)\subset \mathcal{H}^{{n+1}}(S)$. This immediately yields a chain of inclusions of $\infty$-categories:
$$\mathcal{I}\simeq \mathcal{H}^{{-1}}(S)\subset \mathcal{H}^{b}(S)=\mathcal{H}^{0}(S)\subset \mathcal{H}^{1}(S)\subset \mathcal{H}^{2}(S)\subset \cdots \subset \mathcal{H}^{\infty}(S)\subset\mathcal{H}^{\mathbb{A}^1}(S) $$
(here, $\mathcal{H}^\infty(S)$ is the colimit of all the previous inclusions) or, equivalently, a chain of inclusions among the saturation classes:
$$all(S)\supset bir_0(S)\supset bir_1(S)\supset bir_2(S)\supset \cdots \supset bir_\infty(S)\supset mot(S) $$
That is, all $n$-Birational motivic equivalences are, in particular, $0$-birational motivic equivalences. We choose to call a $0$-birational motivic equivalence simply a Birational motivic equivalence. In this terminology, it makes sense to call an $n$-Birational motivic equivalence a Birational motivic equivalence of height $n$.

By denoting the relative localizations $\mathcal{H}^{n+1}(S)\to \mathcal{H}^{n}(S)$ (corresponding to the inclusion $\mathcal{H}^n(S)\subset \mathcal{H}^{n+1}(S)$) still by $L^n$, the above chains yield a tower in $Pr^L$:
$$\mathcal{I}\simeq \mathcal{H}^{{-1}}(S)\xleftarrow{L_{}^{-1}}\mathcal{H}^{0}(S)\xleftarrow{L_{}^{0}} \mathcal{H}^{1}(S)\xleftarrow{L_{}^{1}}\mathcal{H}^{2}(S)\xleftarrow{L_{}^{2}} \cdots \leftarrow\mathcal{H}^{\infty}(S)\xleftarrow{L^{\infty}_{}}\mathcal{H}^{\mathbb{A}^1}(S) $$
We call this the unstable Birational slice tower. The pointed categories will be denoted by $\mathcal{H}^n_\bullet (S)$ and can similarly be assembled into a tower in $Pr^L$:
$$*\simeq \mathcal{H}_\bullet^{{-1}}(S)\xleftarrow{L_{}^{-1}}\mathcal{H}_\bullet^{0}(S)\xleftarrow{L_{}^{0}} \mathcal{H}_\bullet^{1}(S)\xleftarrow{L_{}^{1}}\mathcal{H}_\bullet^{2}(S)\xleftarrow{L_{}^{2}} \cdots \leftarrow\mathcal{H}_\bullet^{\infty}(S)\xleftarrow{L^{\infty}_{}}\mathcal{H}_\bullet^{\mathbb{A}^1}(S) $$
\begin{rem}\label[rem]{2.2.3}
    It is unclear whether these towers converge to the ordinary motivic homotopy category, i.e., whether the induced morphism in $Pr^L$ $$\mathcal{H}^{\infty}(S):=\plim_{n}\mathcal{H}^n(S)\xleftarrow{L^{\infty}}\mathcal{H}^{\mathbb{A}^1}(S) $$ is an equivalence of $\infty$-categories. In particular, suppose we are given a tower of spaces $$\esc{X}_0\xleftarrow{p_0}\esc{X}_1\xleftarrow{p_1}\esc{X}_2\xleftarrow{p_2}\cdots$$ with each $\esc{X}_i\in \mathcal{H}^{i}(S)$ and such that each $p_i$ is an $i$-birational motivic equivalence (equivalently, the induced morphisms $L^{i+1}\esc{X}_{i+1}\to \esc{X}_{i}$ are equivalences) for all $i\geq0$. Then it is not clear whether there is a motivic space $\esc{X}\in \mathcal{H}^{\mathbb{A}^1}(S)$ with a compatible family of $i$-birational motivic equivalences $\pi_i:\esc{X}\to \esc{X}_i$ (i.e., such that the induced morphisms $L^i\esc{X}\to \esc{X}_i$ are equivalences).
\end{rem}
\subsection{On equivalent constructions of $\mathcal{H}^n$}
In this section, we discuss some equivalent ways the $n$-birational homotopy category could have been constructed.

\underline{{Changing the base category from $Sm_S$}}

It turns out that the Nisnevich topos $\mathcal{P}_{nis}(S)$ can also be constructed from other subcategories of schemes over $S$. Let us recall the general mechanism for doing so. Let $AffSm_S$ denote the full subcategory of $Sm_S$ consisting of smooth $X/S$ such that $X=SpecR$ is an affine scheme. Let $a: AffSm_S\hookrightarrow Sm_S$ be the obvious inclusion. Since it is continuous and co-continuous for the Nisnevich topology, there is an adjunction of the sheaf $\infty$-toposes with a fully faithful right adjoint:
$$a^*:\mathcal{P}_{nis}(Sm_S)\leftrightarrows \mathcal{P}_{nis}(AffSm_S):a_*$$

When $S$ is separated, every Smooth (separated) $X/S$ can be covered by objects of $(AffSm_S)_{/X}$ whose finite intersections lie in $AffSm_S$. From standard \v{C}ech nerve computations, this immediately induces an equivalence of the sheaf toposes ([\cite{MR3679884} Lemma 3.3.2]; see also [\cite[\href{https://stacks.math.columbia.edu/tag/03A0}{Lemma 03A0}]{stacks-project}], though written for discrete spaces, applies equally to spaces):
$$a^*:\mathcal{P}_{nis}(Sm_S)\simeq\mathcal{P}_{nis}(AffSm_S):a_*$$
More generally, when $S$ is semiseparated, the functor $a$ factors through the full subcategory $SmAff_S$ of $Sm_S$ consisting of smooth affine morphisms $X/S$, which in turn factors through the subcategory $SmQproj_S$ of smooth quasiprojective morphisms $X/S$. It is evident that all these inclusions are continuous and cocontinuous for the Nisnevich topology. Therefore, when $S$ is separated (so that we can always choose a Zariski cover of any smooth $X/S$ such that all finite intersections are affine), the above equivalence factors as a composition of equivalences:
$$\mathcal{P}_{nis}(Sm_S)\simeq\mathcal{P}_{nis}(SmQproj_S)\simeq  \mathcal{P}_{nis}(SmAff_S)\simeq  \mathcal{P}_{nis}(AffSm_S)$$

In fact, when $S$ is separated, so that, so is every $X$ for $X/S\in Sm_S$, in the Nisnevich topos $\mathcal{P}_{nis}(Sm_S)$, we may write $X$ as a colimit of objects of $AffSm_S$, and thus every $\mathbb{A}^1$ projection map in $Sm_S$ (thought of in $\mathcal{P}_{nis}(S)$) is a colimit of $\mathbb{A}^1$ projections in $AffSm_S$ [\cite{MR3679884}, Lemma 5.1.2]. It follows that the above equivalences descend to equivalences of the ordinary motivic homotopy categories:
\begin{prop}
    Suppose $S$ is a separated scheme. Then there are equivalences of presentable $\infty$-categories:
$$\mathcal{H}^{\mathbb{A}^1}(Sm_S)\simeq\mathcal{H}^{\mathbb{A}^1}(SmQproj_S)\simeq  \mathcal{H}^{\mathbb{A}^1}(SmAff_S)\simeq  \mathcal{H}^{\mathbb{A}^1}(AffSm_S)$$
\end{prop}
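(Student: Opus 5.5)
The plan is to deduce each equivalence of motivic categories from the corresponding equivalence of Nisnevich toposes recalled just above,
$$\mathcal{P}_{nis}(Sm_S)\simeq\mathcal{P}_{nis}(SmQproj_S)\simeq \mathcal{P}_{nis}(SmAff_S)\simeq \mathcal{P}_{nis}(AffSm_S).$$
Each of these is induced by restriction $a^*$ along a fully faithful inclusion $a:\mathcal{C}\hookrightarrow Sm_S$ that is continuous and cocontinuous for the Nisnevich topology, with inverse $a_*$. On both sides, $\mathcal{H}^{\mathbb{A}^1}$ is the accessible localization of the topos at the set of $\mathbb{A}^1$-projections of the respective site. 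So it suffices to show that, under $a^*\simeq a_*$, the two sets of morphisms generate the same strongly saturated class. Then the local objects correspond, and the localizations agree as presentable $\infty$-categories.

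First inclusion: for $Y\in \mathcal{C}$, the projection $\mathbb{A}^1_Y\to Y$ lies in $\mathcal{C}$, since each of the classes (quasiprojective, affine morphism, affine scheme) is stable under $\mathbb{A}^1\times-$. Moreover $a^*$ (equivalently the left Kan extension $a_!$, which sends representables to representables because $a$ is fully faithful and cocontinuous) identifies $L_{nis}h_{\mathcal{C}}(Y)$ with $L_{nis}h(Y)$. Hence every $\mathcal{C}$-projection is an $\mathbb{A}^1$-projection of $Sm_S$. The class generated by $\mathcal{C}$-projections is therefore contained in the class generated by $\mathbb{A}(S)$.

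Converse inclusion: this is the real content. Take $X\in Sm_S$. Because $S$ is separated, $X$ is separated, so a Zariski cover $\{U_i\}$ of $X$ by affine schemes has affine finite intersections, and these lie in $AffSm_S\subset\mathcal{C}$. In $\mathcal{P}_{nis}$, $X$ is then the colimit of the Čech nerve $|U_\bullet|$ of this cover. Since $\mathbb{A}^1\times-$ commutes with colimits in the topos (colimits are universal) and with fiber products, $\mathbb{A}^1_X\simeq |\mathbb{A}^1_{U_\bullet}|$. The projection $\mathbb{A}^1_X\to X$ is therefore the geometric realization of levelwise projections $\mathbb{A}^1_{U_{i_0\cdots i_k}}\to U_{i_0\cdots i_k}$, each of which is a $\mathcal{C}$-projection. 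Strongly saturated classes are closed under small colimits in the arrow category, so $\mathbb{A}^1_X\to X$ lies in the class generated by $\mathcal{C}$-projections. This is exactly the statement of [\cite{MR3679884}, Lemma 5.1.2], which I would simply cite.

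Combining the two inclusions, the saturated classes coincide, so $a^*$ restricts to an equivalence $\mathcal{H}^{\mathbb{A}^1}(Sm_S)\simeq \mathcal{H}^{\mathbb{A}^1}(\mathcal{C})$ compatible with $L_{mot}$; this runs for each $\mathcal{C}$ in the chain. The main obstacle is the converse inclusion: one must check that the Čech colimit presentation of $X$ is a colimit in the topos, i.e. a Nisnevich (indeed Zariski) equivalence, and that it is preserved by $\mathbb{A}^1\times-$. This is where separatedness of $S$ is used, to keep all intersections inside $AffSm_S$.
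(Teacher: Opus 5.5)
Your proposal is correct and follows essentially the same route as the paper. Both arguments take the Nisnevich topos equivalences as given. Both then observe, via an affine Zariski \v{C}ech cover (using separatedness of $S$) and universality of colimits, that every $\mathbb{A}^1$-projection in $Sm_S$ is a colimit of $\mathbb{A}^1$-projections of affine smooth schemes, so the two localizing classes generate the same strongly saturated class. The paper simply invokes the cited Lemma 5.1.2 for this step; your explicit checks of the easy inclusion and of the \v{C}ech-nerve colimit fill in details the paper leaves implicit.
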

We now discuss the $n$-birational case. The same methods apply immediately to this case. Indeed, if $U_i$ is a Zariski cover of $X$, then the open immersions $U_i\subset X$ lift generalizations, so that for any $n$-dense open $V\hookrightarrow X$, the base change $V\times _XU_i\hookrightarrow U_i$ is $n$-dense, and $V\times _XU_i$ forms a Zariski cover of $V$. Thus, we obtain the following result, where the notation $\mathcal{H}^n(\mathcal{C}_S)$ stands for $L_{dense}^nL_{\mathbb{A}^1}L_{nis}\mathcal{P}(\mathcal{C}_S)$ for an (admissible) subcategory $\mathcal{C}_S\subset Sm_S$:
\begin{thm}\label{smqproj}
 Let $S$ be a separated scheme. Then, for every $n$, there is a canonical equivalence of presentable $\infty$-categories:
   $$\mathcal{H}^{n}(Sm_S)\simeq\mathcal{H}^{n}(SmQproj_S)\simeq  \mathcal{H}^{n}(SmAff_S)\simeq  \mathcal{H}^{n}(AffSm_S)$$ 
\end{thm}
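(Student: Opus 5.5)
Write $\mathcal{C}_S$ for any one of $SmQproj_S$, $SmAff_S$, $AffSm_S$, and $a:\mathcal{C}_S\hookrightarrow Sm_S$ for the inclusion. The plan is to start from the equivalence of Nisnevich topoi already recalled above,
$$a^*:\mathcal{P}_{nis}(Sm_S)\simeq \mathcal{P}_{nis}(\mathcal{C}_S):a_*,$$
together with its refinement to motivic homotopy categories in the preceding proposition. An equivalence of $\infty$-categories restricts to an equivalence between the full subcategories of objects local for two classes of morphisms $W$ and $W'$ whenever it carries the strongly saturated class generated by $W$ onto the one generated by $W'$. So it suffices to show that, under $a^*$ and $a_*$, the saturated class generated by $nis\cup\mathbb{A}(S)\cup B_n(S)$ in $Sm_S$ matches the one generated by the same kind of data in $\mathcal{C}_S$. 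By the proposition, the Nisnevich and $\mathbb{A}^1$ parts already match. What remains is the $n$-dense open immersions, and this splits into two inclusions.

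\emph{First inclusion.} Every $n$-dense open immersion $V\hookrightarrow X$ with $X\in\mathcal{C}_S$ is also one in $Sm_S$. So $a_*$ sends the generators $B_n(\mathcal{C}_S)$ into $B_n(S)$, modulo identifying representables. Since $a$ is continuous and cocontinuous, $a_*h_{\mathcal{C}}X\simeq L_{nis}h X$. When $V$ itself is not in $\mathcal{C}_S$ (for example $V$ is non-affine inside an affine $X$), regard $V$ through its Nisnevich sheafification; the identification still holds because of the topos equivalence.

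\emph{Second inclusion, the main step.} Take $j:V\hookrightarrow X$ in $B_n(S)$ with $X\in Sm_S$ arbitrary. Because $S$ is separated, choose a Zariski cover $\{U_i\}$ of $X$ by objects of $\mathcal{C}_S$ whose finite intersections $U_{i_0\cdots i_k}$ again lie in $\mathcal{C}_S$; affine intersections suffice for all three choices of $\mathcal{C}_S$. In $\mathcal{P}_{nis}(S)$ we then have
$$X\simeq \colim_{\Delta^{op}}\check{C}(\{U_i\}),\qquad V\simeq\colim_{\Delta^{op}}\check{C}(\{V\cap U_i\}),$$
and $j$ is the colimit of the levelwise maps $V\cap U_{i_0\cdots i_k}\hookrightarrow U_{i_0\cdots i_k}$. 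Open immersions lift generalizations, so by the lemma on generalization-lifting maps each of these levelwise maps is $n$-dense.

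This is where I expect the obstacle: the source $V\cap U_{i_0\cdots i_k}$ need not itself lie in $\mathcal{C}_S$. To handle it, cover $V\cap U_{i_0\cdots i_k}$ by affine opens $W$ of $U_{i_0\cdots i_k}$. Since the ambient scheme is separated, intersections of such $W$ are again affine. The inclusion $V\cap U_{i_0\cdots i_k}\hookrightarrow U_{i_0\cdots i_k}$ can then be written as a composite whose pieces are inclusions of basic opens of an affine, each still $n$-dense by the same lemma, plus a Čech gluing which is a Nisnevich equivalence. More simply, $V\cap U_{i_0\cdots i_k}$ is Zariski-locally in $\mathcal{C}_S$, so its representable is a colimit of objects of $\mathcal{C}_S$. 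Then two-out-of-three, closure of the saturated class under colimits, and the $n$-density of each piece $W\hookrightarrow U_{i_0\cdots i_k}$ show that $a^*j$ lies in the saturation of $nis\cup B_n(\mathcal{C}_S)$.

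Combining the two inclusions, $a^*\dashv a_*$ restricts to an equivalence $L^n_{dense}L_{\mathbb{A}^1}L_{nis}\mathcal{P}(Sm_S)\simeq L^n_{dense}L_{\mathbb{A}^1}L_{nis}\mathcal{P}(\mathcal{C}_S)$. The equivalences factor compatibly through the chain $AffSm_S\subset SmAff_S\subset SmQproj_S\subset Sm_S$, which gives the stated chain of equivalences.
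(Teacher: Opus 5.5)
Your strategy is the paper's. You restrict the equivalence $a^*\colon\mathcal{H}^{\mathbb{A}^1}(Sm_S)\simeq\mathcal{H}^{\mathbb{A}^1}(\mathcal{C}_S)$ by writing an $n$-dense $j\colon V\hookrightarrow X$ as a colimit, over a Zariski \v{C}ech nerve, of the immersions $V\cap U_{i_0\cdots i_k}\hookrightarrow U_{i_0\cdots i_k}$, which are $n$-dense by \Cref{glt pull}. You also correctly isolate the point the paper's sketch passes over: $V\cap U_{i_0\cdots i_k}$ need not lie in $\mathcal{C}_S$.

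Your repair of that point fails. An affine open $W=D(f)\subset V\cap U_{i_0\cdots i_k}$ is not $n$-dense in $U_{i_0\cdots i_k}$. \Cref{glt pull} only controls \emph{pullbacks} of an $n$-dense immersion along generalization-lifting maps, and $W\hookrightarrow U_{i_0\cdots i_k}$ is not such a pullback. Its complement $V(f)$ has codimension $\le 1$ by Krull's principal ideal theorem, and if $U_{i_0\cdots i_k}$ is reducible, $W$ need not even be dense. Expressing $h_{V\cap U_{i_0\cdots i_k}}$ as a colimit of objects of $\mathcal{C}_S$ does not help either, since the resulting maps to $U_{i_0\cdots i_k}$ are not in $B_n(\mathcal{C}_S)$. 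Your two steps also use different generating classes. In the first you admit $a^*h_V\to h_X$ with $V\notin\mathcal{C}_S$; with that convention the levelwise maps are already generators, there is no obstacle, and the statement becomes nearly tautological.

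With the literal definition (localize $\mathcal{P}(\mathcal{C}_S)$ at $n$-dense immersions between objects of $\mathcal{C}_S$), the gap cannot be closed in the affine cases for $n\ge 1$, because the assertion is false there. The paper's one-line argument therefore has the same hole. Here is a counterexample:
\begin{itemize}
\item Take $S=\mathrm{Spec}\,k$ and a $1$-dense open immersion $W\hookrightarrow X$ of smooth affine $k$-schemes. Since $X$ is normal and $X\setminus W$ has codimension $\ge 2$, algebraic Hartogs gives $\mathcal{O}(X)\xrightarrow{\sim}\mathcal{O}(W)$. As both schemes are affine, $W=X$. More generally, by EGA IV, 21.12.7, the complement of an affine open in a locally noetherian separated scheme has all components of codimension $\le 1$.
\item So for $n\ge1$ the class $B_n(AffSm_k)=B_n(SmAff_k)$ consists of isomorphisms. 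Hence $\mathcal{H}^n(AffSm_k)=\mathcal{H}^{\mathbb{A}^1}(AffSm_k)\simeq\mathcal{H}^{\mathbb{A}^1}(k)$.
\item By contrast, $\mathcal{H}^1(Sm_k)$ inverts the $1$-dense immersion $\mathbb{A}^2_k\setminus 0\hookrightarrow\mathbb{A}^2_k$. This is not a motivic equivalence, since $\mathbb{A}^2\setminus0\simeq T\wedge\mathbb{G}_m$ is not contractible.
\item Consequently the functor induced by $a^*$ is fully faithful but not essentially surjective.
\end{itemize}

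What your argument, like the paper's, does prove is the following:
\begin{itemize}
\item The $SmQproj_S$ case, for all $n$. A quasi-compact open of a quasi-projective $S$-scheme is quasi-projective, so the levelwise maps lie in $B_n(SmQproj_S)$.
\item The affine cases for $n=0$, when $X$ has finitely many generic points. Write $U=U_{i_0\cdots i_k}$. By prime avoidance, cover $V\cap U$ by basic opens $W_i=D(f_i)$, each containing every generic point of $U$, so that all $W_i$ and their intersections are dense in $U$. Then compare the \v{C}ech nerves of $\{W_i\}$ and $\{W_1\cap W_i\}$ as in the proof of \Cref{nis is bir}, and apply two-out-of-three with $W_1\hookrightarrow U$.
\end{itemize}
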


\textbf{Reducing local conditions for constructing $\mathcal{H}^0$}\label{Hb=H0}

It turns out that when $n=0$, the corresponding birational motivic homotopy category can be constructed in many other ways. Some of these constructions are discussed in detail in another work of mine [\cite{0bat}]. For example, $\mathcal{H}^0$ is equivalent to the $0$-dense localization of the logarithmic motivic homotopy category of [\cite{binda2023logarithmic}], as shown in [\cite{0bat}, Corollary 2.3.6]. In this document, we will present different lines of argument (aligned with the content of this paper) for the main model of $\mathcal{H}^0$ from \textit{loc.cit.}. In that paper, the arguments were mostly from the perspective of local objects; here, instead, we shall work with local equivalences. 

Let us now recall the construction of the birational homotopy category, $\mathcal{H}^b$, from [\cite{0bat}]. For a scheme $S$, let $\mathcal{P}_\Sigma(S)$ be the full subcategory of $\mathcal{P}(S)$ consisting of presheaves that send finite disjoint unions of schemes to products of spaces. This is the sifted co-completion of $Sm_S$ [\cite{lurie2009higher}, Proposition 5.5.8.10]. Since coproducts in $Sm_S$ are disjoint, this is also the category of sheaves for the disjoint-union topology [\cite{bachmann2017norms}, Lemma 2.4]. Local equivalences for this topology are precisely the \v{C}ech nerves of morphisms of the following types $$\bigsqcup _{i=1}^kh_S{U_i}\to h_S({\bigsqcup _{i=1}^kU_i}).$$ 

Let $\mathcal{H}^b(S)$ denote the full subcategory of $\mathcal{P}_\Sigma(S)$ consisting of $0$-dense local objects; i.e., $\mathcal{H}^b(S):=L^0_{dense}\mathcal{P}_\Sigma(Sm_S)$. Denote the associated saturation class by $bir_\Sigma(S)$ and the corresponding localization functor by $L_{bir}$. We first show that $nis(S)\subset bir_\Sigma(S)$. For this, we shall need some preliminary lemmas:

\begin{lem}\label[lem]{nis dense splitting}
    Let $p: U\to X$ be a Nisnevich cover of qcqs schemes; then $p$ splits over a dense open subscheme $V\subset X$.
\end{lem}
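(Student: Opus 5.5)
The plan is to use the generic points of $X$ and spread out a section from each of them. Since $X$ is qcqs, it has finitely many irreducible components only if it is Noetherian-like, so I will not assume this. Instead I will work with the constructible topology and show that the locus over which $p$ splits contains a dense open subset.

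\emph{Reduction.} First reduce to the case where $X$ is affine. Cover $X$ by finitely many affine opens $X_1,\dots,X_r$. Suppose we have dense opens $V_i\subset X_i$ over which $p$ splits. Set
$$V_1':=V_1,\qquad V_i':=V_i\setminus\bigcup_{j<i}\overline{X_j}.$$
Then $V=\bigsqcup_i V_i'$ is a disjoint union of opens, and the local sections glue. Density of $V$ follows because each $X_i\setminus\bigcup_{j<i}\overline{X_j}$ is open and the $V_i$ are dense in $X_i$.

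\emph{Generic points.} Now let $X$ be affine and let $\eta$ be a generic point of $X$, i.e.\ a maximal point. Because $p$ is completely decomposed, there is a point $u\in U$ over $\eta$ with $k(u)=k(\eta)$. Since $p$ is étale, it is a local isomorphism at $u$ onto its image on the level of the local scheme $\mathrm{Spec}\,\mathcal{O}_{X,\eta}$. The reason is that $\mathcal{O}_{X,\eta}$ is zero-dimensional, and an étale map with trivial residue extension over a zero-dimensional local ring is an isomorphism onto the corresponding component of the henselian (indeed Artinian-like, since the Krull dimension is zero) local scheme. This gives a section of $p$ over $\mathrm{Spec}\,\mathcal{O}_{X,\eta}$. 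Since $p$ is of finite presentation, this section spreads out, by limit arguments (EGA IV 8), to a section over some open neighbourhood $W_\eta$ of $\eta$.

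\emph{Assembling a dense open.} If $X$ has finitely many generic points, take the $W_\eta$, shrink them to be pairwise disjoint by removing the closures of the other components, and take their union. In general, the set of points over which $p$ admits a local section is an open set $W$ containing all maximal points. For $V$ we want an open \emph{disjoint union} of pieces over which sections exist. Because sections of an étale separated map over an open set form a sheaf, choose $V$ maximal via Zorn's lemma among opens with a section. A nonempty open missing $V$ would contain a maximal point and hence admit a local section there, contradicting maximality. So $V$ is dense.

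\emph{Main obstacle.} The hardest step is this last one. Sections over overlapping neighbourhoods need not agree, so instead I would use the disjoint union of $V$ with $W_\eta\setminus\overline{V}$ to extend. This only requires that $p$ be separated so that the restrictions are well-defined. Equivalently, one can deduce the whole statement from the Noetherian case by approximating $p$ by a Nisnevich cover $p_0:U_0\to X_0$ of finite type $\mathbb{Z}$-schemes, where $X_0$ has finitely many generic points, and pulling back. The difficulty with that route is that the pullback of a dense open along $X\to X_0$ need not be dense. For that reason I would prefer the Zorn argument above.
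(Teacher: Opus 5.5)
Your proposal is correct, but it follows a different route from the paper.

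\textbf{The paper's route.} It first reduces to $X$ reduced, using invariance of the \'etale site under $X^{red}\subset X$. It then invokes Hoyois' theorem that a Nisnevich cover of qcqs schemes admits a finite stratification $\emptyset=Z_0\subset\cdots\subset Z_n=X$ by finitely presented closed subschemes, with a section $p_i$ over each stratum $S_i=Z_i\setminus Z_{i-1}$. Reducedness makes the ideal of $Z_i$ vanish near each generic point $\eta\in S_i$. Hence $\eta$ has an open neighbourhood $W_\eta\subset S_i$ on which $S_i$ and $X$ agree as schemes, and the dense open is $\bigsqcup_i\bigcup_{\eta\in S_i}W_\eta$. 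Infinitely many generic points cause no gluing problem there: all $W_\eta$ in one stratum carry restrictions of the same section $p_i$, and distinct strata are disjoint.

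\textbf{Your route.} You work pointwise. At a maximal point $\eta$, the ring $\mathcal{O}_{X,\eta}$ is zero-dimensional local, hence henselian. So the point $u$ with $k(u)=k(\eta)$ yields a section over $\mathrm{Spec}\,\mathcal{O}_{X,\eta}$, which spreads out to an open $W_\eta$ because $p$ is locally of finite presentation. You then apply Zorn's lemma to pairs (open, section). Density of the maximal open $V$ follows from two facts: every nonempty open contains a maximal point, and $V$ extends disjointly by $W_\eta\setminus\overline{V}$.

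\textbf{What each buys.} Your argument is more elementary: it needs neither the stratification theorem nor the reduction to the reduced case. It also proves more: any locally finitely presented $p$ with a section over $\mathrm{Spec}\,\mathcal{O}_{X,\eta}$ at every maximal point splits over a dense open, with no quasi-compactness hypotheses. The paper's argument avoids Zorn's lemma and produces $V$ explicitly from the stratification.

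\textbf{Points to tidy.}
\begin{itemize}
\item $\mathcal{O}_{X,\eta}$ need not be Artinian, since it may be non-Noetherian. The correct justification for the section is that zero-dimensional local rings are henselian.
\item Separatedness plays no role in gluing sections, because morphisms into any scheme form a Zariski sheaf.
\item The reduction to affine $X$ and the mention of the constructible topology are unnecessary for the Zorn argument.
\end{itemize}
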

\begin{proof}
    Since $X_{nis}=X_{nis}^{red}$ and $X^{red}\subset X$ is a nil immersion, we may assume that $X$ is reduced. Indeed, suppose that on the dense open $V^{red}\subset X^{red}$, the Nisnevich cover $U^{red}\to X^{red}$ admits a section. Since both $U^{red}$ and $V^{red}$ are étale over $X^{red}$, the canonical base change equivalence of categories $Et/X\simeq Et/{X^{red}}$ [\cite{grothendieck1966elements}, Théorème (18.1.2)] implies that there is an open $V\subset X$ with reduction $V^{red}$ and a section $V^{}\to U^{}$ over $X^{}$ reducing to the section $V^{red}\to U^{red}$ over $X^{red}$. Since $X^{red}\subset X$ is a homeomorphism, the denseness of $V\subset X$ follows from the denseness of the pullback $V^{red}\subset X^{red}$.
    
    Now, by [\cite{hoyois2016trivial}], there is a stratification of finitely presented closed subschemes $$\emptyset=Z_0\subset Z_1 \subset Z_2\dots\subset Z_{n-1}\subset Z_n=X$$ such that $p$ admits a section $p_i$ over each locally closed stratum $S_i=Z_i\setminus Z_{i-1}$. Let $\mathscr{I}_i$ be the finitely presented ideal defining $Z_i$. If $\eta$ is a generic point of $X$ contained in $S_{i_\eta}$, then reducedness implies $(\mathscr{I}_{i_\eta})_{\eta}=0$. Since $\mathscr{I}_{i_\eta}$ is coherent, there exists an open $V_{\eta}\subset X$ such that ${\mathscr{I}_{i_\eta}}_{|V_{\eta}}=0$. Define, $$W_\eta:=V_\eta\bigcap S_{\eta_i}=V_\eta\bigcap (X\setminus Z_{\eta_i-1}).$$ Clearly, $W_\eta$ is open in $X$, contains $\eta$, and is contained in $S_{i_\eta}$. 

    If $X^{(0)}$ is the set of generic points of $X$, then for each $i$ consider the open set $$Y_i:=\bigcup_{\eta\in{X^{(0)}\bigcap S_i}}W_\eta\subset S_i.$$ The restriction ${p_i}_{|Y_i}$ provides a section of $p$ over $Y_i$. Since the $S_i$'s are disjoint from one another, so are the $Y_i$'s. Hence, the open set $Y=\bigsqcup Y_i$ admits a section. By construction, $X^{(0)}\subset Y$. Hence, $Y$ is dense in $X$, and we are done. \end{proof}

\begin{lem}\label[lem]{dense prod}
    Let $X_i\to S$ be a finite family of maps, each universally lifting generalizations. Assume $U_i\xhookrightarrow{}X_i\in bir_{n_i}(S)$. Then $$\prod_S U_i\xhookrightarrow{} \prod_S X_i\in bir_ {min \{n_i\}}{(S)}.$$
\end{lem}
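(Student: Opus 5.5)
The plan is to reduce to the case of a single factor and then use the lemma on lifting generalizations above.

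Replacing one factor at a time, the map $\prod_S U_i\hookrightarrow\prod_S X_i$ factors as a finite composite
$$U_1\times_S\cdots\times_S U_k\to X_1\times_S U_2\times_S\cdots\times_S U_k\to\cdots\to X_1\times_S\cdots\times_S X_k .$$
Its $i$-th step has the form $Y_i\times_S U_i\hookrightarrow Y_i\times_S X_i$, where $Y_i$ is the fibre product of the remaining factors (the $X_j$ for $j<i$ and the $U_j$ for $j>i$). We have $bir_{n_i}(S)\subset bir_{\min\{n_j\}}(S)$, by the inclusions $bir_{n}(S)\supset bir_{n+1}(S)$ of the birational slice tower. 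Saturated classes are closed under composition. So it suffices to show that each step lies in $bir_{n_i}(S)$.

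\textbf{Geometric case.} First take $U_i\hookrightarrow X_i$ to be an honest $n_i$-dense open immersion. The step $Y_i\times_S U_i\hookrightarrow Y_i\times_S X_i$ is the base change of $U_i\hookrightarrow X_i$ along the projection $Y_i\times_S X_i\to X_i$. That projection is itself a base change of $Y_i\to S$. Now $Y_i\to S$ is a composite of base changes of the structure maps $X_j\to S$ (and of the open immersions $U_j\subset X_j$, which lift generalizations universally). Since each $X_j\to S$ universally lifts generalizations by hypothesis, so does $Y_i\to S$, and hence so does the projection. The lemma on lifting generalizations then shows the step is $n_i$-dense. When everything lives in $Sm_S$, this means the step lies in $B_{n_i}(S)\subset bir_{n_i}(S)$.

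\textbf{General case.} If the hypothesis is read more generally as $U_i\to X_i$ merely being a $bir_{n_i}(S)$-equivalence, I would argue at the level of saturated classes. Consider the functor $Y\times_S(-)\colon\mathcal{P}(S)\to\mathcal{P}(S)$ for $Y=Y_i$ (or its representable presheaf). It preserves small colimits by universality of colimits in the $\infty$-topos $\mathcal{P}(S)$. Therefore the class of maps $f$ with $Y\times f\in bir_{n_i}(S)$ is again saturated, and it suffices to check the generators of $bir_{n_i}(S)$:
\begin{enumerate}
\item \emph{Nisnevich equivalences.} These are preserved because $L_{nis}$ is left exact, so products of a Nisnevich equivalence with $Y$ remain Nisnevich equivalences.
\item \emph{$\mathbb{A}^1$-projections.} $Y\times(\mathbb{A}^1\times X\to X)$ is again an $\mathbb{A}^1$-projection. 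For non-representable $Y$ one writes $Y$ as a colimit of representables and uses colimit preservation again.
\item \emph{$n_i$-dense open immersions $V\subset W$ in $Sm_S$.} Reduce to representable $Y'\in Sm_S$ by the same colimit argument. Then $Y'\times_S V\subset Y'\times_S W$ is $n_i$-dense by the lemma, because the smooth, hence flat, map $Y'\times_S W\to W$ lifts generalizations.
\end{enumerate}

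\textbf{Expected obstacle.} I expect the main subtlety to be bookkeeping: ensuring that the intermediate fibre products and projections genuinely inherit the universal lifting of generalizations. Flatness handles the smooth objects, and the stated hypothesis on the $X_i\to S$ handles the general factors. Everything else is formal closure of saturated classes under composition and under the colimit-preserving functor $Y\times_S(-)$.
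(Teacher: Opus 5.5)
Your proof is correct and follows essentially the paper's argument. The paper reduces to a single height and a two-fold product, writes $f\times g=(f\times 1)\circ(1\times g)$, and applies the generalization-lifting lemma to the base change of $U\hookrightarrow X$ along the UGLT projection $X\times_S Y\to X$; this is exactly your geometric case, and you also make explicit that the factors $U_j\to S$ are UGLT, which the paper uses tacitly. Your ``general case'', via closure of $bir_n(S)$ under the colimit-preserving functor $Y\times(-)$, is not part of the paper's proof of this lemma, but it is sound, matches the reasoning in the paper's remark after the theorem that $L^n$ is cartesian, and does not even need the UGLT hypothesis, since smooth base changes are flat.
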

\begin{proof}
    First, observe that $bir_{n_i}(S)\subset {min \{n_i(S)\}}$, so we may work with a single ${n}$. By induction, we may further reduce to the case of a two-fold product. But even then $f\times g=(f\times1)\circ (1\times g)$. Thus, it suffices to prove that if $U\xhookrightarrow{} X$ is in $B_n(S)$ and $Y\to S$ has universal generalization lifting, then $U\times_S Y\to X\times_S Y$ is in $B_n(S)$ as well. But this is the pullback of $U\xhookrightarrow{} X$ along $X\times_S Y\to X$. Since the latter lifts generalization (due to universality), we are done by the \Cref{glt pull}.
\end{proof}

\begin{thm}[see \cite{0bat}, Theorem 2.1.3. for a different argument]\label{nis is bir}
    Let $S$ be a qcqs scheme. Every Nisnevich equivalence in $\mathcal{P}(Sm_S)$ lies in $bir_\Sigma(S)$.
\end{thm}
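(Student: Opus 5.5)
We want to show that every Nisnevich equivalence lies in $bir_\Sigma(S)$, the saturated class generated by the Čech nerves of finite coproduct decompositions together with $B_0(S)$. Since $bir_\Sigma(S)$ is strongly saturated, it suffices to check a set of generators of $nis(S)$. Nisnevich local equivalences are generated, as a strongly saturated class, by the Čech nerve inclusions $\check{C}(U)\to h_SX$ for Nisnevich covers $p:U\to X$ in $Sm_S$ ($U$ a finite coproduct, hence a single qcqs scheme). Equivalently, we may use covering sieves generated by a single Nisnevich cover. So the plan is to show that, for each Nisnevich cover $p:U\to X$ in $Sm_S$, the map $|\check{C}(p)|\to h_SX$ becomes an equivalence after applying $L_{bir}$.

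\textbf{Step 1: find a dense open where the cover splits.} By \Cref{nis dense splitting}, there is a dense open $V\subset X$ over which $p$ admits a section $s$. The base change $p_V:U_V\to V$ is a cover with a section, so its Čech nerve is split. Hence $|\check{C}(p_V)|\to h_SV$ is already an equivalence in $\mathcal{P}(S)$: split simplicial objects have the augmentation as their colimit.

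\textbf{Step 2: compare the Čech nerves.} Consider the square
\[\begin{array}{ccc}|\check{C}(p_V)| & \to & h_SV\\ \downarrow && \downarrow\\ |\check{C}(p)| & \to & h_SX\end{array}\]
The right vertical map lies in $B_0(S)$. The left vertical map is the geometric realization of the levelwise maps $U_V^{\times_V k}\to U^{\times_X k}$. Each of these is the base change of $V\subset X$ along the étale map $U^{\times_X k}\to X$. Étale maps are flat, so they lift generalizations, and the pullback of a dense open is again dense by the lemma on generalization-lifting (\Cref{glt pull}). Thus each levelwise map is in $B_0(S)$. Since $bir_\Sigma$ is closed under colimits in the arrow category, the left vertical map is in $bir_\Sigma(S)$. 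The top map is an equivalence by Step 1. Two-out-of-three applied twice then gives that the bottom map is in $bir_\Sigma(S)$.

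\textbf{Main obstacle.} The delicate point is the reduction to generators. A Nisnevich cover in $Sm_S$ is usually a family $\{U_i\to X\}$. To replace it by the single map $\bigsqcup U_i\to X$, we need $h_S(\bigsqcup U_i)\simeq\bigsqcup h_SU_i$. This is exactly what the $\Sigma$-locality built into $\mathcal{P}_\Sigma$ (the disjoint-union Čech nerves in $bir_\Sigma$) provides. Working with sieves rather than Čech nerves of families would require the same identification.

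We also have to make sure that the iterated fibre products $U^{\times_X k}$ stay in $Sm_S$, so that the generators are the representable ones used in $B_0(S)$. This holds because they are étale over $X$. Finally, we must check that flatness suffices for density to be preserved under pullback: an étale map is open, which is enough, since every generic point of $U^{\times_X k}$ maps to a generic point of $X$, and those lie in $V$.
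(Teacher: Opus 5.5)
Your proposal is correct and is essentially the paper's proof. Both arguments reduce to a single cover $p\colon U\to X$ via the $\Sigma$-equivalences $\bigsqcup_i h_S U_i\to h_S(\bigsqcup_i U_i)$, split $p$ over a dense open $V\subset X$ by \Cref{nis dense splitting}, and observe that the comparison of \v{C}ech nerves is levelwise a $0$-dense open immersion because the \'etale maps $U^{\times_X k}\to X$ lift generalizations (\Cref{glt pull}); they then conclude by closure under colimits and two-out-of-three. The only difference is cosmetic: you phrase the reduction to generators through covering sieves, whereas the paper cites a reduction to basic Nisnevich covers.
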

\begin{proof}
Since $S$ is qcqs, it suffices to show that the \v{C}ech nerve of a basic Nisnevich cover belongs to $bir_\Sigma(S)$ [see \cite{hoyois2017six}, Proposition 3.8]. 

Let $X/S$ be smooth and let $\{U_i\to X\}_{i=1}^{k}$ be a Nisnevich cover of $X$. Since (the \v Cech nerve of) $$\bigsqcup h_{U_i}\to h_{\bigsqcup _iU_i}$$ is an equivalence in $ \mathcal{P}_\Sigma(Sm_S)$, by taking a disjoint union $U=\bigsqcup_i U_i$ we are left to show that if $p:U\to X$ is a Nisnevich cover given by a single morphism of schemes, then the colimit of the \v{C}ech nerve of the morphism $h_p:h_SU\to h_SX$ is a $0$-dense equivalence.
    
  By \Cref{nis dense splitting}, there is a dense open subset $V\subset X$ and a section $V\to U$ of $p$. Consider the pullback square on the left and the commutative diagram on the right, consisting of the \v{C}ech nerves of the vertical arrows in the left square:
  \[\xymatrix{
  (U\times_XV) \ar@{^(->}[r]^-{j'}\ar[d]_{q} &U\ar[d]^{p}&&  \check{C}(q) \ar[r]^-{\check{C}(j')}\ar[d]_{q_\bullet} &\check{C} (p)\ar[d]^{p_\bullet}&&\colim\check{C}(q) \ar[r]^-{|\check{C}(j')|}\ar[d]_{|q_\bullet|} &\colim \check{C} (p)\ar[d]^{|p_\bullet|}\\
 V\ar@{^(->}[r]_{j}& X&&  h_SV\ar[r]_{h_j}& h_SX&&  h_SV\ar[r]_{h_Sj}& h_SX
  }\]
  
  As $p$ lifts generalizations, \Cref{dense prod} implies that the morphism of \v{C}ech nerves $\check{C}(q) \xrightarrow{\check{C}(j')}\check{C} (p)$ is dense in each degree and hence a simplicial $0$-dense equivalence. Since the class of $0$-dense equivalences (being saturated) is closed under colimits, it follows that the top horizontal map $|\check{C}(j')| $ in the right-hand square is a dense equivalence. On the other hand, since $q:U\times_XV\to V$ splits, the left vertical map in the right-hand square is an equivalence in $\mathcal{P}(Sm_S)$. Because the bottom map $h_Sj$ is a dense equivalence by construction, the 2-out-of-3 property of the class of dense equivalences, together with the commutativity of the right-hand square, yields the desired result that $|p_\bullet|$ is a dense equivalence.
\end{proof}  

\begin{rem}
    The proof above offers a refinement of the argument presented in [\cite{choudhury2022characterisation}, Lemma 3.6]. While their approach effectively addresses the case of a single Nisnevich cover, it requires modification to fully account for Nisnevich covers arising from disjoint union decompositions. Consequently, the specific notion of birational equivalences utilized there (which correspond to $0$-dense equivalences in the present work) does not encompass all Nisnevich equivalences. Indeed, as observed in the first paragraph of the proof above, for a nontrivial decomposition $A\bigsqcup B=X$ of smooth schemes over $k$, the \v{C}ech nerve of the presheaf map $h_SA\bigsqcup h_SB\to h_SX$ is not automatically a sectionwise equivalence in $\mathcal{P}(k)$.
\end{rem}
The following result shows that $\mathcal{H}^b$ is an equivalent model to $\mathcal{H}^0$:
\begin{cor}[\cite{0bat}, Corollary 2.2.6 (2)]\label{Motivic equivalences are Birational equivalences}
Suppose $S$ is a qcqs scheme. Then the inclusion $\mathcal{H}^0(S)\subset \mathcal{H}^b(S)$ induces an equivalence of $\infty$-categories.
\end{cor}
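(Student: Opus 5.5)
We show that both subcategories of $\mathcal{P}(S)$ are local with respect to the same saturated class of morphisms. By definition, $\mathcal{H}^0(S)$ is the localization of $\mathcal{P}(S)$ at the saturation generated by $nis(S)\cup\mathbb{A}(S)\cup B_0(S)$. Likewise, $\mathcal{H}^b(S)=L^0_{dense}\mathcal{P}_\Sigma(S)$ is the localization of $\mathcal{P}(S)$ at the saturation $bir_\Sigma(S)$, generated by $B_0(S)$ together with the \v{C}ech nerves of the disjoint-union covers $\bigsqcup_i h_SU_i\to h_S(\bigsqcup_i U_i)$. It therefore suffices to prove that these two saturated classes coincide. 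Then the two full subcategories of $\mathcal{P}(S)$ consist of the same local objects, and the inclusion $\mathcal{H}^0(S)\subset\mathcal{H}^b(S)$ is an equivalence.

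\textbf{Step 1: $bir_\Sigma(S)\subset bir_0(S)$.} The disjoint-union \v{C}ech nerves are Nisnevich local equivalences, because a finite disjoint union decomposition is a Zariski cover. The set $B_0(S)$ lies in $bir_0(S)$ by definition. Since $bir_0(S)$ is saturated, it contains the saturation $bir_\Sigma(S)$.

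\textbf{Step 2: $bir_0(S)\subset bir_\Sigma(S)$.} We must place each generator of $bir_0(S)$ in $bir_\Sigma(S)$. The $0$-dense open immersions lie there by definition. Nisnevich equivalences lie there by \Cref{nis is bir}. The remaining generators are the $\mathbb{A}^1$-projections $\mathbb{A}^1_X\to X$ for $X\in Sm_S$.

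For these we use the zero section $s:X\to\mathbb{A}^1_X$, which satisfies $p\circ s=\mathrm{id}$, so by two-out-of-three it suffices to show that $h_Ss$ is in $bir_\Sigma(S)$. The idea is the standard birational trick, essentially that of [\cite{0bat}]. The multiplication map $\mu:\mathbb{A}^1\times\mathbb{A}^1\to\mathbb{A}^1$, $(t,x)\mapsto tx$, gives an $\mathbb{A}^1$-homotopy between $\mathrm{id}$ and $s\circ p$. We need an analogous homotopy that is visible to dense localization.

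We exploit the fact that $\mathbb{G}_{m,X}\subset\mathbb{A}^1_X$ is dense, hence an equivalence in $bir_\Sigma(S)$. On $\mathbb{G}_{m,X}$, consider the two maps $\mathbb{G}_m\times\mathbb{A}^1\to\mathbb{A}^1$ given by $(t,x)\mapsto tx$ and by the projection to $x$. They restrict to the same map on the dense open $\{t=1\}$-free part, or more precisely they become identified after passing through the dense open $\mathbb{G}_m\times\mathbb{G}_m\subset\mathbb{A}^1\times\mathbb{A}^1$. Alternatively we reduce to $\mathbb{P}^1_X$, in which both $0$ and $\infty$ have dense complements, and use the involution $t\mapsto t^{-1}$.

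The cleanest route, and the one we would try first, is to show that every $L_{dense}$-local $\Sigma$-presheaf $F$ is $\mathbb{A}^1$-invariant. First, $F(\mathbb{A}^1_X)\simeq F(\mathbb{G}_{m,X})$ by density of $\mathbb{G}_{m,X}\subset\mathbb{A}^1_X$. Second, $F(\mathbb{A}^1_X)\simeq F(\mathbb{P}^1_X)\simeq F(\mathbb{A}^1_X)$ by density of both affine charts in $\mathbb{P}^1_X$. Comparing the two restriction maps via the automorphism $t\mapsto t^{-1}$ then forces the evaluation maps at $0$ and at $1$ (or at $\infty$) to agree. Consequently $s^*\circ p^*=\mathrm{id}$ and $p^*\circ s^*\simeq\mathrm{id}$ on $F$. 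This is the content of the "birational-local implies $\mathbb{A}^1$-local" principle of [\cite{0bat}, \cite{cisinski2023homotopy}]. Here only dense open immersions among smooth $X$-schemes are used, and the $\mathbb{A}^1$- and $\mathbb{P}^1$-charts are smooth over $S$, so no non-smooth blow-ups are needed.

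\textbf{Main obstacle.} This last step, $\mathbb{A}^1$-invariance of dense-local $\Sigma$-presheaves over an arbitrary qcqs base, is the crux. Making the homotopy rigorous requires care: one has to produce an actual equivalence in the saturation rather than a mere identification of maps. The plan is to show, via $\mathbb{P}^1_X$ and its two dense charts $\mathbb{A}^1_X$, $\mathbb{P}^1_X\setminus 0$, that the inclusions $X\xrightarrow{0}\mathbb{P}^1_X$ and $X\xrightarrow{\infty}\mathbb{P}^1_X$ become homotopic. Combining this with $\mathbb{A}^1_X\simeq\mathbb{P}^1_X$ in $bir_\Sigma$ gives the $\mathbb{A}^1$-contraction.

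Once Steps 1 and 2 are in place, the two saturated classes agree, and the inclusion $\mathcal{H}^0(S)\subset\mathcal{H}^b(S)$ is an equivalence.
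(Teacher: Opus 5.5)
Your reduction is the paper's: Step 1 is formal, Nisnevich equivalences are handled by \Cref{nis is bir}, and everything then rests on showing that $bir_\Sigma$-local objects are $\mathbb{A}^1$-local. That last step is a genuine gap. You flag it as the main obstacle but do not supply it, and the mechanism you sketch cannot supply it.

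Inverting dense open immersions among $X$, $\mathbb{A}^1_X$, $\mathbb{G}_{m,X}$, $\mathbb{P}^1_X$, and composing with automorphisms such as $\sigma\colon t\mapsto t^{-1}$, never relates restriction along one section to restriction along another, or to $p^*$. The involution only gives $s_\infty^*=s_0^*\circ\sigma^*$, with no control over $\sigma^*$. Here is a counterexample to the whole approach. Take the full subcategory of $Sm_k$ spanned by $\mathrm{Spec}\,k$ and the nonempty opens of $\mathbb{P}^1_k$. Set $F(k)=A$ and $F(U)=A\times B$. Let dominant maps act by the identity, $k$-points act by $(x,y)\mapsto x$, and structure maps act by $x\mapsto (x,b_0)$. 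This $F$ is dense-local and fixed by every automorphism of $\mathbb{P}^1_k$. Yet $F(k)\to F(\mathbb{A}^1_k)$ is not surjective when $|B|\geq 2$. Separately, $(t,x)\mapsto tx$ and $(t,x)\mapsto x$ do not agree on any dense open; they agree only on $\{t=1\}$.

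The missing idea is a birational morphism that is not an open immersion and contracts a curve to a point. A dense-local $F$ still inverts such a map, because it is an isomorphism over a dense open.

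\Cref{bir local is P1 local} does exactly this with $W=\mathrm{Bl}_{\{0,\infty\}}\mathbb{P}^2_S\cong\mathrm{Bl}_{pt}(\mathbb{P}^1_S\times\mathbb{P}^1_S)$. $F(a)$ is bijective. $F(j)$ is surjective because $E$ maps isomorphically onto a ruling line of $\mathbb{P}^1\times\mathbb{P}^1$. Then $a\circ j=i\circ\pi$ forces $F(\pi)\colon F(S)\to F(E)\cong F(\mathbb{P}^1_S)$ to be surjective, hence bijective.

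A shorter variant uses $\beta(s,t)=(s,st)$ on $\mathbb{A}^2_X$:
\begin{itemize}
\item $\beta$ is an automorphism over the dense open $\{s\neq 0\}$, so $\beta^*$ is bijective.
\item Restriction to $\{s=0\}\cong\mathbb{A}^1_X$ is surjective, since $(s,t)\mapsto t$ is a retraction.
\item $\beta$ collapses $\{s=0\}$ onto the zero section, so $F(X)\to F(\mathbb{A}^1_X)$ is surjective, hence bijective.
\end{itemize}

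All these schemes lie in $Sm_S$, and presheaves are contravariant in all morphisms. So your worry about non-smooth blow-ups does not arise. The non-UGLT problem discussed after \Cref{homotopy} concerns the presheaf of categories $\mathcal{H}^0(-)^*$, not this lemma.

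Finally, the lemma is about set-valued presheaves. For $G\in\mathcal{H}^b(S)$, apply it to the dense-local presheaves of sets $\pi_0\mathrm{Map}(K,G(-))$ for all spaces $K$, then use Yoneda in the homotopy category of spaces. This yields an honest equivalence $G(X)\simeq G(\mathbb{A}^1_X)$, which answers your concern about identifications versus equivalences.
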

By \Cref{nis is bir}, the above theorem follows from the following lemma, whose proof appears in [\cite{MR3404383}, Appendix A] (the proof has also been recalled in [\cite{cisinski2023homotopy}, Proposition 1.3], which we prefer to use below for its diagrammatic presentation). We recall it here for further use and the convenience of the reader:
\begin{lem}\label[lem]{bir local is P1 local}
    Let $S$ be a (QCQS) scheme and $\mathcal{G}$ a Grothendieck universe. Then every $0$-dense local presheaf $F\in \mathcal{G}\mbox{-}PSh_S$ taking values in the category of $\mathcal{G}$-sets is $\mathbb{P}^1$-local, and hence also $\mathbb{A}^1$-local. 
\end{lem}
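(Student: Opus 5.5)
Following the argument of [\cite{cisinski2023homotopy}, Proposition 1.3] (originally [\cite{MR3404383}, Appendix A]), I will first reduce $\mathbb{A}^1$-locality to $\mathbb{P}^1$-locality and then prove the latter with a single birational diagram. For the reduction, take $X\in Sm_S$ and the projection $p:\mathbb{A}^1_X\to X$ with zero section $s_0$. The map $p^*:F(X)\to F(\mathbb{A}^1_X)$ is injective because $s_0^*p^*=\mathrm{id}$. The open immersion $\mathbb{A}^1_X\subset\mathbb{P}^1_X$ is dense, since its complement $\infty_X$ contains no generic point of the fibers and hence none of $\mathbb{P}^1_X$. As $F$ is $0$-dense local, restriction gives a bijection $F(\mathbb{P}^1_X)\xrightarrow{\sim} F(\mathbb{A}^1_X)$. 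Consequently, if $F(X)\to F(\mathbb{P}^1_X)$ is a bijection, then so is $F(X)\to F(\mathbb{A}^1_X)$, and $F$ is $\mathbb{A}^1$-local.

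For $\mathbb{P}^1$-locality, let $q:\mathbb{P}^1_X\to X$ be the projection and $s_0,s_\infty$ its sections. I will show that every $a\in F(\mathbb{P}^1_X)$ satisfies $a=q^*s_0^*a$, which gives surjectivity of $q^*$; injectivity again comes from $s_0^*q^*=\mathrm{id}$. The key device is the multiplication map $\mu:\mathbb{A}^1_X\times_X\mathbb{A}^1_X\to\mathbb{A}^1_X$, $(t,u)\mapsto tu$. Since $F$ is birational, $F(\mathbb{A}^1\times\mathbb{A}^1)\simeq F(\mathbb{P}^1\times\mathbb{P}^1)$ and $F(\mathbb{A}^1_X)\simeq F(\mathbb{P}^1_X)$, so $\mu$ induces a well-defined pullback on $F(\mathbb{P}^1_X)$ even though $\mu$ only extends to a rational map $\mathbb{P}^1\times\mathbb{P}^1\dashrightarrow\mathbb{P}^1$. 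Setting $b:=\mu^*a\in F(\mathbb{P}^1\times_X\mathbb{P}^1)$, the plan is to compare the restrictions of $b$ to the slices $\{u=1\}$ and $\{u=0\}$.
\begin{itemize}
\item On $\{u=1\}$, $\mu$ is the identity, so $b$ restricts to $a$.
\item On $\{u=0\}$, $\mu$ factors through the zero section, so $b$ restricts to $q^*s_0^*a$.
\end{itemize}
To identify these two restrictions, use that $b$ lives on $\mathbb{P}^1_X\times_X\mathbb{P}^1_X$ as a $\mathbb{P}^1$-family over the $u$-line. Applying the same dense-extension trick to the open $\mathbb{P}^1_X\times\mathbb{G}_m$ and using the involution $u\mapsto u^{-1}$, one shows that the restriction of $b$ to a slice $\{u=c\}$ does not depend on $c$.

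The main obstacle is making precise that pullback along the rational map is legitimate and that restricting to the slices commutes with it. Here $\mu$ is a genuine morphism only on the complement of the indeterminacy points $(0,\infty)$ and $(\infty,0)$. The open set $W=\mathbb{P}^1\times\mathbb{P}^1\setminus\{(0,\infty),(\infty,0)\}$ is dense, and $\mu$ extends to a morphism $W\to\mathbb{P}^1$. The slices $\{u=1\}$ and $\{u=0\}$ each meet $W$ in a dense open of the slice: $\{u=1\}$ meets neither bad point, while $\{u=0\}$ loses only the point $t=\infty$. So I would define $b$ on $W$ and use the bijection $F(\mathbb{P}^1\times\mathbb{P}^1)\simeq F(W)$ to transport it, and use the bijections for the slices similarly. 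The point requiring care is that the slices are closed, not dense, so restriction to them is not a dense-localization map. Each slice, however, is a section of a smooth projection, so restriction along it is an honest morphism in $Sm_S$ and no locality is needed there. Only the identifications with $W$ use density, and these hold by the codimension count above.
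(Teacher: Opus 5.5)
Your reduction of $\mathbb{A}^1$-locality to $\mathbb{P}^1$-locality, the injectivity of $q^*$ via $s_0$, and the construction of $b$ through your dense open $W$ are correct. So are the two slice computations $\sigma_1^*b=a$ and $\sigma_0^*b=q^*s_0^*a$, where $\sigma_c\colon\mathbb{P}^1_X\to\mathbb{P}^1_X\times_X\mathbb{P}^1_X$ is $t\mapsto(t,c)$. The obstacle you flag, the legitimacy of pulling back along the rational map $\mu$, is not the real one.

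The real gap is the step you pass over: that $\sigma_c^*b$ is independent of $c$. That is the entire content of the lemma. The statement that $\sigma_0^*=\sigma_1^*\colon F(Y\times_X\mathbb{P}^1_X)\to F(Y)$ for all $Y$ is equivalent to $\mathbb{P}^1$-locality: one direction is your multiplication argument, the other is trivial. So it cannot be assumed.

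The mechanism you suggest does not supply it. The bijection $F(\mathbb{P}^1_X\times_X\mathbb{P}^1_X)\simeq F(\mathbb{P}^1_X\times_X\mathbb{G}_m)$ says an element is determined on that open. It says nothing about restrictions to closed slices such as $\{u=0\}$, which lie outside it. The involution $u\mapsto u^{-1}$ fixes the slice $u=1$ and swaps $u=0$ with $u=\infty$, so it cannot link the slice over $0$ to the slice over $1$. Nothing in your toolkit (dense open immersions, sections, $\mu$) moves information from one closed fiber to another.

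The missing ingredient is a proper birational modification that contracts a curve, which is what the paper uses. It takes the blow-up of $\mathbb{P}^2$ at two points, which is also the blow-up of $\mathbb{P}^1\times\mathbb{P}^1$ at one point. $F$ of the blow-down to $\mathbb{P}^2$ is bijective, since that map is an isomorphism over a dense open. The exceptional curve $E$ over $0$ maps to a point of $\mathbb{P}^2$ but isomorphically onto a ruling of $\mathbb{P}^1\times\mathbb{P}^1$. Chasing $F$ through this diagram forces $F(S)\to F(E)\cong F(\mathbb{P}^1)$ to be surjective.

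Your set-up can be repaired in the same spirit, with no slice comparison at all:
\begin{enumerate}
\item Blow up $\mathbb{P}^1_X\times_X\mathbb{P}^1_X$ along the two indeterminacy sections. This gives $\beta\colon\widetilde{P}\to\mathbb{P}^1_X\times_X\mathbb{P}^1_X$, with $\mu$ extending to a morphism $\tilde\mu\colon\widetilde{P}\to\mathbb{P}^1_X$.
\item Your $W$ is a dense open of $\widetilde{P}$, so $\tilde\mu^*a=\beta^*b$.
\item On the exceptional curve $E'$ over $(\infty,0)$, $\beta$ factors through the section $x\colon X\to\mathbb{P}^1_X\times_X\mathbb{P}^1_X$ at $(\infty,0)$. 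Meanwhile $\tilde\mu|_{E'}\colon E'\to\mathbb{P}^1_X$ is an $X$-isomorphism: in coordinates $t'=1/t$ and $u=t'v$ one has $\tilde\mu=v$.
\item Restricting $\tilde\mu^*a=\beta^*b$ to $E'$ gives $a=q^*(x^*b)$, which is the surjectivity you need.
\end{enumerate}
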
 
\begin{proof}
It suffices to prove that the morphism $F(S)\to F(\mathbb{P}^1_S)$ induced by the projection $\mathbb{P}^1_S\to S$ is an equivalence. We suppress the base $S$ from the notation. As in [\cite{cisinski2023homotopy}, Proposition 1.3], consider the blowup $W\to \mathbb{P}^2$ of $\mathbb{P}^2$ at a pair of closed points, say $\{0,\infty\}$, and let $j:E\rightarrowtail W$ be the exceptional fiber over $0$. It turns out that $W$ is also the blowup of $\mathbb{P}^1\times \mathbb{P}^1$ at the closed point $(0,0)$, which normalizes the birational isomorphism $\mathbb{P}^1\times\mathbb{P}^1\dashrightarrow \mathbb{P}^2$. Moreover, $E$ maps isomorphically to one copy of $\mathbb{P}^1$. This situation is summarized by the following diagram on the left, while the right-hand diagram is obtained by applying the contravariant functor $F$ to the left-hand diagram:
\[
\xymatrix@C=1.75em{
  & E \ar[dl]_{\pi} \ar@{>->}[d]_{j} \ar[dr]^{\sim} & & & & & F(E) & & \\
  S \ar@{>->}[d]_i & W \ar[dl]_{a} \ar[dr]^{b} & L \ar@{>->}[d]_k \ar[dr]^{\sim} & \ar@{|->}[r]^F & & F(S) \ar[ur]^{F(\pi)} & F(W) \ar@{->>}[u]_{F(j)} & F(L) \ar[ul]_{\sim} & \\
  \mathbb{P}^2 & & \mathbb{P}^1 \times \mathbb{P}^1 \ar[r]^{p_1} & \mathbb{P}^1 & & F(\mathbb{P}^2) \ar[u]^{F(i)} \ar@{->>}[ur]_{F(a)} & & F(\mathbb{P}^1 \times \mathbb{P}^1) \ar[ul]^{F(b)} \ar@{->>}[u]^{F(k)} & F(\mathbb{P}^1) \ar[l]^-{F(p_1)} \ar[ul]_{\sim}
}
\]
Any contravariant functor $F$ takes $k$ to a surjection, since $Fk\circ Fp_1$ is an isomorphism. But $Fj\circ Fb$ is isomorphic to $Fk$, so $Fj$ must be surjective. Since $a$ is a span of $0$-dense open immersions, $Fa$ is a surjection (in fact, an isomorphism). Because $F\pi\circ Fi= Fj\circ Fa$ and both $Fj$ and $Fa$ are surjective, we deduce that $F\pi$ is surjective. Since $\pi$ is split, it follows that $F\pi$ is an isomorphism. But $E\cong \mathbb{P}^1$. Thus $FS\underset{F\pi}{\twoheadrightarrow}FE\cong F\mathbb{P}^1$ given by the projection $\mathbb{P}^1\to S$ is an isomorphism.
\end{proof}
\begin{rem}
    Since $a$ is not $n$-dense for any $n>0$, the method above does not apply to the general $n$-birational case. On the other hand, the proof applies to any presheaf that turns the blow-up $W\to \mathbb{P}^2$ into a surjection, not just $0$-dense local ones.
\end{rem}

\subsection{On generating $\mathcal{H}^n$}
Except for the $n=0$ cases, the following result is standard:
\begin{prop}\label[prop]{compact generation}
    \begin{enumerate}
        \item $\mathcal{H}^{n}(S)$ is generated under sifted colimits by $n$-birational models of smooth schemes $X/S$. When $S$ is separated, we may take $X$ to be affine (in the absolute sense). When $S$ is the spectrum of a field of characteristic zero, we may take $X$ to be smooth and proper.
        \item  Each inclusion in the chain $$\mathcal{H}^n(S)\subset \mathcal{P}_{nis}(Sm_S)\subset\mathcal{P}(Sm_S)$$ is closed under filtered colimits. When $n=0$ and $S$ is qcqs, each inclusion in the chain $$\mathcal{H}^0(S)\subset \mathcal{P}_\Sigma(Sm_S)\subset\mathcal{P}(Sm_S)$$ is, in fact, closed under sifted colimits. If $S$ is geometrically unibranch with finitely many generic points, then $\mathcal{H}^0(S)\subset \mathcal{P}_\Sigma(Sm_S)$ and $\mathcal{H}^0(S)\subset \mathcal{P}_{nis}(Sm_S)$ are closed under all colimits.
        \item Every smooth $X/S$ is a compact object in $\mathcal{H}^n(S)$. When $n=0$, these generators are projective (i.e., they preserve geometric realizations) as well.
    \end{enumerate}
\end{prop}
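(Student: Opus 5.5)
The plan is to treat the three items in the order (2), (3), (1), since compactness and generation follow formally once we know which colimits are computed sectionwise.

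\emph{Item (2).} For filtered colimits, I would express each locality condition as a finite-limit condition on sections, and then use that filtered colimits in $Spc$ commute with finite limits.
\begin{enumerate}
\item Nisnevich descent on qcqs $S$ is equivalent to excision for elementary distinguished squares together with $F(\emptyset)\simeq *$ [\cite{hoyois2017six}, Proposition 3.8]. Both are finite-limit conditions on the values $F(X)$.
\item $\mathbb{A}^1$-locality is the condition $F(X)\xrightarrow{\sim}F(X\times\mathbb{A}^1)$.
\item $B_n$-locality is the condition $F(X)\xrightarrow{\sim}F(U)$ for $U\subset X$ $n$-dense.
\end{enumerate}
All three are preserved by sectionwise filtered colimits, so $\mathcal{H}^n(S)\subset\mathcal{P}_{nis}(Sm_S)\subset \mathcal{P}(Sm_S)$ is closed under filtered colimits.

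For $n=0$ I would first use \Cref{Motivic equivalences are Birational equivalences} to identify $\mathcal{H}^0(S)$ with the $0$-dense-local objects of $\mathcal{P}_\Sigma(Sm_S)$. The class of $0$-dense-local presheaves in $\mathcal{P}(Sm_S)$ is closed under \emph{all} sectionwise colimits, because the condition $F(X)\simeq F(U)$ is a sectionwise equivalence and colimits of equivalences are equivalences. The class $\mathcal{P}_\Sigma$ is closed under sifted colimits, since finite products commute with sifted colimits in $Spc$. Together these give closure under sifted colimits.

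For the geometrically unibranch case, every $X\in Sm_S$ is unibranch with finitely many generic points. Hence its connected components are irreducible, and each irreducible component meets every dense open $U\subset X$ in a dense open. Using this, I would show that $L_\Sigma$, the left adjoint to $\mathcal{P}_\Sigma\subset\mathcal{P}$ (a left Kan extension along finite coproducts), carries $0$-dense-local presheaves to $0$-dense-local ones. The point is that a dense open of $\bigsqcup X_i$ is exactly $\bigsqcup (U\cap X_i)$ with each $U\cap X_i$ dense. Consequently colimits in $\mathcal{P}_\Sigma$, which are $L_\Sigma$ of sectionwise colimits, stay in $\mathcal{H}^0$. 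The comparison with $\mathcal{P}_{nis}$ then follows from \Cref{nis is bir}. I expect this compatibility of $L_\Sigma$ with dense-locality to be the main obstacle; it is exactly where unibranchness is needed.

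\emph{Item (3).} Write $b^n$ for the inclusion $\mathcal{H}^n(S)\subset\mathcal{P}(Sm_S)$. By adjunction, $\mathrm{Map}_{\mathcal{H}^n}(L^nh_SX,F)\simeq F(X)$ for $F\in\mathcal{H}^n(S)$. Because $b^n$ preserves filtered colimits by (2), the functor $F\mapsto F(X)$ commutes with filtered colimits, so $L^nh_SX$ is compact. For $n=0$ the same argument applies to geometric realizations, which are sifted, so these generators are also projective.

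\emph{Item (1).} Every presheaf is a colimit of representables. Any colimit can be written as a sifted colimit of finite coproducts, and in $\mathcal{H}^n$ a finite coproduct of the $L^nh_SX_i$ is $L^nh_S(\bigsqcup X_i)$, because $L_{nis}$ already inverts $\bigsqcup h_SX_i\to h_S(\bigsqcup X_i)$. Since $L^n$ preserves colimits, $\mathcal{H}^n(S)$ is generated under sifted colimits by the $L^nh_SX$. For separated $S$, the affine reduction is \Cref{smqproj}. For $S$ the spectrum of a field of characteristic zero (relevant for $n=0$), Nagata compactification followed by Hironaka resolution embeds any smooth $X$ as a dense open in a smooth proper $\bar X$. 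Then $L^0h_SX\simeq L^0h_S\bar X$, so smooth proper $X$ suffice.
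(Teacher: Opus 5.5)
Your proposal is correct and follows the paper's proof in structure. For (2), $\mathbb{A}^1$- and dense-locality are sectionwise conditions, so they are closed under all colimits in $\mathcal{P}(Sm_S)$, while $\mathcal{P}_{nis}(Sm_S)$ and $\mathcal{P}_\Sigma(Sm_S)$ are closed under filtered and sifted colimits respectively. Item (3) then follows by adjunction from (2), and (1) follows from sifted generation by representables together with the affine reduction.

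The difference is that you prove two inputs the paper only cites.

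\emph{Geometrically unibranch case.} The paper quotes closure under small coproducts from \cite{inhot}, Proposition 4.5.4; you argue directly, and the argument works. The clean form of your key step is as follows. Every $X\in Sm_S$ has finitely many generic points (it is flat and finitely presented over $S$) and is unibranch. Hence $X$ is uniquely a finite disjoint union of irreducible connected pieces. It follows that $\mathcal{P}_\Sigma(Sm_S)\simeq\mathcal{P}(Sm_S^{\mathrm{conn}})$ and $(L_\Sigma F)(\bigsqcup_i X_i)\simeq\prod_i F(X_i)$. So $L_\Sigma$ is restriction followed by a \emph{right} Kan extension, not a left one as your parenthetical says. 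A dense open of an irreducible $X_i$ is again irreducible and connected, so this formula shows directly that $L_\Sigma$ preserves $0$-dense-locality and that colimits in $\mathcal{P}_\Sigma$ are computed sectionwise on connected objects.

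What your route buys is self-containedness, and it shows exactly where the hypothesis enters. One needs irreducible connected components for every $X\in Sm_S$, not merely for $S$; a connected but reducible \'etale cover of a nodal curve shows that the latter alone would not suffice. Geometric unibranchness of $S$ is precisely what supplies this.

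\emph{Characteristic zero.} Your Nagata-plus-Hironaka argument replaces the paper's citation of \cite{0bat}, Lemma 3.1.7(c). You are right to restrict it to $n=0$: the boundary of a compactification is a divisor, so $X\subset\bar X$ is not $n$-dense for $n\geq 1$. The paper's citation also covers only this case.
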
     

\begin{proof}
   (1). By construction, $\mathcal{P}_{\Sigma}(Sm_S)$ is generated under sifted colimits by smooth $X/S$. Being a localizing subcategory, so is $\mathcal{H}^{n}(S)$. By choosing Zariski covers (note that Zariski hypercovers are birational), we may take the (absolute) affine schemes smooth over $S$ as the generators (see the discussions around \Cref{smqproj}). For the last statement of (1) see [\cite{0bat}, Lemma 3.1.7 (c)].

(2). Since the inclusion $\mathcal{P}_{nis}(Sm_S)\subset\mathcal{P}(Sm_S)$ is closed under filtered colimits, while the inclusions $L_{\mathbb{A}^1}\mathcal{P}(Sm_S)\subset \mathcal{P}(Sm_S)$ and $L_{dense}\mathcal{P}(Sm_S)\subset \mathcal{P}(Sm_S)$ are closed under all colimits, the result follows immediately from the definition: $$\mathcal{H}^n(S)=L_{\mathbb{A}^1}\mathcal{P}(S)\bigcap L^n_{dense}\mathcal{P}(S)\bigcap \mathcal{P}_{\Sigma}(Sm_S).$$ When $n=0$ and $S$ is qcqs, we use the fact that $$\mathcal{H}^0(S)=\mathcal{H}^b(S)=L_{dense}\mathcal{P}(Sm_S)\bigcap \mathcal{P}_{\Sigma}(Sm_S)$$ (see \Cref{Motivic equivalences are Birational equivalences}) along with the observation that $\mathcal{P}_\Sigma(Sm_S)\subset\mathcal{P}(Sm_S)$ is closed under sifted colimits. The last statement follows from the additional observation that when the connected components of $S$ are irreducible, the inclusions are closed under small coproducts [\cite{inhot}, Proposition 4.5.4].

(3). Every smooth $X/S$ is a compact projective object in $\mathcal{P}(Sm_S)$. Since projective objects are defined by preservation of geometric realizations and compact objects by preservation of filtered colimits, we are done by (2).
\end{proof}
Given a smooth scheme $X/S$ we shall use the notation  $h^n_S(X):=L^n_S(h_S(X))$

\begin{rem}
We caution the reader that none of the localizations $L^n$ is locally cartesian [\cite{0bat}, Counterexample 3.2.13]. The problem is, again, that $n$-dense open immersions are not stable under arbitrary pullbacks.
\end{rem}

\subsection{Monoidal structure on \texorpdfstring{$\mathcal{H}^n$}{hb} and \texorpdfstring{$\mathcal{H}^n_\bullet$}{hb} and their schematic functorialities}
We continue to work over a fixed Qcqs base scheme $S$. In this section, we will lift the pointed and unpointed birational motivic $\infty$-categories and their associated unstable functors to (symmetric) monoidal ones. Since the inclusion functor $\mathcal{H}^{n}(S)\subset \mathcal{H}^{\mathbb{A}^1}(S)$ preserves limits, it lifts immediately to a monoidal functor between the cartesian monoidal $\infty$-categories ${\mathcal{H}^{n}}(S)^\times\subset {\mathcal{H}^{\mathbb{A}^1}}(S)^\times$. However, this construction yields only an op-lax monoidal structure on the left adjoint $L^n:{\mathcal{H}^{\mathbb{A}^1}}(S)^\times\to {\mathcal{H}^{n}}(S)^\times$. That this is monoidal follows from the following theorem of Lurie and \Cref{bir cart}:
\begin{prop}[Proposition 1.3.9.;\cite{lurie2007derived}]\label{lurie 1.3.9}

Let $\mathcal{C}^{{{\otimes}}}$ be a monoidal structure on the $\infty$-category $\mathcal{C}$, and let $L : \mathcal{C} \to \mathcal{C}$ be a compatible localization functor with essential image $\mathcal{D} \subset \mathcal{C}$. Let $\mathcal{D}^\otimes$ be the full subcategory of $\mathcal{C}^\otimes$ spanned by objects in $\mathcal{D}$ (see the first paragraph of [\cite{lurie2007derived}, \S1.3]). Then:
\begin{enumerate}
\item $\mathcal{D}^{{\otimes}}$ equips the $\infty$-category $\mathcal{D}$ with a monoidal structure.
    \item The inclusion $\mathcal{D}^{{\otimes}} \subset \mathcal{C}^{{\otimes}}$ admits a left adjoint $L^{{\otimes}}$.
    \item The inclusion functor $\mathcal{D}^{{\otimes}} \subset \mathcal{C}^{{\otimes}}$ is lax monoidal, and its left adjoint $L^{{\otimes}} : \mathcal{C}^{{\otimes}} \to \mathcal{D}^{{\otimes}}$ is monoidal.
\end{enumerate} 
\end{prop}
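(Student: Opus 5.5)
The plan is to work in the formalism of \cite{lurie2007derived}, where a monoidal structure is a coCartesian fibration $q:\mathcal{C}^\otimes\to N(\Delta)^{op}$ whose fiber over $[n]$ is identified with $\mathcal{C}^n$ via the Segal maps. I take ``compatible localization'' to mean the following: whenever $f:X\to Y$ is an $L$-equivalence (i.e.\ $Lf$ is an equivalence), so are $f\otimes \mathrm{id}_Z$ and $\mathrm{id}_Z\otimes f$ for every $Z\in\mathcal{C}$. The first step is a purely object-level consequence of this. For $X,Y\in\mathcal{C}$, the maps
$$X\otimes Y\to LX\otimes Y\to LX\otimes LY$$
are both $L$-equivalences, so $L(X\otimes Y)\simeq L(LX\otimes LY)$. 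By induction this extends to $n$-fold products, and it is the only place where compatibility will be used.

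Second, I show that $q|_{\mathcal{D}^\otimes}:\mathcal{D}^\otimes\to N(\Delta)^{op}$ is again a coCartesian fibration satisfying the Segal conditions, which gives (1). Let $\alpha:[m]\to[n]$ be a morphism in $\Delta^{op}$ and let $x\in\mathcal{D}^\otimes_{[m]}$. Choose a $q$-coCartesian lift $x\to y$ in $\mathcal{C}^\otimes$. Its components are tensor products of components of $x$, which lie in $\mathcal{D}$. Now compose with the unit $y\to L^{n}y$ of the fiberwise localization $L^n=L^{\times n}:\mathcal{C}^n\to\mathcal{D}^n$. I claim the composite $x\to L^n y$ is $q|_{\mathcal{D}^\otimes}$-coCartesian. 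For $z\in\mathcal{D}^\otimes$ over $[k]$, the mapping space $\mathrm{Map}(L^ny,z)$ decomposes along $\mathrm{Map}_{\Delta^{op}}([n],[k])$. Each summand is computed by pushing forward along a coCartesian edge and then mapping into a fiber in which $z$ is local. Using the first step, pushing forward $y\to L^ny$ yields an $L^k$-equivalence, and therefore precomposition with it induces an equivalence. The Segal conditions are inherited because inert morphisms send $\mathcal{D}$-tuples to $\mathcal{D}$-tuples without any localization.

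Third, for (2) and (3) I apply the relative adjoint functor criterion (\cite{lurie2007derived}, or HA 7.3.2.6). The inclusion $\mathcal{D}^\otimes\subset\mathcal{C}^\otimes$ is a map over $N(\Delta)^{op}$ that has a left adjoint on every fiber, namely $L^n$. It therefore admits a relative left adjoint $L^\otimes$. The remaining point is that $L^\otimes$ carries $q$-coCartesian edges to coCartesian edges, so that it is monoidal. Concretely, this says that $L^n$ applied to the pushforward of $x$ agrees with the pushforward of $L^m x$ followed by $L^n$, which is exactly the identity $L(X\otimes Y)\simeq L(LX\otimes LY)$ from the first step. The inclusion preserves inert coCartesian edges, since those are just projections of tuples. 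Hence it is lax monoidal, and $L^\otimes\dashv$ inclusion is a monoidal/lax monoidal adjunction.

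I expect the main obstacle to be the coCartesian verification in the second step. The difficulty is not the object-level formula but checking the universal property on mapping spaces in the $\infty$-categorical setting, i.e.\ that the relevant square of mapping spaces over $\mathrm{Map}_{\Delta^{op}}$ is a pullback. My first attempt would be to reduce, via HTT 2.4.4.3, to a fiberwise statement. That fiberwise statement is: for each active $\beta:[n]\to[k]$, the pushforward $\beta_!(y\to L^ny)$ is an $L^k$-equivalence. This follows from compatibility by writing $\beta_!$ as an iterated binary tensor product and applying the first step componentwise.
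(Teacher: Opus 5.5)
Your argument is correct and is essentially the standard proof from \cite{lurie2007derived}; the paper gives no proof of its own and simply imports the result. You localize fiberwise by $L^{n}$, use compatibility to see that every pushforward $\beta_!$ sends componentwise $L$-equivalences to componentwise $L$-equivalences, conclude that $x\to\alpha_!x\to L^{n}\alpha_!x$ is coCartesian in $\mathcal{D}^\otimes$, and read off monoidality of $L^\otimes$ from $L(X\otimes Y)\simeq L(LX\otimes LY)$.

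One correction to Step 3: over a coCartesian fibration, fiberwise left adjoints alone do not give a relative left adjoint. That inference needs exactly the compatibility hypothesis. However, your Step 2 computation works verbatim for arbitrary $y\in\mathcal{C}^\otimes$, and it already shows that $y\to L^{n}y$ is a $\mathcal{D}^\otimes$-localization, which yields (2) directly.
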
 
Recall that, following Remark 1.3.6 of \textit{loc.cit.}, for a closed (symmetric) monoidal presentable $\infty$-category $\mathcal{C}$, if the localization $L\mathcal{C}=\mathcal{D}$ is obtained by inverting $S\subset Mor\mathcal{C}$, then the condition that $L$ is compatible with the monoidal structure is equivalent to the statement that for any $f\in S$ and $Z\in \mathcal{C}$, $f\otimes Z\in S$. In our situation, we thus have the following:

\begin{prop}\label[prop]{bir cart}
    The birational localization $L^n$ is compatible with the cartesian monoidal structure of $\mathcal{P}_{nis}(Sm_S)$.
\end{prop}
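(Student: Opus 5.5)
Following Lurie's Remark 1.3.6 as recalled above, the plan is to check that for every generator $f$ of the localizing class and every $Z\in\mathcal{P}_{nis}(Sm_S)$, the product $f\times Z$ is again an $n$-birational motivic equivalence. The class $W$ of morphisms $f$ such that $f\times Z\in bir_n(S)$ for all $Z$ is saturated: it is closed under pushouts, two-out-of-three and small colimits, because $-\times Z$ preserves colimits in the $\infty$-topos $\mathcal{P}_{nis}(Sm_S)$ (colimits are universal). So it suffices to verify membership in $W$ on the generating set $\mathbb{A}(S)\cup B_n(S)$ together with Nisnevich equivalences. Nisnevich equivalences are already inverted in $\mathcal{P}_{nis}(Sm_S)$, so only the first two families need checking.

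Next we reduce to representable $Z$. Every object $Z\in \mathcal{P}_{nis}(Sm_S)$ is a colimit of representables $h_S Y$ with $Y\in Sm_S$. Since $f\times -$ commutes with colimits and $bir_n(S)$ is closed under colimits in the arrow category, it is enough to treat $Z=h_SY$. For $h_SY$ we have $h_S X\times h_S Y\simeq h_S(X\times_S Y)$, because the Yoneda embedding preserves products.

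The two remaining cases are as follows.
\begin{enumerate}
\item For an $\mathbb{A}^1$-projection $X\times\mathbb{A}^1\to X$, the product with $Y$ is $(X\times_SY)\times\mathbb{A}^1\to X\times_SY$, again a member of $\mathbb{A}(S)$.
\item For an $n$-dense open immersion $U\hookrightarrow X$ in $Sm_S$, the product with $Y$ is $U\times_SY\hookrightarrow X\times_SY$, which is the pullback of $U\subset X$ along the projection $X\times_S Y\to X$. This projection is a base change of the smooth morphism $Y\to S$, hence smooth and in particular flat and locally of finite presentation. Flat morphisms lift generalizations (Stacks 03HV), so the lemma on generalization lifting above applies: $U\times_S Y\subset X\times_SY$ is $n$-dense, i.e.\ lies in $B_n(S)$. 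Equivalently, this is the case of \Cref{dense prod} with $X_1=X$ and $X_2=Y$, using that smooth maps universally lift generalizations.
\end{enumerate}

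The only step requiring care is the reduction from arbitrary $Z$ to representables, which needs universality of colimits in $\mathcal{P}_{nis}(Sm_S)$, and the saturation argument for $W$. Both are formal topos-theoretic facts, so I expect no genuine obstacle beyond invoking that smooth morphisms lift generalizations. With compatibility established, \Cref{lurie 1.3.9} yields the monoidal structure on $\mathcal{H}^n(S)^\times$ and the monoidality of $L^n$.
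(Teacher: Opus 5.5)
Your proposal is correct and follows essentially the same route as the paper: strong saturation of $bir_n(S)$ and generation of $\mathcal{P}_{nis}(Sm_S)$ by smooth schemes reduce the claim to products of generators with representables, and the $n$-dense case is then \Cref{dense prod}, i.e.\ generalization lifting along the smooth base change $X\times_S Y\to X$. You also spell out the $\mathbb{A}^1$-projection case and the universality-of-colimits steps, which the paper leaves implicit.
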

\begin{proof}
    For a scheme $S$, recall that $\mathcal{P}_{nis}(S)$ is an $\infty$-topos, compactly generated by smooth schemes $X/S$. Thus, by the strong saturation of $bir_n(S)$, it suffices to verify that for any smooth scheme $p:X \to S$ and an open immersion $Y\xhookrightarrow{j} Z\in B_ n(S)$ of smooth schemes $Y/S$ and $Z/S$, the map $X\times_S Y\xhookrightarrow{X\times j}X\times_S Z$ lies in $B_n(S)$. But this is clear from \cref{dense prod}.
\end{proof}
Thus applying \Cref{lurie 1.3.9} we have:
\begin{thm}\label{Ln is cart}
    For every scheme $S$, the localization functor $L^n: \mathcal{P}(S)\to \mathcal{H}^{n}(S)$ lifts to a monoidal functor $\mathcal{P}(S)^{\times}\xrightarrow[]{L_{bir}^{n\times}} \mathcal{H}^{n}(S)^\times$. In fact, the entire chain of localization functors $$\mathcal{P}(S)\xrightarrow[]{L_{mot}} \mathcal{H}^{\mathbb{A}^1}(S)\xrightarrow[]{L^n} \mathcal{H}^{n}(S)$$ lifts to a corresponding chain of monoidal $\infty$-categories: $$\mathcal{P}(S)^{\times}\xrightarrow[]{L_{mot}^\times } \mathcal{H}^{\mathbb{A}^1}(S)^\times\xrightarrow[]{L^{n\times}} \mathcal{H}^{n}(S)^\times.$$
    \end{thm}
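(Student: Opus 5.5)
The plan is to deduce the statement from \Cref{lurie 1.3.9}, applied twice, by checking that each localization in the chain is compatible with the cartesian monoidal structure. The final functor $\mathcal{P}(S)\to\mathcal{H}^n(S)$ is then the composite of the two lifts. By Lurie's Remark 1.3.6, for a presentable cartesian closed $\infty$-category, compatibility of a localization at a set $W$ of morphisms amounts to: $f\times Z$ lies in the strongly saturated class generated by $W$ for every $f\in W$ and every object $Z$. Since $\mathcal{P}(S)$ is an $\infty$-topos, products commute with colimits in each variable. Hence the class of $f$ such that $f\times Z$ is a local equivalence for all $Z$ is itself strongly saturated, and it suffices to let $Z$ range over representables $h_SX$ with $X\in Sm_S$.

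\textbf{Step 1 (motivic localization).} I will verify compatibility of $L_{mot}$ on generators. The first kind of generator is a Čech nerve of a Nisnevich cover $\{U_i\to Y\}$. Its product with $h_SX$ is the Čech nerve of the pulled-back cover $\{U_i\times_S X\to Y\times_S X\}$, because $h_S$ preserves fibre products and Nisnevich covers are stable under base change. The second kind is an $\mathbb{A}^1$-projection $Y\times\mathbb{A}^1\to Y$. Its product with $X$ is again an $\mathbb{A}^1$-projection, namely $(Y\times_S X)\times\mathbb{A}^1\to Y\times_S X$. This is the standard fact that $L_{mot}$ is cartesian, and it yields the lift $L_{mot}^\times:\mathcal{P}(S)^\times\to\mathcal{H}^{\mathbb{A}^1}(S)^\times$.

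\textbf{Step 2 ($n$-birational localization).} I will treat $\mathcal{H}^n(S)\subset\mathcal{H}^{\mathbb{A}^1}(S)$ as the localization of $\mathcal{H}^{\mathbb{A}^1}(S)^\times$ at $L_{mot}(B_n(S))$. The cartesian product in $\mathcal{H}^{\mathbb{A}^1}(S)$ is the restriction of the one in $\mathcal{P}(S)$, since the inclusion preserves limits. It therefore suffices that $X\times_S j\in B_n(S)$ for $j\in B_n(S)$ and $X$ smooth. This is exactly \Cref{bir cart}, obtained from \Cref{dense prod} because smooth morphisms are flat and hence universally lift generalizations.

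Applying \Cref{lurie 1.3.9} then gives the monoidal lift $L^{n\times}$. The composite $L^{n\times}\circ L^\times_{mot}$ is monoidal and recovers $L^n:\mathcal{P}(S)\to\mathcal{H}^n(S)$ on underlying categories, since the composite of reflective localizations is the reflective localization onto $\mathcal{H}^n(S)$. That gives the first statement as well.

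\textbf{Main obstacle.} The delicate point is the reduction from all objects $Z$ to representables, and the compatibility of the strong saturation with $Z\times-$. One must make sure that $Z\times-$ preserves colimits in $\mathcal{P}(S)$, and also in $\mathcal{H}^{\mathbb{A}^1}(S)$ after localization. The cleanest route is to do everything in $\mathcal{P}(S)$ with the combined generating set $nis(S)\cup\mathbb{A}(S)\cup B_n(S)$, where universality of colimits holds. The intermediate compatibility for $\mathcal{H}^{\mathbb{A}^1}(S)$ then follows by restriction.
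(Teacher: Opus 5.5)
Your proposal is correct and follows the paper's route. You apply \Cref{lurie 1.3.9} after checking compatibility of each localization with products on generators, and you reduce the $n$-dense case to \Cref{bir cart} via \Cref{dense prod}, just as the paper does. The only difference is that you verify the cartesianness of $L_{mot}$ directly on Nisnevich \v{C}ech nerves and $\mathbb{A}^1$-projections, where the paper simply cites Hoyois for it.
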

\begin{proof}
    For the first statement, combine \Cref{lurie 1.3.9} and \Cref{bir cart}. The rest is obvious because $L_{mot}$ is Cartesian [\cite{hoyois2017six}, Proposition 3.15.]. 
\end{proof}

Note that, by the above theorem, we are also saying that the induced monoidal structure on $\mathcal{H}^n(S)$ is cartesian. In particular, since the original cartesian structure is closed, we obtain an internal hom-tensor adjunction at the birational level: $$\esc{X}\times-:\mathcal{H}^{n}(S)\rightleftarrows\mathcal{H}^{n}(S): (-)^{\esc{X}}$$

\begin{rem}
    The categorical moral of the above machinery is as follows:
    
    Suppose $X/S$ is smooth. Since $\mathcal{P}(S)$ is cartesian closed, the functor $h_SX\times-: \mathcal{P}(S)\to \mathcal{P}(S)$ preserves colimits. By \Cref{dense prod}, the functor $h_SX\times-$ preserves dense open immersions. It preserves $\mathbb{A}^1$-projections (by definition of $\mathbb{A}(S)$) and Nisnevich covers. Hence it preserves $bir_n(S)$. Since $\mathcal{P}(S)$ is generated under colimits by smooth $X/S$ and products commute with colimits, we deduce that for any space $\esc{X}\in \mathcal{P}(S)$, the product functor $\esc{X}\times-: \mathcal{P}(S)\to \mathcal{P}(S)$ preserves $bir_n$. Hence, if $\esc{Y} $ is another space in $\mathcal{P}(S) $, the composite $$\esc{X}\times \esc{Y}\to\esc{X}\times L ^n\esc{Y} \to L^n\esc{X}\times L^n \esc{Y}$$ is an $n$-birational equivalence. But since $\mathcal{H}^{n}(S)$ is closed under limits, the codomain of the composite is an $n$-birational local space. Hence, $L^n\esc{X}\times L^n \esc{Y}$ is an $n$-birational localization of $\esc{X}\times\esc{Y}$, meaning that $L^n\big(\esc{X}\times\esc{Y}\big)\to L^n\esc{X}\times L^n \esc{Y}$ is an equivalence, which is to say that $L^n$ preserves products. 
    Suppose $X/S$ is smooth. Since $\mathcal{P}(S)$ is cartesian closed, the functor $h_SX\times-: \mathcal{P}(S)\to \mathcal{P}(S)$ preserves colimits. Now, by \Cref{dense prod}, the functor $h_SX\times-$ preserves dense open immersions. It preserves $\mathbb{A}^1$-projections (by definition of $\mathbb{A}(S)$) and Nisnevich covers. Hence it preserves $bir_n(S)$. Since $\mathcal{P}(S)$ is generated under colimits by smooth $X/S$ and products commute with colimits, we deduce that for any space $\esc{X}\in \mathcal{P}(S)$, the product functor $\esc{X}\times-: \mathcal{P}(S)\to \mathcal{P}(S)$ preserves $bir_n$. Hence, if $\esc{Y} $ is another space in $\mathcal{P}(S) $, the composite $$\esc{X}\times \esc{Y}\to\esc{X}\times L ^n\esc{Y} \to L^n\esc{X}\times L^n \esc{Y}$$ is an $n$-birational equivalence. But since $\mathcal{H}^{n}(S)$ is closed under limits, the codomain of the composite is an $n$-birational local space. Hence, $L^n\esc{X}\times L^n \esc{Y}$ is an $n$-birational localization of $\esc{X}\times\esc{Y}$, meaning that $L^n\big(\esc{X}\times\esc{Y}\big)\to L^n\esc{X}\times L^n \esc{Y}$ is an equivalence, which is to say that $L^n$ preserves products.

    The arguments before and after this remark are intended to specify that we have an operadically correct localization.
 \end{rem}

The pointed version is similar:
\begin{thm}
    For every  scheme $S$, the chain of localization functors $$\mathcal{P}(S)_\bullet\xrightarrow[]{L_{mot}} \mathcal{H}_\bullet^{\mathbb{A}^1}(S)\xrightarrow[]{L^n} \mathcal{H}_\bullet^{n}(S)$$ can be lifted to a similar one of monoidal $\infty$-categories $$\mathcal{P}(S)_\bullet^{\wedge}\xrightarrow[]{L_{mot}} \mathcal{H}_\bullet^{\mathbb{A}^1, \wedge^{\mathbb{A}^1}}(S)\xrightarrow[]{L_{}^{n}} \mathcal{H}_\bullet^{n,\wedge^n} (S).$$ We thus have a commutative diagram of monoidal $\infty$-categories and monoidal functors:
    \[
    \xymatrix{
    \mathcal{P}(S)^{\times}\ar[r]^-{L_{mot}} \ar[d]&\mathcal{H}^{\mathbb{A}^1}(S)^\times\ar[r]^{L^n} \ar[d]&\mathcal{H}^{n}(S)^\times\ar[d]\\
    \mathcal{P}(S)_\bullet^{\wedge}\ar[r]^-{L_{mot
    }} &\mathcal{H}_\bullet^{\mathbb{A}^1, \wedge^{\mathbb{A}^1}}(S)\ar[r]^{L^n} &\mathcal{H}_\bullet^{n,\wedge^n} (S)\\
    }
    \]
\end{thm}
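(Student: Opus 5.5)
The plan is to run the same argument as in \Cref{Ln is cart}, now for the smash product, using \Cref{lurie 1.3.9}. Recall that for a presentable $\infty$-category $\mathcal{C}$ with a terminal object, the pointed category is $\mathcal{C}_\bullet\simeq\mathcal{C}\otimes Spc_*$. If $\mathcal{C}$ is cartesian closed, $\mathcal{C}_\bullet$ inherits a closed symmetric monoidal smash product characterised by $X_+\wedge Y_+\simeq (X\times Y)_+$. Moreover, $(-)_+:\mathcal{C}^\times\to\mathcal{C}_\bullet^\wedge$ is symmetric monoidal; this is the tensor product in $Pr^{L,\otimes}$ of $\mathcal{C}^\times$ with the monoidal functor $Spc^\times\to Spc_*^\wedge$. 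Applying this to $\mathcal{P}(S)$ produces the left vertical arrow and $\mathcal{P}(S)_\bullet^\wedge$.

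Next we show that $L_{mot}$ and $L^n$ on pointed objects are compatible localizations of $\mathcal{P}(S)_\bullet^\wedge$. By Lurie's Remark 1.3.6, it suffices to check that the generating class is stable under $-\wedge\esc{Z}$ for every pointed $\esc{Z}$. The pointed localization $\mathcal{H}^n_\bullet(S)\subset\mathcal{P}(S)_\bullet$ is the localization at the images $f_+$ of the unpointed generators $f\in nis(S)\cup\mathbb{A}(S)\cup B_n(S)$. A pointed object is local for these exactly when its underlying object is local, since $\mathrm{Map}_\bullet(X_+,\esc{Y})\simeq\mathrm{Map}(X,\esc{Y})$.

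The category $\mathcal{P}(S)_\bullet$ is generated under colimits by objects of the form $X_+$, and $-\wedge\esc{Z}$ preserves colimits in each variable. The saturated class is therefore closed under smashing with every pointed object once it is closed under smashing with each $X_+$, for $X/S$ smooth. We have $f_+\wedge X_+\simeq (f\times X)_+$. Moreover $f\times X$ again lies in $bir_n(S)$, respectively $mot(S)$: for the $B_n$ part this is \Cref{dense prod} (as in \Cref{bir cart}), while $\mathbb{A}^1$-projections and Nisnevich covers are stable under base change. Hence $(f\times X)_+$ lies in the pointed saturated class.

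A small subtlety remains. The saturated class is closed under colimits in $\mathcal{P}(S)_\bullet$, whereas $(-)_+$ is only a left adjoint. Since $(-)_+$ preserves colimits and sends generators to generators, it maps the unpointed saturated class into the pointed one, so the reduction is legitimate. \Cref{lurie 1.3.9} then equips $\mathcal{H}^{\mathbb{A}^1}_\bullet(S)$ and $\mathcal{H}^n_\bullet(S)$ with the monoidal structures $\wedge^{\mathbb{A}^1}$ and $\wedge^n$ and makes both localizations monoidal. Since $L^n$ restricted to $\mathcal{H}^{\mathbb{A}^1}_\bullet(S)$ is again a compatible localization, the chain composes.

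For commutativity of the diagram, each vertical arrow is the functor $(-)_+$ on the respective category. The squares commute because both composites are colimit-preserving functors that agree on generators $h_SX$, both sending them to $L(h_SX)_+$. This uses that $L$ commutes with $(-)_+$, which holds because $(-)_+$ preserves local equivalences and, for connected reasons, local objects: $\esc{Y}\sqcup *$ is local when $\esc{Y}$ is, as $*$ is local and coproducts with a point are preserved by Nisnevich sheafification. To make the squares commute as monoidal functors, the cleanest route is to observe that each vertical arrow is $\mathcal{C}^\times\to\mathcal{C}^\times\otimes Spc_*^\wedge$ in $\mathrm{CAlg}(Pr^L)$, and that $L^\otimes$ is natural under $-\otimes Spc_*^\wedge$. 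I expect this coherence to be the main obstacle: identifying $(L^n\mathcal{C})_\bullet$ with $L^n(\mathcal{C}_\bullet)$ as commutative algebras in $Pr^L$, where the localization is induced by tensoring. I would handle it through the universal property of $\mathcal{D}^\otimes$ among monoidal localizations. Both $\mathcal{H}^n(S)^\times\otimes Spc_*^\wedge$ and $L^n(\mathcal{P}(S)_\bullet^\wedge)$ receive a symmetric monoidal left adjoint from $\mathcal{P}(S)^\times\otimes Spc_*^\wedge$ that inverts the same class. They are therefore equivalent under it, and the diagram follows.
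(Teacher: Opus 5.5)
Your proposal is correct in substance and takes the same route as the paper. The paper's proof just invokes Robalo's Corollaries 5.4, 5.6 and 5.9, and these package exactly what you unpack by hand: $\mathcal{C}_\bullet\simeq\mathcal{C}\otimes Spc_*$ with the induced smash product making $(-)_+$ monoidal, the passage of a compatible localization to pointed objects, and the universal property of monoidal localizations, which gives the commutative diagram in $\mathrm{CAlg}(Pr^L)$. Your explicit check of compatibility on the generators $X_+$, via $f_+\wedge X_+\simeq(f\times X)_+$ and \Cref{dense prod}, makes self-contained what the paper delegates to those results together with \Cref{Ln is cart}.

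One justification is wrong, although the conclusion it supports still holds: $(-)_+$ does not preserve local objects at the presheaf level. The coproduct $\esc{Y}\sqcup *$ formed in $\mathcal{P}(S)$ takes the value $*\sqcup *$ on $\emptyset$ and does not send $X_1\sqcup X_2$ to a product, so it is never a Nisnevich sheaf. This is why the paper writes $h_S(X)_+$ as $h_S(X\sqcup S)$.

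The vertical functor on $\mathcal{H}^n(S)$ is $\esc{Y}\mapsto L^n(\esc{Y}\sqcup *)$. The equivalence $L^n_\bullet\circ(-)_+\simeq(-)_+\circ L^n$ then follows from preservation of local equivalences alone, since $\esc{Y}\sqcup *\to L^n\esc{Y}\sqcup *$ is a coproduct of $L^n$-equivalences. More cleanly, it follows because the right adjoints (inclusions and forgetful functors) commute on the nose.

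That second argument also supplies the comparison transformation your ``agree on generators'' step needs. Objectwise agreement on generators does not by itself identify two colimit-preserving functors. Drop the ``local objects'' clause; the rest stands.
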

\begin{proof}
    Use [\cite{robalo2012noncommutative}, Corollaries 5.4, 5.6, and 5.9]. We just mention the definitions of the individual smash products. In $\mathcal{P}_\bullet(S)$, the smash product of $(\esc{X},x)$ and $(\esc{Y},y)$ is given by the cofiber in $\mathcal{P}_\bullet(S)$ of the canonical map from the coproduct to the product:
    $$(\esc{X},x)\vee (\esc{Y},y)\to (\esc{X},x)\times (\esc{Y},y)\to (\esc{X},x)\wedge (\esc{Y},y).$$
    Then one defines $\wedge^{\mathbb{A}^1}:=L_{mot}\circ \wedge$ and $\wedge^n:=L^n\circ \wedge$. 

    The vertical morphisms are similarly the derivations of the monoidal functors $(-)_\bullet$. 
\end{proof}The following is standard, and we state it without proof:
\begin{prop}
    The adjunction $\esc{X}\times-:\mathcal{H}^{n}(S)\rightleftarrows\mathcal{H}^{n}(S): (-)^{\esc{X}}$ descends to an adjunction $$\esc{X}\wedge^n-:\mathcal{H}^{n}_\bullet(S)\rightleftarrows\mathcal{H}^{n}_\bullet(S): (-)_\bullet^{\esc{X}}$$ where $(-)_\bullet^{\esc{X}}$ is the fiber of the map $(-)^{\esc{X}}\to (-)$ induced by the pointing $*\to \esc{X}$. In other words, the smash product monoidal structure on $\mathcal{H}^n_\bullet(S)$ is closed.
\end{prop}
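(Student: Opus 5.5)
The plan is to build the pointed internal hom directly and check the adjunction on mapping spaces. The key tool is the fact that the smash product on $\mathcal{H}^n_\bullet(S)$ is obtained from the cartesian structure. Concretely, for pointed $(\esc{X},x)$ and $(\esc{Y},y)$, the object $\esc{X}\wedge^n\esc{Y}$ is $L^n$ of the cofiber of $\esc{X}\vee\esc{Y}\to\esc{X}\times\esc{Y}$.

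First I will check that $(-)^{\esc{X}}_\bullet$ lands in $\mathcal{H}^n_\bullet(S)$. By \Cref{Ln is cart} the cartesian structure on $\mathcal{H}^n(S)$ is closed, so $(-)^{\esc{X}}$ preserves $\mathcal{H}^n(S)$. The fiber of $\esc{Z}^{\esc{X}}\to\esc{Z}$ is computed as a limit in $\mathcal{P}(S)$. Since $\mathcal{H}^n(S)\subset\mathcal{P}(S)$ is closed under limits, this fiber is again $n$-birational local. It is pointed by the constant map at the basepoint of $\esc{Z}$.

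Next, for pointed $\esc{Y},\esc{Z}\in\mathcal{H}^n_\bullet(S)$, I will compute $Map_\bullet(\esc{X}\wedge^n\esc{Y},\esc{Z})$. Because $\esc{Z}$ is local, this equals the pointed mapping space out of the unlocalized cofiber. That in turn is the fiber of
$$Map_\bullet(\esc{X}\times\esc{Y},\esc{Z})\to Map_\bullet(\esc{X}\vee\esc{Y},\esc{Z})\simeq Map_\bullet(\esc{X},\esc{Z})\times Map_\bullet(\esc{Y},\esc{Z}).$$
On the other side, $Map_\bullet(\esc{Y},\esc{Z}^{\esc{X}}_\bullet)$ is the fiber of $Map_\bullet(\esc{Y},\esc{Z}^{\esc{X}})\to Map_\bullet(\esc{Y},\esc{Z})$. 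I will then use the unpointed adjunction $\esc{X}\times-\dashv(-)^{\esc{X}}$, together with the usual translation between pointed and unpointed mapping spaces (each is a fiber over the evaluation at the basepoint). This rewrites $Map_\bullet(\esc{Y},\esc{Z}^{\esc{X}})$ as the maps $\esc{X}\times\esc{Y}\to\esc{Z}$ that restrict to the constant map on $\esc{X}\times y$. The fiber over $Map_\bullet(\esc{Y},\esc{Z})$ additionally forces triviality on $x\times\esc{Y}$. Matching the two iterated fibers gives the natural equivalence. Naturality in $\esc{Y}$ and $\esc{Z}$ then yields the adjunction.

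The step I expect to be the main obstacle is this bookkeeping of iterated fibers, specifically identifying the wedge $\esc{X}\vee\esc{Y}$ as the pushout $\esc{X}\sqcup_*\esc{Y}$ in the pointed category. To keep it clean I will do the whole computation in $\mathcal{P}_\bullet(S)$, where it is the standard closedness of the smash product on pointed objects of a cartesian closed $\infty$-category (for instance via $\mathcal{P}_\bullet(S)\simeq\mathcal{P}(S)\otimes Spc_*$). I will then descend along $L^n$ using only two facts: local objects are closed under limits, and $\wedge^n=L^n\circ\wedge$. Finally, I will apply \Cref{lurie 1.3.9} to the compatible localization $L^n$ on $\mathcal{P}_\bullet(S)^\wedge$, whose compatibility follows from \Cref{bir cart} after smashing. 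This shows that the local objects form a closed monoidal subcategory, with internal hom given by the restriction of the ambient one, which is exactly $(-)^{\esc{X}}_\bullet$.
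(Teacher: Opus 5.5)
The paper gives no proof of this proposition (it is stated as standard), and your argument is correct: closedness of the smash product in $\mathcal{P}_\bullet(S)$ with internal hom $\mathrm{fib}(\esc{Z}^{\esc{X}}\to\esc{Z})$, the identity $\wedge^n=L^n\circ\wedge$, and the fact that this pointed internal hom preserves $n$-birational motivic spaces.

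The only non-formal input is that last fact. It follows from the compatibility in \Cref{bir cart}: for $f\in bir_n(S)$ one has $\mathrm{Map}(f,\esc{Z}^{\esc{X}})\simeq\mathrm{Map}(f\times\esc{X},\esc{Z})$ with $f\times\esc{X}\in bir_n(S)$, together with closure of $\mathcal{H}^n(S)$ under limits. In your first step, cite this directly rather than just saying that the cartesian structure on $\mathcal{H}^n(S)$ is closed.

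Once that is in place, the iterated-fiber bookkeeping of your third paragraph is redundant. You could even apply the standard result directly to the presentable cartesian closed $\infty$-category $\mathcal{H}^n(S)$, because the smash product computed inside $\mathcal{H}^n_\bullet(S)$ agrees with $L^n\circ\wedge$ ($L^n$ preserves colimits and $\esc{X}\times\esc{Y}$ is already local).
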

\section{Schematic functorialities of \texorpdfstring{$\mathcal{H}^{n}$}{hb} }

We now turn to the main aspect of this paper, namely the analysis of the functoriality of the construction $S\mapsto \mathcal{H}^n(S)$ from $\mathrm{Sch}$ to $Pr^L$ for every $n\geq -1$. However, since $\mathcal{H}^{-1}(S)$ is the category $\mathcal{I}$ for every $S$, it suffices to study the functoriality for $n\geq 0$. When $n=0$, \S\ref{Hb=H0} shows that the functoriality of $\mathcal{H}^0$ yields that of $\mathcal{H}^b$ and vice versa.

Recall that given a morphism $f:S\to T$ of schemes, there is an adjunction $$f^*:\mathcal{P}(T)\leftrightarrows \mathcal{P}(S):f_*,$$ where the left adjoint is uniquely determined by the property that it preserves colimits and sends $h_T(X)$ to $h_S(X\times_TS)$. This makes the assignment $S\mapsto \mathcal{P}(S)$ into a presheaf $\mathcal{P}(-)^*:\mathrm{Sch}^{op}\to Pr^L$ (in fact, a presheaf with values in $\mathcal{T}op_\infty$, the $\infty$-category of large $\infty$-toposes) given by $f\mapsto f^*$.

Since any topology $\sigma$ is, by definition, stable under pullback, this presheaf induces another presheaf: $\mathcal{P}_{\sigma}(-)^*:\mathrm{Sch}^{op}\to Pr^L$ given by $S\mapsto \mathcal{P}_{\sigma}(S)$ and $f\mapsto L_\sigma f^*$. In particular, the Nisnevich topos construction defines a presheaf $\mathcal{P}_{nis}(-)^*:\mathrm{Sch}^{op}\to Pr^L$ (in fact, a presheaf taking values in $\mathcal{T}op_\infty$). 

On the other hand, when $f$ is smooth, precomposition by $f$ on objects of $Sm_S$ defines another functor $f_\shrp: \mathcal{P}(S)\to \mathcal{P}(T)$, which is the canonical left adjoint to $f^*$ and is uniquely determined by the fact that it sends $h_S(X)\mapsto h_T(X)$ and preserves colimits. Thus, the assignment $S\mapsto \mathcal{P}(S)$ also extends to a presheaf $\mathcal{P}(-)_\shrp:\mathrm{Sch}^{sm}\to Pr^L$ given by $p\mapsto p_\shrp$. It is trivial that $p_\shrp$ preserves Nisnevich coverings; hence we obtain another presheaf $\mathcal{P}_{nis}(-)_\shrp:\mathrm{Sch}^{sm}\to Pr^L$ given by $S\mapsto \mathcal{P}_{nis}(-)$ and $p\mapsto L_{nis}p_\shrp$. 

To summarize, let us note that there is a $(2,1)$-category of correspondences on $\mathrm{Sch}$, usually denoted $Corr(\mathrm{Sch})_{sm,all}$, with additional vertical morphisms given by smooth maps. The above discussions (with the standard exchange formulas) can be summarized as follows:
\begin{prop}
    There is a presheaf $$\mathcal{P}_{nis}(-):Corr(\mathrm{Sch})_{sm,all}^{op}\to Pr^L$$ whose restriction to horizontal maps recovers the presheaf $\mathcal{P}_{nis}(-)^*$, while its restriction to vertical maps recovers the presheaf $\mathcal{P}_{nis}(-)_\shrp$.
\end{prop}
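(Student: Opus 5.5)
The plan is to build the presheaf on correspondences by the standard machinery for bivariant functors satisfying base change. We view $\mathrm{Sch}$ with the two classes $sm$ (vertical) and $all$ (horizontal); both are stable under composition and under pullback along each other. Then a functor $Corr(\mathrm{Sch})_{sm,all}^{op}\to Pr^L$ is the same as a presheaf $\mathrm{Sch}^{op}\to Pr^L$ which, restricted to smooth maps, takes values in right adjoints whose left adjoints satisfy the Beck--Chevalley (base change) condition for every pullback square of a smooth map along an arbitrary map. This is the left-adjointable version of the correspondence construction, as in Gaitsgory--Rozenblyum or Liu--Zheng. So the proof reduces to three steps. First, we have the presheaf $\mathcal{P}_{nis}(-)^*$. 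Second, for $p$ smooth, $L_{nis}p^*$ admits the left adjoint $L_{nis}p_\shrp$. Third, the exchange transformation is an equivalence.

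For the first step, we start from $\mathcal{P}(-)^*:\mathrm{Sch}^{op}\to Pr^L$. This is obtained by left Kan extension of the functor $S\mapsto Sm_S$, $f\mapsto -\times_T S$, which is strictly functorial after choosing pullbacks and can be rectified as a Cartesian fibration. We then pass to the reflective subcategories $\mathcal{P}_{nis}(S)$. Since $f^*$ sends Nisnevich covers to Nisnevich covers, and hence preserves Nisnevich local equivalences, the composites $L_{nis}f^*$ are compatible and functorial. The standard tool is Lurie's lemma on localizations of a presheaf of presentable categories when the transition functors preserve the strongly saturated classes.

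For the second step, at the presheaf level $p_\shrp\dashv p^*$ holds because $p^*$ is restriction along $Sm_S\to Sm_T$, $X\mapsto X$. Moreover $p^*$ already preserves Nisnevich sheaves, since a Nisnevich cover in $Sm_S$ is one in $Sm_T$. Hence $p^*$ commutes with $L_{nis}$, and $L_{nis}p_\shrp\dashv p^*$ on $\mathcal{P}_{nis}$.

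For the third step, take a cartesian square with $g:T'\to T$ arbitrary and $p:S\to T$ smooth, and write $p':S'\to T'$ and $g':S'\to S$ for the base changes. We must show that the exchange map $p'_\shrp g'^*\to g^*p_\shrp$ is an equivalence. All the functors involved preserve colimits, so it suffices to check this on representables $h_S(X)$. Both sides are then $h_{T'}(X\times_T T')$, because $X\times_S S'\simeq X\times_T T'$. The map is the canonical one, so it is an equivalence in $\mathcal{P}$. Applying $L_{nis}$, and using that $L_{nis}$ commutes with the functors as shown in the first step, gives the result on $\mathcal{P}_{nis}$.

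The main obstacle is purely $\infty$-categorical: assembling these pointwise data coherently into a functor out of $Corr$. We handle it by citing the general existence theorem (Gaitsgory--Rozenblyum, vol. I, Ch. 7, Thm. 3.2.2, or Liu--Zheng's adjointability results). That theorem takes as input exactly a horizontal presheaf together with the left adjointability of the smooth maps, as established above.
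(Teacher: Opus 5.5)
Your proposal is correct and follows essentially the paper's route. The paper gives no separate proof: it assembles the statement from the preceding discussion, namely $f\mapsto L_{nis}f^*$ via pullback-stability of Nisnevich covers, $p\mapsto L_{nis}p_\shrp$ since $p_\shrp$ preserves Nisnevich coverings, and the standard smooth exchange formula, which is the same representable-level check $X\times_S S'\simeq X\times_T T'$ it later carries out in \Cref{smooth base change}; these are then glued with the Gaitsgory--Rozenblyum extension theorem, exactly as in its proof of \Cref{full functoriality}.
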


The crux of the functoriality of the ordinary motivic formalism is that, because $\mathbb{A}^1$-projections are stable under both $f^*$ (for arbitrary $f$) and $p_\shrp$ (for all smooth $p$), the above presheaf localizes further to a presheaf taking values $S\mapsto \mathcal{H}^{\mathbb{A}^1}(S)$:
\begin{prop}
 The assignment $S\mapsto \mathcal{H}^{\mathbb{A}^1}(S)$ extends to a presheaf $$\mathcal{H}^{\mathbb{A}^1}(-):Corr(\mathrm{Sch})_{sm,all}^{op}\to Pr^L$$ whose restriction to the horizontal maps recovers the presheaf $\mathcal{H}^{\mathbb{A}^1}(-)^*$ given by $f\mapsto L_{mot}f^*$, while the restriction to the vertical maps recovers the presheaf $\mathcal{H}^{\mathbb{A}^1}(-)_\shrp$ given by $p\mapsto L_{mot}p_\shrp$.
\end{prop}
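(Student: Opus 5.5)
The plan is to obtain $\mathcal{H}^{\mathbb{A}^1}(-)$ on $Corr(\mathrm{Sch})_{sm,all}$ as a localization of the already constructed presheaf $\mathcal{P}_{nis}(-)$, using the standard fact that a presheaf of presentable categories localizes along any family of localizations compatible with its transition functors. Concretely, for each $S$ we set $W(S)=nis(S)\cup\mathbb{A}(S)$, whose strong saturation is $mot(S)$. It then suffices to check the following: for every morphism $f:S\to T$, the left adjoint $f^*:\mathcal{P}(T)\to\mathcal{P}(S)$ sends $mot(T)$ into $mot(S)$; and for every smooth $p:S\to T$, the left adjoint $p_\shrp$ sends $mot(S)$ into $mot(T)$. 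Once this holds, the composites $L_{mot}f^*$ and $L_{mot}p_\shrp$ factor uniquely through $\mathcal{H}^{\mathbb{A}^1}$ as colimit-preserving functors. Their compositions are then automatically coherent, since $L_{mot}$ is a localization and the induced functors are characterized by a universal property in $Pr^L$.

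Because $f^*$ and $p_\shrp$ preserve colimits, the preimage of a strongly saturated class under either functor is again strongly saturated. So we only need to check the generators.

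For $f^*$: Nisnevich covers pull back to Nisnevich covers, so $f^*$ preserves $nis$; this is already built into the presheaf $\mathcal{P}_{nis}(-)^*$. Moreover, $f^*h_T(X\times\mathbb{A}^1_T\to X)=h_S(X_S\times\mathbb{A}^1_S\to X_S)$, which lies in $\mathbb{A}(S)$.

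For $p_\shrp$: this functor simply regards an $S$-scheme as a $T$-scheme. It therefore sends Nisnevich covers in $Sm_S$ to Nisnevich covers in $Sm_T$, and sends $X\times_S\mathbb{A}^1_S=X\times_T\mathbb{A}^1_T\to X$ to an element of $\mathbb{A}(T)$.

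It remains to extend to correspondences. A presheaf on $Corr(\mathrm{Sch})_{sm,all}$ is determined by the horizontal and vertical presheaves together with the base change equivalences $g^*p_\shrp\simeq p'_\shrp g'^*$ for cartesian squares with $p$ smooth. For $\mathcal{P}_{nis}$ these equivalences are the exchange data already available in the preceding proposition. Since all four functors preserve $mot$, the exchange maps descend after applying $L_{mot}$: we get $L_{mot}g^*L_{mot}p_\shrp\simeq L_{mot}g^*p_\shrp$, because $g^*$ preserves motivic equivalences, and likewise on the other side.

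Formally, I would take the full sub-presheaf of the cocartesian fibration classified by $\mathcal{P}_{nis}(-)$ on motivic objects. Equivalently, I would apply the localization of presheaves in $Pr^L$, as in Robalo or Gaitsgory–Rozenblyum, along the fiberwise localizations $L_{mot}$. The compatibility just proved is precisely the hypothesis that these localizations commute with the transition functors.

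The main obstacle is not any of these verifications but the $\infty$-categorical bookkeeping: ensuring that localizing a $Corr$-shaped diagram fiberwise produces a diagram of the same shape with the stated restrictions. I would handle this with the relative-localization criterion of Lurie, HA Proposition 2.2.1.9-style (a localization compatible with the transition functors yields a localized cocartesian fibration). I would apply it once to the whole functor out of $Corr(\mathrm{Sch})_{sm,all}^{op}$, rather than treating the horizontal and vertical parts separately.
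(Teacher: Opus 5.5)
Your proposal is correct and follows the same route as the paper, which gives no separate proof and justifies the proposition only by the remark that Nisnevich covers and $\mathbb{A}^1$-projections are stable under $f^*$ and $p_\shrp$, so that the presheaf $\mathcal{P}_{nis}(-)$ on $Corr(\mathrm{Sch})_{sm,all}$ localizes to $\mathcal{H}^{\mathbb{A}^1}(-)$. Your check on generators, your descent of the exchange equivalences (which mirrors the paper's argument in \Cref{smooth base change}), and your appeal to a fiberwise-localization criterion make that remark precise.
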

In this section, we plan to conduct a similar study of the unstable $n$-birational motivic homotopy category construction $S\mapsto \mathcal{H}^n(S)$. Throughout this section, $n$ denotes a natural number $\geq 0$.
\subsection{Pullback along morphisms lifting generalizations}

Unlike the ordinary motivic case, the pullback functors $f^*$ in general do not preserve $n$-dense open immersions. Openness is not really the issue; the issue is density and the appropriate behavior of the height of the density. The following lemma is the best we have; we make a definition for this.
\begin{defn}\label[defn]{uglt defn}
We say that a morphism of schemes is universally generalization lifting, or UGLT for short, if its arbitrary base change lifts generalizations.
\end{defn}
\begin{ex}
Universally open maps (for example, flat maps) and universal homeomorphisms are the primary examples. The composition of UGLT morphisms is UGLT.
\end{ex}
It follows that the wide class of UGLT morphisms of $\mathrm{Sch}$ forms a subcategory $\mathrm{Sch}^{uglt}$. Given a Qcqs scheme $S$, we denote by $\mathrm{Uglt}_S$ the full subcategory of $\mathrm{Sch}_{/S}$ spanned by UGLT morphisms. Moreover, $\mathrm{Uglt}^{uglt}_S$ denotes the slice category $({\mathrm{Sch}^{uglt})}_{/S}$, i.e., the wide subcategory of $\mathrm{UGLT}$ schemes and $\mathrm{UGLT}$ morphisms over $S$. 

Note that, since $\mathrm{Sch}^{uglt}$ does not have a terminal object, it is not possible to recover $\mathrm{Sch}^{uglt}$ as one of its slices. Therefore, studying presheaves on $\mathrm{Uglt}_{S}$ for an arbitrary base scheme is not the most general setup one might expect (in contrast to the ordinary motivic case). Therefore, we will state some of the results (the absolute ones) on $\mathrm{Sch}^{uglt}$, while some (the relative ones) will be stated over $\mathrm{Uglt}_S$ for an arbitrary base scheme $S$.

The following is the key to the announced functoriality of the $n$-birational motivic construction.
 \begin{lem}\label[lem]{glt pull}
    Suppose $f: S\to T$ lifts generalizations. Let $U\hookrightarrow T$ be an $n$-dense open immersion. Then $f^{-1}U\hookrightarrow S$ is also $n$-dense.
\end{lem}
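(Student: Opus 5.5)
The argument is pointwise: we show that every point of $S\setminus f^{-1}U$ has codimension at least $n+1$ in $S$. Since $cod_S(S\setminus f^{-1}U)$ is the minimum of $cod_S$ over the generic points of the closed set $S\setminus f^{-1}U$, this pointwise bound gives exactly $n$-density of $f^{-1}U\hookrightarrow S$. Openness is immediate, because $f^{-1}U$ is the base change of an open immersion. So the whole content is the codimension estimate, and the statement is the same as the unnamed lemma in \S2.1; I will essentially repeat that argument, with more care about chains.

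Fix $s\in S\setminus f^{-1}U$ and set $t=f(s)$. Then $t\notin U$, so $t\in T\setminus U$. By hypothesis every generic point of $T\setminus U$ has codimension $\geq n+1$ in $T$. The point $t$ specializes from some generic point $\xi$ of an irreducible component of $T\setminus U$ containing it. Any generic chain starting at $\xi$ can be prefixed by $t\leftsquigarrow\xi$ (or used as is if $t=\xi$), so $cod_T(t)\geq cod_T(\xi)\geq n+1$. Hence there is a chain of generalizations
$$t=t_0\leftsquigarrow t_1\leftsquigarrow\cdots\leftsquigarrow t_{n+1}$$
in $T$. If $cod_T t=\infty$, we choose such a chain of length $n+1$ from the supremum.

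Next we lift the chain step by step. Since $f$ lifts generalizations, the generalization $t_1$ of $f(s_0)=t_0$, with $s_0:=s$, lifts to a generalization $s_1$ of $s_0$ with $f(s_1)=t_1$. Inductively, $s_{i+1}$ is a generalization of $s_i$ with $f(s_{i+1})=t_{i+1}$. The one point needing care is that each step is a strict generalization in the paper's sense ($x\neq y$). This holds because $f(s_i)=t_i\neq t_{i+1}=f(s_{i+1})$ forces $s_i\neq s_{i+1}$. Thus $s=s_0\leftsquigarrow s_1\leftsquigarrow\cdots\leftsquigarrow s_{n+1}$ is a generic chain of length $n+1$ at $s$, and $cod_S(s)\geq n+1$.

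Applying this bound to the generic points of $S\setminus f^{-1}U$ gives $cod_S(S\setminus f^{-1}U)\geq n+1>n$, which is the claim. If $S\setminus f^{-1}U$ is empty, $f^{-1}U=S$ and the claim is trivial. The main obstacle is only the bookkeeping: the definition of $cod$ is a supremum over chains, so we must produce an actual chain of length exactly $n+1$ from the bound on $cod_T t$, and must verify that the lifted steps remain strict. Both points are handled above.
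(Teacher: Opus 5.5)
Your proof is correct and is the same argument as the paper's: take $s\notin f^{-1}U$, use $f(s)\in T\setminus U$ to get a generalizing chain of length $n+1$ at $f(s)$, and lift it step by step to a chain at $s$. Your version is slightly more careful than the paper's, which claims that \emph{every} maximal chain at $f(s)$ has length $\geq n+1$; that fails in general, e.g.\ at a point where a plane meets a line. Choosing one chain of length $n+1$ from the supremum and checking that the lifted steps stay strict, as you do, is the right bookkeeping.
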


\begin{proof}
We may safely assume that $U$ is an open subscheme of $T$ and may as well choose $f^{-1}U$ to be an open subscheme of $S$. We have to show that $f^{-1}U\subset S$ is $n$-dense. To see this, let $s\in S-f^{-1}(U)$. This implies that $f(s)\in T-U$ and thus has codimension at least $n+1$. Any maximal chain of generalizations starting at $f(s)$ must therefore have length at least $n+1$. Since $f$ lifts generalizations, we may lift one such maximal chain to obtain a chain of generalizations starting at $s$ and thus having length at least $n+1$. This implies that $cod_S(S-f^{-1}(U))\geq n+1$. 
\end{proof}
\begin{lem}\label[lem]{ugltpull}
     Let $f: S\to T$ be a universally generalization-lifting morphism of schemes. Then pullback along $f$ preserves $n$-dense open immersions. 
\end{lem}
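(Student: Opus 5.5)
The plan is to reduce the statement to \Cref{glt pull} by working one smooth scheme at a time. First I fix what must be shown. For $X\in Sm_T$ and an $n$-dense open immersion $j:U\hookrightarrow X$, i.e.\ $j\in B_n(T)$, I claim that the base change
$$j_S:U\times_TS\hookrightarrow X\times_TS$$
lies in $B_n(S)$. Since $f^*h_T(Y)\simeq h_S(Y\times_TS)$, this says exactly that $f^*:\mathcal{P}(T)\to\mathcal{P}(S)$ sends the generating set $B_n(T)$ into $B_n(S)$.

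Two parts of this are formal. Smoothness, quasi-compactness and finite presentation are stable under base change, so $X\times_TS$ and $U\times_TS$ are objects of $Sm_S$. Open immersions are stable under base change, so $j_S$ is an open immersion. By the Remark identifying open immersions with open subschemes, we may take $U\subset X$ to be an open subscheme. Then $U\times_TS$ is identified with the open subscheme $f_X^{-1}(U)\subset X\times_TS$, where $f_X:X\times_TS\to X$ is the projection.

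The only substantive point is the $n$-density. The map $f_X$ is the base change of $f$ along the structure map $X\to T$. Because $f$ is UGLT (\Cref{uglt defn}), every base change of $f$ lifts generalizations, and in particular $f_X$ does. Applying \Cref{glt pull} to $f_X$ and the $n$-dense open $U\subset X$ shows that $f_X^{-1}(U)\subset X\times_TS$ is $n$-dense, as required.

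I do not expect a genuine obstacle here. The only thing to watch is that the lemma is applied to the base-changed map $f_X$, not to $f$ itself. This is precisely why the hypothesis is that $f$ \emph{universally} lifts generalizations, rather than merely lifting them. If one also wants the saturated form of the statement, namely that $L_{nis}f^*$ sends $bir_n(T)$ into $bir_n(S)$, this follows by a short argument. The functor $f^*$ preserves colimits, so the preimage of $bir_n(S)$ under $f^*$ is a saturated class. Moreover, $f^*$ already sends Nisnevich equivalences and $\mathbb{A}^1$-projections to morphisms of the same kind. Hence this preimage contains the generators $nis(T)$, $\mathbb{A}(T)$ and $B_n(T)$, and therefore contains all of $bir_n(T)$.
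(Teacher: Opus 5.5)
Your proposal is correct and follows the paper's proof. Both base change $f$ along $X\to T$ and apply \Cref{glt pull} to the generalization-lifting map $X\times_T S\to X$. Your added remarks on base change of smoothness and on the saturated class are fine; the latter is essentially the content of \Cref{generic push}.
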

\begin{proof}
       Let $U\to X\in B_n(T)$; we have to show that $f^{-1}(U\to X)\in B_n(S)$. By base change to $X$, we reduce to the previous lemma and the glt morphism $X\times _TS\to X$. 
\end{proof}

We thus have the following pullback functoriality:
\begin{prop}\label[prop]{generic push}
    Let $f: S\to T$ be a morphism that universally lifts generalizations. Then the pushforward along $f$ preserves $n$-birational motivic spaces, and, dually, the pullback functor $f^*$ preserves $n$-birational motivic equivalences. Thus, the adjunction below on the left descends to the one on the right: $$f^*:\mathcal{P}(T)\leftrightarrows\mathcal{P}(S):f_*\text{,  }L^nf^{*}:\mathcal{H}^{n}(T)\leftrightarrows \mathcal{H}^{n}(S): f_*.$$
\end{prop}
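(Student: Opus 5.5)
The plan is to prove the statement about equivalences first, on the presheaf level, and then obtain the statement about local objects by adjunction. Recall that $bir_n(T)$ is the strongly saturated class generated by $nis(T)$, $\mathbb{A}(T)$ and $B_n(T)$. The functor $f^*:\mathcal{P}(T)\to\mathcal{P}(S)$ is a left adjoint, so it preserves colimits. It therefore preserves pushouts and colimits of arrows, and it obviously preserves two-out-of-three. Consequently the class $\{g : f^*g\in bir_n(S)\}$ is strongly saturated. It is then enough to check that $f^*$ sends each generator into $bir_n(S)$.

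For the generators I would argue as follows, using that $f^*h_T(X)\simeq h_S(X\times_T S)$.
\begin{enumerate}
\item For an $\mathbb{A}^1$-projection $X\times\mathbb{A}^1_T\to X$, the pullback is the $\mathbb{A}^1$-projection $(X\times_TS)\times\mathbb{A}^1_S\to X\times_TS$, which lies in $\mathbb{A}(S)$.
\item For Nisnevich equivalences, it suffices (as in \Cref{nis is bir}, via [\cite{hoyois2017six}, Proposition 3.8]) to treat the \v{C}ech nerve of a Nisnevich cover $U\to X$ in $Sm_T$. Pullback of schemes commutes with the fibre products that make up the \v{C}ech nerve, and $f^*$ preserves colimits, so $f^*$ of this nerve is the \v{C}ech nerve of the Nisnevich cover $U\times_TS\to X\times_TS$. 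That nerve lies in $nis(S)$.
\item For a map $U\hookrightarrow X$ in $B_n(T)$, the pullback $U\times_TS\hookrightarrow X\times_TS$ is an open immersion in $Sm_S$. It is $n$-dense by \Cref{ugltpull}, since $f$ is UGLT. Hence it lies in $B_n(S)$.
\end{enumerate}
Together these give $f^*(bir_n(T))\subset bir_n(S)$.

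Dually, take $\esc{Y}\in\mathcal{H}^n(S)$ and $g\in bir_n(T)$. There is an equivalence $\mathrm{Map}(g,f_*\esc{Y})\simeq\mathrm{Map}(f^*g,\esc{Y})$. The right-hand side is an equivalence because $f^*g\in bir_n(S)$ and $\esc{Y}$ is $bir_n(S)$-local. So $f_*\esc{Y}$ is $bir_n(T)$-local, which means $f_*$ restricts to a functor $\mathcal{H}^n(S)\to\mathcal{H}^n(T)$.

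It remains to see that $L^nf^*$ is left adjoint to this restriction. For $\esc{X}\in\mathcal{H}^n(T)$ and $\esc{Y}\in\mathcal{H}^n(S)$ we have
$$\mathrm{Map}(L^nf^*\esc{X},\esc{Y})\simeq\mathrm{Map}(f^*\esc{X},\esc{Y})\simeq\mathrm{Map}(\esc{X},f_*\esc{Y}),$$
where the first equivalence is the universal property of $L^n$ and the second is the original adjunction. We could equally restrict $L^nf^*$ along the inclusion $\mathcal{H}^n(T)\subset\mathcal{P}(T)$; since $f^*$ preserves $bir_n$, this is the same as the functor induced on the localization.

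I expect the only real content to be the third generator case, namely the stability of $n$-density under pullback. This is exactly why $f$ must be UGLT and not merely generalization-lifting: the maps $X\times_TS\to X$ are base changes of $f$. That point is handled by \Cref{ugltpull}. The rest is formal, and the only care needed is to use that the generators of $nis(T)$ can be taken to be \v{C}ech nerves of covers, which pull back to covers.
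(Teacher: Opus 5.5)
Your proposal is correct and follows the paper's proof. Since $f^*$ is a left adjoint, it suffices to check that it sends $\mathbb{A}(T)$, $B_n(T)$ (via \Cref{ugltpull}) and the generators of $nis(T)$ into $bir_n(S)$; the statement about $f_*$ and the adjunction $L^nf^*\dashv f_*$ then follow formally.

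One small precision: in $\mathcal{P}(T)$, unlike in $\mathcal{P}_\Sigma(T)$, the \v{C}ech nerves of single-morphism covers do not generate $nis(T)$. The non-sheafified constant presheaf is local for all of them without being a Nisnevich sheaf. So you should use covering families $\{U_i\to X\}$, including the empty cover of $\emptyset$. Your argument goes through unchanged, because $f^*$ preserves coproducts and representables.
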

\begin{proof}
   Since $f^*$ is a left adjoint and $bir_n(T)$ is the saturation class of $\mathbb{A}(S)\cup B_n(S)$ and $nis(T)$, it suffices to show that it takes $\mathbb{A}(T)\cup B_n(T)$ to $\mathbb{A}(S)\cup B_n(S)$ and $nis(T)$ to $nis(S)$. The preservation of the set of $\mathbb{A}^1$-projections is obvious from its definition, whereas \Cref{ugltpull} settles the case of $B_n$. Since Nisnevich covers are stable under pullbacks, preservation of $nis$ is formal. Consequently, $f^*$ preserves $n$-birational motivic equivalences (for all $n$). Therefore, the right adjoint $f_*$ preserves $n$-local objects, and hence the result.
\end{proof}

\begin{cor}\label[cor]{flat pull push}
    If $f:S\to T$ is flat, then $f^*$ preserves $n$-birational equivalences, and we have an induced adjunction $L^nf^{*}:\mathcal{H}^{n}(T)\leftrightarrows \mathcal{H}^{n}(S): f_*$.
\end{cor}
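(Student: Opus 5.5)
The corollary follows from \Cref{generic push} once we check that a flat morphism $f:S\to T$ is universally generalization lifting in the sense of \Cref{uglt defn}. So the plan has two steps: first establish that flatness implies UGLT, then invoke \Cref{generic push} verbatim. That proposition gives both conclusions: $f^*$ preserves $bir_n$, and the adjunction $f^*\dashv f_*$ on $\mathcal{P}$ descends to $L^nf^*\dashv f_*$ between $\mathcal{H}^n(T)$ and $\mathcal{H}^n(S)$.

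For the first step, flatness is stable under arbitrary base change. So for any $T'\to T$, the base change $f':S\times_T T'\to T'$ is again flat. It therefore suffices to show that a flat morphism lifts generalizations, which is the going-down property for flat morphisms. Concretely, let $s\in S$ and let $t'\leftsquigarrow$-generalize $f(s)$. Then $\mathcal{O}_{T,f(s)}\to\mathcal{O}_{S,s}$ is a flat local homomorphism, hence faithfully flat. The prime of $\mathcal{O}_{T,f(s)}$ corresponding to the generalization therefore lifts to a prime of $\mathcal{O}_{S,s}$, and this gives the required generalization $s'$ of $s$ mapping to it. This is standard (Stacks, Lemma 03HV), and I would simply cite it. No finiteness hypotheses are needed, which matters because all schemes here are only qcqs.

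The one point needing care, which I'd flag as the only potential obstacle, is that the UGLT condition quantifies over \emph{all} base changes in $\mathrm{Sch}$. This is handled by the base-change stability of flatness noted above, so \Cref{ugltpull} applies to every $X\to T$ in $Sm_T$ via the flat map $X\times_TS\to X$. Alternatively, flat morphisms of finite presentation are universally open, and the Example after \Cref{uglt defn} already lists these. Using going-down avoids the finite presentation assumption, so I would present that route.
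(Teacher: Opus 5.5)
Your proposal is correct and follows the paper's own argument: the paper also cites Stacks 03HV to show that flat morphisms lift generalizations, and then applies \Cref{generic push}. You also state explicitly that flatness is stable under base change, which the paper leaves implicit; this is what makes flat maps genuinely UGLT without the finite-presentation hypothesis behind universal openness.
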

\begin{cor}\label[cor]{stalks of n bir equiv}
    Let $\esc{X}\to \esc{Y}$ be a morphism of $S$-spaces that is a $n$-birational motivic equivalence over $S$. Then for every point $s\in S$, the morphism $\esc{X}_{{\mathcal{O}_{S,s}}}\to \esc{Y}_{{\mathcal{O}_{S,s}}}$ (resp. its base change to $\mathcal{O}_{S,s}^h$ or $\mathcal{O}_{S,s}^{sh}$) is a $n$-birational equivalence in $\mathcal{P}(\mathcal{O}_{S,s})$ (resp. $\mathcal{P}(\mathcal{O}_{S,s}^h)$ or $\mathcal{P}(\mathcal{O}_{S,s}^{sh})$).
\end{cor}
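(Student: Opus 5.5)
The plan is to deduce the corollary directly from \Cref{generic push}, once we know that the relevant maps $g:\mathrm{Spec}\,\mathcal{O}_{S,s}\to S$, $\mathrm{Spec}\,\mathcal{O}^h_{S,s}\to S$ and $\mathrm{Spec}\,\mathcal{O}^{sh}_{S,s}\to S$ are universally generalization lifting. Each of these morphisms is flat. The localization $\mathcal{O}_{S,s}$ is a filtered limit of Zariski open neighbourhoods of $s$. The henselization and strict henselization are filtered limits of affine étale neighbourhoods, and filtered colimits of flat algebras are flat. Flat morphisms lift generalizations by going-down, and flatness is stable under base change, so each $g$ is UGLT. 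This is already recorded in the example following \Cref{uglt defn} and packaged in \Cref{flat pull push}. One caveat is that these are not of finite presentation, but \Cref{generic push} and \Cref{ugltpull} never used that; they only needed $g$ to be a morphism of qcqs schemes lifting generalizations universally, and $\mathrm{Spec}$ of a local ring is qcqs.

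Next I have to identify the base change $\esc{X}_{\mathcal{O}_{S,s}}$ with $g^*\esc{X}$ computed in $\mathcal{P}(\mathcal{O}_{S,s})$. By definition $g^*$ is the colimit-preserving functor with $h_S(X)\mapsto h(X\times_S\mathrm{Spec}\,\mathcal{O}_{S,s})$, which is precisely what "base change of an $S$-space" means here. So the statement reduces to: $g^*$ carries $bir_n(S)$ into $bir_n(\mathcal{O}_{S,s})$. This is exactly the content of \Cref{generic push}. The argument there runs as follows. Since $g^*$ is a left adjoint, it preserves saturated classes once it preserves the generators. It preserves $\mathbb{A}^1$-projections and Nisnevich covers formally, and it preserves $n$-dense open immersions by \Cref{ugltpull}. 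The same argument applies verbatim to the henselian and strictly henselian cases.

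I expect the main point of care to be whether the localization $L^n$ being inverted lives over the right base. A morphism inverted by $L^n_S$ is in the saturation generated by $nis(S)\cup\mathbb{A}(S)\cup B_n(S)$. Because $g^*:\mathcal{P}(S)\to\mathcal{P}(\mathcal{O}_{S,s})$ preserves all colimits, pushouts, and 2-out-of-3 (being a functor), the preimage $(g^*)^{-1}bir_n(\mathcal{O}_{S,s})$ is itself a strongly saturated class. It contains the generators, so it contains all of $bir_n(S)$.

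A second subtlety is the possible non-finite-presentation of $g$ when computing $Sm$ over the target. If needed, I would fall back on the continuity remark that $\mathcal{P}_{nis}(Sm)\simeq\mathcal{P}_{nis}(Sm^{fp})$ cited in the notations section. Otherwise the proof is a two-line application of \Cref{flat pull push}.
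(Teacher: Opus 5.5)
Your proposal is correct and follows the paper's own argument. The paper notes that $\mathrm{Spec}\,\mathcal{O}_{S,s}\to S$ and its henselian and strictly henselian variants are flat, being limits of open, resp.\ \'etale, neighbourhoods, and then invokes \Cref{flat pull push}, which is \Cref{generic push} applied to flat and hence UGLT maps; your additional checks (UGLT via going-down plus base change, strong saturation of the preimage of $bir_n$ under the colimit-preserving $g^*$, and base changes of objects of $Sm_S$ already lying in $Sm_{\mathcal{O}_{S,s}}$ so no continuity fallback is needed) are sound elaborations of the same route.
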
 
\begin{proof}
    This follows from \Cref{flat pull push} using the fact that $Spec(\mathcal{O}_{S,s})\to S$ (resp. $Spec(\mathcal{O}_{S,s}^h)\to S$ or $Spec(\mathcal{O}_{S,s}^{sh})\to S$) is a flat morphism of schemes (being a limit of etale maps; see the paragraph above \Cref{continuity of Hb}).
\end{proof} 
\begin{proof}
    Flat maps lift generalizations [\cite[\href{https://stacks.math.columbia.edu/tag/03HV}{Tag 03HV}]{stacks-project}]. Thus, the statement follows from the above remark.
\end{proof}
\begin{prop}
    If $f:S\to S' $ is a morphism of schemes that universally lifts generalizations, then the induced functor $f_*:\mathcal{H}^n(S)\to \mathcal{H}^n(S')$ preserves all filtered colimits. When $n=0$, this moreover preserves sifted colimits. 
\end{prop}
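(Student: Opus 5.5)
The plan is to reduce the statement to colimit-preservation properties of the underlying presheaf-level pushforward, using the closure properties of $\mathcal{H}^n(S)\subset\mathcal{P}(S)$ recorded in \Cref{compact generation}(2).

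First, recall that on presheaves $f_*:\mathcal{P}(S)\to\mathcal{P}(S')$ is given by precomposition with the base change functor $Sm_{S'}\to Sm_S$, $X\mapsto X\times_{S'}S$. In particular it is computed sectionwise, $(f_*\esc{X})(X)=\esc{X}(X\times_{S'}S)$. Since colimits in a presheaf category are computed objectwise, $f_*:\mathcal{P}(S)\to\mathcal{P}(S')$ preserves all small colimits.

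By \Cref{generic push}, $f_*$ carries $\mathcal{H}^n(S)$ into $\mathcal{H}^n(S')$, and the induced functor is the restriction of the presheaf-level $f_*$. Now let $D:I\to\mathcal{H}^n(S)$ be a filtered diagram, and write $b^n$ for the inclusions into presheaves.
\begin{enumerate}
\item By \Cref{compact generation}(2), $\mathcal{H}^n(S)\subset\mathcal{P}(S)$ is closed under filtered colimits, so $\colim_{\mathcal{H}^n(S)}D\simeq \colim_{\mathcal{P}(S)}b^nD$.
\item Applying $f_*$ and using sectionwise colimit preservation gives $f_*\colim b^nD\simeq\colim_{\mathcal{P}(S')} f_*b^nD$.
\item Each $f_*D_i$ lies in $\mathcal{H}^n(S')$, and $\mathcal{H}^n(S')\subset\mathcal{P}(S')$ is again closed under filtered colimits. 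Hence this presheaf colimit is also the colimit in $\mathcal{H}^n(S')$.
\end{enumerate}
This proves that $f_*$ preserves filtered colimits.

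For $n=0$ the same three steps apply with "filtered" replaced by "sifted". Here one uses the second part of \Cref{compact generation}(2): for qcqs $S$ and $S'$, the inclusion $\mathcal{H}^0\subset\mathcal{P}_\Sigma\subset\mathcal{P}$ is closed under sifted colimits, which rests on \Cref{Motivic equivalences are Birational equivalences}.

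The main obstacle I expect is keeping straight which ambient category computes the colimits. The argument must pass through $\mathcal{P}$ (or $\mathcal{P}_\Sigma$), not through $\mathcal{P}_{nis}$ or $\mathcal{H}^{\mathbb{A}^1}$, because there colimits involve sheafification and $f_*$ need not commute with it. It is also worth checking that the $\mathcal{H}^n$-valued $f_*$ of \Cref{generic push} really is the restriction of the presheaf $f_*$ and not a localized variant. It is: $f_*$ is a right adjoint that already preserves local objects, so no $L^n$ is applied to it.
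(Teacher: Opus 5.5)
Your proof is correct, but it is organized differently from the paper's.

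The paper argues formally through the adjunction $f^{n*}\dashv f_*$. The left adjoint sends the generators $h^n_{S'}(X)$ to $h^n_S(X\times_{S'}S)$ (\Cref{pull adjunction}). These are compact, and compact projective when $n=0$, by \Cref{compact generation}(3). Testing against these generators then shows that the right adjoint $f_*$ preserves filtered (resp.\ sifted) colimits.

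You instead work at the presheaf level. There $f_*$ is precomposition with $X\mapsto X\times_{S'}S$, so it preserves all colimits. The closure of $\mathcal{H}^n\subset\mathcal{P}$ under filtered colimits, and of $\mathcal{H}^0\subset\mathcal{P}$ under sifted ones via $\mathcal{H}^0\simeq\mathcal{H}^b$, then means those colimits are computed sectionwise on both sides.

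Part (3) of \Cref{compact generation} is itself deduced from part (2), so the two arguments rest on the same input; yours unwinds the adjunction step. What yours buys is explicitness:
\begin{itemize}
\item it gives the formula $(f_*\colim_i D_i)(X)\simeq\colim_i D_i(X\times_{S'}S)$;
\item it does not need to identify $f^{n*}$ on generators, nor to use that the generators jointly detect equivalences;
\item it makes clear that the UGLT hypothesis is used only to ensure that $f_*$ lands in $\mathcal{H}^n(S')$.
\end{itemize}
The paper's version is an instance of a general principle: a right adjoint preserves filtered colimits whenever its left adjoint preserves a set of compact generators. It is therefore purely formal once the compact (projective) generators are available.
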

\begin{proof}
    Since both categories are generated under colimits by compact (projective when $n=0$) objects [\Cref{compact generation}], namely schemes smooth over the base $S'$, and since $f^*$ takes these generators to schemes smooth over the base $S$, and thus to compact (projective when $n=0$) objects in particular. The claim then follows by adjunction.
\end{proof}

The case of smooth extension is obviously straightforward:
\begin{prop}\label[prop]{smooth extension pull}
    If $p:R\to S$ is a smooth morphism of schemes, the smooth extension map $L_{mot }p_{\shrp} : \mathcal{H}^{\mathbb{A}^1}(R)\to \mathcal{H}^{\mathbb{A}^1}(S) $ (or, $p_{\shrp} : \mathcal{P}(Sm_R)\to \mathcal{P}(Sm_S) $) preserves height $n$-birational equivalences. Thus, smooth pullback commutes with $n$-birational localizations: $L^np^*\simeq p^*L^n$, and we have adjunctions 
   \[ \begin{array}{ccc}
  L^np_{\sharp} :\mathcal{H}^{n}(R)\leftrightarrows \mathcal{H}^{n}(S): p^* & &p^*:\mathcal{H}^n(S)\leftrightarrows \mathcal{H}^n(R):p_*
\end{array}
\]
\end{prop}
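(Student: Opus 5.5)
The strategy is to check that $p_\shrp:\mathcal{P}(R)\to\mathcal{P}(S)$ sends generators of the saturated class $bir_n(R)$ into $bir_n(S)$. The formal consequences then follow by adjunction, since $p^*$ preserves $n$-birational local objects. Because $p_\shrp$ is a left adjoint, it preserves colimits, pushouts and retracts. It therefore carries the saturation of a class into the saturation of its image. By the description of $bir_n(R)$ recalled in \S2.2, it suffices to treat three generating families: the Nisnevich equivalences $nis(R)$, the $\mathbb{A}^1$-projections $\mathbb{A}(R)$, and the $n$-dense open immersions $B_n(R)$.

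The three families are handled as follows.
\begin{enumerate}
\item For a Nisnevich cover $U\to X$ in $Sm_R$, its \v{C}ech nerve is computed by fibre products over $X$. Composing with $p$ gives the same schemes, now viewed in $Sm_S$, so $p_\shrp$ takes it to the \v{C}ech nerve of the same cover over $S$. Hence $p_\shrp(nis(R))\subset nis(S)$.
\item The image of $X\times_R\mathbb{A}^1_R\to X$ is $X\times_S\mathbb{A}^1_S\to X$, since $X\times_R\mathbb{A}^1_R\cong X\times_S\mathbb{A}^1_S$. This lies in $\mathbb{A}(S)$.
\item For $U\hookrightarrow X$ in $B_n(R)$, note that $n$-density is defined purely in terms of the codimension of $X\setminus U$ inside the scheme $X$. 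It does not involve the base at all. So $U\hookrightarrow X$, regarded as a morphism of $Sm_S$ (both sides being smooth over $S$ via $p$), lies in $B_n(S)$.
\end{enumerate}
Thus $p_\shrp(bir_n(R))\subset bir_n(S)$. The same argument applies to $L_{mot}p_\shrp$ on $\mathcal{H}^{\mathbb{A}^1}$, because motivic equivalences lie in $bir_n$. Consequently $L^np_\shrp$ descends to a functor $\mathcal{H}^n(R)\to\mathcal{H}^n(S)$, and it is left adjoint to the restriction of $p^*$.

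Next I show $L^np^*\simeq p^*L^n$. Since $p$ is smooth, it is flat, and hence UGLT. So \Cref{generic push} applies: $p^*$ preserves $bir_n$, which gives $L^np^*\simeq L^np^*L^n$. It remains to show that $p^*$ preserves $n$-birational local objects. For $\esc{X}\in\mathcal{H}^n(S)$ and $Y\in Sm_R$, we have $(p^*\esc{X})(Y)=\esc{X}(Y)$, with $Y$ regarded over $S$; this is the description of $p^*$ as a right adjoint to $p_\shrp$. Locality of $p^*\esc{X}$ against a generator $g$ of $bir_n(R)$ is equivalent, by adjunction, to locality of $\esc{X}$ against $p_\shrp g$. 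This holds by the previous step. Hence $p^*\esc{X}$ is already local, so $p^*L^n\esc{Y}$ is local and $p^*\esc{Y}\to p^*L^n\esc{Y}$ lies in $bir_n(R)$. Therefore $L^np^*\simeq p^*L^n$. The adjunction $p^*\dashv p_*$ on $\mathcal{H}^n$ then follows from \Cref{generic push}, because $p_*$ preserves local objects.

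The main obstacle is the first step: making sure that $p_\shrp$ respects the saturation, and that the Nisnevich generators are really the \v{C}ech nerves of covers, as in \Cref{nis is bir}. Once one records that $p_\shrp$ is a colimit-preserving left adjoint sending representables to representables, this is routine.
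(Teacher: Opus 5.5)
Your proposal is correct and takes essentially the same route as the paper. You check that $p_\shrp$ sends the generators $nis(R)$, $\mathbb{A}(R)$ and $B_n(R)$ into $bir_n(S)$, since density and $\mathbb{A}^1$-projections do not depend on the base, and you deduce by adjunction that $p^*$ preserves $n$-birational local objects. You then combine this with \Cref{generic push} (smooth $\Rightarrow$ flat $\Rightarrow$ UGLT) to get $L^np^*\simeq p^*L^n$ and both adjunctions, spelling out a few steps the paper leaves implicit: the identification $X\times_R\mathbb{A}^1_R\cong X\times_S\mathbb{A}^1_S$ and the \v{C}ech-nerve check.
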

\begin{proof}
    Being a dense open immersion or an $\mathbb{A}^1$ projection is a property independent of the structure maps of the associated schemes. Thus, $p_{\shrp} (B_n(R))\subset  B_n(S)$ and $p_{\shrp} (\mathbb{A}(R))\subset  \mathbb{A}(S)$. Moreover, $p_\shrp$ trivially preserves all nisnevich coverings. Since $p_\shrp$ preserves all colimits, it maps the saturation class $bir_n(R)$ of $\mathbb{A}(S)\cup B_n(R)$ and $nis(R)$ into the saturation class $bir_n(S)$ of $\mathbb{A}(S)\cup B_n(S)$ and $nis(S)$. Since $p^*$ is the right adjoint, it follows that, dually, it preserves $n$-birational motivic spaces. The second adjunction is a formal consequence.

    Since, by \Cref{generic push}, $p^*$ also preserves n-birational local equivalences, we know that $p^*\esc{X}\to p^*L^n\esc{X}$ is an $n$-birational equivalence for every $\esc{X}\in \mathcal{P}(S)$. But from the above paragraph, we know that $p^*L^n\esc{X}$ is $n$-birational. Therefore, $p^*L^n\esc{X}$ is the $n$-birational localization of $p^*\esc{X}$, and so $L^np^*\esc{X}\simeq p^*L^n\esc{X}$. 

    The last adjunction is just the one from \Cref{pull adjunction}, because $$L^np^*_{|\mathcal{H}^n(S)}\simeq p^*L^n_{|\mathcal{H}^n(S)}\simeq p^*_{|\mathcal{H}^n}(S).$$
    \end{proof}
    \begin{rem}
Note that for $n=0$, by \Cref{bir local is P1 local}, in all the proofs above, after tracing the dense equivalences, it suffices to trace only the Nisnevich equivalences. Moreover, over qcqs schemes, by \Cref{nis is bir}, this can be further reduced to tracing the $\bigsqcup$-equivalences. In fact, it is easier to trace $\bigsqcup$-equivalences than Nisnevich ones.
\end{rem}
To use these observations fluently, we make some definitions.
\begin{defn}
    Given a functor $F: \mathcal{P}(T)\to \mathcal{P}(S)$ between the presheaf toposes of schemes $S,T$, define the $n$-Birational motivic derivation of $F$ as $$F(n):=L^n_SFb^n_T:\mathcal{H}^n(T)\to \mathcal{H}^n(S),$$ where $L^n_S$ is the $n$-birational motivic localization functor and $b_T^n$ denotes the inclusion $\mathcal{H}^n(T)\subset \mathcal{P}_{nis}(T).$
    
   In particular, for a morphism $f:S\to T$ of schemes, define the morphism: \begin{flalign*}
       f^{n*}:=f^*(n)\equiv L^n_Sf^* b_T^n: \mathcal{H}^n(T)\to \mathcal{H}^n(S)\\ f_{*n}:=f_*(n)=L^n_Tf_*b^n_S: \mathcal{H}^n(S)\to \mathcal{H}^n(T).
       \end{flalign*}
       If $f$ is smooth, we use the notation $$f_{n\shrp}:=f_\shrp(n)\equiv L^n_Tf_{\shrp} b^n_S.$$
\end{defn}
\begin{rem}
When $f$ is UGLT, \Cref{generic push} implies $f_{n*}\simeq {f_*}_{|\mathcal{H}^n(S)}$. On the other hand, when $f$ is smooth, \Cref{smooth extension pull} shows that $f^{n*}\simeq {f^*}_{|\mathcal{H}^n(T)}$.
\end{rem}
\begin{rem}\label[rem]{pull adjunction}
    From \Cref{ugltpull}, whenever $f$ is UGLT, $f^{n*}$ has a right adjoint given by $f_{n*}\simeq {f_*}_{|\mathcal{H}^n(S)}$, and the push-pull adjunction is now expressed as $f^{n*}:\mathcal{H}^{n}(T)\leftrightarrows \mathcal{H}^{n}(S): f_*$. Since $h_T(X)\to h^n_T(X)$ is an $n$-birational equivalence by construction, the same theorem implies that $f^*h_T(X)\to f^*h^n_T(X)$ is an $n$-birational equivalence. Hence, $$L^nf^*h^n_T(X)\simeq L^nf^*h_T(X)\simeq L^nh_S(X\times_TS).$$ That is, $f^{n*}h^n_T(X)\simeq  h^n_T(X\times_T S)$. The existence of a right adjoint and the presentability of $\mathcal{H}^n(T)$ imply that this left adjoint $f^{n*}$ is uniquely determined by the assignment $h^n_T(X/T)\mapsto h^n_T(X\times_T S)$ and by preservation of colimits. 
\end{rem}
\begin{rem}\label[rem]{extension adjunction}
    Similarly, when $f$ is smooth, $f_{n\shrp}$ is a left adjoint to $f^{n*}\simeq {f^*}_{|\mathcal{H}^n(T)}$. This left adjoint is uniquely determined by the fact that it preserves colimits and maps $h^n_S(W)$ to $h^n_T(W)$ for every smooth $W/S$. 
\end{rem}
  \begin{thm}[{Smooth base change}]\label{smooth base change}
    Suppose we have a pullback square of schemes    \[
    \xymatrix{
    V\ar[r]^{ q'}\ar[d]_{p'}&Y\ar[d]^{P}\\
    U \ar[r]_{q}&X
    }
    \]
  where the vertical maps are smooth and the horizontal ones are UGLT. Then the exchange transformations of $n$ birational motivic functors are equivalences:
  \begin{flalign}
      p'_{n\shrp} q'^{n*}\to q^{n*}p_{n{\shrp} }  \text{      from } \mathcal{H}^n(Y) \text{ to } \mathcal{H}^n(U)\\
      p^{n*}q_{n*}\to q'_{n*}p'^{n*} \text{      from } \mathcal{H}^n(U) \text{ to } \mathcal{H}^n(Y)
  \end{flalign}
   
    \end{thm}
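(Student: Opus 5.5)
The plan is to reduce both exchange equivalences to the corresponding statements at the level of presheaf toposes $\mathcal{P}(-)$. There smooth base change is a formal consequence of the definitions: $q^*p_\shrp$ and $p'_\shrp q'^*$ both send $h_Y(W)$ to $h_U(W\times_X U)\simeq h_U(W\times_Y V)$ and preserve colimits. The second transformation is the mate of the first under the adjunctions $q^{n*}\dashv q_{n*}$ and $q'^{n*}\dashv q'_{n*}$ (\Cref{pull adjunction}) and $p_{n\shrp}\dashv p^{n*}$, $p'_{n\shrp}\dashv p'^{n*}$ (\Cref{extension adjunction}). So it suffices to prove that the first transformation $p'_{n\shrp}q'^{n*}\to q^{n*}p_{n\shrp}$ is an equivalence; the second then follows by passing to right adjoints.

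To prove the first equivalence, note that both sides are colimit-preserving functors $\mathcal{H}^n(Y)\to\mathcal{H}^n(U)$. Indeed, each is a composite of left adjoints. By \Cref{compact generation}, $\mathcal{H}^n(Y)$ is generated under colimits by the objects $h^n_Y(W)$ for $W/Y$ smooth. It therefore suffices to check that the transformation is an equivalence on these generators, compatibly with the canonical comparison map.

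On a generator the computation goes as follows. By \Cref{pull adjunction}, $q'^{n*}h^n_Y(W)\simeq h^n_V(W\times_Y V)$. By \Cref{extension adjunction}, $p'_{n\shrp}$ of this is $h^n_U(W\times_Y V)$, with $W\times_Y V$ regarded as smooth over $U$ via $p'$. On the other side, $p_{n\shrp}h^n_Y(W)\simeq h^n_X(W)$, with $W$ smooth over $X$ via $p$. Then $q^{n*}h^n_X(W)\simeq h^n_U(W\times_X U)$. The canonical isomorphism $W\times_X U\cong W\times_Y V$ of $U$-schemes identifies the two sides.

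The main point requiring care is that the exchange map constructed in $\mathcal{H}^n$ really is the $L^n$-localization of the presheaf-level exchange map. One writes $p'_{n\shrp}q'^{n*}=L^n p'_\shrp b^n L^n q'^* b^n$. The inner $b^nL^n$ can be dropped because $p'_\shrp$ preserves $n$-birational equivalences (\Cref{smooth extension pull}), and similarly on the other side because $q^*$ preserves them (\Cref{generic push}, using that $q$ is UGLT). After this reduction both composites become $L^n(-)b^n$ applied to the presheaf functors, and the transformation is $L^n$ of the presheaf exchange, which is already an equivalence.
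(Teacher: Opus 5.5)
Your proposal is correct and follows essentially the same route as the paper. You reduce to the first transformation via mates, identify it with $L^n$ of the presheaf-level exchange $p'_\shrp q'^*\to q^*p_\shrp$ using that $p'_\shrp$ preserves $n$-birational equivalences (\Cref{smooth extension pull}) and that $q^*$ does for UGLT $q$ (\Cref{generic push}), and then check that exchange on representables via $W\times_Y V\cong W\times_X U$. The only difference is cosmetic: the paper does the generator check directly at the presheaf level on $h_Y(W)$, so your separate computation on $h^n_Y(W)$ is already covered by your final paragraph.
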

    \begin{proof}
The second transformation is the adjoint mate of the first, so it suffices to show that the natural transformation in (2) is an equivalence.

Since $p_{\shrp}'$ preserves $n$-birational equivalences (see \Cref{smooth extension pull}), it follows that $p_{\shrp}'q^{'*}\to p'_\shrp L^nq^{'*}$ is an $n$-birational equivalence. Hence $L^n(p_{\shrp}'q^{'*})\simeq L^np'_\shrp L^nq^{'*}$. By our definition, the left-hand side of (2) is precisely $L^np'_{\shrp} L^nq'^*$. Thus, we identify the left-hand side with $L^n(p_{\shrp}'q^{'*})$. Similarly, since $q^*$ preserves $n$-birational equivalences by \Cref{generic push}, the right-hand side of (2) is $L^nq^*L^np_{\shrp} \simeq L^nq^*p_{\shrp}$. Therefore, the transformation in (2) is simply $$L^n\big({p'_\shrp q'^{*}}_{|\mathcal{H}^n(Y)}\to {q^*p_\shrp}_{|\mathcal{H}^n(Y)}\big).$$ It is thus sufficient to verify that $p'_\shrp q^{'*}\to q^*p_\shrp$ is a natural equivalence of functors $\mathcal{P}(Sm_Y)\to \mathcal{P}(Sm_U)$. Since $\mathcal{P}(Sm_Y)$ is generated by $h_Y(W)$'s for smooth $W/Y$ and all the functors involved in the last natural transformation ($p'_\shrp q^{'*}\to q^*p_\shrp$) are left adjoints, it suffices to verify that the components of this last transformation on these $h_Y(W)$ are equivalences. But clearly, $$p'_\shrp q^{'*}(h_Y(W))\cong h_U(V\times_YW),\text{ while }q^*p_\shrp h_Y(W)= q^*h_X(W)\cong h_U(U\times_XW).$$ So the claim follows by the pasting lemma for pullbacks.
    \end{proof}
    \begin{cor}\label[cor]{fully faithful j_*}
    For an open immersion $U\xhookrightarrow{j} X$, the birational pushforward ${j_{*}}_{|\mathcal{H}^n(U)}: \mathcal{H}^n(U)\to \mathcal{H}^n(X)$ and the birational extension $j_{n\shrp}$ are fully faithful.
    \end{cor}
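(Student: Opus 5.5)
The plan is to deduce both claims from the smooth base change theorem (\Cref{smooth base change}) applied to the self-intersection square of $j$. An open immersion is smooth, and it is flat, so it is UGLT. Hence the square
\[
\xymatrix{
U\ar[r]^{\mathrm{id}}\ar[d]_{\mathrm{id}}&U\ar[d]^{j}\\
U \ar[r]_{j}&X
}
\]
is a pullback square, since $U\times_X U\simeq U$ for a monomorphism. In it the vertical maps are smooth and the horizontal maps are UGLT, so \Cref{smooth base change} applies.

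The first exchange transformation then gives an equivalence $\mathrm{id}\simeq j^{n*}j_{n\shrp}$ on $\mathcal{H}^n(U)$, because $\mathrm{id}_{n\shrp}$ and $\mathrm{id}^{n*}$ are both $L^n$ restricted to $\mathcal{H}^n(U)$, which is the identity there. I will check that this equivalence is the unit of the adjunction $j_{n\shrp}\dashv j^{n*}$ of \Cref{extension adjunction}. This is the one step needing some care. The exchange map is built from the unit of $\mathrm{id}_\shrp\dashv\mathrm{id}^*$ and the counit of $j_\shrp\dashv j^*$, and for the degenerate square it reduces to the unit $\mathrm{id}\to j^*j_\shrp$ after applying $L^n$. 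At presheaf level one can verify this directly on generators: $j^*j_\shrp h_U(W)=h_U(W\times_X U)=h_U(W)$ for $W\in Sm_U$. Since every functor involved is a colimit-preserving left adjoint and the identification descends through $L^n$, the unit is an equivalence on $\mathcal{H}^n(U)$. A left adjoint whose unit is an equivalence is fully faithful, so $j_{n\shrp}$ is fully faithful.

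For $j_*$, I will use that $j$ is UGLT. By \Cref{generic push} and \Cref{pull adjunction}, $j^{n*}\dashv {j_*}_{|\mathcal{H}^n(U)}$. The second exchange equivalence of \Cref{smooth base change}, or equivalently the mate of the first, gives $j^{n*}j_*\simeq \mathrm{id}$. Again this identifies with the counit, since at presheaf level $j^*j_*F(W)=F(W\times_XU)=F(W)$ for $W\in Sm_U$. A right adjoint whose counit is an equivalence is fully faithful. Alternatively, full faithfulness of $j_*$ follows formally from that of $j_{n\shrp}$: for an adjoint triple $j_{n\shrp}\dashv j^{n*}\dashv j_*$, the outer adjoints are fully faithful simultaneously. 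The only real obstacle is the bookkeeping that the exchange equivalence coincides with the (co)unit, and the generator computation above settles it.
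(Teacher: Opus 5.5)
Your proposal is correct and follows the paper's own argument. Feeding the self-intersection square of $j$ into \Cref{smooth base change} shows that the unit $1\to j^{n*}j_{n\shrp}$ is an equivalence, so $j_{n\shrp}$ is fully faithful, and full faithfulness of ${j_*}_{|\mathcal{H}^n(U)}$ then follows formally from the adjoint triple $j_{n\shrp}\dashv j^{n*}\dashv j_*$. One small bookkeeping slip: the exchange map $p'_{\shrp}q'^{*}\to q^{*}p_{\shrp}$ is built from the unit of $j_\shrp\dashv j^*$ and the counit of $\mathrm{id}_\shrp\dashv \mathrm{id}^*$, not the reverse, though your conclusion that it reduces to the unit is right.
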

    \begin{proof}
        Since open immersions are monomorphisms, we have a pullback square that satisfies the conditions of the theorem above:   
        \[
    \xymatrix{
    U\ar[r]^{1}\ar[d]_{1}&U\ar[d]^{j}\\
    U \ar[r]_{j}&X
    }
    \]
    Thus, the natural transformation from the above theorem [\Cref{smooth base change} (2)], namely $1\to j^{n*}j_{n\shrp} $ (which, by definition, is the unit of the adjunction), is an equivalence. This implies that the left adjoint $j_{n\shrp} $ (of $j^{n*}\simeq j^*_{|\mathcal{H}^n(U)}$) is fully faithful. Then, as a formal consequence, ${j_{*}}_{|\mathcal{H}^n(U)}$, being a right adjoint of $j^*_{_{|\mathcal{H}^n(U)}}$, is also fully faithful.
\end{proof}

    The first difference between the $\mathbb{A}^1$-homotopical world and the birational world appears in the following theorem. It's a simple observation that the $\mathbb{A}^1$-homotopy category is not $\mathbb{A}^1$-local (in the sense that the pullback along the projection $p:\mathbb{A}^1_S\to S$ induces an equivalence). For example, the pullback of $h_{\mathbb{A}_k^1}\mathbb{G}_{m,k}$ along the zero section $0\to \mathbb{A}^1_k$ is empty, i.e., the initial object. But $h_{\mathbb{A}^1_k}\emptyset$ cannot be motivically equivalent to $h_{\mathbb{A}^1}\mathbb{G}_{m,k}$ over $\mathbb{A}^1_k$. Indeed, $\mathbb{G}_{m,k}$ is $\mathbb{A}^1$-rigid in $\mathcal{P}(\mathbb{A}^1_k)$ while being a nonempty sheaf of sets. 
  \vspace{.4cm}  
    \begin{rem}\label[rem]{motivic category is a1 faithful}
        However, the pullback map $p^*:\mathcal{H}^{\mathbb{A}^1}(S)\to\mathcal{H}^{\mathbb{A}^1}(\mathbb{A}^1_S)$ is fully faithful. Indeed, the map $p_{\shrp} p^*\to 1$ is a motivic equivalence. To see this, note that both functors preserve colimits, so it suffices to check this on smooth $X/S$, where it is simply the projection $\mathbb{A}^1_S\times_SX\to X$.
    \end{rem}

    \begin{thm}\label{n dense equiv}
   For an $n$-dense open immersion $j:V\xhookrightarrow{}S$, the smooth extension map $$j_{i\shrp} :\mathcal{H}^{i}(V)\to \mathcal{H}^{i}(S)$$ is an equivalence for all $i\leq n$. Consequently, $j_{i*}\simeq {j_*}_{|\mathcal{H}^i(V)}$ and $j^{i*}\simeq j^*_{|\mathcal{H}^i(S)}$ are also equivalences. 
    \end{thm}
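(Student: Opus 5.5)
By \Cref{fully faithful j_*}, $j_{i\shrp}$ is fully faithful for every $i$, since the unit $1\to j^{i*}j_{i\shrp}$ is an equivalence. So it remains to prove essential surjectivity of $j_{i\shrp}$. Equivalently, we show that the counit $j_{i\shrp}j^{i*}\to 1$ is an equivalence on $\mathcal{H}^i(S)$. Both sides preserve colimits: $j_{i\shrp}$ is a left adjoint, and $j^{i*}\simeq j^*_{|\mathcal{H}^i(S)}$ preserves colimits because it has the right adjoint $j_*$ (\Cref{generic push}, since open immersions are flat and hence UGLT). By \Cref{compact generation}(1), $\mathcal{H}^i(S)$ is generated under colimits by the $h^i_S(X)$ with $X/S$ smooth. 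It therefore suffices to check the counit on these generators.

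For a smooth $X/S$ we have $j^{i*}h^i_S(X)\simeq h^i_V(X\times_SV)$ by \Cref{pull adjunction}. Hence $j_{i\shrp}j^{i*}h^i_S(X)\simeq L^i_S h_S(X\times_S V)$, and the counit is $L^i_S$ applied to the open immersion $X_V:=X\times_S V\hookrightarrow X$. The key step is that this open immersion lies in $B_i(S)$. The structure map $X\to S$ is smooth, hence flat, hence lifts generalizations. By \Cref{glt pull}, the pullback of the $n$-dense immersion $V\hookrightarrow S$ along $X\to S$ is $n$-dense in $X$. Since $i\le n$, we have $B_n(S)\subset B_i(S)$, so $X_V\hookrightarrow X$ is $i$-dense. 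It is therefore inverted by $L^i_S$, and the counit is an equivalence on generators. This gives the equivalence $j_{i\shrp}$.

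For the remaining claims: $j^{i*}$ is right adjoint to the equivalence $j_{i\shrp}$, hence is its inverse and is itself an equivalence. Then $j_{i*}\simeq {j_*}_{|\mathcal{H}^i(V)}$, being right adjoint to the equivalence $j^{i*}$, is also an equivalence. The identification $j^{i*}\simeq j^*_{|\mathcal{H}^i(S)}$ is \Cref{smooth extension pull} applied to the smooth map $j$.

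The main point requiring care is the reduction to generators. We need both composites to preserve colimits, and we need $j_{i\shrp}h^i_V(Y)\simeq h^i_S(Y)$ (\Cref{extension adjunction}), so that the counit on $h^i_S(X)$ is literally the localization of $h_S(X_V)\to h_S(X)$. The geometric input is only the lifting of generalizations along smooth (flat) maps. In particular, no $\mathbb{P}^1$-argument is needed, and the statement holds for every $i\le n$, not just for $i=0$.
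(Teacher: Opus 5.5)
Your proposal is correct and follows the paper's proof essentially step for step. Full faithfulness comes from the corollary on $j_{n\shrp}$; the counit $j_{i\shrp}j^{i*}\to 1$ is then checked on the generators $h^i_S(X)$, where it is $L^i$ of the base-changed open immersion $X\times_S V\hookrightarrow X$, and that immersion is $i$-dense because the smooth structure map lifts generalizations. You also spell out the formal adjunction argument for the claims about $j^{i*}$ and $j_{i*}$, which the paper leaves implicit.
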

        \begin{proof}
       By \Cref{fully faithful j_*}, it suffices to show that the natural transformation $\varepsilon: j_{i\shrp } j^{i*}\to 1$ of endofunctors of $\mathcal{H}^i(S)$ is an equivalence. Since both functors preserve colimits and $\mathcal{H}^i(S)$ is generated under colimits by $h^i_S(W)$ for smooth $p:W\to S$, it is enough to show that the transformation is an equivalence on these generators. Now, 
       \begin{flalign}
           j_{i\shrp}j^{i*}h^i_S(W)&=L^ij_\shrp (h^i_V(W_V)) \text{ [by \Cref{pull adjunction}]}\\&=L^ih_S(W_V) \text{ [by \Cref{extension adjunction}]}
       \end{flalign}
       Clearly, the map $\varepsilon_{h^i_S(W)}$ is given by $L^ih_S(W_V)\to L^ih_S(W)$, which can be identified as $L^i (j':X_V\to X )$, where $j'$ is the pullback of $j$ along $p$. Since $p$ is smooth and thus lifts generalizations, and $j$ is $i$-dense for all $i\leq n$, by \Cref{ugltpull} we have that $j'$ is an $i$-dense open immersion and thus $L^i (j':X_V\to X )$ is indeed an equivalence as required.
    \end{proof}
\begin{cor}[Dense locality of $\mathcal{H}^0$]\label[cor]{dense equiv}
For a dense open immersion $j:V\xhookrightarrow{}S$, the smooth extension map $$j_{0\shrp}:=L_{0}j_\shrp :\mathcal{H}^0(V)\to \mathcal{H}^0(S)$$ is an equivalence, and hence so are $j_*\simeq L_{0}j_*$ and $j^*\simeq L_{0}j^*$. A similar statement holds for $\mathcal{H}^b:=L_{dense}\mathcal{P}_\Sigma$. 
\end{cor}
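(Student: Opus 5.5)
This is the case $n=0$ of \Cref{n dense equiv}, so the plan is to specialize that theorem and then transfer the conclusion to the model $\mathcal{H}^b$.

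\textbf{The $\mathcal{H}^0$ statement.} A dense open immersion is exactly a $0$-dense one. Indeed, $V\subset S$ is dense if and only if it contains every generic point of $S$. That is the same as saying every irreducible component of $S\setminus V$ has codimension at least $1$. Applying \Cref{n dense equiv} with $n=i=0$ shows that $j_{0\shrp}=L^0j_\shrp$ is an equivalence. Its right adjoint $j^{0*}\simeq j^*_{|\mathcal{H}^0(S)}$ is then an inverse equivalence; this uses \Cref{smooth extension pull}, because $j$ is smooth. The adjunction of \Cref{generic push}, which applies since open immersions are flat and hence UGLT, identifies $j_{0*}$ with $j_*$ restricted to $\mathcal{H}^0(V)$. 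As the right adjoint of the equivalence $j^{0*}$, it is also an equivalence.

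\textbf{The $\mathcal{H}^b$ statement.} \Cref{Motivic equivalences are Birational equivalences} identifies $\mathcal{H}^0(-)$ with $\mathcal{H}^b(-)$ as full subcategories of $\mathcal{P}(-)$. It then remains to check that the functors agree under this identification.
\begin{itemize}
\item On the one hand, $L_{bir}j_\shrp$ and $L^0j_\shrp$ both preserve colimits and send $h_V(W)$ to the localization of $h_S(W)$.
\item On the other hand, the two localizations $L_{bir}$ and $L^0$ have the same local objects, so they coincide.
\end{itemize}
Hence the $\mathcal{H}^b$ version follows.

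\textbf{Alternative direct proof for $\mathcal{H}^b$.} One could also rerun the counit argument of \Cref{n dense equiv} inside $\mathcal{P}_\Sigma$. In that argument, the counit on a generator $h_S(W)$ is $L_{bir}$ applied to the dense open immersion $W_V\hookrightarrow W$. Here $W\to S$ is smooth, so it lifts generalizations and \Cref{glt pull} shows $W_V\hookrightarrow W$ is dense. In addition, $j_\shrp$ preserves finite coproducts, hence preserves $\mathcal{P}_\Sigma$.

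\textbf{Main point to check.} The step needing care is that $j^*$ on $\mathcal{H}^0$ requires no further localization. This holds because $j$ is both smooth and UGLT, so \Cref{smooth extension pull} applies.
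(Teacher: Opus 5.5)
Your proposal is correct and follows the paper's own argument. The $\mathcal{H}^0$ statement is the case $n=i=0$ of \Cref{n dense equiv}, and the $\mathcal{H}^b$ version is transported along the identification $\mathcal{H}^0\equiv\mathcal{H}^b$ of \Cref{Motivic equivalences are Birational equivalences}, with your extra checks (dense equals $0$-dense for the quasi-compact opens considered here, and the two extension functors agree on generators) only making explicit what the paper leaves implicit.
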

\begin{proof}
    The first statement follows immediately from \Cref{n dense equiv}. The last claim follows from \Cref{Motivic equivalences are Birational equivalences}, which identifies $\mathcal{H}^0\equiv \mathcal{H}^b$.
    \end{proof} 
\begin{rem}\label[rem]{Ldense is dense inv}
    Following the proof, it is easy to see that the statement above holds without motivic localization. That is, $$L_{dense}j_\shrp :L_{dense}\mathcal{P}(V)\to L_{dense}\mathcal{P}(S)$$ is an equivalence of $\infty$-categories whenever $j$ is dense. 
\end{rem}
Here is an interesting corollary:

\begin{cor}
    Suppose $f:\esc{X}\to \esc{Y}$ is a morphism of $S$-spaces. Then $f$ is an $n$-birational motivic equivalence if and only if there exists an $n$-dense open $V\subset S$ such that $f_V$ is an $n$-birational motivic equivalence in $\mathcal{P}(V)$. 
\end{cor}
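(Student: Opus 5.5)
The plan is to reduce everything to \Cref{n dense equiv}, which says that restriction along an $n$-dense open immersion $j:V\hookrightarrow S$ is an equivalence. Concretely, $j^{n*}\simeq j^*_{|\mathcal{H}^n(S)}:\mathcal{H}^n(S)\to\mathcal{H}^n(V)$ is an equivalence of $\infty$-categories.

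For the forward direction, take $V=S$ (trivially $n$-dense). Alternatively, for any $n$-dense $V$, open immersions are flat, hence UGLT. By \Cref{generic push} (or \Cref{smooth extension pull}, since $j$ is smooth), $j^*$ preserves $n$-birational motivic equivalences. So $f_V=j^*f$ is an $n$-birational motivic equivalence in $\mathcal{P}(V)$.

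For the converse, suppose $f_V=j^*f$ is an $n$-birational motivic equivalence for some $n$-dense open $j:V\hookrightarrow S$.
\begin{enumerate}
\item Since $j$ is smooth, \Cref{smooth extension pull} gives $L^n_Vj^*\simeq j^*L^n_S$. Hence $j^*(L^n_Sf):j^*L^n_S\esc{X}\to j^*L^n_S\esc{Y}$ is identified with $L^n_V(f_V)$, which is an equivalence by hypothesis.
\item The map $L^n_Sf$ is a morphism in $\mathcal{H}^n(S)$. Its image under $j^*_{|\mathcal{H}^n(S)}=j^{n*}$ is an equivalence. By \Cref{n dense equiv}, $j^{n*}$ is an equivalence of categories, so it is conservative. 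Therefore $L^n_Sf$ is an equivalence.
\item By definition of $bir_n(S)$, this means $f$ is an $n$-birational motivic equivalence.
\end{enumerate}

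The only point needing care is the commutation $L^n_Vj^*\simeq j^*L^n_S$. One needs it on all of $\mathcal{P}(S)$ and not merely on representables, so that the localization of $f_V$ really is the restriction of $L^n_Sf$. This is exactly the second paragraph of the proof of \Cref{smooth extension pull}. It uses two facts: $j^*$ preserves $bir_n$ (because $j$ is UGLT), and $j^*$ preserves $n$-birational local objects (because $j$ is smooth). With this in hand, the rest is formal conservativity of an equivalence of categories.
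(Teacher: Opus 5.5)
Your proof is correct and follows the argument the paper intends. The paper states this corollary without proof, directly after \Cref{n dense equiv}, and the evident route is yours: use $j^*L^n_S\simeq L^n_Vj^*$ from \Cref{smooth extension pull} together with the conservativity of the equivalence $j^{n*}\simeq j^*_{|\mathcal{H}^n(S)}$, with the forward direction handled by $V=S$ or by \Cref{generic push}.
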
 

\begin{cor}
    Suppose $f:X\to Y$ is a morphism of smooth schemes over $S$ that has $n$-birationally contractible fiber over an $n$-dense open $V\subset Y$, i.e., $f_V:X_V\to V$ is an $n$-birational motivic equivalence in $\mathcal{P}(V)$. Then $f$ is an $n$-birational equivalence in $\mathcal{P}(S)$.
\end{cor}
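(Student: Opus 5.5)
The plan is to reduce to the previous corollary, with $\esc{X}=h_S(X)$ and $\esc{Y}=h_S(Y)$ viewed in $\mathcal{P}(S)$. Because $Y$ is only smooth over $S$, the open $V$ sits in $Y$ and not in $S$, so I will work over the base $Y$ and then push down along the smooth structure map $q:Y\to S$ using $q_\shrp$.

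\textbf{Step 1 (over $Y$).} Regard $f$ as the morphism $h_Y(X)\to h_Y(Y)=*$ in $\mathcal{P}(Y)$. Let $j:V\hookrightarrow Y$ be the $n$-dense open. Then $j^*h_Y(X)=h_V(X_V)$ and $j^*(*)=*$, and the hypothesis says that $j^*f$ is an $n$-birational motivic equivalence in $\mathcal{P}(V)$. I then invoke \Cref{n dense equiv}: the functor $j^{n*}\simeq j^*_{|\mathcal{H}^n(Y)}:\mathcal{H}^n(Y)\to\mathcal{H}^n(V)$ is an equivalence. Since $j$ is smooth, \Cref{smooth extension pull} gives $L^n j^*\simeq j^*L^n$. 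Hence $j^*(L^n_Y f)\simeq L^n_V(j^*f)$ is an equivalence, and conservativity of the equivalence $j^{n*}$ shows that $L^n_Y f$ is an equivalence. So $f$ lies in $bir_n(Y)$. This step is just the preceding corollary applied over the base $Y$.

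\textbf{Step 2 (push to $S$).} Apply $q_\shrp:\mathcal{P}(Y)\to\mathcal{P}(S)$. By \Cref{smooth extension pull}, $q_\shrp$ sends $bir_n(Y)$ into $bir_n(S)$. We have $q_\shrp h_Y(X)=h_S(X)$ and $q_\shrp(*)=q_\shrp h_Y(Y)=h_S(Y)$, and $q_\shrp f$ is exactly the morphism $h_S(f):h_S(X)\to h_S(Y)$. Therefore $f$ is an $n$-birational equivalence in $\mathcal{P}(S)$.

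\textbf{Main obstacle.} The argument is formal, so the delicate point is Step 1: making sure that "$f_V$ is an $n$-birational motivic equivalence in $\mathcal{P}(V)$" is understood with $X_V$ as a $V$-scheme. That is what makes $j^*h_Y(X)=h_V(X_V)$ hold on the nose. Beyond that, the only ingredient needed is compatibility of $L^n$ with smooth pullback along $j$, and \Cref{smooth extension pull} already supplies it.
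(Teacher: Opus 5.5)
Your proof is correct and matches the paper's argument: the paper likewise applies the preceding corollary over the base $Y$ to get $f\in bir_n(Y)$, then pushes forward along the smooth structure map $Y\to S$ with $q_\shrp$ using \Cref{smooth extension pull}. Your Step~1 simply proves that preceding corollary, which the paper states without proof, via \Cref{n dense equiv} and $L^nj^*\simeq j^*L^n$, and this is valid.
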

\begin{proof}
    From the corollary above, it is clear that $f$ is a $0$-birational motivic equivalence in $\mathcal{P}(Y)$. Since the structure map $p: Y\to S$ is smooth and $f\in \mathcal{P}(S)$ is the image $p_\shrp f$ of $f\in \mathcal{P}(Y)$ under $p_\shrp:\mathcal{P}(Y)\to \mathcal{P}(S)$, the claim follows from \Cref{smooth extension pull}.
\end{proof}
\textbf{Presheaf description of $\mathcal{H}^{n}(-)$}

To summarize this section, note that for a composition $g\circ f$ with $f$ a UGLT morphism, we have $f^{n*}\circ g^{n*}\simeq (g\circ f)^{n*}$. Indeed, since $f^*$ preserves $n$-birational equivalences by \Cref{generic push}, the map $f^*g^*\to f^*L^ng^*$ is an $n$-birational equivalence, so that $$L^n(f^*g^*)\simeq L^n(f^*L^ng^*)\simeq L^nf^*\circ L^ng^*.$$ Moreover, when $g$ is also UGLT, so that both $f^{n*}$ and $g^{n*}$ are left adjoints, the equivalence $f^{n*}\circ g^{n*}\simeq (g\circ f)^{n*}$ holds, in fact, in $Pr^L$. Therefore, we may define:
    \begin{defn}
The presheaf $\mathcal{H}^{n}(-)^*: \mathrm{Sch}^{uglt,op}_{}\to Pr^L$ is given by the assignment $S\mapsto \mathcal{H}^{n}(S)$ and $f\mapsto f^{n*}$. 
\end{defn}
\begin{lem}\label[lem]{^* is cart}
The presheaf $$\mathcal{H}^{n}(-)^*: \mathrm{Sch}^{uglt, op}\to Pr^L$$ extends to a presheaf $$\mathcal{H}^{n}(-)^{\times,*}: \mathrm{Sch}^{uglt,op}\to \mathrm{CAlg}(Pr^{L,\otimes}).$$ In other words, for any UGLT morphism of schemes $f:S\to T$, the functor $f^{n*}: \mathcal{H}^n(T)\to \mathcal{H}^n(S)$ is (strictly) monoidal with respect to the cartesian monoidal structure on $\mathcal{H}^n(S)$ and $\mathcal{H}^n(T)$.
\end{lem}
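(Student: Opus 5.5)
The plan is to show two things: that each $f^{n*}$ preserves finite products (including the terminal object), and that these symmetric monoidal structures assemble functorially over $\mathrm{Sch}^{uglt,op}$.

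\textbf{Step 1: $f^{n*}$ preserves finite products.} Let $f:S\to T$ be UGLT. On presheaves, $f^*:\mathcal{P}(T)\to\mathcal{P}(S)$ preserves finite products. It is the left Kan extension along $X\mapsto X\times_T S$, and this is computed by a filtered (indeed cofiltered-index) formula whose indexing categories $(Sm_T)_{U/}$, for $U\in Sm_S$, have finite products because $Sm_T$ has fibre products over $T$. Hence $f^*$ is left exact; concretely, $f^*(h_TX\times h_TY)\simeq h_S((X\times_TY)\times_TS)\simeq f^*h_TX\times f^*h_TY$, and this extends to all presheaves since products commute with colimits in each variable in a topos. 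Also $f^*(*)=f^*h_T(T)=h_S(S)=*$.

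Next use $f^{n*}=L^n_Sf^*b^n_T$. For $\mathcal{X},\mathcal{Y}\in\mathcal{H}^n(T)$ we get
\[
f^{n*}(\mathcal{X}\times\mathcal{Y})\simeq L^n_S(f^*\mathcal{X}\times f^*\mathcal{Y})\simeq L^n_Sf^*\mathcal{X}\times L^n_Sf^*\mathcal{Y}.
\]
The first equivalence holds because $\mathcal{H}^n(T)\subset\mathcal{P}(T)$ is closed under limits. The second holds because $L^n_S$ preserves finite products, which is the content of \Cref{Ln is cart} (via \Cref{bir cart} and \Cref{dense prod}). The terminal object is handled the same way. So $f^{n*}$ is strictly cartesian monoidal.

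\textbf{Step 2: coherence.} Rather than build the coherence data by hand, I would use that a functor between cartesian monoidal $\infty$-categories which preserves finite products lifts uniquely to a symmetric monoidal functor. This is Lurie's result that $\mathcal{C}\mapsto\mathcal{C}^\times$ is an equivalence between finite-product-preserving functors and symmetric monoidal functors of cartesian structures (HA, Corollary 2.4.1.8). So the subcategory of $\mathrm{CAlg}(Pr^{L,\otimes})$ consisting of cartesian closed presentable categories and product-preserving colimit-preserving functors maps fully faithfully into $Pr^L$. This holds because the monoidal structure on a cartesian object is forced.

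Consequently, the presheaf $\mathcal{H}^n(-)^*$ from the preceding definition factors through that subcategory. The objects lie in it because $\mathcal{H}^n(S)$ is cartesian closed, as noted after \Cref{Ln is cart}. The morphisms lie in it by Step 1. The factorization therefore lifts uniquely to a presheaf valued in $\mathrm{CAlg}(Pr^{L,\otimes})$.

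\textbf{Main obstacle.} The only real subtlety is Step 2. It requires checking that the cartesian symmetric monoidal structure on a presentable category with product-preserving tensor in each variable is indeed a commutative algebra in $Pr^{L,\otimes}$, and that the cartesian lift is functorial, not just objectwise. Both follow from the universal property of $\mathcal{C}^\times$ (HA \S2.4.1) together with the fact that cartesian closedness makes $\times$ preserve colimits separately in each variable.
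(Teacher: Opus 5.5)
Your proposal is correct and follows the paper's argument: write $f^{n*}=L^n_Sf^*b^n_T$ and note that the limit-closed inclusion $b^n_T$, the pullback $f^*$, and $L^n_S$ (by \Cref{Ln is cart}) each preserve finite products, with your Step 2 making explicit (via HA, Corollary 2.4.1.8) the coherence the paper leaves implicit. One small slip: the comma categories indexing the Kan extension $f^*$ are only sifted (opposites of categories with finite products, since $Sm_T$ lacks equalizers), not cofiltered, so ``hence $f^*$ is left exact'' does not follow --- but your concrete argument via representables and colimits in each variable gives exactly the finite-product preservation you need.
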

\begin{proof}
Since $f^{n*}\equiv L^n_Sf^*b^n_T$, it suffices to show that each component is Cartesian. Because $b_T^n$ is a right adjoint, it is clearly Cartesian. On the other hand, $f^*$, being left exact, is also Cartesian. Finally, $L^n_S$ is Cartesian by \Cref{Ln is cart}.
\end{proof}
\begin{cor}\label{cart pull functoriality}
    If $T$ is a scheme, then $\mathcal{H}^{n} (-)^{\times,*}$ induce a presheaf of $\mathcal{H}^n(T)^\times $-modules given by $$\mathcal{H}^{n} (-)^{\times,*}:\mathrm{Uglt}_T^{uglt,op}\to \mathrm{Mod}_{\mathcal{H}^{n}(T)^{\times}}(Pr^{L,\otimes}).$$
\end{cor}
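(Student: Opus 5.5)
The plan is to obtain the module structure formally from \Cref{^* is cart}, by restricting the presheaf to the slice over $T$ and using that every object of the slice carries a canonical map of commutative algebras from the value at $T$.

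\textbf{Step 1: restrict to the slice.} First I will restrict $\mathcal{H}^{n}(-)^{\times,*}:\mathrm{Sch}^{uglt,op}\to \mathrm{CAlg}(Pr^{L,\otimes})$ along the forgetful functor $\mathrm{Uglt}^{uglt}_T\to \mathrm{Sch}^{uglt}$. This is legitimate because the objects of $\mathrm{Uglt}^{uglt}_T=(\mathrm{Sch}^{uglt})_{/T}$ are UGLT maps $f:S\to T$, and the morphisms are UGLT maps over $T$. The result is a functor $(\mathrm{Sch}^{uglt}_{/T})^{op}\to \mathrm{CAlg}(Pr^{L,\otimes})$.

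\textbf{Step 2: pass to the undercategory.} Next I will use the general fact that a functor $F:\mathcal{C}^{op}\to \mathcal{D}$ induces a functor $(\mathcal{C}_{/T})^{op}\to \mathcal{D}_{F(T)/}$. Concretely, $\mathcal{C}_{/T}^{op}\simeq (\mathcal{C}^{op})_{T/}$, and $F$ induces a functor on undercategories. Applying this to the restricted presheaf gives
$$\mathrm{Uglt}^{uglt,op}_T\to \mathrm{CAlg}(Pr^{L,\otimes})_{\mathcal{H}^n(T)^\times/}.$$
On objects it sends $S\to T$ to the symmetric monoidal map $f^{n*}:\mathcal{H}^n(T)^\times\to \mathcal{H}^n(S)^\times$. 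The compatibilities $f^{n*}g^{n*}\simeq (gf)^{n*}$ for UGLT maps, established just before the definition of $\mathcal{H}^n(-)^*$, are already encoded in the functor of \Cref{^* is cart}, so no additional coherence has to be checked here.

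\textbf{Step 3: forget to modules.} Finally I will compose with the standard forgetful functor
$$\mathrm{CAlg}(Pr^{L,\otimes})_{A/}\simeq \mathrm{CAlg}_A(Pr^{L,\otimes})\to \mathrm{Mod}_A(Pr^{L,\otimes}),\qquad A=\mathcal{H}^n(T)^\times,$$
which is [\cite{lurie2017higher}, Corollary 3.4.1.7] together with restriction of scalars. Under this composite, each $\mathcal{H}^n(S)$ becomes an $\mathcal{H}^n(T)$-module via the action $(\esc{E},\esc{X})\mapsto f^{n*}\esc{E}\times \esc{X}$. A UGLT map $g:S'\to S$ over $T$ is sent to $g^{n*}$, which is $\mathcal{H}^n(T)$-linear because it is monoidal and commutes with the structure maps.

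The only point requiring care is that $\mathrm{Uglt}^{uglt}_T$ really is the slice of $\mathrm{Sch}^{uglt}$, so that the undercategory construction of Step 2 applies verbatim. This holds by the definition of $\mathrm{Uglt}^{uglt}_S$ as $(\mathrm{Sch}^{uglt})_{/S}$. Everything else is formal $\infty$-categorical bookkeeping, so I do not expect a genuine obstacle.
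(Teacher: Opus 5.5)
Your proposal is correct and takes essentially the same route as the paper. The paper states this corollary without a written proof, as a formal consequence of the preceding lemma that $f^{n*}$ is cartesian monoidal, so that $\mathcal{H}^n(-)^{\times,*}$ lands in $\mathrm{CAlg}(Pr^{L,\otimes})$; your argument (restrict to the slice $(\mathrm{Sch}^{uglt})_{/T}$, pass to the undercategory of $\mathcal{H}^n(T)^\times$, then use $\mathrm{CAlg}(Pr^{L,\otimes})_{A/}\simeq \mathrm{CAlg}_A(Pr^{L,\otimes})\to \mathrm{Mod}_A(Pr^{L,\otimes})$) is the standard way of spelling out that implicit deduction.
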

\begin{defn}
    Similarly, define the co-presheaf $$\mathcal{H}^n(-)_\shrp: \mathrm{Sch}^{sm}\to Pr^L$$ by $S\mapsto \mathcal{H}^{n}(S)$ and $p\mapsto p_{n\shrp}$.
\end{defn}
\begin{rem}
We must caution the reader that this is not a co-presheaf of cartesian monoidal categories. In other words, $p_{n\shrp}$ need not be a cartesian functor. In fact, since its right adjoint $p^{n*}$ is cartesian, $p_{n\shrp}$ is only op-lax monoidal [\cite{lurie2017higher}, Corollary 7.3.2.7]. This op-lax monoidal structure is monoidal for open immersions, as the next proposition shows.
\begin{prop}\label[prop]{j_sharp is cart}
Let $j: U\hookrightarrow X$ be an open immersion. Then the embedding $j_{n{\shrp}}:\mathcal{H}^n(U)\hookrightarrow\mathcal{H}^n(X)$ lifts to a monoidal embedding $j_{n{\shrp}}:\mathcal{H}^n(U)^\times \hookrightarrow\mathcal{H}^n(X)^\times $.
\end{prop}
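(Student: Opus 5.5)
We have to show that $j_{n\shrp}$ preserves finite products, i.e. that it sends the terminal object to something equivalent to... wait: a monoidal (strict) structure on $j_{n\shrp}$ with respect to cartesian structures would require $j_{n\shrp}(*_U)\simeq *_X$, which fails unless $j$ is $n$-dense. So the meaning must be that $j_{n\shrp}$ preserves binary products (a non-unital monoidal embedding). We therefore aim to show that for $\esc{A},\esc{B}\in\mathcal{H}^n(U)$ the canonical comparison map
$$j_{n\shrp}(\esc{A}\times\esc{B})\to j_{n\shrp}\esc{A}\times j_{n\shrp}\esc{B}$$
(the op-lax structure map) is an equivalence. The unit is then handled by replacing $\mathcal{H}^n(X)$ with the slice over $h^n_X(U)=j_{n\shrp}(*)$.

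\textbf{Reduction to the presheaf level.} First we will argue at the level of $\mathcal{P}(U)\to\mathcal{P}(X)$. For representables, $j_\shrp(h_U V\times h_U W)=h_X(V\times_U W)$ and $j_\shrp h_UV\times j_\shrp h_UW=h_X(V\times_X W)$. Since $j$ is a monomorphism, $V\times_U W\cong V\times_X W$, so the comparison map is an isomorphism on representables. Both sides commute with colimits separately in each variable: $\mathcal{P}(X)$ is cartesian closed, and $j_\shrp$ preserves colimits. Hence $j_\shrp(\esc{A}\times\esc{B})\simeq j_\shrp\esc{A}\times j_\shrp\esc{B}$ for all presheaves. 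In fact $j_\shrp$ identifies $\mathcal{P}(U)$ with $\mathcal{P}(X)_{/h_XU}$, and products there are fiber products over $h_XU$, which agree with products in $\mathcal{P}(X)$ because $h_XU\to *$... no: $\esc{A}\times_{h_XU}\esc{B}\to\esc{A}\times\esc{B}$ is an equivalence because $h_XU$ is subterminal. This is the cleanest formulation.

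\textbf{Passing to $\mathcal{H}^n$.} We have $j_{n\shrp}=L^n_Xj_\shrp b^n_U$. For $\esc{A},\esc{B}\in\mathcal{H}^n(U)$, \Cref{Ln is cart} and the presheaf step give
$$L^n_X j_\shrp(\esc{A}\times\esc{B})\simeq L^n_X(j_\shrp\esc{A}\times j_\shrp\esc{B})\simeq L^n_Xj_\shrp\esc{A}\times L^n_Xj_\shrp\esc{B}.$$
The product $\esc{A}\times\esc{B}$ computed in $\mathcal{P}(U)$ already lies in $\mathcal{H}^n(U)$, since local objects are closed under limits. Full faithfulness of $j_{n\shrp}$ is \Cref{fully faithful j_*}.

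\textbf{Coherence.} To promote this to an $\infty$-operadic statement, we use that the op-lax structure on $j_{n\shrp}$ (the mate of the cartesian structure on $j^{n*}$) has invertible structure maps for binary products. By [\cite{lurie2017higher}, \S7.3.2], that makes it (non-unitally) monoidal. Equivalently, we can present $j_{n\shrp}$ as an equivalence $\mathcal{H}^n(U)\simeq\mathcal{H}^n(X)_{/h^n_XU}$, which is symmetric monoidal for the cartesian structures, followed by the forgetful functor from the slice over a subterminal object, which preserves products. The main obstacle is justifying $\mathcal{H}^n(U)\simeq\mathcal{H}^n(X)_{/h^n_XU}$ and the fact that $h^n_XU$ remains subterminal after localization. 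The latter holds because $L^n$ preserves the product $h_XU\times h_XU\simeq h_XU$, so the diagonal stays an equivalence. I would rely on this slice description to handle the coherences.
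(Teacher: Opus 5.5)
Your argument is correct and is essentially the paper's proof: use that $L^n$ is cartesian to reduce to $j_\shrp$ on presheaves, use that both sides preserve colimits in each variable to reduce to pairs of representables, and conclude from $V\times_U W\cong V\times_X W$, which holds because $j$ is a monomorphism. Your caveat about the unit is also right, and the paper's proof glosses over it: the paper invokes preservation of all finite products via [\cite{lurie2017higher}, Corollary 2.4.1.8] but only checks binary products, whereas $j_{n\shrp}(*)\simeq h^n_X(U)$ is not terminal in general (for instance, for $U=\emptyset$ and $X\neq\emptyset$ it is the initial object), so the statement holds only non-unitally, or with target the slice over the subterminal object $h^n_X(U)$, as you say.
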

\begin{proof}
The claim follows similarly to [\cite{khan2016motivic}, Proposition 6.1.4]. We shall sketch the argument below. By [\cite{lurie2017higher}, Corollary 2.4.1.8], it suffices to show that $j_{n\shrp}$ preserves finite products. Since $L^n$ is cartesian, it suffices to show this for $j_\shrp$ at the level of presheaves. But $j_\shrp$ is a left adjoint and hence preserves colimits, and the cartesian product functor preserves colimits in each variable. By the compact generation of $\mathcal{P}(U)$ by smooth schemes, it suffices to prove this on a pair $(h_UX,h_UY)$ for smooth schemes $X,Y/U$. This is clear since the diagonal of $j$ is an isomorphism.
\end{proof}
\end{rem}

It follows immediately from \Cref{^* is cart} that:

\begin{prop}[Smooth projection formula]\label{sm proj formula}
Let $p: Y\to X$ be a smooth morphism of schemes, and let $\esc{X}\in \mathcal{H}^{n}(X)$ and $\esc{Y}\in \mathcal{H}^{n}({Y})$. Then the canonical maps \begin{flalign}p_{\shrp}(p^*\esc{X}\times\esc{Y})\to \esc{X}\times p_{\shrp}\esc{Y}\\
{(p_\shrp\esc{Y})}^{\esc{X}}\to p_\shrp(\esc{Y}^{p^*\esc{X}})\end{flalign} are equivalences in $\mathcal{H}^{n}(X)$ and $\mathcal{H}^{n}(Y)$, respectively. 
\end{prop}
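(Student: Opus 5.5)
The plan is to build the comparison map from the first (projection) equivalence, and then prove it invertible by a Yoneda argument against the compact generators of \Cref{compact generation}. I read $p_\shrp$ as the derived functor $p_{n\shrp}$, and both sides as objects of $\mathcal{H}^n(X)$, since both $p_\shrp\esc{Y}$ and $\esc{X}$ live over $X$.

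\textbf{Step 1: the first map.} It is a map of colimit-preserving functors in each of the variables $\esc{X}$ and $\esc{Y}$. Here I use that $L^n$ is cartesian (\Cref{Ln is cart}), that products in $\mathcal{H}^n$ commute with colimits in each variable, and that $p^{n*}$ is monoidal (\Cref{^* is cart}). So it suffices to check it on generators $h^n_X(V)$ and $h^n_Y(W)$. There $p_\shrp(V\times_X Y\times_Y W)=V\times_X W$, as in the proof of \Cref{smooth base change}, and the map is the identity.

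\textbf{Step 2: building the second comparison map.} Using Step 1 and the counit of $p^{n*}\esc{X}\times-\dashv(-)^{p^{n*}\esc{X}}$, we get
\[
p_\shrp(\esc{Y}^{p^*\esc{X}})\times\esc{X}\simeq p_\shrp\big(\esc{Y}^{p^*\esc{X}}\times p^*\esc{X}\big)\to p_\shrp\esc{Y}.
\]
Its adjoint is $\theta:p_\shrp(\esc{Y}^{p^*\esc{X}})\to(p_\shrp\esc{Y})^{\esc{X}}$. The statement asks that the map in the opposite direction be an equivalence. I will prove that $\theta$ is invertible and take the stated map to be its inverse, so that the two formulations agree.

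\textbf{Step 3: reduction to representables.} Both sides of $\theta$ send colimits in $\esc{X}$ to limits. To reduce to $\esc{X}=h^n_X(V)$, I need $p_\shrp$ to commute with the relevant limits. This is the main obstacle, because $p_\shrp$ is only a left adjoint.

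\begin{itemize}
\item \emph{Open immersions.} For $p=j$ an open immersion I would first use \Cref{j_sharp is cart} together with full faithfulness (\Cref{fully faithful j_*}). The essential image of $j_{n\shrp}$ consists of objects over $h^n_X(U)$. Slicing over $h^n_X(U)$ preserves limits in an $\infty$-topos-like cartesian setting, and one checks directly that $\theta$ is the canonical identification of internal homs in the slice.
\item \emph{General smooth $p$.} I would try to reduce to the open-immersion case and to $p_\shrp$ applied to objects over a fixed base, computing mapping spaces from generators $h^n_X(T)$. On the target, $\mathrm{Map}(h^n_X(T),(p_\shrp\esc{Y})^{\esc{X}})\simeq\mathrm{Map}(h^n_X(T)\times\esc{X},p_\shrp\esc{Y})$. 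On the source, I would use that $p_\shrp$ is the forgetful functor $\mathcal{H}^n(Y)\simeq$ (objects over $h^n_X(Y)$) wherever that identification holds. For the $n$-birational case this is exactly the point needing verification, and I would try to deduce it from \Cref{smooth extension pull} and the cartesianness of $L^n$ on slices.
\end{itemize}

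\textbf{Step 4: the representable case.} With $\esc{X}=h^n_X(V)$, Step 1 and the slice description give $\esc{Y}^{p^*h(V)}$ over $Y$. Its image under $p_\shrp$ is computed in the slice over $h^n_X(Y)$, where the exponential by $h(V)\times h(Y)$ agrees with the exponential by $h(V)$ over $X$ restricted to the slice. This is the standard identity $(Z\to B)^{A}$ in $\mathcal{C}_{/B}$ equals $Z^{A}\times_{B^{A}}B$. I flag that this yields a pullback, not $Z^A$ itself. So Step 4 is where I would carefully compare, and where the claim may require extra hypotheses (for instance $p$ an open immersion, or $\esc{X}$ pulled back with a section).
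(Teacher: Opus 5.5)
Your Step 1 is correct and is essentially the paper's argument for the first map. Both sides preserve colimits separately in $\esc{X}$ and $\esc{Y}$, using that $L^n$ is cartesian and that $p^{n*}\simeq p^*$ for smooth $p$. On generators $(h^n_X(V),h^n_Y(W))$ both sides are $h^n_X(V\times_X W)$.

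\textbf{The gap: the second map.} Steps 2--4 cannot be completed, because with $p_\shrp$ on both sides the formula is false.

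\emph{Counterexample.} Take $\esc{X}$ initial in $\mathcal{H}^n(X)$. Since $p^{n*}$ is a left adjoint, $p^{n*}\esc{X}$ is initial. Exponentiating by an initial object gives the terminal object, so $\esc{Y}^{p^*\esc{X}}\simeq *_Y$ and $p_{n\shrp}(\esc{Y}^{p^*\esc{X}})\simeq h^n_X(Y)$, whereas $(p_{n\shrp}\esc{Y})^{\esc{X}}\simeq *_X$. Now let $X=\mathrm{Spec}\,k$ and $Y=X\sqcup X$. The representable sheaf $h_X(Y)$, which sends $T$ to its clopen subsets, is already $n$-birational motivic local. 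Indeed, smooth $k$-schemes are normal, and neither passing to a dense open nor $\mathbb{A}^1_T\to T$ changes $\pi_0$. This sheaf has two points over $X$, so the two sides are not even abstractly equivalent. Already $Y=\emptyset$ gives initial versus terminal.

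Your Step 4 formula $Z^A\times_{B^A}B$ is exactly this discrepancy. It also shows that the open-immersion bullet of Step 3 is wrong: the internal hom in the slice over $h^n_X(U)$ is not the ambient one unless $h^n_X(U)\to h^n_X(U)^{\esc{X}}$ is an equivalence.

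\textbf{The missing idea.} The paper obtains the second formula ``by adjunction'' from the first. The legitimate mate of (1) under $p_{n\shrp}\dashv p^{n*}$ and $\esc{X}\times-\dashv(-)^{\esc{X}}$ comes, for $\esc{W}\in\mathcal{H}^n(X)$ and all $\esc{Y}\in\mathcal{H}^n(Y)$, from the chain
\begin{align*}
\mathrm{Map}\big(\esc{Y},p^*(\esc{W}^{\esc{X}})\big)&\simeq\mathrm{Map}\big(p_{n\shrp}\esc{Y}\times\esc{X},\esc{W}\big)\\
&\simeq\mathrm{Map}\big(p_{n\shrp}(p^*\esc{X}\times\esc{Y}),\esc{W}\big)\simeq\mathrm{Map}\big(\esc{Y},(p^*\esc{W})^{p^*\esc{X}}\big).
\end{align*}
By Yoneda, (1) is therefore equivalent to $p^*(\esc{W}^{\esc{X}})\xrightarrow{\sim}(p^*\esc{W})^{p^*\esc{X}}$ in $\mathcal{H}^n(Y)$. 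This matches the statement's ``in $\mathcal{H}^n(Y)$''. The $p_*$-variant $(p_*\esc{Z})^{\esc{X}}\simeq p_*(\esc{Z}^{p^*\esc{X}})$ likewise follows formally, from $p^*$ being cartesian.

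Neither version requires $p_\shrp$ to commute with limits, which is the obstruction you hit in Step 3. Rather than forcing Step 3 through, you should have turned your Step 4 observation into the counterexample above and then proved the mate.
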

\begin{proof}
By adjunction, the equivalences are dual to each other, so it is enough to verify the first. But the first follows from the cartesian-ness of $L^n$ [\Cref{Ln is cart}] and the smooth projection formulas at the level of presheaves. Again, use the fact that each functor involved in the formulas preserves colimits and that the formula holds on the generators of $\mathcal{P}(X)\times \mathcal{P}(Y)$, namely $(h_X(W),h_Y(W'))$ for smooth $W/X$ and $W'/Y$. 
\end{proof}
\begin{cor}\label{cart extension functoriality}
    Let $X$ be a scheme, and consider the category $\mathrm{Uglt}_X^{sm}$ of schemes UGLT over $X$ with morphisms the smooth ones. The assignment $p\mapsto p_{n\shrp}$ makes $\mathcal{H}^{n}_{\shrp}(-)$ into a $\mathcal{H}^n(X)^{\times}$-module co-presheaf. Formally, $$\mathcal{H}^{n}(-)_\shrp: \mathrm{Uglt}_X\to \mathrm{Mod}_{\mathcal{H}^n(X)^\times }(Pr^{L,\otimes}).$$
\end{cor}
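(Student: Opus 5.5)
The corollary asks for a co-presheaf $\mathcal{H}^n(-)_\shrp$ on $\mathrm{Uglt}_X^{sm}$ with values in $\mathcal{H}^n(X)^\times$-modules, i.e. a functor $\mathrm{Uglt}_X^{sm}\to \mathrm{Mod}_{\mathcal{H}^n(X)^\times}(Pr^{L,\otimes})$. I would build it in three layers.

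\emph{Module structures on objects.} For an object $u:Y\to X$ of $\mathrm{Uglt}_X$, the map $u$ is UGLT. By \Cref{^* is cart} the functor $u^{n*}:\mathcal{H}^n(X)^\times\to\mathcal{H}^n(Y)^\times$ is symmetric monoidal. This makes $\mathcal{H}^n(Y)$ a commutative $\mathcal{H}^n(X)^\times$-algebra in $Pr^{L,\otimes}$, and hence a module. The action is $\esc{X}\cdot\esc{Y}=u^{n*}\esc{X}\times\esc{Y}$. On objects, this is exactly the module presheaf of \Cref{cart pull functoriality}, obtained by restricting to the slice over $X$. Doing this coherently means composing the functor $\mathrm{Uglt}_X^{op}\to \mathrm{CAlg}(Pr^L)_{\mathcal{H}^n(X)/}$ with the forgetful functor $\mathrm{CAlg}_{A/}\to\mathrm{Mod}_A$.

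\emph{Linearity of the morphisms.} Let $p:Y\to Y'$ be a smooth map over $X$, with structure maps $u=u'\circ p$. Then $p_{n\shrp}$ is left adjoint to $p^{n*}=p^*_{|\mathcal{H}^n(Y')}$, by \Cref{smooth extension pull} and \Cref{extension adjunction}. The adjoint $p^{n*}$ is symmetric monoidal, hence $\mathcal{H}^n(X)$-linear. Its left adjoint $p_{n\shrp}$ therefore carries a canonical oplax $\mathcal{H}^n(X)$-linear structure, given by the mate
\[
p_{n\shrp}(u^{n*}\esc{X}\times \esc{Y})\simeq p_{n\shrp}(p^{n*}u'^{n*}\esc{X}\times\esc{Y})\to u'^{n*}\esc{X}\times p_{n\shrp}\esc{Y}.
\]
This uses $u^{n*}\simeq p^{n*}u'^{n*}$, from the presheaf structure on UGLT maps. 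Since $p$ is itself UGLT, this is a composition of UGLT maps. The displayed map is an equivalence by the smooth projection formula \Cref{sm proj formula} applied to $p$ and $u'^{n*}\esc{X}\in\mathcal{H}^n(Y')$. An oplax linear functor whose structure map is an equivalence is strictly linear, so $p_{n\shrp}$ is a morphism in $\mathrm{Mod}_{\mathcal{H}^n(X)^\times}(Pr^L)$.

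\emph{Coherence.} The functoriality $(q\circ p)_{n\shrp}\simeq q_{n\shrp}p_{n\shrp}$ is obtained by passing to left adjoints from the presheaf $\mathcal{H}^n(-)^*$ restricted to smooth maps. For this step I would use the standard fact that, for a symmetric monoidal $A$, a diagram in $\mathrm{Mod}_A(Pr^L)$ whose underlying functors admit left adjoints, and whose linearity mates are invertible, has a left-adjoint diagram in $\mathrm{Mod}_A(Pr^L)$. Equivalently, one can view it as a functor out of $\mathrm{Mod}_A(Pr^R)^{op}$ restricted to the adjointable part. Concretely, the diagram $\mathrm{Uglt}_X^{sm,op}\to\mathrm{Mod}_{\mathcal{H}^n(X)}(Pr^L)$, $p\mapsto p^{n*}$, lands in right adjoints. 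The left-adjointability condition is exactly the projection formula checked in the previous step, and passing to left adjoints gives the desired co-presheaf.

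\textbf{Main obstacle.} I expect the hard part to be this $\infty$-categorical coherence, namely invoking the right adjointability and mate theorem in module categories, for example via Lurie's results on linear adjoints (HA \S7.3.2 / \S4.6). The pointwise verifications are immediate from \Cref{sm proj formula} and \Cref{^* is cart}.
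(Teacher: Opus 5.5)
Your proposal is correct and follows the paper's proof. The paper's entire argument is the pointwise check $r_{n\shrp}(p^{n*}\esc{X}\times\esc{Y})\simeq r_{n\shrp}(r^{n*}q^{n*}\esc{X}\times\esc{Y})\simeq q^{n*}\esc{X}\times r_{n\shrp}\esc{Y}$, using $p^{n*}\simeq r^{n*}q^{n*}$ and \Cref{sm proj formula}; this is exactly your linearity step, and your object-level module structure and passage to left adjoints only make explicit the coherence the paper leaves implicit. One small slip, which your later restriction to smooth maps makes harmless: in the object-level step the source should be $\mathrm{Uglt}_X^{uglt,op}$ (or $\mathrm{Uglt}_X^{sm,op}$) rather than $\mathrm{Uglt}_X^{op}$, since a morphism between UGLT $X$-schemes need not be UGLT.
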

\begin{proof}
    Let $r$ be a smooth morphism of schemes UGLT over $X$ defined by the following diagram:
    \[
    \xymatrix{
    Y\ar[rr]^r\ar[dr]_p&&Z\ar[dl]^q\\
    &X&
    }
    \]
Assume that $\esc{X}\in \mathcal{H}^n(X)$ and $\esc{Y}\in \mathcal{H}^n(Y)$. We must show that $r_{n\shrp}(p^{n*}\esc{X}\times \esc{Y})\simeq q^{n*}\esc{X}\times r_{n\shrp}\esc{Y}$. This follows immediately from the above proposition. Indeed, $$r_{n\shrp}(p^{n*}\esc{X}\times \esc{Y})\simeq r_{n\shrp}(r^{n*}q^{n*}\esc{X}\times \esc{Y})\simeq q^{n*}\esc{X}\times r_{n\shrp}\esc{Y}$$ (the first equivalence holds because $p^{n*}\simeq r^{n*}\circ q^{n*}$, and the last follows from \Cref{sm proj formula}).
\end{proof}

It is possible to write similar statements for the pointed setups. We leave the details to the reader. See [\cite{khan2016motivic}] for a similar treatment. 

The content of this section can then be summarized as follows. Let $\mathrm{Corr}(\mathrm{Sch}_S)_{ sm,uglt}$ be the $(2,1)$-category whose objects are schemes, vertical morphisms are smooth maps, horizontal morphisms are uglt, and composition of morphisms is given by composition through pullback completion of spans. Because a pullback requires a choice up to an isomorphism, this is forced to be a $(2,1)$-category.

\begin{rem}
Note that `birational morphisms' in the original sense of algebraic geometry are actual morphisms in this category, with both the vertical and horizontal morphisms being dense open immersions.
\end{rem}

\begin{thm}\label{full functoriality}
    There is  a presheaf $$\mathcal{H}^n(-): \mathrm{Corr}
    (\mathrm{Sch})_{sm,uglt}^{op}\to Pr^L$$ whose restriction to the vertical maps recovers $\mathcal{H}^n(-)_\shrp: \mathrm{Sch}^{sm}\to Pr^L$ while the restriction to the horizontal maps recovers $\mathcal{H}^{n}(-)^*: \mathrm{Sch}^{uglt,op}_{}\to Pr^L$.
\end{thm}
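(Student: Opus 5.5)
The plan is to use the standard machinery for producing functors out of correspondence categories from a presheaf satisfying a left Beck–Chevalley condition (as in Gaitsgory–Rozenblyum, or Liu–Zheng's ``partial adjoints''). Concretely, I would first build the horizontal presheaf $\mathcal{H}^n(-)^*:\mathrm{Sch}^{uglt,op}\to Pr^L$ as an honest $\infty$-functor rather than object-by-object. For this I would not work with the formula $f\mapsto L^n f^* b^n$ directly. Instead I would realise $\mathcal{H}^n(-)^*$ as a localization of the already available functor $\mathcal{P}(-)^*:\mathrm{Sch}^{op}\to Pr^L$, restricted to $\mathrm{Sch}^{uglt}$.

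The restriction is arranged as follows. Let $W(S)=bir_n(S)$. By \Cref{generic push} (together with \Cref{ugltpull}), for every UGLT $f$ the functor $f^*$ sends $W(T)$ into $W(S)$. The pointwise Dwyer–Kan localizations $\mathcal{P}(S)[W(S)^{-1}]\simeq\mathcal{H}^n(S)$ then assemble functorially. One convenient way to see this is to pass to the cartesian fibration classified by $\mathcal{P}(-)^*$, take its fibrewise localization at the marked edges, and apply the fact that a fibrewise localization compatible with transition functors is again cartesian (Lurie, HA 2.1.4/ HTT 5.2.7-style). The resulting transition functors are $L^n f^* b^n=f^{n*}$, and they are left adjoints in $Pr^L$ by \Cref{pull adjunction}. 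The presheaf-description paragraph above (compatibility $f^{n*}g^{n*}\simeq (gf)^{n*}$) is then recovered automatically.

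Next I would check the left Beck–Chevalley condition along the smooth class. Each $p^{n*}$ with $p$ smooth admits the left adjoint $p_{n\shrp}$ by \Cref{smooth extension pull} and \Cref{extension adjunction}. For every pullback square with $p$ smooth and $q$ UGLT, \Cref{smooth base change} gives that the exchange map $p'_{n\shrp}q'^{n*}\to q^{n*}p_{n\shrp}$ is an equivalence. Two facts keep the square inside the relevant classes: smooth morphisms are stable under base change, and UGLT morphisms are stable under base change by definition. With these inputs, the general theorem (e.g.\ Gaitsgory–Rozenblyum, Vol.~I, Ch.~7, Thm.~3.2.2, or Liu–Zheng's ``partial adjoint'' construction) extends the presheaf from $\mathrm{Sch}^{uglt,op}$ to $\mathrm{Corr}(\mathrm{Sch})_{sm,uglt}^{op}$. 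On a span $X\xleftarrow{q} U\xrightarrow{p} Y$ (with $q$ UGLT and $p$ smooth) the extension is $p_{n\shrp}\circ q^{n*}$. By construction its restriction to horizontal maps is $\mathcal{H}^n(-)^*$, and its restriction to vertical maps is obtained by passing to left adjoints, i.e.\ $\mathcal{H}^n(-)_\shrp$.

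I expect the main obstacle to be the first step, namely producing $\mathcal{H}^n(-)^*$ as a coherent functor. The problem is that $f^{n*}$ is defined as a composite $L^n f^* b^n$, and this only gives homotopy-coherence up to a choice, whereas the correspondence theorem needs an $\infty$-functor. The localization-of-cartesian-fibrations argument handles this, but it relies crucially on $f^*$ preserving $bir_n$. That property is exactly why the horizontal class must be restricted to UGLT maps. A minor secondary point is that the correspondence theorem usually asks the horizontal and vertical classes to contain isomorphisms and be closed under composition and base change. All of these hold here: composition of UGLT maps is UGLT (as noted in the example after \Cref{uglt defn}), and smooth maps are quasi-compact and of finite presentation.
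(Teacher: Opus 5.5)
Your proposal is correct and follows the paper's route: the paper proves this by feeding \Cref{smooth base change} (the left Beck--Chevalley condition for smooth vertical and UGLT horizontal maps) into the Gaitsgory--Rozenblyum extension theorem for correspondences. Your extra step builds $\mathcal{H}^n(-)^*$ coherently as a fibrewise Dwyer--Kan localization of $\mathcal{P}(-)^*$ at $bir_n$, using \Cref{generic push}; this justifies a point the paper leaves implicit in its presheaf-description paragraph, and you might also state explicitly that smooth maps are UGLT, so that $p^{n*}$ already belongs to the horizontal functor.
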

\begin{proof}
This follows from \Cref{smooth base change} and [\cite{MR3701352}, Theorem 2.1.3].
\end{proof}
\begin{cor}
Let $S$ be a Qcqs scheme. Then the presheaf $$\mathcal{H}^n(-):\mathrm{Corr}(\mathrm{Sch})_{sm,uglt}^{op}\to Pr^L$$ extends to 
 a presheaf $$\mathcal{H}^n(-)/\mathcal{H}^n(S):Corr(\mathrm{Uglt}_S)_{sm,uglt}^{op}\to Mod_{\mathcal{H}^n(S)^\times}(Pr^{L,\otimes})$$ whose restriction to vertical maps recovers $$\mathcal{H}^n(-)_\shrp: \mathrm{Uglt}_S^{sm}\to Mod_{\mathcal{H}^n(S)^\times}(Pr^{L,\otimes})$$ from \Cref{cart pull functoriality}, while the restriction to horizontal maps recovers $$\mathcal{H}^{n}(-)^*: \mathrm{Uglt}^{uglt,op}_{S}\to Mod_{\mathcal{H}^n(S)^\times}(Pr^{L,\otimes})$$ from \Cref{cart extension functoriality}.
\end{cor}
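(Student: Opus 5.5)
The plan is to obtain the module-valued correspondence presheaf from \Cref{full functoriality} by passing to slices over $S$ and then upgrading the values using the cartesian structures established earlier. First, restricting the presheaf of \Cref{full functoriality} along the forgetful functor $Corr(\mathrm{Uglt}_S)_{sm,uglt}\to Corr(\mathrm{Sch})_{sm,uglt}$ gives a $Pr^L$-valued presheaf. This forgetful functor is well defined because a pullback of a UGLT-over-$S$ scheme along a smooth map, or along a UGLT map, is again UGLT over $S$: smooth maps are flat, hence UGLT, and UGLT morphisms are closed under composition and base change. So the spans in $\mathrm{Uglt}_S$ compose inside $\mathrm{Uglt}_S$.

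Next I lift the values to $\mathcal{H}^n(S)^\times$-modules. For $X\in\mathrm{Uglt}_S$ with structure map $\pi_X$, the functor $\pi_X^{n*}$ is symmetric monoidal by \Cref{^* is cart}. This makes $\mathcal{H}^n(X)^\times$ a commutative $\mathcal{H}^n(S)^\times$-algebra, and hence a module, functorially in $X$ along UGLT maps; this is \Cref{cart pull functoriality}. On the vertical side, \Cref{cart extension functoriality} shows that for smooth $r:Y\to Z$ over $S$, the functor $r_{n\shrp}$ is $\mathcal{H}^n(S)$-linear. The linearity structure is given by the smooth projection formula \Cref{sm proj formula}, applied to $\pi_Z^{n*}\esc{X}$.

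The remaining point is compatibility in the $(2,1)$-categorical sense. For each pullback square with smooth vertical and UGLT horizontal maps, the Beck--Chevalley equivalence of \Cref{smooth base change} must be an equivalence of $\mathcal{H}^n(S)$-linear functors, not merely of underlying functors. Once that holds, I will invoke [\cite{MR3701352}, Theorem 2.1.3] with target $Mod_{\mathcal{H}^n(S)^\times}(Pr^{L,\otimes})$ in place of $Pr^L$. This works because the forgetful functor $Mod_{\mathcal{H}^n(S)^\times}(Pr^{L})\to Pr^L$ is conservative and detects adjointability. Concretely, I will check that the exchange map $p'_{n\shrp}q'^{n*}\to q^{n*}p_{n\shrp}$ commutes with the linearity constraints. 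Both sides reduce, via $L^n$ and \Cref{Ln is cart}, to the presheaf-level maps $p'_\shrp q'^*(\pi^*\esc{X}\times -)\to q^*p_\shrp(\pi^*\esc{X}\times-)$. By colimit preservation in each variable, it then suffices to check this on representables $h(W)$, $h(W')$, where everything is an explicit iterated fiber product and the compatibility is the pasting of pullback squares.

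I expect this coherence step to be the main obstacle, namely making precise that the linearity data assembles coherently rather than just objectwise. I would handle it by working with the unstraightened cocartesian fibration over $\mathrm{Uglt}_S^{op}$ classified by $\mathcal{H}^n(-)^{\times,*}$. I would then observe that the left adjoints $p_{n\shrp}$ exist fiberwise and satisfy base change, which yields the vertical functoriality automatically in the module-valued setting. Finally, the restrictions to vertical and horizontal maps agree with \Cref{cart extension functoriality} and \Cref{cart pull functoriality} by construction.
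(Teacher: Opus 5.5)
Your proposal is correct and matches what the paper intends: it gives no separate proof, treating the corollary as the module-valued version of \Cref{full functoriality}, obtained from the same extension theorem [\cite{MR3701352}, Theorem 2.1.3] applied to the horizontal presheaf of \Cref{cart pull functoriality}, with \Cref{sm proj formula} (via \Cref{cart extension functoriality}) making $p_{n\shrp}\dashv p^{n*}$ an adjunction in $Mod_{\mathcal{H}^n(S)^\times}(Pr^L)$ and \Cref{smooth base change} supplying Beck--Chevalley. Your hand check that the exchange map respects the linearity constraints is in fact automatic, since a mate of adjunctions in $Mod_{\mathcal{H}^n(S)^\times}(Pr^L)$ is already a module 2-morphism and is invertible iff its underlying transformation is; also, your assignment of the vertical restriction to \Cref{cart extension functoriality} and the horizontal one to \Cref{cart pull functoriality} is the right one, as the cross-references in the statement are swapped.
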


\begin{rem}
       It is possible to write down all the results of this subsection for the naive Birational Homotopy construction $\mathcal{H}^b(-):=L_{dense}L_\Sigma\mathcal{P}(-)$, recovering the functor $\mathcal{H}^{0}$ over Qcqs schemes. And in fact, just as above, this is an easier task than the (however, equivalent) Birational \textbf{Motivic} counterpart $\mathcal{H}^{0}$.
\end{rem}
\subsection{Refined pushforward}\label{3.2}
As we have mentioned earlier, $n$-dense open immersions need not be stable under arbitrary pullbacks. The problem is easily visible when pulling back along closed immersions: it might happen that the pullback of a dense immersion becomes empty. For example, if $i: Z\rightarrowtail X$ is a closed immersion of a divisor, then it fails to pull back the dense open subscheme $j:U=X\setminus Z\hookrightarrow X$. Of course, when the closed immersions are UGLT, the results of the previous section do apply:
\begin{cor}
    Let $i:Z\to S$ be a UGLT closed immersion. Then $i^*$ preserves $n$-birational motivic equivalences, and $i_*$ preserves $n$-birational motivic spaces.
\end{cor}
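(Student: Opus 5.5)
This is a direct specialization of \Cref{generic push} to $f=i$, so the plan is to check that $i$ meets its hypothesis and then read off both conclusions. No new geometric input is needed beyond \Cref{ugltpull}.

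First, I would note that $i$ is UGLT by assumption. Hence, by \Cref{ugltpull}, for every $X\in Sm_S$ and every $n$-dense open $U\hookrightarrow X$, the base change $U\times_SZ\hookrightarrow X\times_SZ$ is $n$-dense in $X\times_S Z$. In particular it is not empty over any generic point of $X\times_S Z$, which is exactly the pathology described above for divisors. This gives $i^*(B_n(S))\subset B_n(Z)$.

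Second, I would check that $i^*$ also sends the other generators into the right classes. It takes $\mathbb{A}^1$-projections $\mathbb{A}^1\times X\to X$ to $\mathbb{A}^1$-projections over $Z$, since base change commutes with products. It takes Nisnevich covers to Nisnevich covers, since these are stable under arbitrary pullback. Because $i^*:\mathcal{P}(S)\to\mathcal{P}(Z)$ is a left adjoint, it preserves colimits and pushouts, and it respects two-out-of-three. It therefore maps the saturation class generated by $\mathbb{A}(S)\cup B_n(S)\cup nis(S)$ into the one generated by $\mathbb{A}(Z)\cup B_n(Z)\cup nis(Z)$. That is, $i^*(bir_n(S))\subset bir_n(Z)$.

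Finally, I would deduce the statement about $i_*$ by adjunction. If $\esc{X}\in\mathcal{H}^n(Z)$ and $w\in bir_n(S)$, then $\mathrm{Map}(w,i_*\esc{X})\simeq\mathrm{Map}(i^*w,\esc{X})$, and the right-hand side is an equivalence because $i^*w\in bir_n(Z)$. So $i_*\esc{X}$ is $n$-birational local, which gives the descended adjunction $L^ni^*\dashv i_*$.

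The only real point, and the place where anything could go wrong, is the first step: applying \Cref{ugltpull}, which needs generalization lifting for the base change $X\times_S Z\to X$ rather than for $i$ itself. The UGLT hypothesis is precisely what supplies this. Everything after that step is formal saturation and adjunction bookkeeping.
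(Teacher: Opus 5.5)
Your proposal is correct and follows the paper's route: the corollary is \Cref{generic push} specialized to $f=i$, and that proof checks, as you do, that $i^*$ sends $\mathbb{A}(S)$, $B_n(S)$ (via \Cref{ugltpull}) and $nis(S)$ into the corresponding classes over $Z$, then concludes by saturation and adjunction. One cosmetic point: the saturation step is best phrased as saying that the class of maps $w$ with $i^*w\in bir_n(Z)$ is strongly saturated, because $i^*$ preserves colimits, and therefore contains $bir_n(S)$.
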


In the example above, the observation is made in the wrong light. The correct observation is that the pullback along $i$ takes $0$-dense open immersions to $(-1)$-dense open immersions. The following result is meant to indicate that, in general, instead of asking for stability of dense open immersions under closed pullback, the right question is to ask what height of density a closed immersion takes to what height after pulling back. For the rest of this section, we assume that $S$ is Noetherian Universally Catenary (NUC for short). We denote the subcategory of $\mathrm{Sch}$ consisting of NUC schemes as $\mathrm{Nuc}$. 
  \begin{thm}\label{closed rel d push}
Suppose $Z\xhookrightarrow{i} S$ is a closed immersion of $NUC$ schemes of maximal codimension bounded by $d$, i.e., $mcod(i)\leq d$. Recall that this means $i$ maps generic points to points of codimension at most $d$. Then for every $n\geq 0$, the functor $i^*:\mathcal{P}(S)\to \mathcal{P}(Z)$ takes $(d+n)$-birational motivic equivalences to $n$-birational motivic equivalences, and dually, $i_*$ takes $n$-birational motivic spaces to $(n+d)$-birational motivic spaces. We have an adjunction $$L_{n}i^*: \mathcal{H}^{{n+d}}(S) \leftrightarrows \mathcal{H}^{n}(Z):i_{*}.$$
    \end{thm}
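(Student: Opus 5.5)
The proof follows the pattern of \Cref{generic push}. Since $i^*:\mathcal{P}(S)\to\mathcal{P}(Z)$ is a left adjoint and $bir_{n+d}(S)$ is the saturation class generated by $nis(S)$, $\mathbb{A}(S)$ and $B_{n+d}(S)$, it suffices to check three things: $i^*$ sends Nisnevich equivalences to Nisnevich equivalences, which is formal because covers are stable under pullback; it sends $\mathbb{A}^1$-projections to $\mathbb{A}^1$-projections, which is obvious; and it sends every $(n+d)$-dense open immersion $U\hookrightarrow X$ in $Sm_S$ to an $n$-dense open immersion $U_Z\hookrightarrow X_Z$ in $Sm_Z$. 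Once these hold, $i^*(bir_{n+d}(S))\subset bir_n(Z)$. Dually, $i_*$ sends $n$-birational motivic spaces to $(n+d)$-birational ones. The adjunction $L^ni^*\dashv i_*$ between $\mathcal{H}^{n+d}(S)$ and $\mathcal{H}^n(Z)$ is then formal, exactly as in \Cref{pull adjunction}.

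The substance is the geometric claim. Let $p:X\to S$ be smooth and let $j:U\subset X$ be $(n+d)$-dense. The closed immersion $i_X:X_Z=X\times_SZ\hookrightarrow X$ is a base change of $i$, and I would first show that $mcod(i_X)\le d$. The scheme $X$ is again NUC, since it is of finite type over a NUC scheme. A generic point $\xi$ of $X_Z$ lies over a generic point $\zeta$ of $Z$, because the smooth map $X_Z\to Z$ is flat and hence sends generic points to generic points. Then
\[
cod_X(\xi)=\dim\mathcal{O}_{X,\xi}=\dim\mathcal{O}_{S,\zeta}+\dim\mathcal{O}_{X_\zeta,\xi}=\dim\mathcal{O}_{S,\zeta}\le d,
\]
using the dimension formula for flat local homomorphisms and the fact that $\xi$ is generic in the fibre.

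Next take $x\in X_Z\setminus U_Z$, and let $\xi$ be a generic point of $X_Z$ specializing to $x$. In $X$ we have $cod_X(x)\ge n+d+1$, because $x\notin U$. Since $X$ is catenary and local rings of a Noetherian scheme are finite dimensional, I would aim for the inequality
\[
cod_{X_Z}(x)\ \ge\ \dim\mathcal{O}_{\overline{\{\xi\}},x}\ \ge\ cod_X(x)-cod_X(\xi)\ \ge\ (n+d+1)-d=n+1,
\]
applied with a generic point $\xi$ of $X_Z$ chosen through $x$.

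\textbf{This step is the main obstacle.} Catenarity alone gives the additivity $\dim\mathcal{O}_{X,x}=\dim\mathcal{O}_{\overline{\{\xi\}},x}+\dim\mathcal{O}_{X,\xi}$ only when $\dim\mathcal{O}_{X,x}$ is realised by a saturated chain passing through $\xi$. In a catenary but non-equidimensional local ring this can fail. I would first try to reduce to $S$ local (flat pullback to $\mathrm{Spec}\,\mathcal{O}_{S,s}$ is harmless by \Cref{glt pull}), and then argue on $X$ locally at $x$. If the local ring $\mathcal{O}_{X,x}$ is equidimensional, for instance when $S$ is irreducible or universally catenary with equidimensional local rings, Ratliff's theorem gives the dimension formula and the inequality above follows. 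Failing that, I would instead bound $cod_{X_Z}(x)$ from below by choosing a maximal chain of generalizations of $x$ in $X$ of length $\ge n+d+1$. I would then intersect this chain with the closed set $X_Z$, whose codimension is at most $d$ at each generic point, and use catenarity to show that at most $d$ steps of the chain are lost.

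Finally, I would record that the identification with $\mathcal{H}^{n+d}(S)$ rests on the fact that all three generating classes land in $bir_n(Z)$. This is the analogue of \Cref{ugltpull}, with the codimension shift $d$.
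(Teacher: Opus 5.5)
Your reduction to the three generating classes and your check that $mcod(X_Z\hookrightarrow X)\le d$ match the paper's argument. The step you flag is exactly where the paper's own proof breaks. It asserts $cod_S z=cod_S\eta_z+cod_{\overline{\{\eta_z\}}}z$ ``since $S$ is catenary'', but catenarity only gives $\ge$. Your proposal leaves this step unproved, and it cannot be proved: for $n\ge 1$ the statement is false as written. In particular, your fallback, that a maximal chain through $x$ loses at most $d$ steps when intersected with $X_Z$, must fail.

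Here is a counterexample. Take $S=\mathrm{Spec}\,k[x,y,z]/(xz,yz)$, a plane and a line meeting at the origin $0$. It is of finite type over a field, hence NUC. Let $Z=V(x,y)\cong\mathbb{A}^1_k$ be the line. Its generic point is a generic point of $S$, so $mcod(i)=0$ (even $\delta(i)=0$).

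Since $\dim\mathcal{O}_{S,0}=2$, the open immersion $S\setminus\{0\}\hookrightarrow S$ lies in $B_1(S)$. But $i^*$ sends it to $h_Z(\mathbb{A}^1_k\setminus 0)\to h_Z(\mathbb{A}^1_k)$, which is not in $bir_1(Z)$. To see this, consider the sheaf $\mathcal{O}^\times$ on $Sm_Z$:
\begin{itemize}
\item it is a Nisnevich sheaf;
\item it is $\mathbb{A}^1$-invariant on reduced schemes;
\item it is local for $B_1(Z)$ by algebraic Hartogs, since objects of $Sm_Z$ are normal.
\end{itemize}
Yet $k[z]^\times\to k[z,z^{-1}]^\times$ is not bijective. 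In chain terms, the maximal chain from $0$ through a line in the plane to the generic point of the plane meets $Z$ only at $0$. Both steps are lost although $d=0$.

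Two things survive. First, the case $n=0$ holds unconditionally: a $d$-dense open of $X$ contains every generic point of $X_Z$, by your first step. Second, for $n\ge 1$ the statement holds under an extra hypothesis, for example that the local rings of $S$ are equidimensional (automatic if $S$ is irreducible). Under that hypothesis your first route works, provided you also check that equidimensionality passes to every $X\in Sm_S$.

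For that check, let $B=\mathcal{O}_{X,x}$ over $A=\mathcal{O}_{S,s}$. Let $\mathfrak{Q}$ be a minimal prime of $B$; by going-down it lies over a minimal prime $\mathfrak{q}$ of $A$. The dimension formula over the universally catenary local domain $A/\mathfrak{q}$ gives $\dim B/\mathfrak{Q}=\dim A/\mathfrak{q}+\dim\mathcal{O}_{X_s,x}$. By equidimensionality of $A$ and the flat dimension formula, this equals $\dim B$, so $B$ is equidimensional. Since $B$ is also catenary, $\dim B=\operatorname{ht}\mathfrak{P}+\dim B/\mathfrak{P}$ for every prime $\mathfrak{P}$. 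This is exactly the additivity your displayed inequality requires.
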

        \begin{proof}
       Since $i^*$ preserves Motivic equivalences, it suffices to show that it maps $(d+n)$-dense open immersions to $n$-dense open immersions.
        
         Let $f:X\to S$ be smooth, and $U\hookrightarrow X$ be an $(d+n)$-dense open immersion. We must show that the open immersion $$(Z\times_SX)\bigcap U\hookrightarrow Z\times_SX$$ is $n$-dense. 
         
         Note that the inclusion $Z\times_SX\hookrightarrow X$ also has maximum codimension bounded by $d$. Indeed, if $\tilde{z}\in (Z\times_SX)^{(0)}$, then, since every scheme is locally noetherian and $f$ is flat, we have $$cod_X(\tilde{z})=cod_S(f(\tilde{z}))+cod_{X_{{f(\tilde{z})}}}(\tilde{z})$$ [\cite{bachmann2019voevodsky}, Theorem 2.1]. Since $\tilde{z}\in X_{{f(\tilde{z})}}$ ($\subset Z\times_SX$) is a generic point of $Z\times_SX$, it is also a generic point of $X_{{f(\tilde{z})}}$. Thus the last term, $cod_{X_{{f(\tilde{z})}}}(\tilde{z})$, is zero, and we have $cod_X(\tilde{z})=cod_S(f(\tilde{z}))$. But $f_Z: Z\times_SX\to Z$ being flat implies that $f(\tilde{z})$ is a generic point of $Z$. Since the maximum codimension of $Z$ in $S$ is $\leq d$, it follows that $cod_S(f(\tilde{z}))\leq d$ and thus $cod_X(\tilde{z})\leq d$.

Moreover, since $X\to S$ is of finite type and $S$ is universally catenary, we know that $X$ is catenary. By the above paragraph and base change, we may thus reduce to showing that $(d+n)$-dense open immersions into the base $S$ pull back to $n$-dense open immersions into $Z$. Since every $\eta_Z\in Z^{(0)}$ has codimension at most $d$, for any $(d+n)$-dense open subscheme $U\hookrightarrow S$, we have $i(Z^{(0)})\subset U$. The pullback $Z\bigcap U\hookrightarrow Z$ therefore satisfies $Z^{(0)}\subset Z\bigcap U$, i.e., $Z\bigcap U$ is dense in $Z$. Now let $z\in Z$ be a point with $cod_Z(z)\leq n$. Let $z\leftsquigarrow \eta_z$ be a generalization with generic point $\eta_z\in Z^{(0)}$. Since $S$ is catenary, by [\cite[\href{https://stacks.math.columbia.edu/tag/02I6}{Lemma 02I6}]{stacks-project}] we have $$cod_Sz=cod_S\eta_z+cod_{\bar{{\eta_{z}}}} z\leq cod_S\eta_z+cod_{Z} z\leq  d+n.$$  Because $U\hookrightarrow S$ is $(d+n)$-dense, it follows that $z\in U$. Hence $z\in U\bigcap Z$. Since $z\in Z$ was arbitrary with $cod_Z(z)\leq n$, this observation implies that $Z\bigcap U\hookrightarrow Z$ is $n$-dense.
          
          Thus, $i^*$ takes $(d+n)$-birational motivic equivalences to $n$-birational motivic equivalences, and dually, $i_*$ takes $n$-birational motivic spaces to $(d+n)$-birational motivic spaces. As a formal consequence we deduce the adjunction $$L^{n}i^*: \mathcal{H}^{{n+d}}(S) \leftrightarrows \mathcal{H}^{n}(Z):{i^{}_{*}}_{|\mathcal{H}^{n}(Z)}.$$
\end{proof}
\begin{cor}
    Let $f:\esc{X}\to S$ be an $n$-birational motivic equivalence over an NUC scheme $S$. Then for every $s\in S^{(n)}$, the fiber $\esc{X}_{k(s)}$ is birationally contractible over $k(s)$.
\end{cor}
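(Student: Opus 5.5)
The plan is to factor the base change to the point $s$ into a flat step and a closed immersion of small codimension, and to apply \Cref{closed rel d push} with $d=n$. I read the hypothesis as saying that $f:\esc{X}\to *$ is an $n$-birational motivic equivalence in $\mathcal{P}(S)$ (i.e.\ $\esc{X}\to h_S(S)$ lies in $bir_n(S)$), and the goal as showing that $i_s^*\esc{X}\to *$ lies in $bir_0(k(s))$.

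First I localize. By \Cref{stalks of n bir equiv}, or directly by \Cref{flat pull push} applied to the flat UGLT map $Spec(\mathcal{O}_{S,s})\to S$, the map $\esc{X}_{\mathcal{O}_{S,s}}\to *$ is an $n$-birational motivic equivalence over $S':=Spec(\mathcal{O}_{S,s})$. The scheme $S'$ is again Noetherian and universally catenary, since localizations of NUC rings are NUC. It is local of dimension $cod_S(s)=n$, and its closed point is $s$.

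Next I apply the closed immersion $i:Spec\,k(s)\hookrightarrow S'$. The closed subscheme $Spec\,k(s)$ has a single generic point, namely $s$, of codimension $\dim\mathcal{O}_{S,s}=n$ in $S'$. Hence $mcod(i)=n$, and we may take $d=n$. \Cref{closed rel d push}, with its $n$ set to $0$ and $d=n$, says that $i^*$ sends $(0+n)$-birational motivic equivalences over $S'$ to $0$-birational motivic equivalences over $k(s)$. Since $i^*$ is a left adjoint, it sends the terminal object to the terminal object, so $i^*\esc{X}_{\mathcal{O}_{S,s}}\simeq\esc{X}_{k(s)}\to *$ lies in $bir_0(k(s))$. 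This is exactly the statement that $\esc{X}_{k(s)}$ is birationally contractible.

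The main points to check are twofold. One is that localization preserves the NUC hypothesis needed in \Cref{closed rel d push}. The other is the identification $mcod(i)=n$, which uses $cod_{S'}(s)=\dim\mathcal{O}_{S,s}=cod_S(s)$ via the Stacks lemma quoted in \S2.1. A further subtlety is that \Cref{closed rel d push} is stated for $i^*$ on all of $\mathcal{P}(S')$, so composing the two pullbacks is harmless: both functors preserve the relevant saturated classes, and $(g\circ f)^*\simeq f^*g^*$ at the presheaf level.

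If one prefers to avoid localizing, one can instead use the closed immersion $\overline{\{s\}}\hookrightarrow S$, which has $mcod=n$, to obtain a $0$-birational equivalence over $\overline{\{s\}}$. One then pulls back along the flat map from the generic point $Spec\,k(s)\to\overline{\{s\}}$ using \Cref{flat pull push}. I expect this route to work equally well and would choose whichever is cleaner.
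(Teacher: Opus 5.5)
Your argument is correct and is essentially the paper's proof with the two steps swapped. The paper first applies \Cref{closed rel d push} with $d=n$ to the closed immersion $\overline{\{s\}}\hookrightarrow S$, then passes to the generic point $s$ of $\overline{\{s\}}$ via \Cref{stalks of n bir equiv}, which is exactly the alternative route in your last paragraph. One small fix: $i^*$ preserves the terminal object not because it is a left adjoint (left adjoints preserve initial objects) but because it is left exact, equivalently because $i^*h_{S'}(S')\simeq h_{k(s)}(\mathrm{Spec}\,k(s))$.
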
 
\begin{proof}
    Let $i_s:\bar{x}\to S$ be the closed immersion of the closure of $s$ in $S$. Then, by \Cref{closed rel d push}, $i_{s}^*f$ is a $0$-birational motivic equivalence over $\bar{x}$. The claim now follows from \Cref{stalks of n bir equiv}.
\end{proof}
\begin{thm}\label{closed push full-faithful}
   Let $Z\xhookrightarrow{i} S$ be a closed immersion of NUC schemes. Then the functor $$i_*:  \mathcal{H}^{n}(Z) \to \mathcal{H}^{{n+mcod(i)}}(S)$$ induced by \Cref{closed rel d push} is fully faithful.
    \end{thm}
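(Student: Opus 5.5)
Write $d=mcod(i)$. Fully faithfulness of the right adjoint $i_*:\mathcal{H}^{n}(Z)\to \mathcal{H}^{n+d}(S)$ is equivalent to the counit $L^{n}i^*i_*\esc{F}\to \esc{F}$ being an equivalence for every $\esc{F}\in\mathcal{H}^n(Z)$. Here $L^ni^*$ is the left adjoint from \Cref{closed rel d push}, and we use the identification of $i_*$ on $\mathcal{H}^n(Z)$ with the restriction of the presheaf pushforward. I will reduce this to a statement at the level of presheaves. Recall that on $\mathcal{P}(Z)$ the counit $i^*i_*\esc{F}\to\esc{F}$ of the presheaf adjunction is not an equivalence for a general presheaf, since $i^*$ is left Kan extension along $X\mapsto X_Z$. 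However, it becomes an equivalence after Nisnevich sheafification. This is the classical fact (Morel--Voevodsky, Hoyois \cite{hoyois2017six}) that for a closed immersion the pushforward $i_*:\mathcal{P}_{nis}(Z)\to\mathcal{P}_{nis}(S)$ is fully faithful. It rests on the Nisnevich-local lifting of smooth $Z$-schemes: every smooth $Y/Z$ is Nisnevich locally of the form $X\times_SZ$ with $X/S$ smooth, and maps of henselian pairs $(X,X_Z)$ are computed on the closed fibre.

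The plan is therefore as follows.

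\textbf{Step 1.} Check that for $\esc{F}\in\mathcal{H}^n(Z)$, the object $i_*\esc{F}$ lies in $\mathcal{H}^{n+d}(S)$; this is the content of \Cref{closed rel d push}. Check also that $L^ni^*\simeq L^n L_{nis}i^*$ on $\mathcal{P}(S)$, because $i^*$ preserves Nisnevich equivalences.

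\textbf{Step 2.} Factor the counit as
$$L^n i^* i_*\esc{F}\to L^nL_{nis}i^*i_*\esc{F}\to L^nL_{nis}\esc{F}\simeq \esc{F}.$$
The first map is an equivalence because $L^n$ inverts Nisnevich equivalences. The second is $L^n$ applied to the Nisnevich counit, which is an equivalence by the closed-immersion fully faithfulness on Nisnevich sheaves recalled above. The last equivalence holds because $\esc{F}$ is already $n$-birational local, hence a Nisnevich sheaf.

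\textbf{Step 3.} Conclude by the triangle identities that $i_*$ restricted to $\mathcal{H}^n(Z)$ is fully faithful into $\mathcal{H}^{n+d}(S)$. A full subcategory of $\mathcal{P}_{nis}(S)$ is full in any intermediate full subcategory, so alternatively one can argue directly: $\mathcal{H}^n(Z)\subset\mathcal{P}_{nis}(Z)\xrightarrow{i_*}\mathcal{P}_{nis}(S)$ is fully faithful, and its image lands in $\mathcal{H}^{n+d}(S)$, which is a full subcategory of $\mathcal{P}_{nis}(S)$.

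\textbf{Main obstacle.} The main obstacle is justifying that $i_*:\mathcal{P}_{nis}(Z)\to\mathcal{P}_{nis}(S)$ is fully faithful over arbitrary NUC (indeed qcqs) bases. This is available in the literature (\cite{hoyois2017six}, where it is proved using that affine henselian pairs lift étale maps and that smooth schemes over $Z$ lift Nisnevich locally to $S$). I would simply cite it. If a self-contained argument is needed, I would check the counit on stalks, that is, on henselian local essentially smooth $Z$-schemes. A henselian local smooth $Z$-scheme $Y$ lifts to an étale neighbourhood $X\to S$ for which $(X^h,Y)$ is a henselian pair, and then $(i_*\esc{F})(X^h)=\esc{F}(Y)$ by continuity. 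No birational input is needed beyond \Cref{closed rel d push}.
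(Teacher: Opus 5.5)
There is a genuine gap in Step 2. The fact you cite, that $i_*:\mathcal{P}_{nis}(Z)\to\mathcal{P}_{nis}(S)$ is fully faithful for a closed immersion, is false for sheaves of spaces on the big smooth site. Full faithfulness of closed pushforward there is an $\mathbb{A}^1$-local phenomenon: it is the Morel--Voevodsky gluing/localization theorem. What Hoyois and Khan prove is that $i_*:\mathcal{H}^{\mathbb{A}^1}(Z)\to\mathcal{H}^{\mathbb{A}^1}(S)$ is fully faithful.

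Here is a counterexample within the NUC setting. Take $i:\mathrm{Spec}\,k\hookrightarrow\mathbb{A}^1_k=\mathrm{Spec}\,k[t]$ and $F=h_k(\mathbb{A}^1_k)$.
\begin{enumerate}
\item Every $X\in Sm_{\mathbb{A}^1_k}$ is flat over $k[t]$, so $t$ is a nonzerodivisor on $\mathcal{O}_X$.
\item Hence $i_*F$, which is $X\mapsto\mathcal{O}(X_0)$, is the quotient of $\mathbb{A}^1_{k[t]}$ by the free action $(x,y)\mapsto x+ty$ of $\mathbb{G}_{a,k[t]}$. So $i_*F$ is the geometric realization of a bar construction made of smooth $k[t]$-schemes.
\item The functor $i^*$ preserves colimits and is base change on representables. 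It therefore produces the bar construction of the \emph{trivial} action of $\mathbb{G}_{a,k}$ on $\mathbb{A}^1_k$.
\item Thus $i^*i_*F\simeq\mathbb{A}^1_k\times B\mathbb{G}_{a,k}$, which has $\pi_1=\mathbb{G}_a\neq 0$. The counit is not an equivalence.
\end{enumerate}
Restricting to $F\in\mathcal{H}^n(Z)$ does not help at the Nisnevich level. For example, $\mathbb{G}_{m,k}\in\mathcal{H}^1(k)$ gives $i^*i_*\mathbb{G}_{m,k}\simeq\mathbb{G}_{m,k}\times B\mathbb{G}_{a,k}$, via the dilatation group $1+t\mathcal{O}$.

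Your stalk sketch misses exactly this. It only computes $i_*F$ on a henselian lift, not the counit. The stalk of $i^*i_*F$ at $Y$ is a colimit over \emph{all} pairs $(X,\,Y\to X_Z)$, and this indexing category is not filtered. Extensions of $Y\to X_Z$ across a henselian pair exist but are not unique: they form an affine space of choices. The assertion that ``maps of henselian pairs are computed on the closed fibre'' is only a surjectivity statement. These affine spaces of choices contribute the $B\mathbb{G}_a$-type homotopy, which becomes contractible only after $\mathbb{A}^1$-localization.

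The repair is to replace $L_{nis}$ by $L_{mot}$ in your argument. Since $L^n$ inverts motivic equivalences, $L^n i^*i_*\esc{F}\simeq L^n(L_{mot}i^*i_*\esc{F})$. The map $L_{mot}i^*i_*\esc{F}\to\esc{F}$ is the counit of the motivic adjunction, which is an equivalence because $i_*$ is fully faithful on $\mathcal{H}^{\mathbb{A}^1}$ [\cite{khan2016motivic}, Corollary 7.4.3]. Equivalently, your Step 3 goes through with $\mathcal{H}^n(Z)\subset\mathcal{H}^{\mathbb{A}^1}(Z)\xrightarrow{i_*}\mathcal{H}^{\mathbb{A}^1}(S)$ in place of $\mathcal{P}_{nis}$. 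This composite is fully faithful and lands in the full subcategory $\mathcal{H}^{n+d}(S)$ by Theorem \ref{closed rel d push}. That is precisely the paper's proof.

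So you are right that no birational input is needed beyond Theorem \ref{closed rel d push}. The $\mathbb{A}^1$-input, however, is essential.
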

        \begin{proof}
      Clearly, ${i_*}_{|\mathcal{H}^{n}(Z)}={i^{mot}_*}_{|\mathcal{H}^{n}}$.
We know that $i_*^{mot}$ is fully faithful [Corollary 7.4.3, \cite{khan2016motivic}]. So we win.    \end{proof}

For a closed immersion $i$, let us define $$i^{*}_n:=L_{{n-mcod(i)}}i^*:\mathcal{H}^{n}(S)\to \mathcal{H}^{n-mcod(i)}(S).$$ 
\begin{rem}
However, because $mcod$ is not additive, it is not possible to arrange closed immersions with their $mcod$ grading in a suitable category. Hence, this result alone cannot be used to extend the construction $S\mapsto\mathcal{H}^-(S)$ to a (say, mcod-graded) presheaf, as we have done for uglt maps. This is possible when $mcod$ is additive, for example, when $mcod=cod$, i.e., for equidimensional closed immersions.
\end{rem}
\begin{cor}
    If $\delta(i)=0$, then $i^*$ maps $(n+cod(i))$-birational equivalences to $n$-birational equivalences.
\end{cor}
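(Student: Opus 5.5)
The statement is the special case of \Cref{closed rel d push} in which $mcod(i)=cod(i)$. The plan is therefore to reduce to that theorem with $d:=mcod(i)$.

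First, the definitions give $\delta(i)=mcod(i)-cod(i)$. The hypothesis $\delta(i)=0$ thus says that every generic point $\eta_Z\in Z^{(0)}$ has $cod_S(\eta_Z)=cod(i)=mcod(i)$. In particular $mcod(i)\leq cod(i)$, so $i$ has maximal codimension bounded by $d:=cod(i)$. As in the surrounding discussion, we work with NUC schemes, since that is the standing assumption of \S\ref{3.2}.

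Second, apply \Cref{closed rel d push} with $d=cod(i)$. It says that $i^*:\mathcal{P}(S)\to\mathcal{P}(Z)$ sends $(d+n)$-birational motivic equivalences to $n$-birational motivic equivalences, for every $n\geq0$. With $d=cod(i)$ this is exactly the claim. The adjunction $L^ni^*:\mathcal{H}^{n+cod(i)}(S)\leftrightarrows\mathcal{H}^n(Z):i_*$ follows formally, and by \Cref{closed push full-faithful} its right adjoint is fully faithful.

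There is no real obstacle here. The only point needing care is confirming that the hypothesis of \Cref{closed rel d push} is the bound $mcod(i)\leq d$, so the mere inequality $mcod(i)\leq cod(i)$ already suffices. Equidimensionality is used only to identify $mcod(i)$ with $cod(i)$. This is what makes the grading additive, as noted in the preceding remark: a composite of equidimensional closed immersions $Z'\to Z\to S$ should again shift heights by $cod$. Additivity is not needed for the present statement.
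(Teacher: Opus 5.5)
Your reduction is exactly the paper's. The corollary is stated without proof right after \Cref{closed rel d push}: $\delta(i)=0$ just says $mcod(i)=cod(i)$, so the claim is that theorem with $d=cod(i)$, and as a deduction from it your argument is complete. However, it inherits a genuine defect of \Cref{closed rel d push}, and the statement is false as written. For $z\in Z$ with $cod_Z z\le n$, that theorem's proof bounds $cod_S z$ using the formula $cod_S z=cod_S\eta_z+cod_{\bar{\eta}_z}z$. Catenarity gives additivity of codimension only along chains of irreducible closed subsets. But $cod_S z=\dim\mathcal{O}_{S,z}$ is a supremum over \emph{all} irreducible components of $S$ through $z$, including ones not containing $\eta_z$. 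So the formula, and with it the bound $cod_S z\le d+n$, can fail when $S$ is not equidimensional at $z$. The hypothesis $\delta(i)=0$ controls only the generic points of $Z$, so it does not rule this out. Your remark that there is no real obstacle is therefore too optimistic.

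Here is a concrete counterexample. Over a field $k$, let $S=\mathrm{Spec}\,k[x,y,z]/(xz,yz)$, a plane union a transverse line $L$, meeting at the origin $o$. This $S$ is NUC. Let $i\colon Z=L\hookrightarrow S$.

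\begin{enumerate}
\item Since $L$ is an irreducible component of $S$, we have $cod(i)=mcod(i)=0$ and $\delta(i)=0$.
\item Since $cod_S(o)=\dim\mathcal{O}_{S,o}=2$, the open immersion $S\setminus\{o\}\hookrightarrow S$ lies in $B_1(S)$.
\item Its pullback $i^*$ is $L\setminus\{o\}\hookrightarrow L$, which is not a $1$-birational motivic equivalence. To see this, fix a set $A$ with at least two elements and put $G(X):=A^{\pi_0(X\times_L\{o\})}$ on $Sm_L$.
\item $G$ is a Nisnevich sheaf and is $\mathbb{A}^1$-local. It is $1$-dense local because, by flatness, the generic points of the fiber $X\times_L\{o\}$ have codimension $1$ in $X$ and so lie in every $1$-dense open. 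Moreover, $\pi_0$ of a smooth $k$-scheme does not change on passing to a dense open.
\item Yet $G(L)=A\neq *=G(L\setminus\{o\})$, so $G$ detects that the pullback is not inverted.
\end{enumerate}

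So the corollary (with $n=1$) and \Cref{closed rel d push} (with $d=0$, $n=1$) need an extra equidimensionality hypothesis, for example that $S$ is irreducible or has equidimensional local rings, which restores the codimension formula. Under such a hypothesis your argument goes through verbatim.
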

Since codimension is additive for catenary schemes [\cite[\href{https://stacks.math.columbia.edu/tag/02I6}{Lemma 02I6}]{stacks-project}], by the above corollary, the following definition gives a presheaf $$\mathcal{H}^\mathbb{N}(-)^*: \mathrm{Nuc}^{closed_{\delta=0},op}\to Cat_\infty$$ via the following assignment:
\begin{flalign*}
    &S\mapsto \underset{n\in \mathbb{N}}{\bigsqcup }\mathcal{H}^{n}(S) \\
    &i \mapsto (n\mapsto i^{*}_n)
\end{flalign*}

\section{Continuity properties of \texorpdfstring{$\mathcal{H}^{n}$}{hb}}

Let us go back to the presheaf $\mathcal{P}_{nis}(-)^*: \mathrm{Sch}^{op}\to Pr^L$ consisting of presheaves satisfying Nisnevich descent. It turns out that this presheaf has good geometric properties, most of which descend to the motivic homotopy presheaf $\mathcal{H}^{\mathbb{A}^1}(-)^*:Sch^{op}\to Pr^L$. 

For example,

\begin{enumerate}
    \item $\mathcal{P}_{nis}(-)^{*}$ itself satisfies Nisnevich descent. With a little work, one can show that this holds even for $\mathcal{H}^{\mathbb{A}^1}(-)^*$; see [\cite{hoyois2017six}, Proposition 4.8] and [\cite{khan2016motivic}, Proposition 6.1.6] (below we will briefly recall how it works). 
    \item The presheaves $$\mathcal{P}(-)^*, \mathcal{P}_{nis}(-)^*, \mathcal{H}^{\mathbb{A}^1}(-)^*:Sch^{aff,op}\to Pr^L$$ take filtered colimits (i.e., pro schemes in $\mathrm{Sch}$ with affine transition maps) to filtered colimits (in $Pr^L$). In short, they are affine-continuous.
\end{enumerate}
    In fact, $\mathcal{H}^{\mathbb{A}^1}(-)^*$ has a few more properties that do not hold without the $\mathbb{A}^1$-localization:
\begin{enumerate}[resume]    
    \item $\mathcal{H}^{\mathbb{A}^1}(-)^*:Sch^{op}\to Pr^L$ is a deformation invariant.
    \item $ \mathcal{H}^{\mathbb{A}^1}(-)^*:Sch^{op}\to Pr^L$  takes $\mathbb{A}^1$ projections to fully faithful functors.
\end{enumerate}

In this section, we will briefly recall each property and then study analogous properties of the $n$-Birational motivic homotopy presheaf: $$\mathcal{H}^n(-)^{*}:\mathrm{Sch}^{uglt,op}\to Pr^L.$$
\subsection{Nisnevich descent for $\mathcal{H}^{n}(-)^*$}
We begin our discussion of the continuity properties of the $n$-birational motivic construction by showing that even this candidate satisfies Nisnevich descent. This is neither surprising nor unnecessary, given that the Nisnevich topos construction itself has the same property.  
\begin{prop}[Nisnevich descent of $\mathcal{H}^n(-)^*$]\label{bir detect} 

       \begin{enumerate}
           \item For a Nisnevich cover $\{f_\alpha: U_\alpha\to S\}$, the pullback maps $f_{\alpha}^*: \mathcal{P}(Sm_S)\to \mathcal{P}(Sm_{U_\alpha})$ preserve and detect $n$-birational motivic equivalences.
           \item  The presheaf $\mathcal{H}^{n}(-)^*:\mathrm{Sch}^{ uglt, op}\to Cat_\infty$ is Nisnevich separated, i.e., for a Nisnevich cover $f_\alpha: U_\alpha\to S$, the pullback maps $$f_\alpha^{n*}\simeq L^nf_\alpha^{*}: \mathcal{H}^{n}(S)\to \mathcal{H}^{n}(U_\alpha)$$ are jointly conservative. 
           \item The presheaf $\mathcal{H}^{n}(-)^{*}:\mathrm{Sch}^{ uglt, op}\to Pr^L$ from \Cref{full functoriality} is a Nisnevich sheaf.
       \end{enumerate}  
\end{prop}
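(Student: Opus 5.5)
Nisnevich covers are étale, hence flat, hence UGLT. So each $f_\alpha^*$ already preserves $n$-birational motivic equivalences by \Cref{generic push}, and $f_\alpha^{n*}\simeq f_\alpha^*|_{\mathcal{H}^n(S)}$ by \Cref{smooth extension pull}, since étale maps are smooth. The plan is to reduce everything to the known Nisnevich descent of $\mathcal{P}_{nis}(-)^*$ and $\mathcal{H}^{\mathbb{A}^1}(-)^*$ [\cite{hoyois2017six}, Proposition 4.8], plus a check that the extra localizing class $B_n$ descends.

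\textbf{Step 1: detection in (1).} Let $\phi:\esc{X}\to\esc{Y}$ in $\mathcal{P}(S)$ be such that every $f_\alpha^*\phi$ is an $n$-birational equivalence. It suffices to show that for each $\esc{Z}\in\mathcal{H}^n(S)$ the map $\mathrm{Map}(\esc{Y},\esc{Z})\to\mathrm{Map}(\esc{X},\esc{Z})$ is an equivalence.
\begin{itemize}
\item Since $\esc{Z}$ is in particular a Nisnevich sheaf, write $\esc{Z}\simeq\lim_{\Delta}(f_\bullet)_*f_\bullet^*\esc{Z}$ along the Čech nerve $U_\bullet\to S$ of the cover. This is the unit of Nisnevich descent, valid in $\mathcal{P}_{nis}(S)$ and hence on local objects.
\item Each $U_{\alpha_0\cdots\alpha_k}\to S$ is étale, so by \Cref{smooth extension pull} the term $f_\bullet^*\esc{Z}$ lies in $\mathcal{H}^n(U_\bullet)$. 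By \Cref{generic push}, $(f_\bullet)_*$ preserves $n$-birational spaces.
\item By adjunction, $\mathrm{Map}(\phi,\esc{Z})$ becomes the limit of $\mathrm{Map}(f_\bullet^*\phi,f_\bullet^*\esc{Z})$. These are equivalences because the multi-index pullbacks factor through some $f_{\alpha}^*$, and étale pullback preserves $n$-birational equivalences.
\end{itemize}
Preservation is immediate from \Cref{generic push}.

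\textbf{Step 2: statement (2).} A map $g$ in $\mathcal{H}^n(S)$ is an equivalence if and only if it is an $n$-birational equivalence. Since $f_\alpha^{n*}g\simeq f_\alpha^*g$, (2) follows from (1).

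\textbf{Step 3: statement (3), the main point.} We must show that $\mathcal{H}^n(S)\to\lim_\Delta\mathcal{H}^n(U_\bullet)$ is an equivalence. I will use that the corresponding statement holds for $\mathcal{P}_{nis}$. Since all transition functors are restrictions $f^*$, which preserve $n$-local objects, $\lim_\Delta\mathcal{H}^n(U_\bullet)$ is the full subcategory of $\lim_\Delta\mathcal{P}_{nis}(U_\bullet)\simeq\mathcal{P}_{nis}(S)$ of descent data that are termwise $n$-local. So it suffices to show: a Nisnevich sheaf $\esc{Z}$ on $S$ with each $f_\alpha^*\esc{Z}$ local for $\mathbb{A}(U_\alpha)\cup B_n(U_\alpha)$ is itself $\mathbb{A}(S)\cup B_n(S)$-local. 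For the $\mathbb{A}^1$-part this is the cited motivic descent. For $V\hookrightarrow X$ in $B_n(S)$ the argument goes as follows.
\begin{itemize}
\item Use the Nisnevich cover $X_{U_\bullet}\to X$ to express $\esc{Z}(X)$ and $\esc{Z}(V)$ as the limits of $\esc{Z}(X_{U_\bullet})$ and $\esc{Z}(V_{U_\bullet})$.
\item The map $V_{U_\bullet}\hookrightarrow X_{U_\bullet}$ is the pullback of $V\hookrightarrow X$ along a UGLT map, so it is $n$-dense by \Cref{ugltpull}.
\item Hence $\esc{Z}(X_{U_\bullet})\to\esc{Z}(V_{U_\bullet})$ is a levelwise equivalence, and therefore so is the map on limits.
\end{itemize}

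The obstacle I expect is bookkeeping rather than geometry. One must identify the limit in $Pr^L$ with the limit in $Cat_\infty$ of the right adjoints, and check that the Beck–Chevalley/exchange equivalences from \Cref{smooth base change} make the cosimplicial diagram coherent. Descent then follows from \cite{hoyois2017six} with the birational locality check above.
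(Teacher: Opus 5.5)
Your proposal is correct and follows essentially the paper's route. For (1) you use \v{C}ech descent along the cover together with étale stability of $n$-birational equivalences and of local objects; you run it dually, mapping into $\mathcal{Z}\simeq\lim_\Delta(f_\bullet)_*f_\bullet^*\mathcal{Z}$, where the paper uses the resolution $\colim (c_mf)_\sharp(c_mf)^*F\to F$ and two-out-of-three. For (3) you regard $\mathcal{H}^n(-)^*$ on $Et_S$ as a full subpresheaf of the sheaf $\mathcal{P}_{nis}(-)^*$ and show that $n$-locality descends via \Cref{ugltpull}; you check this on sections, the paper on internal homs.

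One small correction to your closing remark: limits in $Pr^L$ are computed in $Cat_\infty$ along the left adjoints themselves [\cite{lurie2009higher}, Proposition 5.5.3.13]. So neither a passage to right adjoints nor any exchange equivalences are needed for this sheaf condition.
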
    
\begin{proof}
1. The only-if part of (1) follows from the fact that each $f_\alpha$ is etale and hence uglt. For the if part of (1), let $f:U=\bigsqcup U_\alpha \to S$ be the disjoint union of the cover. Consider the \v{C}ech nerve $c_\bullet f: \check{C}_\bullet f \to S$ and the associated \v{C}ech resolution $\varepsilon: C_\bullet\to 1$ of endo-functors of $\mathcal{P}(Sm_S)$. In other words, for every $[m]\in \Delta$, define $$C_m (F)= (c_mf)_{\shrp} (c_mf)^{*}(F)\in \mathcal{P}(Sm_S).$$ Clearly, for each $F\in \mathcal{P}(Sm_S)$, the map  $|\varepsilon_F|: \colim_{}C_\bullet F\to F$  is a Nisnevich equivalence, since this holds when evaluated on smooth schemes. 
      
Now let $h:F\to G$ be a morphism in $\mathcal{P}({Sm_S})$ such that for every $\alpha$, the morphism $f_{\alpha}^{*}h$ is an $n$-birational equivalence in $\mathcal{P}(Sm_{U_\alpha})$. Since $f^*h=\bigsqcup_\alpha f_\alpha^*h$, and $bir_n(U)$ is colimit-closed, we have $f^*h\in bir_n(U)$. Recall that for each $m$, $$c_mf=f\times _Sf\times_S\cdots \times_Sf\text{ (the $(m+1)$-fold self-pullback of }f\text{),}$$ so that $c_mf=f\phi$ for some etale map $\phi$. Because $n$-birational motivic equivalences are stable under pullbacks by etale maps [\Cref{generic push}] and $(c_mf)^*h=(f\circ \phi)^*h\simeq \phi^*(f^*h)$, it follows that $(c_mf)^*h$ is an $n$-Birational motivic equivalence in $\mathcal{P}(\check{C}_mf)$.

Moreover, since $c_mf$ is smooth, \Cref{smooth extension pull} implies that $(c_mf)_\shrp$ preserves $n$-birational motivic equivalences. Hence, $(c_mf)_{\shrp} (c_mf)^{*}h$ is an $n$-birational motivic equivalence in $\mathcal{P}(S)$. Consequently, $C_\bullet h$ is a simplicial $n$-birational motivic equivalence. Now consider the commutative square below,        \[
    \xymatrix{
    \colim_{}C_\bullet F \ar[r]^-{\varepsilon_F}\ar[d]_{ \colim_{}C_\bullet h}&F\ar[d]^{h}\\
 \colim_{}C_\bullet G \ar[r]_-{\varepsilon_G}&G
    }
    \]
Because $bir_n(S)$ is closed under colimits, we deduce from the last paragraph that the left vertical map in the square above is an $n$-birational motivic equivalence. The first paragraph, on the other hand, guarantees that both horizontal maps in the square are Nisnevich equivalences, and hence $n$-birational motivic equivalences. By the 2-out-of-3 property of $bir_n(S)$, the commutativity of the square implies that the right vertical map, namely $h$, is also an $n$-birational motivic equivalence.

2. Statement (2) follows from the first one.

3. For (3), first note that, since $Pr^L\subset Cat_\infty$ is closed under limits [\cite{lurie2009higher}, Proposition 5.5.3.13], by a general theory of descent it suffices to show that, for a scheme $S$, the restriction of the presheaf $\mathcal{H}^n$ to $Et_S$ is a Nisnevich sheaf of $\infty$-categories. By the 2-out-of-three property of etale maps, all morphisms of $Et_S$ are etale, and hence, by \Cref{smooth extension pull}, for any $f:A\to B\in Mor(Et_S)$, $$\mathcal{H}^{n*}(f)\equiv L_{n}f^*\simeq f^*_{|\mathcal{H}^{n}}.$$ Thus, the restriction $\mathcal{H}^{n}(-)^*_{|Et_S}$ is a subpresheaf of the presheaf $\mathcal{P}_{nis}(-)_{|Et_S}$. Since $\mathcal{P}_{nis}(-)^*_{|Et_S}$ is a sheaf for the Nisnevich topology [\cite{hoyois2017six}, proposition 4.8 (1)], and the functor $$\mathcal{H}^{n}(-)^*_{|Et_S}\subset \mathcal{P}_{nis}(-)_{|Et_S}$$ is fully faithful, it suffices, by stability of fully faithful functors under limits, to show that, given a Nisnevich cover as in (1), the family $f_{\alpha}^{ *}$ can detect $n$-birational locality of Nisnevich sheaves of spaces. For this, we need to show that if $f_{\alpha}^{ *} \esc{F}$ is $n$-birational local for each $\alpha$, then so is the given Nisnevich sheaf $\esc{\esc{F}}$. This follows from the fact that the $\mathbb{A}^1$ projection morphisms, as well as $n$-dense open immersions [\Cref{ugltpull}], are stable under pullback by smooth maps. 

For example, suppose $V\xhookrightarrow{}X$ is an $n$-dense open immersion of smooth schemes over $S$. Let us show that $\esc{F}^X\to \esc{F}^V$ in $\mathcal{P}(Sm_S)$ is an equivalence. Since both $\esc{F}^X$ and $\esc{F}^V$ are Nisnevich local, it suffices to show that $\esc{F}^X\to \esc{F}^U$ is a Nisnevich equivalence. Since $\{f_\alpha:U_\alpha\to S\}$ is a Nisnevich cover of $S$, by [\cite{hoyois2017six}, Proposition (1)] it suffices to show that, for every $\alpha$, the map $f_\alpha^{*}(\esc{F}^X\to \esc{F}^V)$ is a Nisnevich local equivalence in $\mathcal{P}(U_\alpha)$. But $$f_\alpha^*(\esc{F}^X)\simeq (f_\alpha^*\esc{F})^{X\times_SU_\alpha}\text{ and similarly, }f_\alpha^*(\esc{F}^V)\simeq (f_\alpha^*\esc{F})^{V\times_SU_\alpha}.$$ So we have to show that the map $$(f_\alpha^*\esc{F})^{U_\alpha\times_SX}\to(f_\alpha^*\esc{F})^{U_\alpha\times_S V} $$ induced by $U_\alpha\times_SV\to U_\alpha\times_SX$ is an equivalence. Since $f_\alpha : U_\alpha\to S$ is UGLT, it follows from \Cref{ugltpull} that the map $U_\alpha\times_SV\to U_\alpha\times_SX$ over $U_\alpha$ is an $n$-dense open immersion. On the other hand, $f_\alpha^*\esc{F}$ is given to be $n$-birational motivic local. So this last requirement holds by definition of $n$-Birational motivic locality.
\end{proof}
       \begin{cor}[$\mathcal{H}^n(-)^*$ is additive]\label{zar add}
           Suppose we have a finite disjoint union of schemes $X=\bigsqcup X_\alpha$ (for example, $X$ if $X_\alpha$'s are its irreducible connected components). Then $$\mathcal{H} ^{n}(X)\xrightarrow[]{\Pi i_\alpha^*}\underset{\alpha} {\prod}  \mathcal{H}^{n}(X_\alpha){}$$ is an equivalence of $\infty$-categories. 
       \end{cor}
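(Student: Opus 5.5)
The plan is to read off the corollary from the Nisnevich sheaf property in \Cref{bir detect}(3), applied to the cover given by the decomposition itself. Write $i_\alpha : X_\alpha\hookrightarrow X$ for the inclusions. Since $X=\bigsqcup_\alpha X_\alpha$ is a finite disjoint union, each $i_\alpha$ is an open and closed immersion. In particular $i_\alpha$ is étale, hence flat and UGLT. The family $\{i_\alpha\}$ is therefore a Nisnevich (indeed Zariski) cover of $X$ lying in $\mathrm{Sch}^{uglt}$, so the sheaf condition of \Cref{bir detect}(3) applies to it.

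The sheaf condition says that $\mathcal{H}^n(X)$ is the limit of the cosimplicial diagram obtained by applying $\mathcal{H}^n(-)^*$ to the Čech nerve of $U:=\bigsqcup_\alpha X_\alpha\to X$. Here $U\to X$ is an isomorphism, so its Čech nerve is constant. The limit is then just $\mathcal{H}^n(U)$, which gives nothing yet. To extract the product decomposition, I will use instead the presheaf-level form of the sheaf condition, indexed by the cover $\{i_\alpha\}$. This is equivalent because $\mathcal{H}^n(-)$ restricted to $Et_X$ is a full subpresheaf of $\mathcal{P}_{nis}(-)$, exactly as in the proof of \Cref{bir detect}(3). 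In this form the terms are products over tuples of the $\mathcal{H}^n$ of the iterated fibre products $X_{\alpha_0}\times_X\cdots\times_X X_{\alpha_m}$. These fibre products are empty unless all the $\alpha_j$ coincide, and equal $X_\alpha$ when they do.

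Next I record that $\mathcal{H}^n(\emptyset)\simeq *$. Indeed $Sm_\emptyset$ consists only of $\emptyset$. A Nisnevich sheaf must send the empty cover of $\emptyset$ to a point, so $\mathcal{P}_{nis}(\emptyset)$, and hence $\mathcal{H}^n(\emptyset)$, is the terminal category. (Alternatively, the empty sieve is a Nisnevich covering sieve of $\emptyset$.) With this, the cosimplicial diagram degenerates: all degenerate and diagonal terms agree with the corresponding $\mathcal{H}^n(X_\alpha)$, and the off-diagonal terms are contractible. The limit therefore collapses to $\prod_\alpha\mathcal{H}^n(X_\alpha)$, and the induced functor is $\prod_\alpha i_\alpha^{n*}\simeq\prod_\alpha i_\alpha^*$. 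That the pullbacks need no further localization follows from \Cref{smooth extension pull}, since each $i_\alpha$ is étale.

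I expect the main obstacle to be making the collapse of the Čech limit rigorous. As a safer route, I would prove the equivalence directly instead. The functor $\mathcal{P}(Sm_X)\to\prod_\alpha\mathcal{P}(Sm_{X_\alpha})$ is an equivalence, because $Sm_X\simeq\prod_\alpha Sm_{X_\alpha}$ on objects up to the disjoint-union decomposition $Y=\bigsqcup_\alpha Y\times_X X_\alpha$. This equivalence restricts to Nisnevich sheaves by \Cref{bir detect}(1), since Nisnevich covers and the sheaf condition on a disjoint union split componentwise. It further matches $\mathbb{A}^1$-projections and $n$-dense open immersions componentwise, because a subset of $Y=\bigsqcup_\alpha Y_\alpha$ has codimension $>n$ if and only if each of its pieces in $Y_\alpha$ has codimension $>n$ there. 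Hence the local objects correspond, giving $\mathcal{H}^n(X)\simeq\prod_\alpha\mathcal{H}^n(X_\alpha)$ via $\prod_\alpha i_\alpha^*$.
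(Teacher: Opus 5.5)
Your first route is the paper's argument: the corollary is stated without proof as an immediate consequence of the Nisnevich sheaf property in \Cref{bir detect}(3), applied to the Zariski cover $\{i_\alpha\colon X_\alpha\to X\}$, and it is correct. The step you flag as the main obstacle is not one. $X_{\alpha_0}\times_X\cdots\times_X X_{\alpha_m}$ equals $X_\alpha$ when all indices equal $\alpha$ and is empty otherwise, and $\mathcal{H}^n(\emptyset)\simeq *$. So the \v{C}ech cosimplicial object is constant at $\prod_\alpha\mathcal{H}^n(X_\alpha)$ with identity structure maps, and its limit is that product. Equivalently, you can use the Zariski square with empty intersection.

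What licenses the family form is that ``Nisnevich sheaf'' in \Cref{bir detect}(3) means descent for covering families, inherited from $\mathcal{P}_{nis}(-)$. It does not follow from $\mathcal{H}^n(-)^*_{|Et_X}$ being a full subpresheaf of $\mathcal{P}_{nis}(-)$. The family form and the single-map form are not equivalent in general, precisely because of disjoint unions, as your own remark that the map $\bigsqcup_\alpha X_\alpha\to X$ ``gives nothing'' shows.

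Your fallback ``safer route'' starts from a false claim. Although $Sm_X\simeq\prod_\alpha Sm_{X_\alpha}$, the functor $\prod_\alpha i_\alpha^*\colon\mathcal{P}(Sm_X)\to\prod_\alpha\mathcal{P}(Sm_{X_\alpha})$ is not an equivalence, since presheaves on a product of categories do not form the product of the presheaf categories. For a counterexample, take $X=X_1\sqcup X_2$ with both pieces nonempty. Let $F(Y)=*$ if $Y$ lies over a single $X_\alpha$ (including $Y=\emptyset$) and $F(Y)=\emptyset$ otherwise. This is a presheaf, and every $i_\alpha^*F$ is terminal, yet $F(X)=\emptyset$. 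So $F$ is not equivalent to the terminal presheaf although they have the same image. The splitting only holds after imposing $F(\emptyset)\simeq *$ and $F(\bigsqcup_\alpha Y_\alpha)\simeq\prod_\alpha F(Y_\alpha)$, that is, for $\mathcal{P}_\Sigma$ or $\mathcal{P}_{nis}$. That is where the content lies, and \Cref{bir detect}(1) does not supply it, since it concerns $n$-birational equivalences.

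If you instead start from $\mathcal{P}_{nis}(Sm_X)\simeq\prod_\alpha\mathcal{P}_{nis}(Sm_{X_\alpha})$, your componentwise matching of $\mathbb{A}^1$-projections and $n$-dense open immersions does finish the argument. Test locality on $Y$ concentrated over a single $X_\alpha$, using $F_\beta(\emptyset)\simeq *$ on the other components. This is essentially the mechanism of the paper's proof of \Cref{bir detect}(3), specialized to this cover.
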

\begin{rem}\label[rem]{Ldense not nis sheaf}
    The spirit of \Cref{Ldense is dense inv}, however, does not apply to either of the results above. More precisely, $L_{dense}\mathcal{P}(-)$ is not even a contractible sheaf, let alone a Nisnevich or a $\sqcup$-sheaf. Indeed, $L_{dense}\mathcal{P}(\emptyset)$ is equivalent to $Spc$, not the terminal category. 
\end{rem}
\subsection{Topological invariance}
Let us recall that the small Nisnevich topos construction is a topological invariant:
\begin{lem}\label[lem]{def inv of small nis topos}
    The presheaf $Shv(-_{nis})^*:Sch^{op}\to Pr^L$ given by $S\mapsto Shv(S_{nis})$ sends universal homeomorphisms to equivalences. In particular, it is a deformation-invariant.
\end{lem}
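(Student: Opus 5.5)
The plan is to reduce the statement to the topological invariance of the small étale site, and then identify the Nisnevich topology on both sides. Let $f:S'\to S$ be a universal homeomorphism. Pullback gives a functor $f^{-1}:S_{et}\to S'_{et}$, $U\mapsto U\times_S S'$, and so an adjunction $f^*:Shv(S_{nis})\leftrightarrows Shv(S'_{nis}):f_*$. We will show that $f^{-1}$ is an equivalence of sites for the Nisnevich topology. Then $f_*$, which is precomposition with $f^{-1}$, is an equivalence of $\infty$-categories of sheaves, and so is its left adjoint $f^*$.

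\textbf{Step 1 (étale sites).} Grothendieck's topological invariance of the étale site [\cite{grothendieck1966elements}, Théorème (18.1.2)] says that $U\mapsto U\times_SS'$ is an equivalence $Et/S\simeq Et/S'$ for universal homeomorphisms $f$. The paper already uses this for nil immersions in \Cref{nis dense splitting}. We restrict it to the full subcategories of qcqs (separated) étale $S$-schemes, which is what the small Nisnevich site uses. Since $U'=U\times_SS'\to U$ is again a universal homeomorphism, this restriction is harmless. The equivalence commutes with fibre products because base change does.

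\textbf{Step 2 (covers correspond).} We check that a family $\{U_\alpha\to U\}$ in $S_{et}$ is a Nisnevich cover if and only if $\{U'_\alpha\to U'\}$ is. Surjectivity transfers because each $U'\to U$ is a homeomorphism on underlying spaces. Complete decomposedness needs more care. For $u'\in U'$ over $u\in U$, the extension $k(u)\subset k(u')$ is purely inseparable, since universal homeomorphisms are radicial. Given $v\in U_\alpha$ over $u$ with $k(v)=k(u)$, the unique $v'$ over $v$ lies over $u'$. Its residue field is a quotient of $k(v)\otimes_{k(u)}k(u')=k(u')$, so $k(v')=k(u')$.

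For the converse, suppose $k(v')=k(u')$. We need $k(v)=k(u)$. Here $k(v)/k(u)$ is finite separable, since $U_\alpha\to U$ is étale, and it embeds in $k(v')=k(u')$, which is purely inseparable over $k(u)$. So $k(v)=k(u)$.

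\textbf{Step 3 (conclusion).} An equivalence of sites that preserves and reflects covering families induces an equivalence of $\infty$-categories of sheaves of spaces: presheaves correspond under precomposition, and the sheaf conditions, being defined by Čech nerves of covers, match. So $f_*$ and $f^*$ are equivalences. Deformation invariance then follows because a nil immersion, such as $X^{red}\subset X$ or any thickening, is a universal homeomorphism.

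The main obstacle is the converse direction of Step 2, which needs the radiciality of universal homeomorphisms. This is standard (EGA IV 18.12.11 / Stacks Tag 04DF), and I would cite it rather than reprove it.
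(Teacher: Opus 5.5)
Your argument is correct, but it follows a different route from the paper's, whose proof is essentially a citation. The paper notes two facts. Radicial morphisms are exactly the universally injective ones (Stacks 01S4). Universal homeomorphisms are exactly the integral, universally injective, surjective morphisms (Stacks 04DF). So the hypotheses of [\cite{MR1106918}, E.3] amount to ``$f$ is a universal homeomorphism'', and the lemma becomes a restatement of that result.

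You instead reprove the cited statement from the topological invariance of the \'etale site. The base-change equivalence $S_{et}\simeq S'_{et}$ respects fibre products, and your residue-field check shows that it preserves and reflects completely decomposed families. Radiciality enters exactly once, in the reflection step: a finite separable subextension of the purely inseparable extension $k(u')/k(u)$ must be trivial.

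Your version is self-contained, and it gives an equivalence of the Nisnevich \emph{sites} themselves. So the equivalence of sheaf categories holds for sheaves of spaces, which is the setting of a $Pr^L$-valued presheaf, with no further comment. The paper's version gains brevity.

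One citation fix: EGA IV (18.1.2) is stated only for closed immersions with the same underlying space. That covers the ``in particular'' (thickening) clause but not a general universal homeomorphism. For the main claim you should cite the general topological invariance theorem (SGA 4, Exp.~VIII, Thm.~1.1).
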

\begin{proof}
    After one knows that radical maps are precisely the universally injective maps [\cite[\href{https://stacks.math.columbia.edu/tag/01S4}{Lemma 01S4}]{stacks-project}], the list of conditions in [\cite{MR1106918}, E.3] is equivalent to the corresponding morphism being a universal homeomorphism [\cite[\href{https://stacks.math.columbia.edu/tag/04DF}{Lemma 04DF}]{stacks-project}]. So this is just a restatement of [\cite{MR1106918}, E.3]. 
\end{proof}
Unfortunately, this does not extend to the Large Nisnevich topos construction $\mathcal{P}_{nis}(-)^*$. For example, suppose we are dealing with the reduction morphism $i:k=S^{red}\to S$ where $S=Spec (k[x]/x^2)$, then the unit $1\to i_*i^*$ fails to be an equivalence. Indeed, $i_*i^*\mathbb{A}^1_S\simeq i_*\mathbb{A}^1_S$ and $(i_*\mathbb{A}^1_S)(S)=Sm_S(S^{red},\mathbb{A}^1_S)=\Gamma(S^{red})=k$ while $\mathbb{A}^1_S(S)=\Gamma(S)=k[x]/x^2$. In fact, once the affine line is contracted, this issue completely disappears. That is, the motivic homotopy category construction is well known to be deformation-invariant:

\begin{thm}[Motivic Deformation Invariance]\label{motivic deformation invariant}
 The presheaf $\mathcal{H}^{\mathbb{A}^1}(-)^*: \mathrm{Sch}^{op}\to Pr^L$ sends thickenings of schemes to equivalences. Specifically, if $Z\xhookrightarrow{i} S$ is a nil immersion of schemes, then the functor $${i_*}_{mot}:={i_*}_{|\mathcal{H}^{\mathbb{A}^1}(Z)}:  \mathcal{H}^{\mathbb{A}^1}(Z) \to \mathcal{H}^{\mathbb{A}^1}(S)$$ is an equivalence of $\infty$-categories. In particular, $\mathcal{H}^{n}(X^{red})\to \mathcal{H}^{n}(X)$ is an equivalence.
    \end{thm}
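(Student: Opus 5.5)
The plan is to prove the motivic statement first, and then deduce the $n$-birational one by checking that the two equivalent categories have matching localizing classes. For the first part, recall that $i_*^{mot}$ is fully faithful for any closed immersion [\cite{khan2016motivic}, Corollary 7.4.3]; this was already used in \Cref{closed push full-faithful}. So it suffices to show that $i_*$ is essentially surjective onto $\mathcal{H}^{\mathbb{A}^1}(S)$. Equivalently, since $i^*\dashv i_*$ and the counit $i^*i_*\to 1$ is an equivalence, it suffices to show that the unit $\esc{F}\to i_*i^*\esc{F}$ is a motivic equivalence for every $\esc{F}\in\mathcal{H}^{\mathbb{A}^1}(S)$.

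For this I will invoke the Morel--Voevodsky gluing (localization) theorem in its unstable form over qcqs bases [\cite{hoyois2017six}, \cite{khan2016motivic}]. Let $j:U=S\setminus Z\hookrightarrow S$ be the open complement. The theorem says that for every $\esc{F}$ the square
\[
\xymatrix{
j_\shrp j^*\esc{F}\ar[r]\ar[d] & \esc{F}\ar[d]\\
h_S(U)\ar[r] & i_*i^*\esc{F}
}
\]
is a pushout in $\mathcal{H}^{\mathbb{A}^1}(S)$. Since $i$ is a nil immersion, it is surjective, so $U=\emptyset$. Then both $j_\shrp j^*\esc{F}$ and $h_S(\emptyset)$ are the initial object: $\mathcal{P}(Sm_\emptyset)$ is trivial, and $h_S(\emptyset)$ is Nisnevich-equivalent to the initial presheaf because the empty family covers $\emptyset$. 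Hence the unit $\esc{F}\to i_*i^*\esc{F}$ is an equivalence, and $i_*$ is an equivalence with inverse $L_{mot}i^*$. The main obstacle is entirely contained in the gluing theorem, which I take as known; if one prefers to avoid it, the alternative is to check the unit on generators $h_S(X)$. This uses that smooth $Z$-schemes lift Zariski-locally along the nil immersion, and that $X(T)\to X(T\times_S Z)$ becomes an equivalence after $\mathbb{A}^1$-localization by the usual deformation $\mathbb{A}^1$-homotopy. This alternative is messier, which is why I would rely on the gluing theorem.

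For the $n$-birational claim, note that $i:X^{red}\to X$ is a universal homeomorphism, hence UGLT. By \Cref{generic push}, $i_*$ restricts to $\mathcal{H}^n(X^{red})\to\mathcal{H}^n(X)$ and is left adjoint to ... rather, has left adjoint $i^{n*}=L^ni^*$; the restricted $i_*$ is still fully faithful. It remains to see that every $\esc{F}\in\mathcal{H}^n(X)$ lies in the image, that is, that $G:=i^*\esc{F}$ (so that $\esc{F}\simeq i_*G$ by the first part) is $B_n(X^{red})$-local. Let $V\subset W$ be an $n$-dense open immersion in $Sm_{X^{red}}$. By Nisnevich descent (\Cref{bir detect} and the Zariski-locality discussion around \Cref{smqproj}), we may assume $W$ is affine. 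Then $W$ lifts to a smooth affine $\widetilde W/X$ with $\widetilde W\times_X X^{red}\cong W$ [\cite{grothendieck1966elements}]. The open $V$ lifts uniquely to an open $\widetilde V\subset\widetilde W$ with the same underlying space, so $\widetilde V\subset\widetilde W$ is $n$-dense, since codimension is topological. We then compute
\[
\mathrm{Map}(h(W),G)\simeq\mathrm{Map}(i_*h(W),\esc{F})\simeq \mathrm{Map}(h(\widetilde W),\esc{F}),
\]
using the motivic equivalence between $h_X(\widetilde W)$ and $i_*h_{X^{red}}(W)$, and similarly for $V$. Locality of $\esc{F}$ with respect to $\widetilde V\subset\widetilde W$ then gives locality of $G$ with respect to $V\subset W$. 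This finishes the proof.
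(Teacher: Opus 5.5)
Your proof is correct and is essentially the paper's. For the motivic part you use the localization theorem with empty open complement; the paper states this as the cofiber sequence $\mathcal{H}^{\mathbb{A}^1}(\emptyset)\to\mathcal{H}^{\mathbb{A}^1}(S)\to\mathcal{H}^{\mathbb{A}^1}(Z)$ in $Pr^L$ rather than objectwise. Your $n$-birational part is the argument the paper gives later for \Cref{deformation invariant} via \Cref{n bir detection by thickening}, with your reduction to affine $W$ replacing the paper's \v{C}ech nerve over a lifted Zariski cover. One harmless slip: $\mathcal{P}(Sm_\emptyset)\simeq Spc$ is not trivial. What you actually need, and what holds because the empty family covers $\emptyset$, is $\mathcal{P}_{nis}(Sm_\emptyset)\simeq\mathcal{H}^{\mathbb{A}^1}(\emptyset)\simeq *$.
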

\begin{proof}
For a closed immersion $i:Z\hookrightarrow X$, there is a cofiber sequence in $Pr^L$ [\cite{hoyois2021localization}, Remark 6]:
$$ \mathcal{H}^{\mathbb{A}^1}(X\setminus Z)\xrightarrow{j_\shrp}\mathcal{H}^{\mathbb{A}^1}(X)\xrightarrow{i^*}\mathcal{H}^{\mathbb{A}^1}(Z)$$
When $i$ is a thickening, we have $X\setminus Z=\emptyset$. Thus $\mathcal{H}^{\mathbb{A}^1}(X\setminus Z)=\mathcal{H}^{\mathbb{A}^1}(\emptyset)\simeq pt$, the zero object in $Pr^L$. Therefore, the cofiber of $\mathcal{H}^{\mathbb{A}^1}(X)\xrightarrow{i^*}\mathcal{H}^{\mathbb{A}^1}(Z)$ must be an equivalence.
\end{proof}
It follows that this induces nil-invariance for $\mathcal{H}^n(-)$ as well. To this end, we will need an $n$-birationality detection lemma for thickenings:
\begin{lem}\label[lem]{n bir detection by thickening}
    Let $i:Z\to S$ be a thickening. Then $i_*:\mathcal{P}_{nis}(Z)\to \mathcal{P}_{nis}(S)$ detects $n$-dense locality.
\end{lem}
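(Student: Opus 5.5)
We must show: if $\esc{F}\in\mathcal{P}_{nis}(Z)$ is such that $i_*\esc{F}\in\mathcal{P}_{nis}(S)$ is local with respect to $B_n(S)$, then $\esc{F}$ is local with respect to $B_n(Z)$. The plan is to lift every $n$-dense open immersion over $Z$ to one over $S$ whose pullback along $i$ recovers it. Then we compare mapping spaces through the adjunction $i^*\dashv i_*$.

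First, we lift objects. Let $V\hookrightarrow Y$ be in $B_n(Z)$, with $Y/Z$ smooth. Smooth schemes do not always lift along a thickening globally, but they do lift Zariski-locally: affine smooth $Z$-schemes lift to smooth affine $S$-schemes (standard presentation plus Hensel-type lifting of étale coordinates). So we first reduce to the case $Y$ affine (or small enough). Since $\esc{F}$ is a Nisnevich sheaf, we may use a Zariski cover $\{Y_k\}$ of $Y$ by liftable opens with liftable intersections (e.g. affine opens when $Y$ is separated). Open immersions lift generalizations, so $V\cap Y_k\hookrightarrow Y_k$ remains $n$-dense (\Cref{glt pull}). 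Locality for $V\hookrightarrow Y$ then follows from locality on the pieces by a Čech-descent argument: the mapping spaces $\esc{F}(Y)$ and $\esc{F}(V)$ are limits over the Čech nerves, and the comparison is levelwise an equivalence.

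Next, for $Y$ with a smooth lift $\tilde Y/S$, so that $\tilde Y\times_S Z\cong Y$, we lift the open. The map $\tilde Y\times_S Z\to\tilde Y$ is a homeomorphism, since it is the base change of a thickening. Hence $V$ corresponds to an open $\tilde V\subset\tilde Y$ with $\tilde V\times_S Z=V$. Codimension is a purely topological invariant, so $\tilde V\hookrightarrow\tilde Y$ is $n$-dense, i.e. it lies in $B_n(S)$. By adjunction we get
$$\mathrm{Map}(h_Z Y,\esc{F})\simeq\mathrm{Map}(h_S\tilde Y,i_*\esc{F})\simeq\mathrm{Map}(h_S\tilde V,i_*\esc{F})\simeq\mathrm{Map}(h_Z V,\esc{F}),$$
using $i^*h_S\tilde Y=h_Z(Y)$ and $i^*h_S\tilde V=h_Z(V)$. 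Naturality in the inclusion shows that the composite is the restriction map. Hence $\esc{F}$ is $B_n(Z)$-local.

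The main obstacle is the lifting of smooth $Z$-schemes and the reduction to the liftable case. I would try to invoke the standard fact that smooth affine schemes lift along nilpotent thickenings (EGA IV 18 / Elkik-type, or the corresponding step in the proof of \Cref{motivic deformation invariant} via \cite{khan2016motivic}). I would also make sure that the Zariski hypercover reduction is compatible with the sheaf condition on $\esc{F}$, so that we do not need to lift the intersections separately. If the intersections fail to be liftable, I would pass to a hypercover refined by affines instead.
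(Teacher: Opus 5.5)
Your proposal is correct and follows essentially the same route as the paper. Both arguments lift the smooth $Z$-scheme Zariski-locally (EGA IV, \S18), lift the open subscheme through the homeomorphism $Y\to\tilde Y$ (so $n$-density survives, codimension being topological), and conclude by \v{C}ech descent together with the adjunction $i^*\dashv i_*$. Your hedge about the overlaps is unnecessary, since any open of a liftable piece lifts as an open of the chosen lift via the same homeomorphism; and by doing the descent levelwise on the sheaf side, you even avoid the compatible lifts of overlaps that the paper's nerve $\check{C}_\bullet(J')$ tacitly requires.
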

\begin{proof}
    Let $j: U\hookrightarrow X$ be an $n$-dense open immersion over $Z$. By [\cite{grothendieck1966elements}, Proposition 18.1.1], there exists a Zariski cover $X_\alpha$ of $X$ such that $X_\alpha \simeq X_\alpha'\times_SZ$ for some $X'_\alpha\in Sm_S$. Since $X_\alpha\to X_\alpha'$ is a homeomorphism, by [\cite{grothendieck1966elements}, 18.1.2] the $n$-dense open immersion $j_\alpha :U_\alpha=U\times_XX_\alpha\hookrightarrow X_\alpha$ is pulled back from a unique open immersion $j_\alpha':U_\alpha'\hookrightarrow X_\alpha'$. Moreover, because both $X_\alpha\to X_\alpha'$ and $U_\alpha\to U_\alpha'$ are homeomorphisms, it is clear that $U_\alpha'\hookrightarrow X_\alpha'$ is also $n$-dense. This works for all finite fiber products. Moreover, by \Cref{dense prod} we know that the \v{C}ech nerves of both $J:=\sqcup j_\alpha$ and $J':=\sqcup j_\alpha'$ are simplicial $n$-birational motivic equivalences. Also, $\colim_{}\check{C}_\bullet (J)\simeq j$. 

    Now, suppose $i_*\esc{X}$ is $n$-dense local. Then we have an equivalence of morphisms: $$\mathrm{Map}(j,\esc{X})\simeq \lim_m \mathrm{Map} (\check{C} _mJ,\esc{X})\simeq \lim_m \mathrm{Map}(i^*\check{C} _mJ', \esc{X})\simeq \lim_m \mathrm{Map}(\check{C}_mJ', i_*\esc{X}).$$ This last morphism is an equivalence when $i_*\esc{X}$ is $n$-dense local, because $\check{C}_m j_\alpha'$ is $n$-dense for every $m$. Thus, the first morphism $\mathrm{Map}(j,\esc{X})$ is an equivalence, implying the $n$-dense locality of $\esc{X}$.
    \end{proof}

From this, we obtain the result of this subsection, namely the deformation invariance of the $n$-birational motivic homotopy category construction.
\begin{thm}[Deformation Invariance]\label{deformation invariant}
The presheaf $$\mathcal{H}^{n}(-)^*: Sch^{uglt,op}\to Pr^L$$ sends thickenings of schemes to equivalences of $\infty$-categories. Specifically, if $Z\xhookrightarrow{i} S$ is a nil immersion, then $i_*:  \mathcal{H}^{n}(Z) \to \mathcal{H}^{n}(S)$ is an equivalence of $\infty$-categories. In particular, the canonical map $$\mathcal{H}^{n}(X^{red})\to \mathcal{H}^{n}(X)$$ given by pullback along the reduction $X^{red}\subset X$ is an equivalence.
    \end{thm}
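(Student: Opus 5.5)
Since a nil immersion $i:Z\hookrightarrow S$ is a universal homeomorphism, it is UGLT, so by \Cref{generic push} we have the adjunction $i^{n*}\simeq L^ni^*:\mathcal{H}^n(S)\leftrightarrows \mathcal{H}^n(Z):i_*$. The plan is to deduce the equivalence from \Cref{motivic deformation invariant}: there $i_*:\mathcal{H}^{\mathbb{A}^1}(Z)\to\mathcal{H}^{\mathbb{A}^1}(S)$ is an equivalence, with inverse $L_{mot}i^*$. We show that this equivalence restricts to an equivalence between the full subcategories $\mathcal{H}^n(Z)\subset\mathcal{H}^{\mathbb{A}^1}(Z)$ and $\mathcal{H}^n(S)\subset\mathcal{H}^{\mathbb{A}^1}(S)$. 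Fully faithfulness of the restriction is then automatic, so only essential surjectivity onto $\mathcal{H}^n(S)$ remains, together with the fact that $i_*$ lands in $\mathcal{H}^n(S)$.

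First, $i_*$ carries $\mathcal{H}^n(Z)$ into $\mathcal{H}^n(S)$. This is \Cref{generic push}, since $i^*$ preserves $\mathbb{A}^1$-projections, Nisnevich covers and, by \Cref{ugltpull}, $n$-dense open immersions.

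Second, the converse: let $\esc{Y}\in\mathcal{H}^n(S)$. By motivic deformation invariance, $\esc{Y}\simeq i_*\esc{X}$ for a unique motivic space $\esc{X}\in\mathcal{H}^{\mathbb{A}^1}(Z)$. We must show that $\esc{X}$ is $n$-dense local. This is exactly \Cref{n bir detection by thickening}, which says that $i_*$ detects $n$-dense locality on Nisnevich sheaves. The lemma works because every $n$-dense open immersion $U\hookrightarrow X$ in $Sm_Z$ is, Zariski locally on $X$, pulled back from an $n$-dense open immersion in $Sm_S$. The lifting uses EGA IV 18.1.1 and 18.1.2, and $n$-density is preserved because homeomorphisms preserve codimension. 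Hence $\esc{X}\in\mathcal{H}^n(Z)$, and $i_*:\mathcal{H}^n(Z)\to\mathcal{H}^n(S)$ is essentially surjective.

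Third, we identify the inverse. For $\esc{Y}\in\mathcal{H}^n(S)$ the motivic counit $i_*L_{mot}i^*\esc{Y}\to\esc{Y}$ is an equivalence, and by step two $L_{mot}i^*\esc{Y}$ is already $n$-birational. Thus $L^ni^*\esc{Y}\simeq L_{mot}i^*\esc{Y}$, so $i^{n*}$ is inverse to $i_*$ and the unit and counit of the adjunction above are equivalences. The final claim follows by taking $i:X^{red}\subset X$: the pullback $\mathcal{H}^n(X)\to\mathcal{H}^n(X^{red})$ is an equivalence, hence so is its inverse, the map in the statement.

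The main obstacle is the detection step, specifically checking that the Čech-nerve argument of \Cref{n bir detection by thickening} really computes $\mathrm{Map}(j,\esc{X})$ as a limit over lifted pieces. This requires the finite intersections of the Zariski cover to lift compatibly as well. If that lemma's compatibility claim is shaky, I would first reduce to affine $X$ using \Cref{smqproj} and Nisnevich descent (\Cref{bir detect}), where a smooth affine $Z$-scheme lifts globally to $S$ and no gluing is needed.
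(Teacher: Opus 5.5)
Your proposal is correct and is essentially the paper's proof. The thickening is UGLT, so $i_*$ lands in $\mathcal{H}^n(S)$; full faithfulness is inherited from \Cref{motivic deformation invariant}; and essential surjectivity comes from \Cref{n bir detection by thickening}, where you state the direction of that step (starting from $\esc{Y}\in\mathcal{H}^n(S)$) more accurately than the paper's write-up. The compatibility worry you raise is unnecessary: each finite intersection $X_\sigma=X_{\alpha_0}\cap\cdots\cap X_{\alpha_k}$ is an open subscheme of $X_{\alpha_0}$, so it lifts on its own to an open subscheme $X'_\sigma$ of the lift $X'_{\alpha_0}$, with $U\cap X_\sigma$ lifting to an $n$-dense open of $X'_\sigma$; since you only need each levelwise map $\esc{X}(X_\sigma)\to\esc{X}(U\cap X_\sigma)$ to be an equivalence before taking the limit over $\Delta$, no face maps ever have to be lifted.
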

\begin{proof}
Since thickenings are UGLT, $i_*$ preserves $n$-birational motivic spaces, and we have an adjunction 
$$L^ni^*:\mathcal{H}^n(Z)\leftrightarrows \mathcal{H}^n(S):i_*(n):= {i_*}_{|\mathcal{H}^n(S)}={i_{mot}^*}_{|\mathcal{H}^n{(S)}}$$
Since ${i_{mot}^*}$ is an equivalence by \Cref{motivic deformation invariant}, it follows that $i_*(n)$ is also fully faithful. To see that it is essentially surjective, first note that if $\esc{X}\in \mathcal{H}^{n}(Z)\subset \mathcal{H}^{\mathbb{A}^1}(Z)$, then by \Cref{motivic deformation invariant}, there exists a motivic space $\esc{Y}\in \mathcal{H}^{\mathbb{A}^1}(S)$ such that $i_*\esc{Y}\simeq \esc{X}$. But then, \Cref{n bir detection by thickening} tells us that $\esc{Y}$ must be $n$-birational local.
\end{proof}
\begin{rem}\label[rem]{Ldense not def inv}
    The above result is likely false for the naive dense localization $L_{dense}\mathcal{P}(-)$ (as is the case in \Cref{Ldense not nis sheaf}) (even though, naive dense localization factors through the naive $\mathbb{A}^1$-localization).
\end{rem}
\subsection{Essentially smooth continuity for $\mathcal{H}^n$}
Let us now recall the continuity property for $\mathcal{P}(-)^*,\mathcal{P}^*_{nis}(-):\mathrm{Sch}^{op}\to Pr^L$.
\begin{prop}
Suppose $S=\plim{}S_\alpha$ is a pro-scheme with affine transition maps $f_{\alpha\beta}$, then the morpshim  $$\phi=\dcolim_{\alpha} \phi_\alpha^*:  \dcolim_{\alpha}\mathcal{P}(S_\alpha)\to \mathcal{P}(S)$$ defined in $Pr^L$, induced by the left adjoints $\phi_\alpha^*$ associated to the legs $\phi_\alpha: X\to X_\alpha$, is an equivalence. Equivalently, the morphism $$\plim_{\alpha} \phi_{\alpha,*}:  \mathcal{P}(S)\to \plim_{\alpha}\mathcal{P}(S_\alpha)$$ defined in $Cat_\infty$ is an equivalence. Similarly, $$\dcolim_{\alpha} L_{nis}\phi_\alpha^*:  \dcolim_{\alpha}\mathcal{P}_{nis}(S_\alpha)\simeq  \mathcal{P}_{nis}(S)$$ in $Pr^L$ and equivalently, $$\plim_{\alpha} \phi_{\alpha,*}:  \mathcal{P}_{nis}(S)\simeq \plim_{\alpha}\mathcal{P}_{nis}(S_\alpha ).$$ In short, the presheaves $\mathcal{P}(-)^*, \mathcal{P}^*_{nis}(-): \mathrm{Sch}^{op}\to Pr^L$ are pro-affine continuous.
\end{prop}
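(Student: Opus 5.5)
The plan is to reduce everything to a statement about small categories, namely that $Sm_S$ is the filtered colimit of the $Sm_{S_\alpha}$, and then pass to presheaf categories formally.

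\textbf{Step 1: finitely presented smooth schemes.} Since $S=\plim S_\alpha$ has affine transition maps and the $S_\alpha$ are qcqs, we use the standard limit results ([\cite{grothendieck1966elements}, \S8 and Proposition 17.7.8]). Every finitely presented smooth $X/S$ descends to some smooth $X_\alpha/S_\alpha$. Morphisms also descend:
$$Sm_S(X,Y)\simeq \dcolim_{\beta\geq\alpha} Sm_{S_\beta}(X_\beta,Y_\beta).$$
Hence the canonical functor $\dcolim_\alpha Sm_{S_\alpha}\to Sm_S$, given by base change along the $\phi_\alpha$, is an equivalence of (essentially small) categories. Here the colimit is taken in $Cat_\infty$; it is a filtered colimit of $1$-categories, computed as usual.

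\textbf{Step 2: the presheaf statement.} The functor $\mathcal{P}(-)=Fun((-)^{op},Spc)$ from small $\infty$-categories to $Pr^L$ preserves colimits, being the free cocompletion, i.e.\ left adjoint to the forgetful functor on underlying small objects. Hence
$$\dcolim_\alpha \mathcal{P}(Sm_{S_\alpha})\simeq \mathcal{P}(\dcolim_\alpha Sm_{S_\alpha})\simeq \mathcal{P}(Sm_S)$$
in $Pr^L$. We must check that the transition maps match. Under this identification the transition map is left Kan extension along base change, which is exactly $f_{\alpha\beta}^*$, and the leg is $\phi_\alpha^*$, since both are colimit preserving and agree on representables. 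The equivalent formulation with $\plim\phi_{\alpha,*}$ in $Cat_\infty$ follows from the equivalence $Pr^L\simeq (Pr^R)^{op}$ together with the fact that limits in $Pr^R$ are computed in $Cat_\infty$ [\cite{lurie2009higher}, Theorem 5.5.3.18].

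\textbf{Step 3: the Nisnevich case.} Colimits in $Pr^L$ of localizations are the corresponding localizations of the colimit. Concretely, $\dcolim_\alpha \mathcal{P}_{nis}(S_\alpha)$ is the localization of $\mathcal{P}(Sm_S)$ at the union of the images $\phi_\alpha^*(nis(S_\alpha))$. It therefore suffices to show that the strongly saturated class generated by these images equals $nis(S)$.

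One inclusion holds because pullback preserves Nisnevich covers. For the reverse inclusion, $nis(S)$ is generated by \v{C}ech nerves of finite Nisnevich covers of finitely presented smooth schemes; one can use distinguished squares by [\cite{hoyois2017six}, Proposition 3.8]. Every such cover descends to some stage. By the limit results in [\cite{grothendieck1966elements}, \S8, \S17], étaleness descends, and the complete-decomposition/surjectivity condition holds after enlarging $\alpha$. For the distinguished-square formulation, openness, étaleness and the isomorphism over the closed complement all descend at a finite stage.

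\textbf{Main obstacle.} This descent of Nisnevich covers, or of distinguished squares, to a finite stage is the main step. Everything else is formal manipulation in $Pr^L$.
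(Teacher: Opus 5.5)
Your proposal is correct and follows essentially the paper's route. You identify $Sm_S$ with the filtered colimit of the $Sm_{S_\alpha}$ via the standard limit theorems, pass formally to presheaf categories, and descend Nisnevich distinguished squares to a finite stage so that the equivalence restricts to sheaves. The only difference is which side the formal step is run on: you work in $Pr^L$, using that free cocompletion preserves colimits and that a colimit of localizations is the localization at the union of the images, whereas the paper applies $Fun((-)^{op},Spc)$ to turn the colimit of the $Sm_{S_\alpha}$ into a limit in $Cat_\infty$. Your description of $Sm_S$ as a colimit is also the correct form of what the paper writes as $\plim Sm_{S_\alpha}$.
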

\begin{proof}
  Since colimits in $Pr^L$ are computed as limits in $Pr^R$, which in turn are computed in $Cat_\infty$, the `equivalently' parts of each statement are formal. Thus, it suffices to show the second equivalences in each case.
 
 Let us first address the case of $\mathcal{P}(-)^*$. We will show the second equivalence $$\plim_{\alpha} \phi_{\alpha,*}:  \mathcal{P}(S)\to \plim_{\alpha}\mathcal{P}(S_\alpha)$$ in $Cat_\infty$. To do so, we shall use standard cofiltering arguments, as in [\cite{hoyois2015quadratic}, Proposition C.7 (3)].
        
 Recall that all our smooth morphisms are finitely presented [see \S1.2]. By [\href{https://stacks.math.columbia.edu/tag/01ZM}{Lemma 01ZM} (1)], given a smooth $X/S$, we may assume there is an $\alpha_0$ and a finitely presented $X_{\alpha_0}\to S_{\alpha_0}$ such that $X\to S$ is pulled back from $X_{\alpha_0}\to S_{\alpha_0}$ via $S\to S_{\alpha_0}$. Moreover, by [\href{https://stacks.math.columbia.edu/tag/0C0C}{Lemma 0C0C}] and [\href{https://stacks.math.columbia.edu/tag/01ZQ}{Lemma 01ZQ}], we may find $\alpha_1$ such that the smooth and separated morphism $X_{\alpha_1}\to S_{\alpha_1}$ pulls back to the smooth (separated by assumption) map $X\to S$ via $S\to S_{\alpha_1}$. Thus $X=\plim_{\alpha\geq \alpha_1}X_\alpha$. Therefore, the pullback maps $\phi_\alpha^*: Sm_{S_\alpha}\to Sm_S$ induce an essential surjection $\plim \phi_\alpha^*: \plim Sm_{S_\alpha}\to Sm_S$. The fact that this functor is fully faithful is well known (use [\cite[\href{https://stacks.math.columbia.edu/tag/01ZB}{Section 01ZB}]{stacks-project}] with cofinality arguments, assuming everything is smooth). So, $  Sm_S\simeq\plim Sm_{S_\alpha}$. Since $\mathcal{P}$ commutes with limits in $Cat_\infty$, we deduce that $  \mathcal{P}(S)\simeq\plim \mathcal{P}({S_\alpha})$. Indeed, 
 \begin{flalign}
\mathcal{P}(S):= {\mathrm{\mathrm{Cat}}}_\infty(Sm_S^{op},Spc)&\simeq {\mathrm{\mathrm{Cat}}}_\infty((\plim Sm_{S_\alpha})^{op},Spc)\\&\simeq {\mathrm{\mathrm{Cat}}}_\infty(\dcolim Sm_{S_\alpha}^{op},Spc)\\&\simeq \plim {\mathrm{\mathrm{Cat}}}_\infty(Sm_{S_\alpha}^{op},Spc)=\plim \mathcal{P}(S_\alpha).     
 \end{flalign}

Similarly, descent for etale maps [\cite[\href{https://stacks.math.columbia.edu/tag/07RP}{Lemma 07RP}]{stacks-project}], open immersions [\cite[\href{https://stacks.math.columbia.edu/tag/0EUU}{Lemma 0EUU}]{stacks-project}], surjective maps [\cite[\href{https://stacks.math.columbia.edu/tag/07RR}{Lemma 07RR}]{stacks-project}], and isomorphisms [\cite[\href{https://stacks.math.columbia.edu/tag/081E}{Lemma 081E}]{stacks-project}] implies that a Nisnevich square on $X$ lifts from a Nisnevich square on some $X_\alpha$, and thus the above equivalence restricts to an equivalence $\mathcal{P}_{nis}(S) \simeq \plim \mathcal{P}_{nis}({S_\alpha})$.    
\end{proof}
Since affine projections on $S$ are trivially pulled back from an affine projection on some component $S_\alpha$, the above continuity extends to the presheaf $\mathcal{H}^{\mathbb{A}^1}(-)^*:\mathrm{Sch}^{op}\to Pr^L$, i.e., the canonical morphism $$  \dcolim_{\alpha}\mathcal{H}^{\mathbb{A}^1}(S_\alpha)\to \mathcal{H}^{\mathbb{A}^1}(S)$$ in $Pr^L$ is an equivalence [\cite{hoyois2015quadratic}, Proposition C.2(3)]. That is, the motivic homotopy category satisfies essentially smooth continuity for pro-smooth morphisms whose transition maps are affine. Unfortunately, in the birational setting, such a statement is not expected, as general affine maps need not be generalization-lifting. Because arbitrary UGLT maps might not behave well under limits, such a continuity result cannot be expected with only UGLT-affine transition maps.

To state the birational result, let us first note that, for a ring $R$, since $Ring_R\to Mod_R$ creates filtered colimits (Rings being an algebraic theory) and the category of flat modules $FlatMod_R\subset Mod_R$ is closed under filtered colimits, the category $Flat_R$ of flat rings over $R$ is also closed under filtered colimits in $Ring_R$. It follows immediately that if $S=\lim{}S_\alpha$ is a pro-limit in $\mathrm{Sch}$ with flat transition maps $\phi_{\beta \alpha}:S_\beta\to S_\alpha$, then each map $\phi_\alpha: S\to S_\alpha$ is flat. Therefore, in this situation, for all $n$, there is a well-defined morphism $\phi^{n*}_\alpha:\mathcal{H}^n(S_\alpha)\to \mathcal{H}^n(S)$ in $Pr^L$.
 \begin{thm}[Flat-affine continuuity of $\mathcal{H}^n(-)^*$]\label{continuity of Hb}
     The functor $\mathcal{H}^{n}(-)^*: \mathrm{Sch}^{flat,op}\to Cat_\infty$ is pro-affine continuous; i.e., if $S=\plim{}S_\alpha$ (with legs $\phi_\alpha: S\to S_\alpha$) is a pro-scheme with flat affine transition maps, then the pullback assembly map in $Pr^L$ $$\phi=\dcolim_{\alpha} \phi_\alpha^{n*}:  \dcolim_{\alpha}\mathcal{H}^n(S_\alpha)\to \mathcal{H}^n(S),$$ induced by the left adjoints $\phi_\alpha^*$, is an equivalence of presentable infinity categories. Equivalently, $$\plim_{\alpha} \phi_{\alpha,n*}:  \mathcal{H}^n(S)\to \plim_{\alpha}\mathcal{H}^n(S_\alpha)$$ is an equivalence in ${\mathrm{Cat}}_\infty$ (and thus in $Pr^R$).
    \end{thm}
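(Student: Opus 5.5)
We work with the right-adjoint formulation and reduce to the continuity equivalence $\plim_\alpha \phi_{\alpha,*}:\mathcal{P}_{nis}(S)\simeq \plim_\alpha \mathcal{P}_{nis}(S_\alpha)$ proved in the preceding proposition. Since every leg $\phi_\alpha$ and every transition map $f_{\alpha\beta}$ is flat, hence UGLT, \Cref{generic push} shows that $\phi_{\alpha,*}$ and $f_{\alpha\beta,*}$ preserve $n$-birational motivic spaces. So the $Cat_\infty$-limit $\plim_\alpha\mathcal{H}^n(S_\alpha)$ is the full subcategory of $\plim_\alpha\mathcal{P}_{nis}(S_\alpha)$ on the compatible families $(\esc{X}_\alpha)$ with each $\esc{X}_\alpha$ $n$-birational motivic. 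Full subcategories are stable under limits, so the comparison functor $\mathcal{H}^n(S)\to\plim_\alpha\mathcal{H}^n(S_\alpha)$ is automatically fully faithful. It remains to prove essential surjectivity: if $\esc{X}\in\mathcal{P}_{nis}(S)$ has each $\phi_{\alpha,*}\esc{X}$ $n$-birational motivic, then $\esc{X}$ is itself $n$-birational motivic.

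$\mathbb{A}^1$-locality follows as in the motivic case [\cite{hoyois2015quadratic}, Proposition C.2(3)]. Every $\mathbb{A}^1$-projection over $S$ is pulled back from one over some $S_\alpha$, and $\mathrm{Map}(h_S(Y\times_{S_\alpha}S),\esc{X})\simeq\mathrm{Map}(h_{S_\alpha}(Y),\phi_{\alpha,*}\esc{X})$.

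The substantive step is $B_n(S)$-locality. Take an $n$-dense open $U\subset X$ with $X\in Sm_S$. By the noetherian approximation used in the preceding proposition (Lemmas 01ZM, 0EUU), there is an $\alpha$, a smooth $X_\alpha/S_\alpha$ and an open $U_\alpha\subset X_\alpha$ with $U=U_\alpha\times_{S_\alpha}S$ and $X=X_\alpha\times_{S_\alpha}S$. The obstacle is that $U_\alpha$ need not be $n$-dense in $X_\alpha$; we must show it becomes $n$-dense after enlarging $\alpha$.

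To handle this, set $Z_\beta=X_\beta\setminus U_\beta$ for $\beta\geq\alpha$. The maps $X\to X_\beta$ are flat, hence lift generalizations, and they are surjective onto the image of $S$. Since $X\to X_\beta$ lifts generalizations, every point $z$ of $Z=\plim Z_\beta$ satisfies $cod_{X_\beta}(\phi(z))\le cod_X(z)$, which is the wrong direction for a bound on $Z_\beta$. So instead we argue that each $X_\beta\setminus U_\beta$ is contained in the locus of points of codimension $\le n$ of $X_\beta$ that are missed by $U_\beta$. If such a point persisted for all $\beta$, its preimages in $X$ would form a nonempty limit (quasi-compactness of the constructible topology, since the bad loci are constructible under finite presentation). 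A point of the limit lying over codimension-$\le n$ points compatibly would have codimension $\le n$ in $X$, because $\mathcal{O}_{X,x}=\colim \mathcal{O}_{X_\beta,x_\beta}$ with flat local transition maps and dimension is controlled along flat local homomorphisms with zero-dimensional fibres. That contradicts the $n$-density of $U$.

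If this direct argument resists, the fallback is to observe that we only need $\mathrm{Map}(U\hookrightarrow X,\esc{X})$ to be an equivalence. Writing $\mathrm{Map}(h_S(-),\esc{X})=\colim_\beta\mathrm{Map}(h_{S_\beta}(-_\beta),\phi_{\beta,*}\esc{X})$ via the continuity proposition, it suffices that $U_\beta\subset X_\beta$ is $n$-dense for cofinally many $\beta$, which is exactly the codimension statement above. We expect this codimension-in-the-limit comparison to be the main difficulty.
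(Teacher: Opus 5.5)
Your reduction is sound and is the same one the paper makes (``it suffices to show that $n$-dense open immersions descend along the system''). Full faithfulness is formal once all the pushforwards preserve $\mathcal{H}^n$, and $\mathbb{A}^1$-locality descends as in the motivic case. The gap is in the one step with real content, which you yourself leave open, and your compactness sketch for it does not go through as written.

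\begin{itemize}
\item The bad loci $B_\beta=\{x\in Z_\beta : cod_{X_\beta}(x)\le n\}$ are not constructible; for $n=0$ they are sets of generic points. For Noetherian $X_\beta$ they are at best pro-constructible, because their complement is the union of the closed sets $\overline{\{x\}}$ with $cod(x)>n$.
\item $B_\beta\neq\emptyset$ does not make $\phi_\beta^{-1}(B_\beta)\subset X$ nonempty. $B_\beta$ can lie entirely off the image of $X\to X_\beta$, for instance in components of $Z_\beta$ lying over closed points of $S_\beta$ outside the image of $S$. So ``its preimages in $X$ would form a nonempty limit'' is unjustified.
\item What you actually need is an inverse system $(B_\beta)$, i.e.\ $f_{\gamma\beta}(B_\gamma)\subset B_\beta$. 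This holds because $Z_\gamma=f_{\gamma\beta}^{-1}(Z_\beta)$ and a generalization-lifting map satisfies $cod_{X_\beta}(f_{\gamma\beta}(x))\le cod_{X_\gamma}(x)$, but you never state it.
\item With that in hand, $\plim_\beta B_\beta\neq\emptyset$ by compactness of the constructible topology, and $|X|=\plim_\beta|X_\beta|$ (affine transition maps) produces a point $x\in Z$.
\item The bound $cod_X(x)\le n$ should then come from $\dim\colim_\beta A_\beta\le\sup_\beta\dim A_\beta$, since a finite chain of primes is already separated at some finite stage. It should not come from flat local maps with zero-dimensional fibres: the fibre of $X\to X_\beta$ at $x$ need not be zero-dimensional.
\end{itemize}

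Your ``fallback'' is not an independent route. It reduces to exactly the same codimension claim, and one good $\beta$ suffices rather than cofinally many.

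The paper avoids all of this and never enlarges $\alpha$. Given any model $U_\alpha\subset X_\alpha$, set $Z_1:=\overline{\phi_\alpha(Z)}\subset X_\alpha\setminus U_\alpha$. Then $\phi_\alpha^{-1}(Z_1)=Z$, so $X_\alpha\setminus Z_1$ is still a model of $U$. Every generic point of $Z_1$ has the form $\phi_\alpha(\zeta)$ for a generic point $\zeta$ of $Z$. Because $Z$ is the full preimage of $Z_1$, such a $\zeta$ is a generic point of its fibre. The flat dimension formula, as in the second paragraph of the proof of \Cref{closed rel d push}, then gives $cod_{X_\alpha}(\phi_\alpha(\zeta))=cod_X(\zeta)\ge n+1$, so $X_\alpha\setminus Z_1$ is $n$-dense.

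Your remark that $cod_{X_\beta}(\phi(z))\le cod_X(z)$ ``is the wrong direction'' is exactly where this is missed. At generic points of a full preimage the inequality is an equality. The only uncontrolled components of $Z_\beta$ are those not meeting $\phi(Z)$, and the closure-of-the-image replacement simply discards them.
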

    \begin{proof}
As mentioned earlier, we have $\mathcal{H}^{\mathbb{A}^1}(S)\simeq  \plim_{\alpha}\mathcal{H}^{\mathbb{A}^1}(S_\alpha)$. It then suffices to show that $n$-dense open immersions descend along the system.

 Now, if $U\xhookrightarrow{j} X$ is a dense open immersion (quasicompact by assumption), then [\cite[\href{https://stacks.math.columbia.edu/tag/01Z4}{Lemma 01Z4}]{stacks-project}] yields a qc open $j_{\alpha_{0}}:U_{\alpha_0} \subset X_{\alpha_0}$ such that $U_{\alpha_0}, X_{\alpha_0}$ are smooth and separated over $S_{\alpha_0}$ and $j_{\alpha_0}$ pulls back to $j$. Since open immersions have constructible image, all our transition maps are flat, and $j$ is dominant, we deduce from [\cite{grothendieck1966elements}, Proposition (8.10.3)] that for some $\alpha_1\geq \alpha_0$, $j_{\alpha_1}$ is dominant and hence a dense open immersion. Thus the above equivalence passes to $\mathcal{H}^0(S) \simeq \plim \mathcal{H}^0({S_\alpha})$.

 To see the general $n$-birational case, suppose $j$ is of height $n$. Let $Z$ be the complement of $jU$. Consider the closure $Z_1$ of the image of $Z=X\setminus jU$ under the flat map $\phi_{\alpha_1}: X\to X_{\alpha_1}$. Clearly, $Z_1\subset(X_{\alpha_1}\setminus U_{\alpha_1}) $, and hence $\phi_{\alpha_1}^{-1}Z_1=Z$. Thus, $$\phi_{\alpha_1}^{-1}(X_{\alpha_1}\setminus Z_1)=X\setminus Z=U.$$ Let $U_{\alpha_1}':=X_{\alpha_1}\setminus Z_1$. Clearly, $U_{\alpha_1}\subset U_{\alpha_1}'$. By the denseness of $U_{\alpha_1}$ in $X_{\alpha_1}$, it follows that $ U_{\alpha_1}'$ is also dense in $X_{\alpha_1}$. By replacing $j_{\alpha_1}: U_{\alpha_1}\hookrightarrow X_{\alpha_1}$ with the open immersion $$j_{\alpha_1}':U_{\alpha_1}'=X_{\alpha_1}\setminus Z_1\hookrightarrow X_{\alpha_1},$$ we see that $j_{\alpha}'$ is a dense open immersion over $S_{\alpha_1}$ that pulls back to $j$ and such that $\phi_{\alpha_1}$ is surjective on the generic points of the closed complements. We claim that $j'_{\alpha_1}$ is $n$-dense. Indeed, by the methods of the second paragraph of \Cref{closed rel d push}, applied to the flat map $\phi_{\alpha_1}$, the codimension of the complement $Z_1=X_{\alpha_1}\setminus U_{\alpha_1}'$ satisfies $$cod_{X_\alpha}Z_1\geq cod_X(Z)\geq n+1.$$
 
 We have thus shown that, if $j\in B_{n}(S)$, then there exists $j_{\alpha_1}'\in B_n(S)$ that pulls back to $j$. From this discussion, the equivalence $$\mathcal{H}^{\mathbb{A}^1}(S)\simeq \plim_{\alpha}\mathcal{H}^{\mathbb{A}^1}(S_\alpha)$$ descends to an equivalence $$\mathcal{H}^n(S)\simeq \plim_{\alpha}\mathcal{H}^n(S_\alpha)$$ in ${\mathrm{Cat}}_\infty$, as desired.
\end{proof}
 This gives us an interesting consequence for the ($0$-)birational motivic homotopy category. But before we state the result, let us mention a lemma which used to reduce the situation:
 \begin{lem}
    Every scheme with finitely many irreducible components (equivalently, finitely many generic points) has a dense open subscheme whose connected components are all irreducible.
 \end{lem}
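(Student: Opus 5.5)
Let $X$ have irreducible components $X_1,\dots,X_r$ with generic points $\eta_1,\dots,\eta_r$. The plan is to remove from $X$ the closed set where two distinct components meet, and check that what remains is dense open with irreducible connected components. Set
$$Z:=\bigcup_{a\neq b}(X_a\cap X_b),\qquad U:=X\setminus Z.$$
Since there are finitely many components, each closed, $Z$ is a finite union of closed sets and hence closed, so $U$ is open.

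\emph{Density.} It suffices to show $\eta_a\in U$ for every $a$, because an open set containing all generic points is dense. Suppose instead $\eta_a\in X_b$ for some $b\neq a$. Then $X_a=\overline{\{\eta_a\}}\subset X_b$, which contradicts the maximality of $X_a$ among irreducible closed subsets. Hence $\eta_a\notin Z$.

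\emph{Irreducibility of connected components.} Put $U_a:=U\cap X_a$. These sets cover $U$ and are pairwise disjoint by construction. Each is closed in $U$, since $X_a$ is closed in $X$. Because there are finitely many and their union is $U$, each $U_a$ is also open in $U$, being the complement of the finite union $\bigcup_{b\neq a}U_b$ of closed sets. So $U=\bigsqcup_a U_a$ is a decomposition into clopen pieces. Each $U_a$ is a nonempty open subset of the irreducible space $X_a$, since it contains $\eta_a$, so it is irreducible and in particular connected. Therefore the $U_a$ are exactly the connected components of $U$, and each is irreducible.

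\emph{Quasi-compactness.} Under the paper's standing convention all schemes are qcqs, so $U$ should be quasi-compact as well. This is the only delicate point. If $X$ is Noetherian there is nothing to prove, since every open subset is quasi-compact. In general, $Z$ need not have quasi-compact complement. In that case I would choose, for each $a$, a quasi-compact open neighborhood $W_a\subset U$ of $\eta_a$, which exists because $U$ is open and affine opens form a base. I would then replace $U$ by $\bigcup_a W_a$. This set is still open, contains every $\eta_a$ and so is dense, and is quasi-compact. Its connected components are the sets $\bigcup_a W_a\cap U_a$, and each of these is again a nonempty open subset of the irreducible $X_a$, hence irreducible.
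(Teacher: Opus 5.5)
Your proof is correct and uses the paper's own construction, $U=X\setminus\bigcup_{a\neq b}(X_a\cap X_b)$, and it spells out the density and clopen-decomposition checks that the paper leaves implicit. The final step, shrinking $U$ to a finite union of quasi-compact neighbourhoods of the generic points, is a genuine refinement the paper omits: for non-Noetherian $X$ the set $U$ need not be quasi-compact, yet the paper's qcqs convention and its later use of \Cref{dense equiv} (which requires a smooth, hence quasi-compact, open immersion) implicitly need this; also, the components of the shrunken set should be written $\bigl(\bigcup_b W_b\bigr)\cap U_a$.
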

\begin{proof}
    Suppose $\{X_i\}_{i \in I}$ is the finite set of irreducible components of a scheme $X$. Consider the closed subset, 
    $$Z = \bigcup_{i \neq j} (X_i \cap X_j).$$
    Since $Z$ contains no generic points, $U:=X\setminus Z$ is a dense open subscheme of $X$. This is the scheme we are looking for.
\end{proof}

\begin{cor}[Generic decomposition of $\mathcal{H}^0(-)$]\label{Hb of a var is Hb of its generic point}
         Let $X^{(0)}$ be the finite set of generic points of a qcqs scheme $X$. Then the canonical map $$\mathcal{H}^0(X)\to\underset{\eta\in X^{(0)}}{\prod}\mathcal{H}^0(k(\eta)),$$ induced by pullback along the pro-smooth map $\underset{\eta\in X^{(0)}}{\bigsqcup} \mathrm{Spec}(k(\eta))\hookrightarrow X$, is an equivalence of $\infty$-categories. In particular, for a reduced irreducible scheme $X$, we have an equivalence $\mathcal{H}^0(X)\simeq \mathcal{H}^0(K(X))$.
     \end{cor}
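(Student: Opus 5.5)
The plan is to reduce to the irreducible case through three earlier results: dense invariance (\Cref{dense equiv}), additivity (\Cref{zar add}), and flat-affine continuity (\Cref{continuity of Hb}).

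\textbf{Step 1: reduce to irreducible connected components.} By the preceding lemma, $X$ has a dense open subscheme $U$ whose connected components $U_1,\dots,U_r$ are irreducible. Each $U_i$ contains exactly one generic point $\eta_i$, and $U^{(0)}=X^{(0)}$. By \Cref{dense equiv}, pullback along $j\colon U\hookrightarrow X$ gives an equivalence $j^*\colon\mathcal{H}^0(X)\simeq\mathcal{H}^0(U)$. Then \Cref{zar add} gives $\mathcal{H}^0(U)\simeq\prod_i\mathcal{H}^0(U_i)$. Since $U$ is qcqs and its components are open, the decomposition $U=\bigsqcup U_i$ is finite.

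The map $\bigsqcup_\eta \mathrm{Spec}\,k(\eta)\to X$ factors through $U$. Note that this map lands in $U$ via $\mathrm{Spec}\,\mathcal{O}_{X,\eta}$, and one passes to $k(\eta)$ after reducing. The canonical map therefore factors as this chain of equivalences followed by $\prod_i \mathcal{H}^0(U_i)\to\prod_i\mathcal{H}^0(k(\eta_i))$. This reduces the claim to a single irreducible qcqs scheme $Y=U_i$ with generic point $\eta$.

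\textbf{Step 2: pass to the reduction.} By \Cref{deformation invariant}, $\mathcal{H}^0(Y^{red})\simeq\mathcal{H}^0(Y)$. Compatibility of pullbacks lets me assume $Y$ is integral, so $\mathcal{O}_{Y,\eta}=k(\eta)$. This disposes of the issue that $\mathrm{Spec}\,\mathcal{O}_{Y,\eta}$ is only an Artinian thickening of $\mathrm{Spec}\,k(\eta)$ when $Y$ is non-reduced.

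\textbf{Step 3: write $\mathrm{Spec}\,k(\eta)$ as a flat affine limit.} Take the cofiltered system of nonempty affine opens $V\subset Y$, ordered by inclusion. Every nonempty open contains $\eta$, and since $Y$ is integral, $\mathrm{Spec}\,k(\eta)=\lim_V V$. The transition maps are open immersions between affines, so they are flat and affine.

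\Cref{continuity of Hb} then gives $\mathcal{H}^0(k(\eta))\simeq\dcolim_V\mathcal{H}^0(V)$ in $Pr^L$. Each $V\subset Y$ is a dense open immersion, because $Y$ is irreducible. So by \Cref{dense equiv} every transition functor $\mathcal{H}^0(V)\to\mathcal{H}^0(V')$ is an equivalence, as is each $\mathcal{H}^0(Y)\to\mathcal{H}^0(V)$. A filtered colimit of equivalences is equivalent to any of its terms. Hence $\mathcal{H}^0(Y)\to\mathcal{H}^0(k(\eta))$ is an equivalence, and the final statement about varieties follows immediately.

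\textbf{Main obstacle.} The delicate point is making \Cref{continuity of Hb} applicable. First, the indexing system must consist of affine schemes with affine transition maps, so one cannot index by all opens. Second, one must check the assembly map is the one induced by the given pullbacks. I would handle the first by cofinality: affine opens containing $\eta$ are cofinal among opens, so the limit is unchanged. I would also double-check that the continuity proof's descent of dense open immersions applies when the $S_\alpha$ are themselves just affine opens of $Y$.
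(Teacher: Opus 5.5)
Your proposal is correct and follows essentially the same route as the paper. You shrink to a dense open with irreducible connected components via \Cref{dense equiv}, reduce to an integral scheme using \Cref{zar add} and \Cref{deformation invariant} (applying additivity before passing to the reduction, which is immaterial), and then write $\mathrm{Spec}\,k(\eta)$ as the limit of the nonempty affine opens, so that \Cref{continuity of Hb} together with \Cref{dense equiv} makes every functor in the colimit an equivalence. Your explicit remark that the map to $\mathrm{Spec}\,k(\eta)$ only becomes a flat pro-open limit after reducing is a point the paper leaves implicit; note that for non-reduced $Y$ the ring $\mathcal{O}_{Y,\eta}$ is zero-dimensional local but not necessarily Artinian.
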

     \begin{proof}
First, by the lemma above and \Cref{dense equiv}, we may assume that $X$ is a disjoint union of finitely many irreducible schemes. Next, by deformation invariance \Cref{deformation invariant}, we may assume that $X$ is reduced. Thus, by appeal to additivity (\Cref{zar add}), we reduce to the case of a reduced and irreducible scheme $V$.

In this case, we have the pro system $$SpecK(X)=\plim_{SpecA=U\in Dense_X}U$$ of dense open affine schemes of $X$. Because each $W$ in the limit is affine, the transition maps are indeed affine. Moreover, the transition maps, being open immersions, are in particular flat. The pro system $$D_X:=\{SpecA=U\in Dense_X\}$$ thus satisfies all the required conditions of the above theorem. Thus, from \Cref{continuity of Hb}, it follows that the map $$ \mathcal{H}^0(k(X))\to \plim_{\alpha}\mathcal{H}^0(U)$$ corresponding to the theorem, is an equivalence. But each transition map $\phi_{\alpha \beta}: U_\alpha\to U_\beta$ in $D_X$ as well as the inclusion $j_\alpha :U_\alpha \subset X$ is a dense open inclusion. So, by \Cref{dense equiv}, all the transition maps in $\plim_{\alpha}\mathcal{H}^0(U)$, namely $\phi_{\alpha \beta,*}:\mathcal{H}^0(U_\alpha)\to \mathcal{H}^0(U_\beta)$, are equivalences and under $j_\alpha^*$'s, each of these is equivalent to the identity functor of $\mathcal{H}^0(X)$. Consequently, $\plim_{\alpha}\mathcal{H}^0(U)\simeq \mathcal{H}^0(X)$, and we are done.
\end{proof}
\begin{cor}\label[cor]{Hb of a variety}
    Let $V$ be a variety over $k$. Then there is a canonical equivalence $\mathcal{H}^b(V)\to \mathcal{H}^b(K(V))$, where $\mathcal{H}^b := L_{dense}\mathcal{P}_\Sigma$ is the birational homotopy category construction of [\cite{0bat}] (also recalled in \S2.3 of this paper).
\end{cor}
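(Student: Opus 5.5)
The plan is to deduce the statement from \Cref{Hb of a var is Hb of its generic point} together with the identification $\mathcal{H}^0\simeq\mathcal{H}^b$ of \Cref{Motivic equivalences are Birational equivalences}. That identification holds over every qcqs base and is compatible with the schematic functoriality. First I will note that a variety $V$ over $k$ is a reduced, irreducible qcqs scheme with a unique generic point $\eta$, whose residue field is $k(\eta)=K(V)$. Applying \Cref{Hb of a var is Hb of its generic point} to $X=V$ then gives an equivalence
\[
\eta^{0*}\colon \mathcal{H}^0(V)\xrightarrow{\ \sim\ }\mathcal{H}^0(K(V)),
\]
induced by pullback along the flat (pro-open) map $\eta\colon \mathrm{Spec}\,K(V)\to V$.

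Next I will transport this equivalence to $\mathcal{H}^b$. By \Cref{Motivic equivalences are Birational equivalences}, the inclusions $\mathcal{H}^0(V)\subset\mathcal{H}^b(V)$ and $\mathcal{H}^0(K(V))\subset\mathcal{H}^b(K(V))$ are equivalences. It then remains to check that the square formed by these inclusions, $\eta^{0*}=L^0\eta^*$ on $\mathcal{H}^0$, and $L_{bir}\eta^*$ on $\mathcal{H}^b$ commutes. For this I will use that $\eta$ is flat, so UGLT, and therefore $\eta^*$ preserves $bir_\Sigma$-equivalences. The only new input is that $\eta^*$ preserves finite disjoint unions and dense open immersions, the latter by \Cref{ugltpull}. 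Consequently both localizations of $\eta^*$ are characterized by preserving colimits and sending $h(X)\mapsto h(X_{K(V)})$. Here the two localizations agree because every $bir_\Sigma$-local object is motivic local, by \Cref{nis is bir} and \Cref{bir local is P1 local}, so $L_{bir}$ and $L^0$ coincide on $\mathcal{P}(Sm)$.

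The main obstacle is only bookkeeping, namely making sure the equivalence is ``canonical'', i.e.\ that it is the functor $L_{bir}\eta^*$ itself rather than an abstract composite. I would handle this by invoking \Cref{pull adjunction}, which says a colimit-preserving functor is determined by its values on representables, in the $\mathcal{H}^b$ setting. Alternatively, one can rerun the proof of \Cref{Hb of a var is Hb of its generic point} verbatim with $\mathcal{H}^b$ in place of $\mathcal{H}^0$, as in the remark that all results of \S3.1 hold for $\mathcal{H}^b$. I expect no genuine difficulty.
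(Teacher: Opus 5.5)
Your proposal is correct and is essentially the paper's argument: the paper likewise just combines \Cref{Hb of a var is Hb of its generic point} with the identification $\mathcal{H}^b(-)\simeq\mathcal{H}^0(-)$ from \Cref{Motivic equivalences are Birational equivalences}. Your additional check, that $L_{bir}\eta^*$ and $L^0\eta^*$ agree under this identification, is bookkeeping that the paper leaves implicit.
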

\begin{proof}
We know from \Cref{Motivic equivalences are Birational equivalences} that $\mathcal{H}^b(-)=\mathcal{H}^0(-)$.
\end{proof} 
\begin{rem}\label[rem]{Ldense cont for var}
    As in \Cref{Ldense is dense inv}, the proof shows that the above equivalence holds even without motivic localization. More precisely, $L_{dnese}\mathcal{P}(V)\simeq L_{dense}\mathcal{P}(K(V))$. However, since $L_{dense}\mathcal{P}(-)$ is not a $\sqcup$-sheaf (see \Cref{Ldense not nis sheaf}) (and is likely neither a deformation-invariant, \Cref{Ldense not def inv}), the general case of \Cref{Hb of a var is Hb of its generic point} does not apply to $L_{dense}\mathcal{P}(-)$.
\end{rem}
\begin{cor}\label[cor]{fibersise criteria}
    Suppose $S$ is a Qcqs scheme with finitely many irreducible components and $f:\esc{X}\to \esc{Y}$ is a morphism of $S$-spaces. Then $f$ is a $0$-birational equivalence in $\mathcal{P}(S)$ if and only if its $S$-generic fibers are $0$-birational equivalences, i.e., for every generic point $\eta\in S^{(0)}$ the morphism $f_\eta:\esc{X}_{k(\eta)}\to \esc{Y}_{k(\eta)}$ is a $0$-birational equivalence in $\mathcal{P}(k(\eta))$.
\end{cor}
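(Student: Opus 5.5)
The plan is to deduce this from the generic decomposition (\Cref{Hb of a var is Hb of its generic point}), with the only real content being a compatibility check. Write $\eta_X:=\bigsqcup_{\eta\in S^{(0)}}\mathrm{Spec}(k(\eta))\to S$ for the pro-open (hence flat, hence UGLT) inclusion of the generic points. By \Cref{flat pull push}, $\eta_X^*$ preserves $0$-birational equivalences. Also $\eta_X^*$ restricted to the factor at $\eta$ is just base change $f\mapsto f_\eta$ on presheaves. The ``only if'' direction is therefore immediate, and it is also a special case of \Cref{stalks of n bir equiv}-type reasoning.

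For the ``if'' direction I will show that $L^0_S f$ is an equivalence in $\mathcal{H}^0(S)$. The key identity is that for a UGLT map $g$ one has $g^{0*}L^0\simeq L^0 g^*$, as in the presheaf description after \Cref{dense equiv}, since $g^*$ preserves $0$-birational equivalences. Hence the canonical functor
$$\Phi:\mathcal{H}^0(S)\to \prod_{\eta\in S^{(0)}}\mathcal{H}^0(k(\eta))$$
sends $L^0_Sf$ to the family $(L^0_{k(\eta)}f_\eta)_\eta$. By hypothesis every component of this family is an equivalence. By \Cref{Hb of a var is Hb of its generic point}, $\Phi$ is an equivalence of $\infty$-categories, and in particular it is conservative. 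So $L^0_Sf$ is an equivalence, which means $f\in bir_0(S)$.

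The main obstacle is making sure that the functor $\Phi$ appearing in \Cref{Hb of a var is Hb of its generic point} really is $L^0\eta_X^*$ restricted to $\mathcal{H}^0(S)$. That proof passes through several reductions: restriction to a dense open subscheme with irreducible connected components via \Cref{dense equiv}, reduction via \Cref{deformation invariant}, additivity via \Cref{zar add}, and continuity via \Cref{continuity of Hb}. I will check that each of these equivalences is induced by pullback along a UGLT map: dense open immersions, the nil immersion $X^{red}\subset X$ (a universal homeomorphism), the coproduct inclusions, and the flat pro-open legs $\mathrm{Spec}K\to U$. The composite is then, by the pseudofunctoriality $f^{0*}g^{0*}\simeq (gf)^{0*}$, the pullback along $\eta_X$. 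Once this is verified, conservativity of $\Phi$ gives the claim.
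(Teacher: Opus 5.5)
Your argument is correct and is essentially the proof the paper intends: the paper states this corollary without proof, as an immediate consequence of \Cref{Hb of a var is Hb of its generic point}, whose equivalence is by construction the pullback to the generic points. Combining that with $L^0\eta^*L^0\simeq L^0\eta^*$ for UGLT $\eta$ and the conservativity of an equivalence gives the result, exactly as you do. One small correction: if $S$ is non-reduced at a generic point $\eta$, the map $\mathrm{Spec}\,k(\eta)\to S$ is neither pro-open nor flat, because it factors through the nil-immersion $\mathrm{Spec}\,k(\eta)\hookrightarrow\mathrm{Spec}\,\mathcal{O}_{S,\eta}$. It is still UGLT (a universal homeomorphism followed by a flat map), so you should invoke \Cref{generic push} rather than \Cref{flat pull push}.
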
 
\begin{cor}\label[cor]{fibersise criteria for smoo}
    Suppose $S$ is a Qcqs scheme with finitely many irreducible components and $f:X\to Y$ is a morphism in $Sm_S$ such that the generic fibers of $f$ are birationally contractible, i.e., for every generic point $\eta\in Y^{(0)}$ the scheme $f_\eta:X_{k(\eta)}\to k(\eta)$ is contractible in $\mathcal{H}^0(k(\eta))$. Then $f$ is a birational equivalence in $\mathcal{P}(S)$.
\end{cor}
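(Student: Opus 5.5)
The plan is to deduce this from \Cref{fibersise criteria}, applied over the base $Y$ rather than $S$, and then transport the result back to $S$ with the smooth extension functor. Let $q:Y\to S$ be the structure map. Regard $f$ as a morphism $h_Y(X)\to h_Y(Y)=*$ in $\mathcal{P}(Y)$, using $X$ as a smooth $Y$-scheme via $f$. If $f$ is not smooth, regard it instead as a morphism of $Y$-spaces $q^*h_S(X)\times_{q^*h_S(Y)} *$, that is, the presheaf $W\mapsto \mathrm{Map}_Y(W,X)$. This is representable on $Sm_Y$ and its base change to $k(\eta)$ is $X_{k(\eta)}$. Its image under $q_\shrp$ is the original map $h_S(X)\to h_S(Y)$: for smooth $f$ this is literal, and in general $q_\shrp$ of the presheaf $W\mapsto \mathrm{Map}_Y(W,X)$ is the presheaf over $S$ of maps to $X$ together with a compatible map to $Y$, which is $h_S(X)$. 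By \Cref{smooth extension pull}, $q_\shrp$ preserves $0$-birational motivic equivalences. It therefore suffices to show that $h_Y(X)\to *$ is a $0$-birational equivalence in $\mathcal{P}(Y)$.

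For that step, first check that $Y$ has finitely many irreducible components. This holds because $Y$ is smooth of finite presentation over $S$, and $S$ has finitely many generic points. Concretely, each generic point of $Y$ lies over a generic point of $S$, since smooth maps are flat and hence generalizing. Over each generic point $\eta_S$, the fiber $Y_{k(\eta_S)}$ is of finite type over a field and so has finitely many generic points. Now apply \Cref{fibersise criteria} over the base $Y$. It reduces the claim to showing that, for each $\eta\in Y^{(0)}$, the pullback along $\mathrm{Spec}\,k(\eta)\to Y$ of $h_Y(X)\to *$ is a $0$-birational equivalence in $\mathcal{P}(k(\eta))$. That pullback is exactly $h(X_{k(\eta)})\to *$, since the pullback functor sends representables to base changes. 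This is a $0$-birational equivalence by the hypothesis that $X_{k(\eta)}$ is contractible in $\mathcal{H}^0(k(\eta))$.

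The main obstacle is \Cref{fibersise criteria} itself, which is stated but not proved above, together with the bookkeeping around the non-smooth case. I would prove \Cref{fibersise criteria} from \Cref{Hb of a var is Hb of its generic point} as follows. The equivalence there, $\mathcal{H}^0(Y)\simeq\prod_\eta \mathcal{H}^0(k(\eta))$, is realized by the pullbacks $L^0\iota_\eta^*$ along the pro-flat maps $\iota_\eta$. These commute with $L^0$ by \Cref{generic push}, since the maps are flat. Hence $L^0(\iota_\eta^* g)$ is an equivalence for all $\eta$ if and only if $L^0 g$ is an equivalence, by conservativity of an equivalence of categories. One point needs care: the pullback $\iota_\eta^*$ in the continuity theorem is along a pro-system of dense opens, not a finite-type map. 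It must agree with the naive base change $X_{k(\eta)}$ on representables. This follows from the identification $\mathcal{P}(k(\eta))\simeq\colim\mathcal{P}(U)$ in the continuity proposition.
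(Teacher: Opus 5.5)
For smooth $f$ your argument is the paper's proof. You regard $f$ as $h_Y(X)\to *$ in $\mathcal{P}(Y)$, apply \Cref{fibersise criteria} over $Y$, and transport back with $q_\shrp$ via \Cref{smooth extension pull}. The paper's step ``$f\in\mathcal{P}(S)$ is the image of $f\in\mathcal{P}(Y)$ under $q_\shrp$'' tacitly uses $X\in Sm_Y$. Your extra checks are fine: the finiteness of $Y^{(0)}$, and the derivation of \Cref{fibersise criteria} from \Cref{Hb of a var is Hb of its generic point}.

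The genuine gap is your treatment of non-smooth $f$, and it cannot be repaired.

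\textbf{The failing step.} The presheaf $F_X\colon W\mapsto \mathrm{Map}_Y(W,X)$ on $Sm_Y$ is not represented by $X$, and in general $q_\shrp F_X\not\simeq h_S(X)$. The functor $q_\shrp$ is left Kan extension along the forgetful functor $Sm_Y\to Sm_S$. So $(q_\shrp F_X)(V)$ is a colimit of $F_X(W)$ over $S$-maps $V\to W$ with $W\in Sm_Y$. An $S$-map $V\to X$ whose composite to $Y$ is not smooth contributes only if it factors through some smooth $Y$-scheme that maps to $X$ over $Y$.

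\textbf{The statement itself fails for non-smooth $f$.} Take $S=\mathrm{Spec}\,k$, $Y=\mathbb{A}^1_k$ and $X=\mathbb{A}^1_k\sqcup\mathrm{Spec}\,k$. Let $f$ be the identity on $\mathbb{A}^1_k$ and the origin on the extra point.
\begin{enumerate}
\item The generic fibre is $\mathrm{Spec}\,k(t)$, so it is contractible and the hypothesis holds.
\item Let $E:=h_k(\mathrm{Spec}\,k\sqcup\mathrm{Spec}\,k)$, which classifies clopen decompositions. It is a Nisnevich sheaf and is $\mathbb{A}^1$-invariant. It is local for dense open immersions, because connected components of smooth $k$-schemes are irreducible. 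Hence $E\in\mathcal{H}^0(k)$.
\item But $E(Y)$ has two elements and $E(X)$ has four, so $f$ is not a birational equivalence.
\end{enumerate}
In this example $F_X=*$, since a nonempty smooth $Y$-scheme has open image and so cannot map into the origin. Hence $q_\shrp F_X\simeq h_S(Y)$ rather than $h_S(X)$, which is exactly where your argument breaks.

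\textbf{What can be salvaged.} The corollary must be read with $f$ smooth, as the paper's proof implicitly does. Alternatively, assume the smooth locus $X^{sm}$ of $f$ is dense in $X$. Then apply the smooth case to $X^{sm}\to Y$; its generic fibres are dense opens of the $X_{k(\eta)}$. Finally compose with the dense open immersion $X^{sm}\subset X$.
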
 
\begin{proof}
    Since $Y$ is qc and smooth over $S$, it has finitely many generic points too. From the corollary above, it is then clear that $f$ is a birational equivalence in $\mathcal{P}(Y)$. If $q:Y\to S$ is the smooth structure map of $Y$, then $f\in \mathcal{P}(S)$ is the image of $f\in \mathcal{P}(Y)$ under $q_\shrp$. The claim thus follows from \Cref{smooth extension pull}.
\end{proof}
\subsection{Rationality property of \texorpdfstring{$\mathcal{H}^0(-)$}{h}}
In this subsection, we plan to show that, in the special case $n=0$, the corresponding birational homotopy category $\mathcal{H}^0$ (which is the birational homotopy category constructed in [\cite{0bat}]) is a birational presheaf of $\infty$-categories. Moreover, we show that this is homotopy-invariant and stably-rational-invariant, in the sense that it induces a fully faithful embedding along the appropriate projection maps.
 \begin{thm}\label{H^b is bir local}
 $\mathcal{H}^{0}(-)^{*}:\mathrm{Sch}^{uglt,op}\to Pr^L$ is a birational local presheaf, i.e., $$\mathcal{H}^{0}(-)^*\in L_{dense}\mathcal{P}_{\Sigma}(\mathrm{Sch}^{uglt}, Pr^L).$$
    \end{thm}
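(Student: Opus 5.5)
The statement has two parts. First, $\mathcal{H}^0(-)^*$ is additive, i.e. lies in $\mathcal{P}_\Sigma(\mathrm{Sch}^{uglt},Pr^L)$. Second, it is local with respect to dense open immersions $j:V\hookrightarrow S$ in $\mathrm{Sch}^{uglt}$, meaning $j^{0*}:\mathcal{H}^0(S)\to\mathcal{H}^0(V)$ is an equivalence in $Pr^L$. I will treat the two parts separately.

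For additivity, I will invoke \Cref{zar add}, the additivity corollary of Nisnevich descent (\Cref{bir detect}). It gives that for a finite disjoint union $X=\bigsqcup X_\alpha$ the map $\prod i_\alpha^*$ is an equivalence. The empty case also has to be checked: $\mathcal{H}^0(\emptyset)$ should be the terminal category. This holds because $Sm_\emptyset$ has only the object $\emptyset$, and Nisnevich sheaves send $\emptyset$ to a point; it is also the empty cover case of Nisnevich descent. Finally, the inclusions $X_\alpha\hookrightarrow X$ are open and closed immersions, hence flat and therefore UGLT, so everything stays inside $\mathrm{Sch}^{uglt}$.

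For dense locality, I will use \Cref{dense equiv}. For a dense open immersion $j:V\hookrightarrow S$, which is flat and hence UGLT, the functor $j_{0\shrp}$ is an equivalence, and therefore so is its right adjoint $j^{0*}\simeq j^*_{|\mathcal{H}^0(S)}$. Here I use \Cref{smooth extension pull} to identify $j^{0*}$ with the restriction of $j^*$, since $j$ is smooth. So the image of $j$ under the presheaf is an equivalence in $Pr^L$. Since the localization $L_{dense}$ on $\mathcal{P}_\Sigma(\mathrm{Sch}^{uglt},Pr^L)$ is defined by inverting the maps $h(V)\to h(S)$ for dense open immersions, this is exactly locality at the generating class. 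By Yoneda for $Pr^L$-valued presheaves, $\mathrm{Map}(h(S),F)\simeq F(S)$, so locality at the generators is equivalent to $F(j)$ being an equivalence.

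The main obstacle is making sure the ambient notion matches the conventions of [\cite{0bat}], where $L_{dense}\mathcal{P}_\Sigma$ is defined on $Sm_S$ rather than on a large category like $\mathrm{Sch}^{uglt}$. I would interpret the statement over $\mathrm{Sch}^{uglt}$ with $Pr^L$ coefficients in the sense just described. Two points need care: size issues, handled by working in a larger universe, and the fact that $Pr^L$-valued locality is tested objectwise via Yoneda. If instead one also wants locality with respect to $n$-dense immersions between arbitrary UGLT schemes, as opposed to open subschemes, the earlier remark lets one treat open immersions and open subschemes interchangeably, so no extra argument is needed.
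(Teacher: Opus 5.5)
Your proposal is correct and follows the paper's proof. As there, additivity comes from Nisnevich descent (\Cref{bir detect}, via \Cref{zar add}), and dense-open invariance comes from \Cref{dense equiv}; your extra checks (the empty scheme, UGLT-ness of the coproduct inclusions, and identifying $j^{0*}$ with the restriction of $j^*$) fill in details the paper leaves implicit.
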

\begin{proof}
From \Cref{bir detect}, it is clear that $\mathcal{H}^{0}(-)^{*}$ lies in $\mathcal{P}_{\Sigma}(\mathrm{Sch}^{uglt}, Pr^L)$. The dense open invariance of $\mathcal{H}^{0}(-)^{*}$ follows from \Cref{dense equiv}.
\end{proof}
\begin{rem}
    Because of \Cref{dense equiv} and \Cref{nis is bir}, it is possible to give an alternative proof that $\mathcal{H}^{0}(-)^*: \mathrm{Sch}^{uglt,op}\to \mathrm{Cat}_\infty$ satisfies Nisnevich descent, provided one explicitly proves \Cref{zar add}, that is, that $\mathcal{H}^{0}(-)^*$ belongs to $\mathcal{P}_{\Sigma}(\mathrm{Sch}^{uglt}, \mathrm{Cat}_\infty)$.
\end{rem}
 
\begin{prop}[Homotopy invariance]\label{homotopy}
    For a scheme $S$, the pullback maps $p^{n*}: \mathcal{H}^{n}(S)\to \mathcal{H}^{n}(\mathbb{A}^m_S)$ induced by the projections $p:\mathbb{A}^m_S\to S$ are fully faithful.
\end{prop}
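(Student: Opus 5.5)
Since $p:\mathbb{A}^m_S\to S$ is smooth, \Cref{smooth extension pull} and \Cref{extension adjunction} give an adjunction $p_{n\shrp}\dashv p^{n*}$ with $p^{n*}\simeq p^*_{|\mathcal{H}^n(S)}$. The plan is to mimic \Cref{motivic category is a1 faithful}: a right adjoint is fully faithful exactly when the counit of the adjunction is an equivalence. So it suffices to show that the counit $\varepsilon: p_{n\shrp}p^{n*}\to 1$ of endofunctors of $\mathcal{H}^n(S)$ is an equivalence.

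Both $p_{n\shrp}$ and $p^{n*}$ preserve colimits. For $p_{n\shrp}$ this holds because it is a left adjoint. For $p^{n*}$ it holds because, $p$ being smooth, it is also a left adjoint by \Cref{pull adjunction}, or alternatively because it commutes with $L^n$. Since $\mathcal{H}^n(S)$ is generated under colimits by the objects $h^n_S(W)$ for smooth $W/S$ (\Cref{compact generation}), it is enough to check $\varepsilon$ on these generators. By \Cref{pull adjunction} we have $p^{n*}h^n_S(W)\simeq h^n_{\mathbb{A}^m_S}(W\times_S\mathbb{A}^m_S)$. By \Cref{extension adjunction}, applying $p_{n\shrp}$ gives $h^n_S(\mathbb{A}^m_W)$. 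The counit component is then $L^n$ applied to the projection $\mathbb{A}^m_W\to W$, which factors as $m$ successive $\mathbb{A}^1$-projections and so lies in the saturation of $\mathbb{A}(S)$. Since $bir_n(S)\supset mot(S)$, this component is an $n$-birational equivalence, and $\varepsilon$ is an equivalence on generators and hence everywhere.

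The main obstacle is the careful identification of the counit component with $L^n$ of the projection map. This requires tracking the counit of the derived adjunction $p_{n\shrp}\dashv p^{n*}$ back to the counit $p_\shrp p^*\to 1$ at the presheaf level. That reduction goes through \Cref{smooth extension pull}: $p_\shrp$ preserves $n$-birational equivalences and $L^np^*\simeq p^*L^n$, so that $p_{n\shrp}p^{n*}L^n\simeq L^np_\shrp p^*$ and the counit is $L^n$ of the presheaf counit. On representables, the presheaf counit is exactly the projection $h_S(\mathbb{A}^m_W)\to h_S(W)$. Note that nothing specific to $n$-density is used here; the argument relies only on the $\mathbb{A}^1$-locality already built into $\mathcal{H}^n$.
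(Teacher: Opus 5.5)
Your proposal is correct and matches the paper's proof: show that the counit $p_{n\shrp}p^{n*}\to 1$ is an equivalence by checking it on the generators $h^n_S(W)$, where it becomes $L^n$ of the projection $\mathbb{A}^m_W\to W$. Your extra steps only make explicit what the paper leaves implicit, namely factoring the projection into $m$ successive $\mathbb{A}^1$-projections and identifying the derived counit with $L^n$ of the presheaf counit.
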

\begin{proof}We need to show that $p_{n\shrp}p^{n*}\to id$ is an equivalence. Since both are left adjoints, and hence preserve colimits, by \Cref{compact generation} it suffices to prove this on $h^n_S(X)$. But $p_{n\shrp}p^{n*}\to id$ evaluated on $h^n_S(X)$ is the projection map $L^n(h_S\mathbb{A}^1_X\to h_SX)$, which, by construction, is an equivalence in $\mathcal{H}^n(S)$. 
\end{proof}

\begin{rem}\label[rem]{rem about strong a1 inv}
The above proposition holds even for the $\mathbb{A}^1$ homotopy category $\mathcal{H}^{\mathbb{A}^1} (-)$; see \Cref{motivic category is a1 faithful}. However, as explained in the paragraph above the referenced remark, the $\mathbb{A}^1$ homotopy category $\mathcal{H}^{\mathbb{A}^1}(-)$ is not $\mathbb{A}^1$-invariant in the strongest sense, in that it does not send $\mathbb{A}^1$ projections to equivalences of $\infty$-categories. Initially, while developing the paper, the author expected that, for $\mathcal{H}^0(-)^*$, such a strong version of $\mathbb{A}^1$-locality is possible. The reason was \Cref{H^b is bir local} and the crucial observation of \Cref{bir local is P1 local} about birational presheaves of sets. At the end of this section, we will discuss a problem in combining these two results. We are unable to draw a final conclusion about the validity of such a strong version of $\mathbb{A}^1$-invariance of $\mathcal{H}^0(-)^*$. If this is true, then $\mathcal{H}^0(-)$ would be an example of a Motivic $\infty$-category in the sense of [\cite{kato2017motivic}]. 
\end{rem}
\begin{cor}[Rational invariance]\label{rational invariance}
Let $S$ be a scheme, and assume $q:X\to S$ is a rational scheme over $S$ (equivalently, $q$ is stably birational). Then the functor $q^{0*}=L^0q^*:\mathcal{H}^0(S)\rightarrow \mathcal{H}^0(X)$ is fully faithful.
    \end{cor}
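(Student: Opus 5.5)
The plan is to reduce rational invariance to homotopy invariance (\Cref{homotopy}) together with dense locality (\Cref{dense equiv}). By hypothesis, $q:X\to S$ is birational to $p:\mathbb{A}^m_S\to S$ over $S$. So there is a span of dense open immersions
$$X\xhookleftarrow{a} U\xhookrightarrow{b}\mathbb{A}^m_S$$
with $q\circ a = p\circ b$. All maps involved are flat, hence UGLT, so the functors $q^{0*}$, $p^{0*}$, $a^{0*}$, $b^{0*}$ exist. By the composition compatibility established for the presheaf $\mathcal{H}^0(-)^*$ on $\mathrm{Sch}^{uglt}$, we get
$$a^{0*}\circ q^{0*}\simeq (q\circ a)^{0*}\simeq (p\circ b)^{0*}\simeq b^{0*}\circ p^{0*}.$$

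Next, \Cref{dense equiv} shows that $a^{0*}$ and $b^{0*}$ are equivalences of $\infty$-categories, since $a$ and $b$ are dense open immersions. Hence
$$q^{0*}\simeq (a^{0*})^{-1}\circ b^{0*}\circ p^{0*}.$$
This is a composite of the fully faithful functor $p^{0*}$ (by \Cref{homotopy} with $n=0$) and two equivalences, so it is fully faithful.

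For the stably birational variant, where $X\times_S\mathbb{A}^r_S$ is birational to $\mathbb{A}^m_S$, I would run the same argument with $X\times\mathbb{A}^r$ in place of $X$. This gives that $(q\circ\pi)^{0*}\simeq \pi^{0*}q^{0*}$ is fully faithful, where $\pi: X\times_S\mathbb{A}^r_S\to X$ is the projection. Since $\pi^{0*}$ is itself fully faithful by \Cref{homotopy}, we can cancel it: for objects $A,B$, the map $\mathrm{Map}(A,B)\to\mathrm{Map}(q^{0*}A,q^{0*}B)$ composed with the equivalence induced by $\pi^{0*}$ is an equivalence, so it is itself an equivalence. Therefore $q^{0*}$ is fully faithful.

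The main subtlety is bookkeeping: the span must be taken over $S$, so that the composites agree as maps to $S$, and the open $U$ must be dense in both $X$ and $\mathbb{A}^m_S$. Both hold by the convention for birational morphisms in \S1.2. No genuine obstacle is expected beyond checking that the composition isomorphisms for $(-)^{0*}$ are natural. This is already justified by \Cref{generic push}, since every map involved is UGLT.
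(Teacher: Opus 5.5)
Your proof is correct and is essentially the paper's. The paper also writes $q^{0*}$ as $p^{0*}$ (fully faithful by \Cref{homotopy}) followed by the equivalences $b^{0*}$ and $a_{0\shrp}\simeq(a^{0*})^{-1}$ from \Cref{dense equiv}. Your composition identities make explicit the step the paper calls ``not difficult to see,'' and your cancellation argument for the stably rational case is a correct addition that the paper does not spell out.

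One minor correction: $q$ itself need not be flat or even UGLT (a blow-up, for example). This is harmless, because $a^{0*}q^{0*}\simeq(qa)^{0*}$ and $b^{0*}p^{0*}\simeq(pb)^{0*}$ only require the first-applied maps $a,b$ to be UGLT.
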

    \begin{proof}
 By definition, there is a rational correspondence $X\xhookleftarrow{k} U\xhookrightarrow{j} \mathbb{A}^n_S$. Use \Cref{dense equiv} to obtain the composite equivalence: $$\mathcal{H}^0(X)\underset\simeq{\xleftarrow[]{k_{0\shrp}}}\mathcal{H}^0(U)\underset\simeq{\xleftarrow{j^*_b}} \mathcal{H}^0(\mathbb{A}^n_S).$$ It is not difficult to see that the morphism $q^{0*}$ in question is equivalent to the composite $$\mathcal{H}^0(X)\underset\simeq{\xleftarrow[]{k_{0\shrp}}}\mathcal{H}^0(U)\underset\simeq{\xleftarrow{j^*_b}} \mathcal{H}^0(\mathbb{A}^n_S)\xhookleftarrow{p^{0*}}\mathcal{H}^0(S)$$ which is fully faithful because, by \Cref{homotopy}, the pullback $\mathcal{H}^0(S)\xrightarrow{p^{0*}}\mathcal{H}^0(\mathbb{A}^n_S)$ is fully faithful.
\end{proof}
\begin{cor}
    The functor $\mathcal{H}^b(S)\to \mathcal{H}^b(\mathbb{P}^1_S)$ induced by the pullback along the projection map $\mathbb{P}^1_S\to S$ is fully faithful.
\end{cor}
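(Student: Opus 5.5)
The plan is to realize $\mathbb{P}^1_S$ as a scheme that is rational over $S$ and then invoke \Cref{rational invariance}. Here $\mathcal{H}^b$ is identified with $\mathcal{H}^0$ by \Cref{Motivic equivalences are Birational equivalences}, and this identification is compatible with pullback along the smooth (hence UGLT) projection $q:\mathbb{P}^1_S\to S$. Concretely, the standard chart gives an open immersion $j:\mathbb{A}^1_S\hookrightarrow \mathbb{P}^1_S$ whose complement is the section at infinity $\infty_S\cong S$. This yields the rational correspondence $\mathbb{P}^1_S\xhookleftarrow{j}\mathbb{A}^1_S\xhookrightarrow{=}\mathbb{A}^1_S$ required in the definition of rationality.

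The one point to check is that $j$ is dense in $\mathbb{P}^1_S$ for an arbitrary qcqs $S$, i.e. that $\infty_S$ contains no generic point of $\mathbb{P}^1_S$. I will argue this using flatness: since $q$ is flat, it lifts generalizations. Take any point $x\in\infty_S$ lying over $s\in S$. Then $x$ is the closed point $\infty$ of the fiber $\mathbb{P}^1_{k(s)}$, and it has a proper generalization, namely the generic point of that fiber. Hence $x$ is not maximal, so every generic point of $\mathbb{P}^1_S$ lies in $\mathbb{A}^1_S$. (Equivalently, for every $\eta\in S^{(0)}$, the generic point of $\mathbb{P}^1_{k(\eta)}$ lies in $\mathbb{A}^1_S$, and these are exactly the generic points of $\mathbb{P}^1_S$.)

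With density established, \Cref{dense equiv} shows that $j^{0*}:\mathcal{H}^0(\mathbb{P}^1_S)\to\mathcal{H}^0(\mathbb{A}^1_S)$ is an equivalence. Since $q\circ j=p$ is the projection $\mathbb{A}^1_S\to S$, the composition compatibility for UGLT maps gives $j^{0*}q^{0*}\simeq p^{0*}$. The functor $p^{0*}$ is fully faithful by \Cref{homotopy}, so $q^{0*}$, being $p^{0*}$ followed by the inverse equivalence $(j^{0*})^{-1}$, is fully faithful as well. The only step that needs any care is the density verification; everything else is a direct application of \Cref{rational invariance} with $n=1$.
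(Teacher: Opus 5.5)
Your proof is correct and follows the paper's route. The corollary is the special case $X=\mathbb{P}^1_S$ of \Cref{rational invariance}, via the rational correspondence $\mathbb{P}^1_S\hookleftarrow\mathbb{A}^1_S=\mathbb{A}^1_S$ together with $\mathcal{H}^b\simeq\mathcal{H}^0$, and you have unwound that proof through \Cref{dense equiv} for $j$, the identity $j^{0*}q^{0*}\simeq p^{0*}$, and \Cref{homotopy}. Your density check is also fine; it only needs that each point of $\infty_S$ has the generic point of its fibre $\mathbb{P}^1_{k(s)}$ as a proper generalization, so the appeal to flatness is unnecessary.
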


\begin{cor}[Purely transcendence invariance]\label{trans}
For any field $k$, the purely transcendental embedding $k\subset k(t_1,\cdots,t_n)$ induces an embedding $\mathcal{H}^0(k)\subset \mathcal{H}^0(k(t_1,\cdots,t_n))$.
\end{cor}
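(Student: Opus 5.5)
We reduce \Cref{trans} to \Cref{rational invariance} together with the generic decomposition \Cref{Hb of a var is Hb of its generic point}. Set $K=k(t_1,\cdots,t_n)$ and consider the projection $p:\mathbb{A}^n_k\to \mathrm{Spec}(k)$. It is smooth, hence flat and UGLT, so $p^{0*}$ is defined. It is trivially rational over $k$, being birational to itself via the identity span. By \Cref{homotopy} (or equivalently \Cref{rational invariance}), the functor $p^{0*}:\mathcal{H}^0(k)\to \mathcal{H}^0(\mathbb{A}^n_k)$ is fully faithful.

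Next, $\mathbb{A}^n_k$ is reduced and irreducible, and its generic point $\eta$ has residue field $k(\eta)=K$. By \Cref{Hb of a var is Hb of its generic point}, pullback along the pro-open (flat, UGLT) map $g:\mathrm{Spec}(K)\to \mathbb{A}^n_k$ gives an equivalence $g^{0*}:\mathcal{H}^0(\mathbb{A}^n_k)\xrightarrow{\sim}\mathcal{H}^0(K)$. Composing, $g^{0*}p^{0*}$ is fully faithful.

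It remains to identify this composite with the functor induced by $k\subset K$, i.e.\ with $(p\circ g)^{0*}$, where $p\circ g:\mathrm{Spec}(K)\to\mathrm{Spec}(k)$. Since $g$ is UGLT, the compatibility $f^{0*}\circ g'^{0*}\simeq (g'\circ f)^{0*}$ established in the presheaf description of $\mathcal{H}^{n}(-)^*$ (the paragraph preceding the definition of $\mathcal{H}^n(-)^*$) applies. Both $p$ and $g$ are UGLT, so the equivalence even holds in $Pr^L$. Hence $\mathcal{H}^0(k)\to\mathcal{H}^0(K)$ is fully faithful, which is the claimed embedding.

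The only step needing care is checking that \Cref{Hb of a var is Hb of its generic point} really applies to $\mathbb{A}^n_k$. The equivalence there is induced by pullback along $\mathrm{Spec}\,k(\eta)\to X$, realized as the limit of dense affine opens with flat affine transition maps (\Cref{continuity of Hb}). The canonical functor to $\mathcal{H}^0(K)$ from that limit is exactly $g^{0*}$, so no extra identification beyond functoriality should be required. As an alternative route that avoids the decomposition, one could apply \Cref{continuity of Hb} directly to the system of dense affine opens $D(f)\subset\mathbb{A}^n_k$. Then use \Cref{dense equiv} to see that the system is constant with value $\mathcal{H}^0(\mathbb{A}^n_k)$, and conclude in the same way.
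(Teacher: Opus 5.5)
Your proof is correct and follows the paper's argument: identify $\mathcal{H}^0(k(t_1,\cdots,t_n))\simeq\mathcal{H}^0(\mathbb{A}^n_k)$ by continuity, then conclude with \Cref{homotopy}. The paper cites \Cref{continuity of Hb} directly, whereas you go through its consequence \Cref{Hb of a var is Hb of its generic point}; your check that the composite is $(p\circ g)^{0*}$, using the UGLT composition compatibility, is a detail the paper leaves implicit.
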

\begin{proof}
    By continuity, \Cref{continuity of Hb} yields $\mathcal{H}^0(k(t_1,\cdots,t_n))\simeq \mathcal{H}^0(\mathbb{A}^n_k)$. The result then follows from \Cref{homotopy}.
\end{proof}
\begin{rem}
   Using \Cref{Ldense is dense inv}, \Cref{bir local is P1 local}, and \Cref{Ldense cont for var}, it is easy to observe that the overall method of proof of all the results above applies to $L_{dense}\mathcal{P}(-)$, implying its rational and pure transcendence invariance.
\end{rem}
We end this section with a discussion of what was already hinted at in \Cref{rem about strong a1 inv}. We recommend that the reader review that remark and return to this discussion. We plan to explain why the contents of \Cref{bir local is P1 local} and \Cref{dense equiv}\textbf{ cannot be used} to conclude that "the functor $p^{0*}:\mathcal{H}^0(S)\to \mathcal{H}^0(\mathbb{A}^1_S)$ is an equivalence of $\infty$-categories".

There are two major problems with combining these two mechanisms. The primary one is formal: the argument of \Cref{bir local is P1 local} applies to ($\mathcal{G}$-)sets (and thus even to $\infty$-groupoids), but not to ($\infty$-)categories. However, \Cref{homotopy} provides an immediate resolution, reducing the problem to a presheaf of large sets. That is, using \Cref{homotopy}, the trick to proving the claimed stronger $\mathbb{A}^1$-locality of $\mathcal{H}^0(-)^*$ should be to show that $\pi_0\mathcal{H}^0(S)\to \pi_0\mathcal{H}^0(\mathbb{A}^1_S)$ is surjective. This brings us to the second problem: the diagram that yields the required surjectivity has morphisms that do not lift generalizations. Therefore, the method of the second paragraph cannot be applied to the functor $\mathcal{H}^0: \mathrm{Sch}^{uglt,op}\to Cat_{\infty}$ alone. Here is the correct way to chase the diagram, though inconclusive. We proceed with the proof of \Cref{bir local is P1 local} and consider the following diagram (which makes sense since $i$, $j$, and $k$ are at least codimension $1$):

\[ \xymatrix{
  & \mathcal{H}^0(E) & & \\
  \mathcal{H}^0(S) \ar[ur]^{L^0\pi^*} & \mathcal{H}^1(W) \ar[d]^{L^1y^*}\ar[u]^{L^0j^*} & \mathcal{H}^0(L) \ar[ul]_{{\sim}^*} &\mathcal{H}^0(\mathbb{P}^1)\ar[l]^{{\sim}}  \\
  \mathcal{H}^1(\mathbb{P}^2) \ar[u]^{L^0i^*} \ar[ur]^{L^1a^*} \ar[r]^{L^1x^*}&\mathcal{H}^1(U) & \mathcal{H}^1(\mathbb{P}^1 \times \mathbb{P}^1) \ar[ul]_{L^1b^*} \ar[u]^{L_0k^*} & \mathcal{H}^1(\mathbb{P}^1)\ar[u]_{L^{0}_{\mathbb{P}^1}} \ar[ul]_{} \ar[l]_-{L^1p_1^*}
}\]
 Clearly, $k^*p_1^*\simeq id$. Since $k$ is of pure codimension 1, \Cref{closed rel d push} implies that $k^*$ takes $1$-birational motivic equivalences to $0$-birational motivic equivalences. Thus, $k^*p_1^*\to k^*L^1p_1^*$ is a $0$-birational motivic equivalence. Consequently, $$(L^{0}k^*)\circ (L^1p_1^*)=L^0(k^*L^1p_1^*)\simeq L^{0}(k^*p_1^*)\simeq L^{0}_{\mathbb{P}^1}.$$ But $L_{\mathbb{P}^1}^0$ is essentially surjective (being a localization). So $L^0k^*$ is also essentially surjective. On the other hand, since $k^*\simeq j^*b^*$ and $j$ is also of pure codimension $1$, we similarly have $$L^0k^*\simeq L^0(j^*b^*)\simeq L^0(j^*( L^1b^*))\simeq (L^0j^*)\circ (L^1b^*).$$ Thus, the essential surjectivity of $L^0k^*$ guarantees that $L^0j^*$ is also essentially surjective.

Again, since $j^*$ takes $1$-birational equivalences to $0$-birational equivalences, we get $$L^0(j^*a^*)\simeq  L^0(j^*(L^1a^*))\simeq (L^0j^*)\circ (L^1a^*).$$ Since $\pi^*i^*=j^*a^*$, this yields $$L^0(\pi^*\circ i^*)\simeq (L^0j^*)\circ (L^1a^*).$$ But since $\pi$ is UGLT, $\pi^*$ preserves birational equivalences by \Cref{generic push}. So, similarly $$L^0(\pi^*\circ i^*)\simeq L^0( \pi^*(L^0i^*))\simeq (L^0\pi^*)\circ (L^0i^*).$$ Combining the last two equivalences, we obtain $$(L^0j^*)\circ (L^1a^*)\simeq (L^0\pi^*)\circ (L^0i^*).$$ But in the last paragraph, we have seen that $L^0j^*$ is essentially surjective; so, to show that $L^0\pi^*$ is essentially surjective, it suffices, using this last equivalence, to show that $L^1a^*$ is essentially surjective. This is exactly the step where the mechanism fails.

Let's now identify exactly why $L^1a^*$ might fail to be essentially surjective. Let $U=W\setminus E$ and consider the span $\mathbb{P}^2\xhookleftarrow{x}U\xhookrightarrow{y}W$ that identifies $U\simeq \mathbb{P}^2\setminus S$ and satisfies $x=ay$. We find $x^*=y^*a^*$. Now $y^*$ preserves $1$-birational equivalences. Therefore, $$L^1x^*=L^1(y^*a^*)\simeq (L^1y^*)\circ (L^1a^*).$$ Since $x$ is $1$-dense, by \Cref{n dense equiv} we know that $L^1x^*$ is an equivalence. To show that $L^1a^*$ is essentially surjective, we want $L^1y^*$ to be conservative, and hence an equivalence by \Cref{fully faithful j_*}. However, since $y$ is only $0$-dense, $L^1y^*$ need not be conservative. In fact, $L^0a^*:\mathcal{H}^0(\mathbb{P}^2)\to \mathcal{H}^0(U)$ is essentially surjective, not $L^1a^*$. 

\subsection{$n$-birational descent of thom spheres}\label{4.5}
In this final subsection, we aim to show that there is an interesting way to produce lower-height birational motivic spaces from higher ones, namely by taking $\mathbb{P}^1$-loop spaces. To this end, we shall establish a Thom suspension and looping adjunction among the birational motivic homotopy categories of various heights:
\begin{prop}
   Let $E$ be a vector bundle of rank $n$ over a scheme $X$ smooth over the base. The suspension map $$Th_X(E)\wedge -: \mathcal{P}_{nis}(S)_\bullet\to \mathcal{P}_{nis}(S)_\bullet$$ sends $d$-birational local equivalences to $n+d$-birational local equivalences, and thus the delooping $\Omega _{Th_X(E)}$ sends $(d+n)$-birational local spaces to $d$-birational local spaces. In turn, this induces an adjunction $$L^{d+n}\Sigma _{Th_X(E)}: \mathcal{H}^{d}(S)_\bullet\leftrightarrows \mathcal{H}^{{d}}(S)_\bullet: \Omega _{Th_X(E)}.$$
\end{prop}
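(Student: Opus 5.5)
The plan is to reduce, by saturation, to a statement about generators and then do a codimension count on the total space of the bundle. The functor $Th_X(E)\wedge-$ on $\mathcal{P}_{nis}(S)_\bullet$ preserves colimits. So the class of maps $g$ with $Th_X(E)\wedge g\in bir_{n+d}(S)$ is closed under pushouts, two-out-of-three and small colimits, and it suffices to check the pointed generators of $bir_d(S)$. These are $(-)_+$ of Nisnevich equivalences, of $\mathbb{A}^1$-projections, and of $d$-dense open immersions $U\hookrightarrow Y$ in $Sm_S$. The first two kinds are sent to motivic equivalences: $Th_X(E)\wedge-$ descends to $\mathcal{H}^{\mathbb{A}^1}_\bullet(S)$ by the monoidality of $L_{mot}$ (the pointed version of \Cref{Ln is cart}). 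Motivic equivalences lie in $bir_{n+d}(S)$. So everything reduces to $d$-dense open immersions.

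For such a $j:U\hookrightarrow Y$, use the model $Th_X(E)\simeq E/(E\setminus 0)$. Write $E_Y:=E\times_S Y$, $E_U:=E\times_S U$, and $E^\circ_Y:=(E\setminus 0)\times_S Y$. Then $Th_X(E)\wedge Y_+\simeq E_Y/E^\circ_Y$ and $Th_X(E)\wedge U_+\simeq E_U/E^\circ_U$. Factor the map through $(E_U\cup E^\circ_Y)/E^\circ_Y$.
\begin{enumerate}
\item The first map, $E_U/E^\circ_U\to (E_U\cup E^\circ_Y)/E^\circ_Y$, is an equivalence already in $\mathcal{P}_{nis}(S)_\bullet$. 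The Zariski cover $\{E_U,E^\circ_Y\}$ of $E_U\cup E^\circ_Y$ has intersection $E^\circ_U$, so the square is a pushout after Nisnevich sheafification.
\item The second map, $(E_U\cup E^\circ_Y)/E^\circ_Y\to E_Y/E^\circ_Y$, is the cofiber of $h(E_U\cup E^\circ_Y)\to h(E_Y)$ relative to $E^\circ_Y$. It therefore lies in $bir_{n+d}(S)$ as soon as the open immersion $E_U\cup E^\circ_Y\hookrightarrow E_Y$ is $(n+d)$-dense.
\end{enumerate}

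The complement of that open immersion is the zero section restricted over the complement, namely $0(X\times_S(Y\setminus U))\subset E_Y$. Two estimates are needed.
\begin{enumerate}
\item Since $X\times_S Y\to Y$ is smooth, hence UGLT, \Cref{glt pull} shows that $X\times_S(Y\setminus U)$ has codimension $\geq d+1$ in $X\times_S Y$.
\item The zero section $X\times_S Y\hookrightarrow E_Y$ of a rank $n$ bundle adds exactly $n$ to codimension. Zariski locally $E_Y\cong\mathbb{A}^n_{X\times_S Y}$, and for a point $z$ of the zero section one has $\dim\mathcal{O}_{E_Y,0(z)}=\dim\mathcal{O}_{X\times_S Y,z}+n$.
\end{enumerate}
Together these give $cod_{E_Y}\ge n+d+1$, as required.

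I expect the second estimate to be the main obstacle for general qcqs (non-noetherian) bases. In the noetherian case it is the flat dimension formula already used in \Cref{closed rel d push}, applied to the smooth map $E_Y\to X\times_S Y$ with fiber $\mathbb{A}^n$. In general I would either argue directly by lifting chains of generalizations of length $n$ inside the fiber $\mathbb{A}^n_{k(z)}$ on top of lifts of chains from $X\times_S Y$, or reduce to the noetherian case by \Cref{continuity of Hb}.

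Given the claim on equivalences, the rest is formal. The right adjoint $\Omega_{Th_X(E)}$ sends $(d+n)$-local spaces to $d$-local spaces: for $g\in bir_d(S)$,
\[\mathrm{Map}(g,\Omega_{Th}\mathcal{F})\simeq\mathrm{Map}(Th\wedge g,\mathcal{F}),\]
and the right-hand side is an equivalence when $\mathcal{F}$ is $(d+n)$-local. This yields the adjunction $L^{d+n}\Sigma_{Th_X(E)}\dashv\Omega_{Th_X(E)}$ between $\mathcal{H}^d_\bullet(S)$ and $\mathcal{H}^{d+n}_\bullet(S)$; the statement writes $\mathcal{H}^d_\bullet(S)$ on both sides, but $L^{d+n}$ lands in $\mathcal{H}^{d+n}_\bullet(S)$, so the right-hand category should be read as $\mathcal{H}^{d+n}_\bullet(S)$.
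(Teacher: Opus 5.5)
Your proposal is correct and follows the paper's proof essentially step for step. You reduce to generators, use Zariski excision to rewrite $Th_X(E)\wedge j_+$ as the quotient of the open immersion $(E\times_S Y)\setminus 0(X\times_S(Y\setminus U))\hookrightarrow E\times_S Y$ by $(E\setminus 0)\times_S Y$, and then count codimension to show this immersion is $(n+d)$-dense. Your bound $\geq d+1$ for the first step is the correct one where the paper writes $d$, and you are right that the target category should read $\mathcal{H}^{d+n}_\bullet(S)$.

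On your non-noetherian worry: only the inequality $cod_{E_Y}(0(z))\geq cod(z)+n$ is needed, and the equality you state can fail in general. Your first suggested argument proves the inequality for arbitrary qcqs schemes: push a generalizing chain from $z$ through the zero section, then append the length-$n$ chain from the origin to the generic point of the fiber $\mathbb{A}^n$ over the last point. So no reduction to the noetherian case is required.
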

\begin{proof}
    Since the functor $$Th_X(E)\wedge -:=(\frac{E}{E\setminus0_X})\wedge -$$ is a left adjoint, it suffices to show that it maps $B_d(S)\cup \mathbb{A}(S)$ to $d+n$-birational motivic equivalences. For the case of $\mathbb{A}^1$-projections, consider the morphism $\mathbb{A}^1_Y\to Y$ in $Sm_S$. Then the map $$Th_X(E)\wedge ({\mathbb{A}^1_{{Y}}}_+)=\frac{E\times \mathbb{A}^1_Y}{(E\setminus0_X)\times \mathbb{A}^1_Y}\longrightarrow\frac{E\times Y}{(E\setminus0_X)\times Y}= Th_X(E)\wedge ({{Y_+}})$$
induced by the pointed projection ${\mathbb{A}^1_Y}_+\to Y_+$ is the Nisnevich local cofiber of 
    $E\times \mathbb{A}^1_Y\to E\times Y$ by $(E\setminus0_X)\times \mathbb{A}^1_Y\to (E\setminus0_X)\times Y$. Since both of the last two morphisms are motivic equivalences, the cofiber is so as well.
    
    Now, let $V\to U $ be a dense open immersion of height $d$. We have to show that $$\frac{E\times V}{(E\setminus0_X)\times V}\to \frac{E\times U}{(E\setminus0_X)\times U}$$ is a $(n+d)$-birational motivic equivalence. First, by Zariski excision, we have $$\frac{E\times V}{(E\setminus0_X)\times V}\simeq \frac{(E\times V ) \bigcup \big ((E\setminus0_X)\times U\big)  }{(E\setminus0_X)\times U}.$$ Now observe that $${(E\times V ) \bigcup \big ((E\setminus0_X)\times U\big)  }=(E\times U)\setminus\big (0_X\times (U\setminus V)\big).$$ So the map in question is the quotient by the
    equivalence $id:{(E\setminus0_X)\times U}\to {(E\setminus0_X)\times U}$ of the open immersion  $(E\times U)\setminus\big (0_X\times (U\setminus V)\big)\xhookrightarrow{} E\times U$.

    We claim that this last open immersion is $(n+d)$-dense. To see this, note that since $X$ is smooth over $S$, the closed immersion $$0_X\times (U\setminus V)=X\times (U\setminus V)\to X\times (U)=0_X\times (U)$$ is of codimension at least $d$, while the other closed immersion $0_X\times U\to E\times U$ is of codimension at least $n$, since $E$ is a vector bundle of rank $n$. And we are done. 
\end{proof}
  \begin{cor}\label[cor]{coneavue adjunction}
      Let $S$ be a Qcqs scheme. Then $\Sigma^n_{\mathbb{P}^1}$ takes 
$d$-birational motivic equivalences to $(n+d)$-birational motivic equivalences, and we have an adjunction, $$L^{n+d}\Sigma^{n}_{\mathbb{P}^1}: \mathcal{H}^{{d}}_\bullet(S)\rightleftarrows \mathcal{H}^{{n+d}}_\bullet(S) :\Omega^n _{\mathbb{p}^1}.$$ In particular, $\Omega_{\mathbb{P}^1}$ takes $(n+1)$-birational motivic local objects to $n$-birational motivic local objects.
  \end{cor}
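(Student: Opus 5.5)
The plan is to deduce the corollary from the preceding proposition applied to the trivial bundle, and then iterate. First I will take $X=S$ and $E=\mathbb{A}^1_S$, the trivial line bundle of rank $1$. Then $Th_S(\mathbb{A}^1_S)=\mathbb{A}^1_S/(\mathbb{A}^1_S\setminus 0)\simeq T\simeq \mathbb{P}^1$ in $\mathcal{H}^{\mathbb{A}^1}_\bullet(S)$. The equivalence $T\simeq\mathbb{P}^1$ is a motivic equivalence, and every motivic equivalence is a $d$-birational motivic equivalence for every $d$, since $mot(S)\subset bir_d(S)$. So the preceding proposition gives that $\mathbb{P}^1\wedge-$ sends $d$-birational motivic equivalences to $(d+1)$-birational motivic equivalences.

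There is one subtlety. The proposition was stated for $Th_X(E)\wedge-$ on $\mathcal{P}_{nis}(S)_\bullet$, whereas $\Sigma_{\mathbb{P}^1}$ uses the smash product with $\mathbb{P}^1$ pointed at $\infty$. I will note that $\mathbb{P}^1\wedge-$ and $T\wedge-$ differ by precomposition with a motivic equivalence of pointed spaces. Because smashing with a fixed object preserves colimits and the relevant saturation classes, the two functors send $bir_d$ into the same class.

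Next I will iterate by induction on $n$. Since $\Sigma^n_{\mathbb{P}^1}=\Sigma_{\mathbb{P}^1}\circ\Sigma^{n-1}_{\mathbb{P}^1}$, a $d$-birational equivalence is sent to a $(d+n-1)$-birational equivalence by the inductive hypothesis, and then to a $(d+n)$-birational equivalence by the $n=1$ case applied with $d+n-1$ in place of $d$. Alternatively, one could apply the proposition directly to the rank-$n$ trivial bundle $\mathbb{A}^n_S$, using $Th(\mathbb{A}^n_S)\simeq T^{\wedge n}\simeq(\mathbb{P}^1)^{\wedge n}$.

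The adjunction is then formal. Since $\Sigma^n_{\mathbb{P}^1}\dashv\Omega^n_{\mathbb{P}^1}$ on $\mathcal{P}(S)_\bullet$ and the left adjoint maps $bir_d$ into $bir_{n+d}$, the right adjoint $\Omega^n_{\mathbb{P}^1}$ maps $(n+d)$-local objects to $d$-local objects: for $f\in bir_d$ and $\mathcal{Y}$ that is $(n+d)$-local, $\mathrm{Map}(f,\Omega^n\mathcal{Y})\simeq \mathrm{Map}(\Sigma^n f,\mathcal{Y})$ is an equivalence. Composing with $L^{n+d}$ on one side and restricting on the other yields $L^{n+d}\Sigma^n_{\mathbb{P}^1}\dashv\Omega^n_{\mathbb{P}^1}$ between $\mathcal{H}^d_\bullet(S)$ and $\mathcal{H}^{n+d}_\bullet(S)$. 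The last sentence of the corollary is the case $n=1$ with $d=n$ renamed.

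The main obstacle is justifying the codimension count inside the proposition for the trivial bundle, namely that $0_S\times(U\setminus V)\subset\mathbb{A}^n\times U$ has codimension at least $n+d+1$ over a general qcqs base. I will take this as given by the proposition. If needed, I would check it by observing that the projection $\mathbb{A}^n\times U\to U$ is flat, so generalizations lift, and that a generic point of $0\times Z$ admits a chain of length $n$ in the fiber $\mathbb{A}^n_{k(z)}$ followed by a lift of a chain of length $d+1$ in $U$.
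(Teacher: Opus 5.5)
Your proposal is correct and is essentially the paper's argument. The paper invokes the motivic equivalence $\Sigma^n_{\mathbb{P}^1}\simeq Th_S(\mathbb{A}^n_S)$ and applies the preceding Thom-space proposition directly to the rank-$n$ trivial bundle, which is your stated alternative; your rank-one-plus-induction route is an equivalent repackaging. The extra details you supply are sound and fill in steps the paper leaves implicit: the comparison of $\mathbb{P}^1\wedge-$ with $T\wedge-$ via an objectwise motivic equivalence and 2-out-of-3 in $bir_{n+d}$, the formal adjunction argument, and the flatness/going-down check that the excised locus has codimension $\geq n+d+1$.
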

      \begin{proof}
Since there is a motivic equivalence $\Sigma ^n_{\mathbb{P}^1}\simeq Th_S(\mathbb{A}^n_S)$, the result follows from the proposition above. 
\end{proof}

Let us now explain its significance. To do so, we first observe that the slice conjecture of Voevodsky [\cite{voevodsky2002possible}] can be restated as a combination of two statements:

\begin{enumerate}
    \item $\Omega_{\mathbb{P}^1}(\mathcal{SH}_{S^1}/f_{n+1})\subset\mathcal{SH}_{S^1}/f_n $
    \item The induced morphism $f_{0/n}\Omega_{\mathbb{P}^1}\to \Omega_{\mathbb{P}^1}{f_{0/n+1}}$ is an $f_n$-equivalence.
\end{enumerate}

The corollary above is therefore the unstable analog of the first part of the (stable) slice conjecture. However, proving the unstable analog of the second condition is a nontrivial task.

Nevertheless, the above corollary yields an interesting tower of birational motivic spaces of varying heights, in contrast to the ordinary tower obtained in \Cref{bst}. Specifically, the following tower in $Pr^R$:
$$\cdots \mathcal{H}_\bullet^{3}(S)\xrightarrow{\Omega_{\mathbb{P}^1}}\mathcal{H}_\bullet^{2}(S)\xrightarrow{\Omega_{\mathbb{P}^1}}\mathcal{H}_\bullet^{1}(S)\xrightarrow{\Omega_{\mathbb{P}^1}}\mathcal{H}_\bullet^{0}\xrightarrow{\Omega_{\mathbb{P}^1}} \mathcal{H}_\bullet^{{-1}}(S)\simeq \mathcal{I}$$
And equivalently, the following tower in $Pr^L$
$$\cdots \xleftarrow{L^4\Sigma_{\mathbb{P}^1}}\mathcal{H}_\bullet^{3}(S)\xleftarrow{L^3\Sigma_{\mathbb{P}^1}}\mathcal{H}_\bullet^{2}(S)\xleftarrow{L^2\Sigma_{\mathbb{P}^1}}\mathcal{H}_\bullet^{1}(S)\xleftarrow{L^1\Sigma_{\mathbb{P}^1}}\mathcal{H}_\bullet^{0}\xleftarrow{L^0\Sigma_{\mathbb{P}^1}} \mathcal{H}_\bullet^{{-1}}(S)\simeq \mathcal{I}$$
\begin{rem}
Note that these towers run in the opposite direction to those in \S\Cref{bst}. As in \Cref{2.2.3}, it is an interesting question to understand the inverse limit of the first tower above in $Cat_\infty$. It seems that the objects of the limiting category will have certain interesting transfers. I do not yet have a characterization of this limit and hope to return to it.
\end{rem} 
\begin{rem}\label[rem]{Spq slice conj}
    Stating a similar version of the conjecture for the unstable spherical nullification of [\cite{asok2023p}, \S3] is rather easy. Indeed, by definition, for every $p\geq 0$ there is a tower in $Pr^L$ (of the $S^{p,q}$-nullified categories, defined as $\mathcal{H}^{p,q}_\bullet(S):=L_{S^{p,q}}\mathcal{H}^{\mathbb{A}^1}{(S)}$):
 $$\cdots \mathcal{H}_\bullet^{p+6,3}(S)\xrightarrow{\Omega_{\mathbb{P}^1}}\mathcal{H}_\bullet^{p+4,2}(S)\xrightarrow{\Omega_{\mathbb{P}^1}}\mathcal{H}_\bullet^{p+2,1}(S)\xrightarrow{\Omega_{\mathbb{P}^1}}\mathcal{H}_\bullet^{p,0}(S)$$   
 And the slice conjecture for this tower states that the canonical natural transformation $$L^{p+q,q}\Omega_{\mathbb{P}^1}\to \Omega_{\mathbb{P}^1} L^{p+q+2,q+1},$$ is a $L^{p+q,q}$-equivalence. By choosing a connected component of the base point, [\cite{asok2023p}, Corollary 3.1.21] can be used to restate this conjecture when the base is the spectrum of a perfect field. Namely, that for every $i\geq 1$  $(\pi_i\esc{X})_{-1}\to (L^{p+q+2,q+1}\esc{X})_{-1}$ is the universal strongly $\mathbb{A}^1$-invariant sheaf with a trivial $i$-fold $\mathbb{G}_m$-contraction.

 Also note that a simplicial version of the $L^{p,q}$ slice conjecture is known to be true for simply connected pointed motivic spaces (see [\cite{asok2023p}, Lemma 3.1.29 (2)]. Therefore, it is meaningful to state the unstable slice conjectures only for simply connected spaces.
\end{rem}
\begin{conj}[The unstable birational slice conjecture]\label[conj]{conj}
For a Qcqs scheme $S$, the canonical map $L^n\Omega_{\mathbb{P}^1}\to \Omega_{\mathbb{P}^1}L^{n+1}$ induced by \Cref{coneavue adjunction} is an $n$-birational equivalence on simply connected motivic spaces.
\end{conj}
    
\vspace{1cm}
\noindent\rule{\textwidth}{0.4pt}
\noindent{\large\textbf{\textit{Acknowledgments}:}} 

I thank my supervisor, Dr. Chetan Tukaram Balwe, for reviewing the initial draft. 
\vspace{1cm}